\numberwithin{equation}{section}
\theoremstyle{plain}
\newtheorem{thm}{Theorem}[section]
\newtheorem{prop}[thm]{Proposition}
\newtheorem{cor}[thm]{Corollary}
\newtheorem{lem}[thm]{Lemma}
\theoremstyle{definition}
\newtheorem{exa}[thm]{Example}
\newtheorem{rem}[thm]{Remark}
\newtheorem{defi}[thm]{Definition}
\numberwithin{equation}{section}
\DeclareMathOperator*{\real}{\mathbb{R}}
\DeclareMathOperator*{\com+}{\mathbb{C}_+}
\DeclareMathOperator*{\comp}{\mathbb{C}}
\DeclareMathOperator*{\nat}{\mathbb{N}}
\DeclareMathOperator*{\im}{\text{Im}}
\DeclareMathOperator*{\re}{\text{Re}}
\DeclareMathOperator*{\supp}{\text{supp}}
\DeclareMathOperator*{\sign}{\text{sign}}
\DeclareMathOperator*{\tor}{\mathbb{T}}
\begin{document}
\title{On monotone convolution and monotone infinite divisibility}
\author{Takahiro Hasebe \\ Graduate School of Science, Kyoto University,\\  Kyoto 606-8502, Japan \\ E-mail: hsb@kurims.kyoto-u.ac.jp}
\date{}

\maketitle 
\tableofcontents
\section{Introduction}
\subsection{Quantum probability theory}
Quantum probability theory (also called non-commutative probability theory) 
is a theory which can be seen as an algebraic foundation of quantum 
mechanics. Recently this area is developing more and more from the 
mathematical point of view. 

The basis of quantum probability theory is a pair $(\mathcal{A}, \phi)$ called a noncommutative probability space, where 
$\mathcal{A}$ is a unital $*$-algebra on $\comp$ and $\phi$ is a state on $\mathcal{A}$. 
One of the most important inner structure of the pair $(\mathcal{A}, \phi)$ 
is the notion of independence. In many cases, independence determines one theory. If we consider free independence, the corresponding theory is called free probability theory. 
 
 There have been two big developments in this area: free probability theory started by Voiculescu 
and the analyses on a Boson Fock space developed by Hudson and Parthasarathy.  
These two areas can be characterized by independence: 
free independence for free probability theory, and Bose (Boson) independence for Hudson-Parthasarathy theory. 

Hudson and Parthasarathy have discussed mainly properties of creation and annihilation operators 
on the Boson Fock space over $L^2 (\real)$ \cite{H-P1}. In this area, there are many interesting developments  
which could not be explained briefly. We only note here that a Boson-Fermion 
correspondence has been established in terms of stochastic integration \cite{H-P2} in case of real one dimension. 

Free probability theory has been founded for the study of the type I\hspace{-.1em}I$_1$ factor which is 
a group algebra of a free group \cite{Voi}. In this area, there is a surprising connection to random matrix theory \cite{V1}. 
This result has given a strong impetus to the later developments. For instance, 
free entropy is defined by the limit of the Boltzmann entropy of random matrix theory. 
Large deviations of random matrices are important in free entropy theory. 
An additional example is stochastic integration theory. 
One can understand a stochastic integration in free probability as the limit of a stochastic integration 
in random matrix theory \cite{Bia1}.

In addition to the above two notions of independence, there are other notions of independence such as Fermi independence, monotone independence, anti-monotone independence and boolean independence. 
Muraki has clarified in \cite{Mur4, Mur5} (see also \cite{Fra2}) that there are only five ``nice'' notions of independence, i.e.,  
boson, monotone, anti-monotone, free, boolean. 

In addition to independence, 
the Fock space structure is also important in quantum probability theory. 
The free Fock space (or full Fock space) over $L^2(\real)$ is defined by 
\[
\Gamma_f (L^2 (\real)) := \comp \oplus \bigoplus_{n = 1} ^{\infty} L^2 ( \mathbb{R} ^n). 
\] 
The Boson Fock space is defined as a symmetrized Fock space: 
\[
\Gamma_b (L^2 (\real)):= \comp \oplus \bigoplus_{n = 1} ^{\infty} L^2 ( \mathbb{R} ^n)_s,
\] 
where $L^2 (\real^n)_s$ means the set $\{f \in L^2(\real^n); f \text{ is symmetric } \}$. 
The monotone Fock space is defined by
\[
\Gamma_m(L^2(\real)):= \comp \oplus \bigoplus_{n = 1} ^{\infty} L^2 ( \mathbb{R} ^n _{>}), 
\]
where $\real^n _{>} := \{(x_1, \cdots, x_n) \in \real^n; x_1 > x_2 > \cdots > x_n \}$. 
A Fock space is important in understanding a Brownian motion. 
In each Fock space $\Gamma _X (L^2(\real))$, a Brownian motion is defined by the operator 
\[
B_X (t):=a_X(1_{[0,t)})+ a_X ^* (1_{[0,t)}), 
\]   
where $a_X (f)$ and $a^* _X(f)$ are defined by  \\
(1) in the full Fock space or the monotone Fock space, 
\[
a_X ^* (f)f_1 \otimes \cdots \otimes f_n := f \otimes f_1 \otimes \cdots \otimes f_n, ~~ X = f, m; 
\]
(2) in the Boson Fock space, 
\[
a_b ^* (f)g^{\otimes n} := \sqrt{n} f \hat{\otimes}g^{\otimes n}. 
\] 
$a_X (f)$ is defined by the adjoint operator of $a_X ^* (f)$ w.r.t. the inner product of each Fock space. 
$a_X (f)$ and $a_X ^* (f)$ are bounded operators for $X = m$ or $f$ and 
unbounded for $X = b$.

The unital $*$-algebra $\mathcal{A}$ constituting the non-commutative probability space $(\mathcal{A}, \phi)$ is taken to be the $*$-algebra generated by $a_X (f)$, $f \in L^2(\real)$
on each Fock space; the state is taken to be the vacuum state. 
It is important that the Brownian motions defined above have independent increments. We explain this point in the case of the monotone Fock space. 

\begin{defi}
Let $\mathcal{A}$ be a $*$-algebra and let $\phi$ be a state. \\
(1) Let $\{\mathcal{A}_m \}_{m = 1} ^{n}$ be a sequence of 
$*$-subalgebras in $\mathcal{A}$. Then $\{\mathcal{A}_m \}_{m = 1} ^{n}$ is said to be \textit{monotone independent} if the following condition holds.  
\begin{equation}
\begin{split}
\phi(a_1  a_2  \cdots a_n ) &= \phi (a_k ) \phi (a_1  a_2  \cdots \check{a_k} \cdots  a_n ) \\
& \text{ if } a_m \in  \mathcal{A}_{i_m} \text{ for all } 1 \leq m \leq n \text{ and $k$ satisfies } i_{k-1}< i_k > i_{k+1}.
\end{split}
\end{equation}
If $k = 1$ (resp. $k = n$), the above inequality is understood to be $i_1 > i_2$ (resp.  $i_{n-1} < i_{n}$). \\
$(2)$ Let $\{b_i \}_{i = 1} ^{n}$ be a sequence of elements in $\mathcal{A}$. $\{b_i \}_{i = 1} ^{n}$ is said to be monotone independent if 
the $*$-algebras $\mathcal{A}_{i}$ generated by each $b_i $ without unit form a monotone independent family. 
\end{defi} 

\begin{thm}\cite{Mur1}\label{indep}
The Brownian motion on the monotone Fock space has independent increments w.r.t. the vacuum: for any $0 < t_1 < \cdots < t_n < \infty$, 
\[ 
B_m(t_2)- B_m (t_1), \cdots, B_m(t_n) -B_m (t_{n-1})
\]
are monotone independent.
\end{thm}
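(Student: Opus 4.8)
Write $\lambda_i := t_i - t_{i-1}$ and $f_i := 1_{[t_{i-1},t_i)}$, so $X_i := B_m(t_i) - B_m(t_{i-1}) = a_m(f_i)+a_m^*(f_i)$ is a self-adjoint element of $\mathcal A$, and let $\mathcal A_i$ be the non-unital $*$-algebra it generates, i.e.\ the polynomials in $X_i$ with zero constant term. Let $\Omega$ be the vacuum vector and $\phi=\langle\Omega,\cdot\,\Omega\rangle$. By multilinearity it suffices to check the defining identity of monotone independence on words $X_{i_1}^{p_1}X_{i_2}^{p_2}\cdots X_{i_m}^{p_m}$ with $p_\ell\ge1$ and $i_{\ell-1}\ne i_\ell$ for $\ell\ge2$: whenever position $k$ is a local peak, $i_{k-1}<i_k>i_{k+1}$ (read as $i_1>i_2$ if $k=1$ and $i_{m-1}<i_m$ if $k=m$), one must prove
\[
\phi\bigl(X_{i_1}^{p_1}\cdots X_{i_m}^{p_m}\bigr)=\phi\bigl(X_{i_k}^{p_k}\bigr)\,\phi\bigl(X_{i_1}^{p_1}\cdots X_{i_{k-1}}^{p_{k-1}}X_{i_{k+1}}^{p_{k+1}}\cdots X_{i_m}^{p_m}\bigr).
\]

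Everything rests on how $a_m(f_i)$ and $a_m^*(f_i)$ move simple tensors built from the $f_j$. Since the intervals $[t_{j-1},t_j)$ are disjoint and increasingly ordered, $\langle f_i,f_j\rangle=\delta_{ij}\lambda_i$, and on $\Gamma_m(L^2(\real))$ a simple tensor $f_{j_1}\otimes\cdots\otimes f_{j_r}$ is a nonzero vector exactly when $j_1\ge j_2\ge\cdots\ge j_r$, because the simplex condition $x_1>x_2>\cdots>x_r$ forces $\supp f_{j_s}$ to lie to the right of $\supp f_{j_{s+1}}$. Hence $a_m^*(f_i)$ prepends $f_i$ and survives only if $i$ is at least the index of the current leftmost (``top'') factor; $a_m(f_i)$ contracts against the top factor and kills the vector unless that factor is $f_i$; and if the factor just below the top $f_i$ has index strictly less than $i$ (or there is none), the contraction is clean: $a_m(f_i)(f_i\otimes\zeta)=\lambda_i\,\zeta$. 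In particular $a_m(f_i)\Omega=0$ and $a_m^*(f_i)\Omega=f_i$, so a single block $X_i^{p}$ only pushes and pops, on top of whatever is present, a tower of vectors all of whose variables lie in $[t_{i-1},t_i)$. A short induction from the right using these rules yields the localization invariant: every nonzero simple-tensor component of $X_{i_j}^{p_j}\cdots X_{i_m}^{p_m}\Omega$ has top index $\le i_j$.

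Now fix a peak at position $k$ and set $\eta:=X_{i_{k+1}}^{p_{k+1}}\cdots X_{i_m}^{p_m}\Omega$, which by the invariant has top index $\le i_{k+1}<i_k$. For each simple-tensor component $\zeta$ of $\eta$, the linear map $V_\zeta\colon\Psi\mapsto\Psi\otimes\zeta$ from the monotone Fock space over $L^2([t_{i_k-1},t_{i_k}))$ into $\Gamma_m(L^2(\real))$ (sending the vacuum to $\zeta$) intertwines $a_m(f_{i_k})$ and $a_m^*(f_{i_k})$ with the analogous operators of the smaller Fock space, precisely because $i_k$ strictly dominates the top index of $\zeta$, so a tower over $[t_{i_k-1},t_{i_k})$ sits on $\zeta$ exactly as it sits on $\Omega$. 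Therefore $X_{i_k}^{p_k}\zeta=V_\zeta\bigl(X_{i_k}^{p_k}\Omega\bigr)$, whose degree-zero part is $\phi(X_{i_k}^{p_k})\,\zeta$, and summing over $\zeta$,
\[
X_{i_k}^{p_k}\eta=\phi\bigl(X_{i_k}^{p_k}\bigr)\,\eta+R,
\]
where every component of $R$ has top index $i_k$. Applying $X_{i_1}^{p_1}\cdots X_{i_{k-1}}^{p_{k-1}}$ now annihilates $R$: when $k\ge2$ the left-neighbour block $X_{i_{k-1}}^{p_{k-1}}$ does so because $i_{k-1}<i_k$ and neither $a_m(f_{i_{k-1}})$ nor $a_m^*(f_{i_{k-1}})$ acts nontrivially on a tensor whose top index is $i_k$; when $k=1$ the term $R$ is killed by $\langle\Omega,\cdot\rangle$ since it has positive degree. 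Pairing with $\Omega$ gives $\phi(X_{i_1}^{p_1}\cdots X_{i_m}^{p_m})=\phi(X_{i_k}^{p_k})\,\langle\Omega,\,X_{i_1}^{p_1}\cdots X_{i_{k-1}}^{p_{k-1}}\eta\rangle$, which is the claimed factorization; the case $k=m$ is identical with $\eta=\Omega$.

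The main obstacles I anticipate are, first, nailing down the action of the creation and annihilation operators — the clean-contraction rule and the localization invariant — against the slightly informal description of $\Gamma_m(L^2(\real))$ above, which forces care about the implicit restriction to the decreasing simplex and about the deformed vectors that appear when one pops a tower of equal-index factors; and second, checking that $V_\zeta$ really does commute with $a_m(f_{i_k})$ and $a_m^*(f_{i_k})$ on all relevant towers, not just on simple tensors, so that the ``$\phi(X_{i_k}^{p_k})\,\zeta$'' term is isolated exactly. A more structural route that sidesteps much of this is to first establish the monotone-product factorization $\Gamma_m(\mathcal H_1\oplus\mathcal H_2)\cong\Gamma_m(\mathcal H_1)\triangleright\Gamma_m(\mathcal H_2)$ of Fock spaces, together with the matching statement for creation/annihilation operators, apply it to $L^2([t_0,t_m))=\bigoplus_{i=1}^m L^2([t_{i-1},t_i))$ with the intervals in their natural order, and invoke the general fact that the localized subalgebras in a monotone product of vacuum Fock spaces are automatically monotone independent.
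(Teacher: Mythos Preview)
The paper does not actually prove this theorem: it is merely quoted, with attribution to Muraki \cite{Mur1}, as background in the introduction. So there is no ``paper's own proof'' to compare against. That said, your direct verification is essentially the argument Muraki carries out, and it is correct in substance.

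The only imprecision worth flagging is your repeated appeal to ``simple-tensor components'' of $\eta$ and of the intermediate vectors. As you yourself note under ``main obstacles'', once an annihilation acts inside a tower of equal-index factors the resulting vector is no longer a simple tensor in the $f_j$ (e.g.\ $a_m(f_i)a_m^*(f_i)a_m^*(f_i)\Omega = (t_i - x)\,1_{[t_{i-1},t_i)}(x)$). The right invariant is not about simple tensors but about supports: every degree-$r$ component of $X_{i_j}^{p_j}\cdots X_{i_m}^{p_m}\Omega$ is supported in the set where $x_1\in[t_{\ell-1},t_\ell)$ for some $\ell\le i_j$. With that formulation the localization invariant, the vanishing of $a_m(f_{i_{k-1}})$ and $a_m^*(f_{i_{k-1}})$ on $R$, and the intertwining property of $V_\zeta$ all go through verbatim; in particular $V_\zeta$ is well defined and intertwining for \emph{any} vector $\zeta$ whose top variable lies strictly below $t_{i_k-1}$, so you can simply take $\zeta$ to be each fixed-degree piece of $\eta$ rather than decomposing further. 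Once you replace ``simple-tensor component'' by ``support of the top variable'', there is no gap.

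Your alternative ``structural'' route via $\Gamma_m(\mathcal H_1\oplus\mathcal H_2)\cong\Gamma_m(\mathcal H_1)\triangleright\Gamma_m(\mathcal H_2)$ is also standard; it is closer in spirit to Franz's treatment \cite{Fra2} and avoids tracking the deformed vectors by hand, at the cost of first setting up the monotone product of pointed Hilbert spaces.
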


The above constructions of the three types of Brownian motions can be seen from a more general point of view. 
In one dimension, a generalized Fock space is defined by replacing the Hermite polynomials with arbitrary orthogonal polynomials; 
in the case of infinite dimensions, some construction is known \cite{A-C-L, Lu}. Such a Fock space is called an interacting Fock space. 
In the case of infinite dimensions, it seems that there is a connection between 
interacting Fock spaces and orthogonal polynomials only in some special cases. We mention three important examples. 
In the usual Boson Fock space over $L^2(\real)$, 
Hermite polynomials are well known to be connected deeply to the Brownian motion through the Wiener-It\^{o} isomorphism. 
Also in the case of free probability theory, an analogue of Wiener-It\^{o} isomorphism is considered in \cite{Bia1}. 
Chebysheff polynomials of the second kind are effectively used in Malliavin Calculus of free probability theory \cite{B-S}. 
The last example is a $q$-deformed Fock space called a $q$-Fock space. 
$q$-Hermite polynomials appear naturally on a $q$-Fock space \cite{B-K-S}. When $q = 0$, a $q$-Fock space becomes a free Fock space; this fact implies that the corresponding non-commutative $q$-probability theory can be seen as a generalization of free probability theory. When we consider a $q$-Fock space, the corresponding Brownian motion 
is called a $q$-Brownian motion and is analyzed in \cite{B-K-S}. 
An interacting Fock space sometimes loses a connection with 
independence, but is still interesting in some aspects such as central limit theorem \cite{A-C-L}.       
 
Monotone probability theory should be connected to Chebysheff polynomials 
of the first kind, but such a connection has not been clarified so much yet. This is an interesting 
direction of research on monotone probability theory.

In this paper, we develop an analysis of the monotone independence (especially monotone convolution). 
This work will be important when we try to clarify special features of monotone independence contrasted with various other notions of independence. At the same time, we aim to clarify common properties among various notions of independence. This work is also expected to have connections with an operator theoretic approach \cite{Fra3, F-M} 
or a categorical approach \cite{Fra1}.  

\subsection{Monotone probability theory and main results of this paper}
Muraki has defined the notion of monotone independence in \cite{Mur3} as an algebraic structure of the monotone Fock space \cite{Lu, Mur1}, and then defined monotone convolution as the probability distribution of the sum of two  
monotone independent random variables. 
Analysis of monotone convolution has been developed by Muraki \cite{Mur3}, 
where the viewpoint of harmonic analysis is emphasized. 
 
The reciprocal Cauchy transform is defined by
\begin{equation}
 H_{\mu} (z): =\frac{1}{G_\mu (z)},
\end{equation}
where $G_\mu$ is the Cauchy transform (or Stieltjes transform)
\begin{equation}
G_\mu (z) = \int_{\real} \frac{1}{z-x}d\mu(x).
\end{equation} 
$H_\mu$ is analytic and maps the upper half plane into itself. Moreover, $\inf_{\im z > 0}\frac{\im H_\mu(z)}{\im z} = 1$. 
Consequently, $H_\mu(z)$ can be expressed uniquely in the form
\begin{equation}\label{recip1}
H_\mu(z) = z + b + \int_{\real} \frac{1 + xz}{x-z}\eta(dx), 
\end{equation}
where $b \in \real$ and $\eta$ is a positive finite measure. The reader is referred to \cite{Akh}.    

The monotone convolution $\mu \rhd \nu$ of two probability measures $\mu$ and $\nu$ is 
characterized by the relation 
\begin{equation}\label{mur11}
H_{\mu \rhd \nu}(z) = H_{\mu}(H_{\nu}(z)). 
\end{equation}
This relation naturally allows us to extend monotone convolution to probability measures with unbounded supports. Recently, Franz \cite{Fra3} has clarified the notion of monotone independence of unbounded operators.  

Similarly to the classical convolution, one can define the notion of 
infinitely divisible distributions. Such a distribution is called a
$\rhd$-infinitely divisible distribution. 
When we consider only probability measures with compact supports, 
there is a natural one-to-one correspondence among  
a $\rhd$-infinitely divisible probability measure, 
a weakly continuous one-parameter monotone convolution semigroup 
of probability measures, 
and a vector field on the upper half plane \cite{Mur3}.
The complete correspondence has been proved by Belinschi \cite{Bel}. 

\begin{thm}\label{inf.div}
There is a one-to-one correspondence among the following four objects: 
\begin{itemize}
\item[$(1)$] a monotone infinitely divisible distribution $\mu$; 
\item[$(2)$] a weakly continuous monotone convolution semigroup $\{\mu_t \}$ with  $\mu_0 = \delta_0, \mu_1 = \mu$; 
\item[$(3)$] a composition semigroup of reciprocal Cauchy transforms $\{H_t \}$ $(H_t \circ H_s = H_{t+s})$ with $H_0 = $\text{id}, $H_1 = H_\mu$, 
 where $H_t(z)$ is a continuous function of $t \geq 0$ for any $z \in \comp \backslash \real$; 
\item[$(4)$] a vector field on the upper halfplane which has the form $A(z) = -\gamma + \int_{\real} \frac{1 + xz}{x - z} d\tau(x)$, where $\gamma \in \real$ and $\tau$ is a positive finite measure. (This is the L\'{e}vy-Khintchine formula in monotone probability theory.) 
\end{itemize}
\end{thm}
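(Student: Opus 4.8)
The plan is to organise the proof around the dictionary between monotone convolution and composition of reciprocal Cauchy transforms. The equivalence $(2)\Leftrightarrow(3)$ is essentially a restatement of \eqref{mur11}: putting $H_t:=H_{\mu_t}$ turns the convolution law $\mu_{t+s}=\mu_t\rhd\mu_s$ into the composition law $H_{t+s}=H_t\circ H_s$, with $H_0=H_{\delta_0}=\mathrm{id}$; conversely, by the representation \eqref{recip1} every member of a composition semigroup of reciprocal Cauchy transforms is the reciprocal Cauchy transform of a unique probability measure $\mu_t$, and the two semigroup laws correspond. The only point needing a comment is that the two continuity statements match: for probability measures, weak convergence is equivalent to pointwise (hence, by normality, locally uniform) convergence of the Cauchy transforms on $\comp^+$, and $G_\mu$ is zero-free there (indeed $\im G_\mu(z)<0$ when $\im z>0$), so $H_\mu=1/G_\mu$ is exactly as regular as $G_\mu$; a Helly-type subsequence argument upgrades pointwise convergence of Cauchy transforms of probability measures to weak convergence of the measures, and Schwarz reflection extends the continuity from $\comp^+$ to $\comp\setminus\real$. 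The implication $(2)\Rightarrow(1)$ is immediate, since $\mu=\mu_1=(\mu_{1/n})^{\rhd n}$ for every $n$.

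The equivalence $(3)\Leftrightarrow(4)$ is the one-parameter-semigroup part. Given a continuous composition semigroup $\{H_t\}$, set $A(z):=\partial_t\big|_{t=0^+}H_t(z)$; using $H_{t+s}=H_t\circ H_s=H_s\circ H_t$ one checks that $t\mapsto H_t(z)$ solves the autonomous equation $\frac{d}{dt}H_t(z)=A\bigl(H_t(z)\bigr)$, $H_0(z)=z$. To identify $A$, write the representation \eqref{recip1} of each $H_t$ as $H_t(z)=z+b_t+\int_\real\frac{1+xz}{x-z}\,\eta_t(dx)$ with $b_0=0$, $\eta_0=0$; differentiating at $t=0$ (which both requires and produces weak-$*$ differentiability of $t\mapsto\eta_t$) gives $A(z)=-\gamma+\int_\real\frac{1+xz}{x-z}\,\tau(dx)$ with $-\gamma=\dot b_0\in\real$ and $\tau=\dot\eta_0$ finite and positive ($\eta_t\ge0=\eta_0$), which is the stated L\'evy--Khintchine form; no linear term can occur because the hydrodynamic normalisation $H_t(z)/z\to1$ at infinity forces $A(z)=o(z)$. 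Conversely, given such an $A$, solve the same ODE: since $\im A(z)\ge0$ on $\comp^+$, the half-plane is forward invariant and $\im H_t(z)$ is nondecreasing in $t$; the at-most-linear growth of $A$, together with the lower bound $\im H_t(z)\ge\im z$, yields global existence for all $t\ge0$ via Gr\"onwall; uniqueness of solutions gives the semigroup law, holomorphic dependence on the initial condition gives analyticity in $z$, and the properties $\im H_t(z)\ge\im z$ and $H_t(z)/z\to1$ show that each $H_t$ is a reciprocal Cauchy transform of a probability measure. This recovers $(3)$, and then $(2)$ via the first paragraph.

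The substantive implication is $(1)\Rightarrow$ (a continuous semigroup), which must be proved by a limiting procedure, because --- unlike the free case, where the $R$-transform linearises the convolution --- there is no transform linearising $\rhd$ and one cannot just divide a ``L\'evy exponent'' by $n$. Given $\mu$ that is $\rhd$-infinitely divisible, choose for each $n$ a probability measure $\mu_{1/n}$ with $H_\mu=H_{\mu_{1/n}}^{\circ n}$. Step one: show $\{\mu_{1/n}\}$ is an infinitesimal array, i.e.\ $\mu_{1/n}\to\delta_0$ weakly with quantitative control. Since $\im w\le\im H_\nu(w)$ for every reciprocal Cauchy transform $H_\nu$, the imaginary part increases along the iteration, so $\im z\le\im H_{\mu_{1/n}}(z)\le\im H_\mu(z)$; this bounds the total mass $\eta_n(\real)$ of the Nevanlinna measure of $H_{\mu_{1/n}}$ uniformly in $n$, and combining it with the constraint $H_{\mu_{1/n}}^{\circ n}=H_\mu$ (an $n$-fold composition of near-identity maps equal to a fixed map) one shows $\eta_n(\real)\to0$, $b_n\to0$, with no escape of mass to infinity. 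Step two: show $n\bigl(H_{\mu_{1/n}}-\mathrm{id}\bigr)\to A$ locally uniformly on $\comp^+$ for an $A$ of the L\'evy--Khintchine form (the finite measures $n\,\eta_n$ converge to $\tau$, and $n\,b_n\to-\gamma$), and that the iterates $H_{\mu_{1/n}}^{\circ\lfloor nt\rfloor}$ converge, locally uniformly on $\comp^+$, to the flow $\{H_t\}$ generated by $A$ (which exists by the $(4)\Rightarrow(3)$ construction); since $H_{\mu_{1/n}}^{\circ n}=H_\mu$ for all $n$, we get $H_1=H_\mu$, so $\{H_t\}$ is the required composition semigroup, yielding $\{\mu_t\}$ in $(2)$ and $A$ in $(4)$. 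Finally, uniqueness in the correspondence reduces to uniqueness of the monotone convolution roots of $\mu$: any such root commutes with $H_\mu$ and hence, by the theory of commuting holomorphic self-maps of $\comp^+$, lies in the constructed flow, which determines it (the degenerate case $\mu=\delta_a$ being treated directly).

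The hardest part will be the array analysis inside $(1)\Rightarrow(2)$: obtaining uniform control on the monotone convolution roots $\mu_{1/n}$ --- in particular excluding escape of mass to infinity, which would manufacture a spurious $\beta z$ term in the generator and break the stated L\'evy--Khintchine form --- and then upgrading the convergence of the discrete iterates (by a normal-families argument) to be locally uniform on $\comp^+$, strong enough to identify the limit as a genuine flow with time-one map $H_\mu$. The step $(3)\Leftrightarrow(4)$ is essentially classical one-parameter-semigroup theory on $\comp^+$ (Denjoy--Wolff point at infinity), but the bookkeeping that lands the generator exactly on $A(z)=-\gamma+\int_\real\frac{1+xz}{x-z}\,\tau(dx)$ with $\tau$ finite and no linear term still has to be carried out with care; $(2)\Leftrightarrow(3)$ and $(2)\Rightarrow(1)$ are routine.
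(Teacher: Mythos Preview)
The paper does not actually prove this theorem. It is stated in the introduction as a known result: the compact-support case is attributed to Muraki \cite{Mur3}, and the general case to Belinschi \cite{Bel}. The only content the paper supplies beyond the statement is the ODE \eqref{ODE} linking $(3)$ and $(4)$, together with a citation to Berkson--Porta \cite{Be-Po} for the fact that the flow does not blow up in finite time. Everything else---in particular the implication $(1)\Rightarrow(2)$, which you correctly identify as the substantive part---is deferred to the cited references.

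Your outline is a reasonable sketch of how a proof of this result goes, and it is broadly in line with what the cited works do. A few remarks on the sketch itself. For $(2)\Leftrightarrow(3)$ and $(2)\Rightarrow(1)$ your argument is fine. For $(3)\Rightarrow(4)$, ``differentiating at $t=0$'' is more delicate than you indicate: one must first establish that $t\mapsto H_t(z)$ is differentiable (not merely continuous) before the generator $A$ can be defined; this is typically done via the Berkson--Porta theory of composition semigroups on the half-plane rather than by differentiating the Nevanlinna data $(b_t,\eta_t)$ directly. For $(1)\Rightarrow(2)$, your two-step plan (infinitesimality of the $n$-th roots, then convergence of $n(H_{\mu_{1/n}}-\mathrm{id})$ to a generator and of the discrete iterates to the flow) is the right shape, but the uniqueness argument at the end---``any such root commutes with $H_\mu$ and hence lies in the constructed flow''---is not quite right as stated: commuting with a single map does not by itself force membership in a given one-parameter flow, and Belinschi's argument for uniqueness is more involved. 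None of this is a fatal gap, but these are the places where real work is hidden.
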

The correspondence of (3) and (4) is obtained through the following ordinary differential equation (ODE): 
\begin{equation}\label{ODE}
\begin{split}
&\frac{d}{dt}H_t(z) = A(H_t(z)), \\
&H_0(z) = z,
\end{split} 
\end{equation}
for $z \in \comp \backslash \real$. The fact that the solution does not 
explode in finite time has been proved in \cite{Be-Po}. 

In this paper, we focus on properties of monotone convolution and monotone convolution semigroups. 
The contents of each section are as follows. 

In Section \ref{inj}, we study the injectivity of the reciprocal Cauchy transforms of $\rhd$-finitely divisible and $\rhd$-infinitely divisible distributions. 
In Section \ref{atom}, we show an interlacing property of the monotone convolution of atomic measures (Theorem \ref{atom1}) and then we conclude that the monotone convolution of atomic measures 
with $m$ and $n$ atoms contains just $mn$ atoms (Corollary \ref{atom2}). In addition, motivated by the 
study in Section \ref{inj}, we clarify that the existence of an atom in a $\rhd$-infinitely divisible distribution puts a restriction on the distribution (Theorem \ref{atom3}). 
 In Section \ref{basic}, we prove a condition for a probability measure to be supported on the positive real line, 
and show how moments change under the monotone convolution.  
In Section \ref{Diff}, we derive a differential equation about the minimum of support of a monotone convolution semigroup. 
In Section \ref{time}, we study how a property of a monotone convolution semigroup changes with respect to time parameter. 
Time-independent property is a property of a convolution semigroup which is determined at an
 instant. We show that the following properties are time-independent: the symmetry around $0$; the concentration of a support on the positive real line; 
the lower boundedness of a support; the finiteness of a moment of even order. All these properties are also time-independent in classical convolution semigroups.  
In Section \ref{stable} we classify strictly $\rhd$-stable distributions 
(or equivalently, $\rhd$-infinitely divisible and self-similar distributions). 
The result is very similar to the free and boolean cases. 
In Section \ref{connec}, a monotone analogue of the Bercovici-Pata bijection is defined. Many time-independent properties in the previous section can be formulated in terms of the Bercovici-Pata bijection.  
In Section \ref{connect1}, we clarify that the Aleksandrov-Clark measures can be represented as monotone convolutions. As a result, 
we can apply spectral analysis of a one-rank perturbation of a self-adjoint operator to monotone convolutions.  
In Section \ref{time2}, we study convolution semigroups in free probability and Boolean probability. A remarkable point is that the concentration of the support on the positive real line is a time-independent property in the monotone, Boolean and classical cases, but 
this is not true in free probability.

\section{Injectivity of reciprocal Cauchy transform}\label{inj}
For a $\rhd$-infinitely divisible distribution $\mu$, the injectivity of 
$H_{\mu}$ follows from the uniqueness of the solution of the ordinary 
differential equation (\ref{ODE}).  
This injectivity can be seen as the counterpart of the classical fact 
that for any infinitely divisible distribution, its Fourier transform has 
no zero point on $\real$. 
The result in \cite{Bel} implies that $H_{\mu}$ is injective for any $\rhd$-infinitely 
divisible distribution $\mu$ (the support of which may be unbounded).  
If a probability distribution is of finite variance, however,  
the injectivity property can be shown in a way different from \cite{Bel}. 
We do not need to embed a probability measure in a convolution semigroup. 
Moreover, the method is applicable to finitely divisible distributions. 
In this section we present the proof. 

We denote by $H^n:=H \circ H \circ \cdots \circ H$ the $n$ fold composition of a map $H$ 
throughout this paper. \\

Define a set of probability measures $\Phi := \{\mu ; H_{\mu} \text{~is injective} \}$. 
We shall prove (b) and (c) of the following properties of the set $\Phi $:
\begin{itemize}
  \item[(a)] $\mu, \nu \in \Phi  \Longrightarrow \mu \rhd \nu \in \Phi$;
  \item[(b)] $\Phi $ is closed under the weak topology of probability measures;
  \item[(c)]  If $\mu$ is a $\rhd$-infinitely  divisible distribution with finite variance, then $\mu \in \Phi$;
  \item[(c')] If $\mu$ is a $\rhd$-infinitely  divisible distribution, then $\mu \in \Phi$.
\end{itemize}
The proof of (a) is simple. 
The assumption ``finite variance'' in $(c)$ is not needed if we use the result in \cite{Bel}, and hence,  
(c') holds.
These results are contained in Theorem \ref{inj1} and Proposition \ref{inj4}. 
The set $\Psi := \{\mu ;\mu \text{ is } \rhd \text{-infinitely divisible} \}$ is difficult 
to analyze except for probability measures with compact supports. For instance, 
properties (a) and (b) seem to be difficult to prove for $\Psi$. 
We have defined $\Phi $ for this reason and aim to analyze $\Phi$ instead of $\Psi $.
(c) (or (c')) is useful as a criterion for $\rhd$-infinite divisibility. 
An application of property (c') is in Theorem \ref{atom3}. 

In the classical case, it is known that 
\begin{equation}\label{infzero}
\{\mu; \mu \text{ is infinitely divisible} \} \subsetneqq \{\mu; \hat{\mu}(\xi ) \neq 0 \text{ for all } \xi \in \real \}, 
\end{equation}
where $\hat{\mu}(\xi):= \int e^{ix\xi}d\mu(x)$,~$\xi \in \real$.  
In order to construct an example of $\mu$ whose Fourier transform has no zero points but 
is not infinitely divisible, we need to make a function $f(\xi)$ such that 
$\exp(f(\xi))$ is positive definite and $\exp(\frac{1}{n} f(\xi))$ is not positive definite for 
some $n \in \nat$. Such an example is $\hat{\mu}(\xi ) = \frac{1}{2}(e^{-\frac{\xi^2}{2}} + e^{-|\xi|})$. 
This is a positive definite function and there exists a distribution $\mu$ by Bochner's theorem. 
The fact that the distribution is not infinitely divisible is shown by Corollary 9.9 in Chapter 4 
of the book \cite{St-Ha}.

In an analogy with (\ref{infzero}), the conjecture 
\begin{equation} 
\{\mu; \rhd \text{-infinitely divisible} \} \subsetneqq \{\mu; H_{\mu} \text{ is injective} \} 
\end{equation}  
comes up in the monotone case. The author has not been able so far to prove this fact.

We prepare for the proof of (b) and (c). The next proposition is taken from \cite{Maa} in a slightly more general version. 

\begin{prop}\label{maa} \text{ \normalfont\cite{Maa}} 
A probability measure  $\mu $ has a finite variance $\sigma ^2(\mu)$ if and only if $H_\mu $ has the 
representation
\begin{equation} \label{nevan}
H_\mu (z) = a + z + \int_ {\real} \frac{1}{x-z}d\rho (x), 
\end{equation}
where $a \in \real$ and $\rho$ is a positive finite measure.
Furthermore, we have $\rho (\real) = \sigma ^2(\mu)$ and $a = -m(\mu)$, 
where $m(\mu)$ denotes the mean of $\mu$ and $\sigma ^2 (\mu)$ denotes the variance of $\mu$. 
\end{prop}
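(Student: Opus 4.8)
The plan is to route everything through the canonical Nevanlinna representation (\ref{recip1}) recorded above, and to extract the second moment of $\mu$ from the behaviour of $H_\mu(z)$ as $z=iy$ with $y\to+\infty$. The bridge between (\ref{recip1}) and (\ref{nevan}) is the elementary identity
\[
\frac{1+xz}{x-z} \;=\; \frac{1+x^2}{x-z} - x .
\]
Integrating it against $\eta$ turns (\ref{recip1}) into the form (\ref{nevan}) with $\rho=(1+x^2)\eta$ and $a=b-\int_\real x\,\eta(dx)$, \emph{provided} $\int_\real(1+x^2)\,\eta(dx)<\infty$ (which automatically makes $\int|x|\,\eta(dx)$ finite, so that $a$ is well defined). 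Hence the implication ``finite variance $\Rightarrow$ (\ref{nevan})'' will follow once we know $\int_\real(1+x^2)\,\eta(dx)<\infty$, while the reverse implication I will argue directly from (\ref{nevan}).

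For the reverse implication, assume (\ref{nevan}) holds with a finite positive $\rho$. On the imaginary axis one has $\re H_\mu(iy)=a+\int_\real\frac{x}{x^2+y^2}\,\rho(dx)=a+O(1/y)$ and $\im H_\mu(iy)=y+y\int_\real\frac{1}{x^2+y^2}\,\rho(dx)=y+O(1/y)$, with $y\bigl(\im H_\mu(iy)-y\bigr)\to\rho(\real)$ by monotone convergence. Passing to $G_\mu=1/H_\mu$ and using the identities, valid for every probability measure,
\[
y^2\bigl(1+y\,\im G_\mu(iy)\bigr)=\int_\real\frac{x^2}{1+x^2/y^2}\,d\mu(x)\ \uparrow\ \int_\real x^2\,d\mu(x),\qquad -y^2\,\re G_\mu(iy)=\int_\real\frac{x}{1+x^2/y^2}\,d\mu(x),
\]
one computes $\int x^2\,d\mu=a^2+\rho(\real)<\infty$, so $\mu$ has finite variance; then $-y^2\,\re G_\mu(iy)\to m(\mu)=-a$ (the last limit now being legitimate, since the first moment exists), and finally $\sigma^2(\mu)=\int x^2\,d\mu-m(\mu)^2=\rho(\real)$, which are the asserted identifications.

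For the forward implication, assume $\sigma^2(\mu)<\infty$ and start from (\ref{recip1}). Computing the imaginary part on the imaginary axis gives $\im H_\mu(iy)=y+y\int_\real\frac{1+x^2}{x^2+y^2}\,\eta(dx)$, whence $y\bigl(\im H_\mu(iy)-y\bigr)=\int_\real\frac{1+x^2}{1+x^2/y^2}\,\eta(dx)$ increases, as $y\to\infty$, to $\int_\real(1+x^2)\,\eta(dx)\in[0,\infty]$. On the other hand, the identities above together with $\sigma^2(\mu)<\infty$ give $\im G_\mu(iy)=-1/y+m_2/y^3+o(1/y^3)$ and $\re G_\mu(iy)=-m(\mu)/y^2+o(1/y^2)$ with $m_2=\int x^2\,d\mu$; expanding $|G_\mu(iy)|^2$ to order $y^{-4}$ — where the real part contributes the term $m(\mu)^2/y^4$ — and inverting yields $\im H_\mu(iy)=y+\sigma^2(\mu)/y+o(1/y)$, i.e. $y\bigl(\im H_\mu(iy)-y\bigr)\to\sigma^2(\mu)<\infty$. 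Comparing the two evaluations of $y\bigl(\im H_\mu(iy)-y\bigr)$ gives $\int_\real(1+x^2)\,\eta(dx)=\sigma^2(\mu)<\infty$; the identity of the first paragraph then produces the representation (\ref{nevan}) with $\rho=(1+x^2)\eta$, and the identifications $\rho(\real)=\sigma^2(\mu)$, $a=-m(\mu)$ follow as before.

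The algebra — the identity above and the Stieltjes formulas for $\re G_\mu$ and $\im G_\mu$ — is routine. The step I expect to require genuine care, and which is the crux, is the expansion of $\im H_\mu(iy)=-\,\im G_\mu(iy)/|G_\mu(iy)|^2$ far enough to see the $1/y$-term: one must retain the contribution $\re G_\mu(iy)=-m(\mu)/y^2+o(1/y^2)$ inside $|G_\mu(iy)|^2$, since it is precisely what turns the naive coefficient $\int x^2\,d\mu$ into the \emph{variance} $\int x^2\,d\mu-m(\mu)^2$. A secondary point worth keeping in mind is that every limit above should be read as a monotone limit in $[0,\infty]$, so that ``$\mu$ has finite variance'' and ``$\rho$ is a finite measure'' are matched without presupposing any integrability.
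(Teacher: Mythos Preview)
Your proof is correct. The paper does not actually prove this proposition: it is quoted from Maassen \cite{Maa} and stated without argument. So there is no ``paper's own proof'' to compare against. What you have written is a clean, self-contained derivation via the Nevanlinna representation~(\ref{recip1}) and asymptotics of $H_\mu(iy)$ and $G_\mu(iy)$ as $y\to\infty$; the algebraic identity $\frac{1+xz}{x-z}=\frac{1+x^2}{x-z}-x$ is exactly the right bridge, and your monotone-limit bookkeeping is sound. In particular, the point you flag as the crux --- retaining the $\re G_\mu(iy)=-m(\mu)/y^2+o(1/y^2)$ contribution inside $|G_\mu(iy)|^2$ so that the coefficient of $1/y$ in $\im H_\mu(iy)$ comes out as $\sigma^2(\mu)$ rather than $\int x^2\,d\mu$ --- is handled correctly.

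Your approach is in the same spirit as the paper's later Proposition~\ref{asymptotic2}, which characterizes $m_{2n}(\mu)<\infty$ by asymptotic expansions of $H_\mu$ along the imaginary axis and defers the analytic work to Akhiezer's book. Your argument is essentially the $n=1$ case of that proposition, worked out by hand, with the additional identifications $a=-m(\mu)$ and $\rho(\real)=\sigma^2(\mu)$ that Proposition~\ref{asymptotic2} does not spell out.
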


\begin{defi}
$(1)$ A probability measure $\mu$ is said to be $\rhd$-$k$-divisible if 
there exists a probability measure $\mu_k$ such that $\mu = \mu_k ^{\rhd k}$. \\
$(2)$ A probability measure $\mu$ is said to be $\rhd$-infinitely divisible if for any integer  
$1 \leq k < \infty$, there exists a probability measure $\mu_k$ such that $\mu = \mu_k ^{\rhd k}$. 
We call $\mu_k $ a $k$-th root of $\mu$. \\
\end{defi}

Let $\mu$ and $\nu$ be probability measures. For each $x \in \real$ 
let $\nu_x$ (also denoted by $\nu^x$) be a probability measure defined by the equation \cite{Mur3}
\begin{equation} \label{eq21}
H_{\nu_{x}}(z)=H_{\nu}(z) - x, 
\end{equation}
and we have the representation of a monotone convolution in the form 
$\mu \rhd \nu (A) = \int_{\real} \nu_{x}(A)d\mu(x)$. 
It follows from this representation that monotone convolution is affine in the left component:
\begin{equation}
(\theta _1 \mu + \theta _2 \nu) \rhd \lambda  = \theta _1 (\mu \rhd\lambda) + 
\theta _2 (\nu \rhd \lambda) 
\end{equation}
for all probability measures $\mu, \nu$ and $\lambda $ and $\theta_1, \theta_2 \geq 0$, 
$\theta_1 + \theta_2 = 1$. It should be noted that $\mu_x$ is weakly continuous with respect to $x$.
The reader is referred to Theorem 2.5 in \cite{Maa} for the proof.
The measurability of $\mu_x(A)$ for any Borel set $A$ 
(denoted as $A \in\mathcal{B}$($\real$))  follows from the weak continuity.
In fact, for an open set $A$, the function $x \longmapsto \mu_x(A)$ is 
lower semicontinuous, and hence, is measurable. Define the set 
$\mathcal{F}$:= \{$A \in \mathcal{B}$($\real$);  $x \longmapsto \mu_x(A)$ is measurable \}.
Every open set is contained in $\mathcal{F}$ and $\mathcal{F}$ is a $\sigma $-algebra; therefore,  $\mathcal{F}= \mathcal{B}(\real)$.

The next lemma is almost the same as Lemma 6.3 in \cite{Mur3}.

\begin{lem}\label{inj3}
Assume that a probability measure $\mu$ has finite variance and that $\mu$ is $\rhd$-$k$-divisible. Then a $k$-th root $\mu_k$ of $\mu$ has finite variance. Therefore, $\mu_k$ has the integral representation in the form 
\begin{equation} 
H_{\mu_k}(z) = a _k + z + \int_{\real} \frac{1}{x-z}d\rho_k(x).
\end{equation}
Moreover, it holds that $a_k = \frac{1}{k}a$ and $\rho_k(\real) = \dfrac{\rho(\real)}{k}$,
where $(a,\rho)$ is a pair which appears in the representation (\ref{nevan}).
\end{lem}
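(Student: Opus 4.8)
The plan is to reduce everything to the identity $H_\mu=H_{\mu_k}^{k}$, immediate from $\mu=\mu_k^{\rhd k}$ and the composition rule (\ref{mur11}), and to study it through the boundary behaviour along the imaginary axis. Write $H:=H_{\mu_k}$. I will use two elementary facts valid for any reciprocal Cauchy transform $F=H_\nu$, written in the form (\ref{recip1}) with measure $\eta_F$: (i) $\im F(w)\ge\im w$ whenever $\im w>0$, which is just the relation $\inf_{\im w>0}\im F(w)/\im w=1$ restated; and (ii), by monotone convergence in (\ref{recip1}),
\begin{equation}\label{pp-limit}
\lim_{y\to\infty}y\,\im\bigl(F(iy)-iy\bigr)=\int_\real (1+x^2)\,\eta_F(dx)\in[0,\infty],
\end{equation}
a quantity which, by Proposition \ref{maa} together with the uniqueness of the representation (\ref{recip1}), is finite precisely when $\nu$ has finite variance, in which case it equals $\sigma^2(\nu)$.

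First I would show that $\mu_k$ has finite variance. Put $z_0:=iy$ and $z_j:=H^{j}(iy)$ for $0\le j\le k$, so that $z_k=H_\mu(iy)$. By (i) every $z_j$ lies in the upper half-plane and $\im z_0\le\im z_1\le\cdots\le\im z_k$; telescoping gives
\begin{equation}\label{pp-tele}
\im\bigl(H_\mu(iy)-iy\bigr)=\sum_{j=0}^{k-1}\im\bigl(H(z_j)-z_j\bigr)\ \ge\ \im\bigl(H(iy)-iy\bigr),
\end{equation}
each summand being nonnegative by (i). Multiplying by $y>0$ and letting $y\to\infty$, the left-hand side tends to $\sigma^2(\mu)<\infty$ by (\ref{pp-limit}) applied to $H_\mu$, whereas the right-hand side tends to $\int_\real(1+x^2)\,\eta_H(dx)$. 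Hence $\int_\real(1+x^2)\,\eta_H(dx)\le\sigma^2(\mu)<\infty$, so $\mu_k$ has finite variance, and Proposition \ref{maa} supplies the claimed representation of $H_{\mu_k}$ with $a_k=-m(\mu_k)$ and $\rho_k(\real)=\sigma^2(\mu_k)$.

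It remains to compute $a_k$ and $\rho_k(\real)$. Since $\mu_k$ now has finite variance, expanding $\int(x-w)^{-1}\rho_k(dx)$ and using that $\rho_k$ is finite yields
\begin{equation}\label{pp-exp}
H(w)=w-m(\mu_k)-\frac{\rho_k(\real)}{w}+o\!\left(\frac{1}{\im w}\right)
\end{equation}
as $w\to\infty$ with $\im w\to\infty$ and $\re w$ bounded. Substituting $z_0=iy$ into (\ref{pp-exp}) and iterating, a routine induction gives
\[
z_j=iy-j\,m(\mu_k)-\frac{j\,\rho_k(\real)}{iy}+o\!\left(\frac1y\right),
\]
which in particular confirms that each $z_j$ tends to $\infty$ with bounded real part, so (\ref{pp-exp}) is indeed applicable along the whole orbit. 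Taking $j=k$ and comparing with $z_k=H_\mu(iy)=iy-m(\mu)-\rho(\real)/(iy)+o(1/y)$, which comes from (\ref{nevan}) applied to $\mu$ — matching first the real parts and then the coefficients of $1/y$ — gives $m(\mu)=k\,m(\mu_k)$ and $\rho(\real)=k\,\rho_k(\real)$, that is $a_k=\tfrac1k a$ and $\rho_k(\real)=\tfrac1k\rho(\real)$. (Alternatively one may avoid this bookkeeping altogether: since $\phi$ is linear and mixed moments of monotone independent random variables factorize, monotone convolution is additive on means and on variances, which gives $m(\mu)=k\,m(\mu_k)$ and $\sigma^2(\mu)=k\,\sigma^2(\mu_k)$ at once, once the finiteness of $\sigma^2(\mu_k)$ is in hand.)

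The step I expect to require the most care is the last one: one must check that the iterates $H^{j}(iy)$ remain inside a fixed Stolz angle at $i\infty$ — real part bounded, imaginary part $\to\infty$ — so that the one-term expansion (\ref{pp-exp}) is legitimate at each $z_j$. This is precisely where the finite-variance representation obtained in the second step is needed, since it makes the remainder in (\ref{pp-exp}) uniform in $w$ over such angles.
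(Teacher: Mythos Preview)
Your proof is correct and takes a genuinely different route from the paper's.

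For the finiteness of the variance of $\mu_k$, the paper uses the disintegration $\mu=\int\mu_{k,x}\,d\mu_k^{\rhd k-1}(x)$ together with Fubini to locate some $y_0$ with $\sigma^2(\mu_{k,y_0})<\infty$, and then transports this back to $\mu_k$ via $H_{\mu_{k,y_0}}=H_{\mu_k}-y_0$. Your telescoping inequality (\ref{pp-tele}) along the imaginary axis is a cleaner and more self-contained argument: it avoids the disintegration formula entirely and yields the bound $\sigma^2(\mu_k)\le\sigma^2(\mu)$ in one stroke.

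For the identities $a_k=a/k$ and $\rho_k(\real)=\rho(\real)/k$, the paper iterates the Nevanlinna representation of $H_{\mu_k}$ explicitly, obtaining the exact formula
\[
\rho(dy)=\sum_{m=0}^{k-1}\int_\real(\mu_k^{\rhd m})_x(dy)\,\rho_k(dx),
\]
from which the total-mass identity follows. Your asymptotic-expansion argument recovers only the total mass, not this finer structural formula, but it is shorter and the Stolz-angle check you flag is routine once $\rho_k$ is known to be finite (split the integral $\int x\,[w(x-w)]^{-1}\,d\rho_k$ at $|x|=M$ and use $\rho_k(|x|>M)\to0$). Your parenthetical alternative via additivity of mean and variance under $\rhd$ is also valid and is perhaps the quickest route of all.
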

\begin{proof}
The monotone convolution $\mu = \mu_k ^{\rhd k}$ 
can be expressed as 
\[ \mu (A) = \int_{\real}\mu_{k,x}(A) d\mu_k ^{\rhd k-1} (x). \]
Since $\mu $ has finite variance, we have

\begin{equation}
\begin{split}
\int_{\real} y^2 d\mu(y) = \int_{\real}d\mu_k ^{\rhd k-1}(x)\int_{\real}y^2 d\mu_{k,x}(y) < \infty. 
\end{split}
\end{equation}
Hence there exists some $ y_0 \in \real$ such that $\sigma ^2 (\mu_{k,y_0}) < \infty$.
By Proposition \ref{maa}, we obtain the representation 
\[ H_{\mu_{k,y_0}} (z) = b_k + z + \int_ {\real} \frac{1}{x-z}d\rho_k (x), \]
and the representation for   $H_{\mu_k}$ 
\[ H_{\mu_{k}} (z) = b_k + y_0 + z + \int_ {\real} \frac{1}{x-z}d\rho_k (x). \]
Therefore, we have $\sigma^2(\mu_k) < \infty$ again by Proposition \ref{maa}.

Next we have 
\begin{equation}
\begin{split}
H_{\mu}(z) &= H_{\mu_k}(H^{k-1}_{\mu_k}(z))  \\
           &= a_k + H_{\mu_k} ^{k-1}(z) + \int_{\real} \frac{1}{x - H_{\mu_k ^{\rhd k-1}}(z) }\rho_k (dx)   \\
           &= a_k +  H_{\mu_k} ^{k-1}(z) - \int_{\real} G_{(\mu_k ^{\rhd k-1})_x}\rho_k (dx)       \\
           &= a_k +  H_{\mu_k} ^{k-1}(z) - \int_{\real}\rho_k (dx) \int_{\real}\frac{(\mu_k ^{\rhd k-1})_x(dy)}{z-y}      \\
           &= a_k +  H_{\mu_k} ^{k-1}(z) + \int_{\real} \frac{1}{y-z}\int_{\real}(\mu_{k}^{\rhd k-1})_x(dy)\rho_k (dx)   \\
           &= a_k + a_k + H_{\mu_k} ^{k-2}(z) + \int_{\real}\frac{1}{y-z}  \Bigg{(}\int_{\real}(\mu_{k}^{\rhd k-1})_x(dy)\rho_k (dx) + \int_{\real}(\mu_{k}^{\rhd k-2})_x(dy) \rho_k (dx) \Bigg{)}    \\
           &= \cdots \\ 
           &= ka_k + z + \int_{\real} \frac{1}{y-z}\Bigg{(} \sum_{m = 0} ^{k-1}\int_{\real}(\mu_k ^{\rhd m})_x (dy) \rho_k(dx) \Bigg{)}, 
\end{split}
\end{equation}
where $\mu_k ^{\rhd 0} := \delta_0$. 
From the uniqueness of the representation, we obtain $a = ka_k$ and 
\begin{equation}
\rho(dy) = \sum_{m = 0} ^{k-1} \int_{\real}(\mu_k ^{\rhd m})_x(dy) \rho_k(dx),   
\end{equation} 
Hence we have $\rho(\real)=k\rho_k(\real)$.  
\end{proof}
\begin{thm}\label{inj1}
Let $\mu$ be a probability measure with finite variance. \\
$(1)$ Assume that $\mu$ is $\rhd$-$n$-divisible. If $z_1 \neq z_2$ are two points in $\com+$ satisfying 
$\im z_1 \cdot \im z_2 > \dfrac{\rho(\real)}{n}$, then $H_\mu(z_1) \neq H_\mu(z_2)$.
In particular, $H_\mu$ is injective in $\{z \in \com+; \im z > \sqrt{\dfrac{\rho(\real)}{n}}\}$.
Moreover, the constant $\dfrac{\rho(\real)}{n}$ is optimal.  \\
$(2)$ Assume that $\mu$ is $\rhd$-infinitely divisible. Then $H_{\mu}$(and hence $G_\mu$) is injective. \\
\end{thm}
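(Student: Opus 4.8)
The plan is to reduce the statement to one elementary secant estimate for functions of Maassen--Nevanlinna type and then to propagate that estimate along orbits of $H_{\mu_n}$. Suppose $\mu$ has finite variance and is $\rhd$-$n$-divisible, and write $c_n := \rho(\real)/n$, where $\rho$ is the measure in the representation (\ref{nevan}). By Lemma \ref{inj3} the $n$-th root $\mu_n$ has finite variance and
\[
H_{\mu_n}(z) = a_n + z + \int_\real \frac{1}{x-z}\, d\rho_n(x), \qquad \rho_n(\real) = c_n .
\]
For two distinct points $z_1, z_2$ with $\im z_1, \im z_2 > 0$ one has the secant identity
\[
H_{\mu_n}(z_1) - H_{\mu_n}(z_2) = (z_1 - z_2)\Bigl( 1 + \int_\real \frac{d\rho_n(x)}{(x-z_1)(x-z_2)} \Bigr),
\]
and since $|x - z_j| \ge \im z_j$ the integral has modulus at most $\rho_n(\real)/(\im z_1 \cdot \im z_2) = c_n/(\im z_1 \cdot \im z_2)$. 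Hence, whenever $\im z_1 \cdot \im z_2 > c_n$, the bracketed factor does not vanish and $H_{\mu_n}(z_1) \ne H_{\mu_n}(z_2)$.

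Next I would iterate to reach $H_\mu = H_{\mu_n}^n$. Set $w_j^{(0)} := z_j$ and $w_j^{(i+1)} := H_{\mu_n}(w_j^{(i)})$, so that $w_j^{(n)} = H_\mu(z_j)$. Because $\im H_{\mu_n}(w) \ge \im w$ for $\im w > 0$, the quantities $\im w_j^{(i)}$ are non-decreasing in $i$, so $\im w_1^{(i)} \cdot \im w_2^{(i)} \ge \im z_1 \cdot \im z_2 > c_n$ for every $i$ once this is assumed at $i = 0$; together with the secant estimate, a one-line induction on $i$ gives $w_1^{(i)} \ne w_2^{(i)}$ for all $i \le n$, i.e.\ $H_\mu(z_1) \ne H_\mu(z_2)$. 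Restricting to $z_1, z_2$ with $\im z_j > \sqrt{c_n}$ yields the claimed injectivity region. For optimality I would use the explicit root $\mu_n = \tfrac12(\delta_{\sqrt{c_n}} + \delta_{-\sqrt{c_n}})$, for which $H_{\mu_n}(z) = z - c_n/z$ and $\rho_n = c_n \delta_0$, and put $\mu := \mu_n^{\rhd n}$, so that $\rho(\real) = n c_n$ by Lemma \ref{inj3}. On the imaginary axis $H_{\mu_n}(is) = i(s + c_n/s)$, a quantity invariant under $s \mapsto c_n/s$; hence $H_\mu(is) = H_\mu(ic_n/s)$ for every $s > 0$, and for $s \ne \sqrt{c_n}$ this produces two distinct points whose imaginary parts have product exactly $c_n = \rho(\real)/n$ and the same $H_\mu$-value. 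Thus the constant $\rho(\real)/n$ cannot be improved.

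Part (2) follows by letting $n \to \infty$: a $\rhd$-infinitely divisible $\mu$ of finite variance is $\rhd$-$n$-divisible for every $n$, so given distinct $z_1, z_2$ with $\im z_j > 0$ we may choose $n$ with $\rho(\real)/n < \im z_1 \cdot \im z_2$ and invoke part (1) to get $H_\mu(z_1) \ne H_\mu(z_2)$; hence $H_\mu$ is injective on the upper half-plane, and by Schwarz reflection together with $\sign \im H_\mu(z) = \sign \im z$ it is injective on $\comp \backslash \real$. Since $|\im H_\mu(z)| \ge |\im z| > 0$ there, $H_\mu$ has no zeros, so $G_\mu = 1/H_\mu$ is injective as well.

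I expect the only genuine obstacle to be the iteration step in part (1): one must notice that the ``bad set'' $\{\im z_1 \cdot \im z_2 \le c_n\}$ is never re-entered along the orbit of $H_{\mu_n}$ because imaginary parts never decrease, which is precisely what keeps the threshold at $\rho(\real)/n$ rather than letting it deteriorate with each composition. The remaining ingredients --- the secant estimate and the bookkeeping of the Maassen data $\rho_n(\real) = \rho(\real)/n$ supplied by Lemma \ref{inj3} --- are routine.
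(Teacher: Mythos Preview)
Your argument is correct and follows essentially the same route as the paper: you use Lemma~\ref{inj3} to obtain $\rho_n(\real)=\rho(\real)/n$, apply the secant bound $|H_{\mu_n}(z_1)-H_{\mu_n}(z_2)|\ge|z_1-z_2|\bigl(1-\rho_n(\real)/(\im z_1\,\im z_2)\bigr)$, iterate via $\im H_{\mu_n}(z)\ge\im z$, and verify optimality with the symmetric two-atom measure~--- exactly as the paper does (Example~\ref{exa2}). The only cosmetic difference is that the paper introduces an auxiliary parameter $r<1$ to write a quantitative lower bound $|H_\mu(z_1)-H_\mu(z_2)|\ge|z_1-z_2|(1-r)^n$, whereas you argue directly that the bracketed factor is nonzero at each step; both are equivalent.
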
  
\begin{proof} 
(1) We use the same notation for the integral representation of $\mu$ and $\mu_k$ as the one adopted in the previous 
lemma. 
Pick an arbitrary real number $r  < 1$ and fix it. Let $z_1, z_2$ be any two points satisfying 
$\frac{\rho(\real)}{n\text{Im}z_1\text{Im}z_2} <r$.

First we have   
\begin{equation}
\begin{split}
| H_{\mu_n}(z_1)-H_{\mu_n}(z_2)| &= \Bigg{|} z_1 - z_2 + \int_{\real}\Bigg{(}\frac{1}{x-z_1}-\frac{1}{x-z_2}\Bigg{)}d\rho_n(x) \Bigg{|}             \\  
                                 &\geq  |z_1 - z_2| - \Bigg{|}\int_{\real}\frac{z_2 - z_1}{(x-z_1)(x-z_2)}d\rho_n(x) \Bigg{|}             \\
                                 &\geq  |z_1 - z_2| - \Bigg{|}\int_{\real}\frac{|z_2 - z_1|}{\text{Im}z_1\text{Im}z_2}d\rho_n(x) \Bigg{|}          \\
                                 &\geq  |z_1 - z_2|(1-r). 
\end{split}
\end{equation}

Since $\text{Im}H_{\mu_n}(z)\geq \text{Im}z$ for all $z \in \com+$, we can iterate the inequality:

\begin{equation}
\begin{split}
|H_\mu(z_1)-H_\mu(z_2)| &= |H^{n}_{\mu_n}(z_1)-H^{n}_{\mu_n}(z_2)|         \\
                        &\geq  |z_1 - z_2|(1-r)^n. 
\end{split}
\end{equation}
Therefore, $z_1 \neq z_2$ implies $H_\mu(z_1) \neq H_\mu(z_2)$ since $r$ can be taken arbitrary near to 1.

The optimality of the constant $\dfrac{\rho(\real)}{n}$ will be proved in Example \ref{exa2} shown later. 

\noindent
(2) For any $z_1, z_2 \in \com+$ we take $n$ large enough so that
   $\text{Im} z_1 \cdot \text{Im} z_2 > \dfrac{\rho(\real)}{n}$, then we can use the result (1). 
\end{proof}
\begin{exa} 
$H_\mu$ (or $G_\mu$) of the following probability measures are all injective: 
\begin{itemize}
\item[$(1)$] Arcsine law 
$d\mu(x) = \frac{1}{\pi \sqrt{2-x^2}}~ 1_{(-\sqrt{2},\sqrt{2})}(x)dx$, 
$H_{\mu}(z) = \sqrt{z^2 - 2}$,   
\item[$(2)$] Uniform distribution $d\mu = \frac{1}{b-a}1_{(a,b)}(x)dx$, $G_{\mu} (z) = \frac{1}{b-a}\log\Big{(} \frac{z-a}{z-b} \Big{)}$, 
\item[$(3)$] Wigner's semicircle law $d\mu(x) = \frac{1}{2\pi}\sqrt{4-x^2}1_{(-2,2)}(x)dx$, 
$H_\mu(z) = \frac{z + \sqrt{z^2- 4}}{2}$,  
\item[$(4)$] Normal distribution $d\mu(x) = \frac{1}{\sqrt{2\pi}} e^{-\frac{x^2}{2}}dx$.
\end{itemize} 
The injectivity in the cases (1), (2) and (3) can be confirmed directly. To prove the injectivity of the Stieltjes transform of the normal distribution, 
we use a general criterion for injectivity proved by Aksent'ev, which is also applicable to (1), (2) and (3).  
The reader is referred to a survey article \cite{Av-Ak} for details. 
\begin{thm}\label{univalent} (Aksent'ev) 
Let $a < c < b$ and let $p:[a, b] \to [0, \infty)$ be a function which is not identically zero, does not decrease in the interval $(a, c)$ 
and does not increase in the interval $(c, b)$. Then the function $\int_a ^b \frac{1}{z-x} p(x)dx$ is injective in $\comp \backslash [a, b]$.  
\end{thm}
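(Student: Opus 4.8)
I would set $F(z):=\int_a^b\frac{p(x)}{z-x}\,dx$ for the function in the statement and start from its elementary properties: $F$ is analytic on $\comp\setminus[a,b]$, satisfies $F(\bar z)=\overline{F(z)}$, and — since $p\ge 0$ is not identically zero — for $\im z>0$ one has $\im F(z)=-\,\im z\int_a^b\frac{p(x)}{|z-x|^2}\,dx<0$, so $F$ maps the upper half-plane into the lower one and the lower into the upper; on each of $(-\infty,a)$ and $(b,\infty)$ it is real with $F'<0$, hence strictly monotone, with values in $(-\infty,0)$ and $(0,\infty)$ respectively. Splitting $\comp\setminus[a,b]$ into upper half-plane, lower half-plane, and $\real\setminus[a,b]$, these remarks reduce the theorem to the injectivity of $F$ on $\{\im z>0\}$. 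I would then reduce, by a standard approximation (truncating the values and the interval, convolving with a symmetric bump, which preserves unimodality, and adding a small $C^1$ strictly unimodal perturbation that vanishes at $a$ and $b$) together with Hurwitz's theorem, to the case where $p$ is $C^1$ on $[a,b]$, strictly increasing on $[a,c]$, strictly decreasing on $[c,b]$, and $p(a)=p(b)=0$; then $p>0$ on $(a,b)$, and by the Plemelj formulas $F$ extends continuously to the closed upper half-plane, with $F(z)\to0$ as $z\to\infty$ and boundary values $F(x\pm i0)=P(x)\mp i\pi p(x)$, where $P(x)=\mathrm{p.v.}\int_a^b\frac{p(t)}{x-t}\,dt$.

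Next I would argue by the argument principle on $\{\im z>0\}$. The $F$-image of its boundary $\real\cup\{\infty\}$ is the closed curve $\Gamma$ formed by concatenating the segment of $(-\infty,0]$ from $F(a^-)<0$ to $0$, the arc $\gamma(x):=P(x)-i\pi p(x)$, $x\in[a,b]$, and the segment of $[0,\infty)$ from $0$ to $F(b^+)>0$. Since $\im\gamma=-\pi p<0$ on $(a,b)$, the arc lies in the closed lower half-plane and meets $\real$ only at its endpoints $\gamma(a)=F(a^-)$ and $\gamma(b)=F(b^+)$, while the two segments lie on opposite sides of $0$; hence $\Gamma$ is a Jordan curve \emph{provided $\gamma$ is injective on $(a,b)$}. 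Granting that, the argument principle shows that for each $w\notin\Gamma$ the function $F$ attains $w$ on $\{\im z>0\}$ exactly as many times (with multiplicity) as the winding number of $\Gamma$ about $w$, which takes only the values $0$ and $1$; a short topological argument — every point of a Jordan curve is approached from both complementary components — then excludes $F(\{\im z>0\})$ from meeting $\Gamma$, so $F$ is injective on $\{\im z>0\}$.

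The crux is thus the injectivity of $\gamma$ on $(a,b)$, and this is exactly where unimodality is used. I would argue as follows. Suppose $\gamma(x_1)=\gamma(x_2)$ with $a<x_1<x_2<b$. Comparing imaginary parts gives $p(x_1)=p(x_2)=:\ell$, and strict unimodality forces $x_1<c<x_2$; hence $p\le\ell$ on $[a,x_1]\cup[x_2,b]$, $p\ge\ell$ on $[x_1,x_2]$, and $\ell=p(x_1)>0$. Using $\frac1{x_2-t}-\frac1{x_1-t}=\frac{x_1-x_2}{(x_1-t)(x_2-t)}$,
\[
P(x_2)-P(x_1)=(x_1-x_2)\,\mathrm{p.v.}\!\int_a^b\frac{p(t)\,dt}{(x_1-t)(x_2-t)} .
\]
Writing $p=\ell+q$ with $q:=p-\ell$, the $\ell$-part of this integral equals $\frac{\ell}{x_2-x_1}\log\frac{(x_1-a)(b-x_2)}{(b-x_1)(x_2-a)}$ (using $\mathrm{p.v.}\int_a^b\frac{dt}{x-t}=\log\frac{x-a}{b-x}$), which is $<0$ since $x_1<x_2$ and $\ell>0$; and in the $q$-part the integrand $\frac{q(t)}{(x_1-t)(x_2-t)}$ is $\le0$ throughout $[a,b]$ — on $[a,x_1)\cup(x_2,b]$ because $q\le0$ and $(x_1-t)(x_2-t)>0$, on $(x_1,x_2)$ because $q\ge0$ and $(x_1-t)(x_2-t)<0$ — while $q$ vanishes at $x_1,x_2$, so those simple poles are removable and the principal value is an ordinary integral, hence $\le0$. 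Therefore the displayed principal value is $<0$, so $P(x_2)-P(x_1)=(x_1-x_2)\cdot(\text{negative})>0$, contradicting $P(x_1)=P(x_2)$; thus $\gamma$ is injective.

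The steps I expect to cost the most effort are the approximation reduction — one must simultaneously keep unimodality, make it strict, and arrange $p(a)=p(b)=0$, and it is exactly this strictness that the sign computation needs, since on a flat piece of $p$ that computation genuinely degrades — and the topological bookkeeping attached to the argument principle (continuity of the boundary extension, the Jordan-curve/winding-number step, and ruling out $F(\{\im z>0\})\cap\Gamma\ne\varnothing$). The sign estimate itself, which carries the idea, is short.
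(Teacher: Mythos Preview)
The paper does not prove this theorem; it is quoted from Aksent'ev's work and the reader is referred to the survey \cite{Av-Ak}. So there is no ``paper's own proof'' to compare against, and your proposal should be judged on its own merits.

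Your argument is essentially correct and follows the classical route to such univalence results. The decomposition of $\comp\setminus[a,b]$ into the two half-planes and the two real rays, together with the sign observations on $\im F$ and on $F|_{\real\setminus[a,b]}$, cleanly reduces the problem to injectivity on $\{\im z>0\}$. The heart of the proof---the sign estimate showing that $\gamma(x)=P(x)-i\pi p(x)$ is injective on $(a,b)$ under strict unimodality---is carried out correctly: the split $p=\ell+q$ with $\ell=p(x_1)=p(x_2)$ makes the constant part of the principal-value integral a computable logarithm with the right sign, while the $q$-part has a nonpositive integrand throughout and removable singularities at $x_1,x_2$ thanks to $q(x_1)=q(x_2)=0$ and the $C^1$ assumption. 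The argument-principle step is standard once $\Gamma$ is known to be a Jordan curve, and your open-mapping argument correctly excludes boundary values from being attained in the interior.

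Two places deserve a bit more care when you write this up. First, the approximation: convolving with a symmetric bump preserves unimodality but enlarges the support, so you should first restrict $p$ to a slightly smaller interval $[a+\delta,b-\delta]$ (using monotonicity of $F$ in the endpoints and Hurwitz again), then mollify; the added strictly unimodal $C^1$ perturbation vanishing at the endpoints is fine. Second, the argument principle on the unbounded domain $\{\im z>0\}$ needs the decay $F(z)\to 0$ as $z\to\infty$ to close the contour; you mention this, but it should be used explicitly (e.g.\ work on half-discs of radius $R\to\infty$ and note the large-arc contribution vanishes). These are routine, and your identification of them as the laborious parts is accurate.
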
 
When we apply this theorem to the normal distribution $\mu$, first we restrict the distribution to the closed interval $[-n, n]$, 
which we denote by $\mu_n$,   
and then take the limit $n \to \infty$. By Theorem \ref{univalent}, $G_{\mu_n}$ is injective in $\com+$. Since $\mu_n \to \mu$ weakly, $G_\mu$ is injective 
in $\com+$ by Proposition \ref{inj4} shown later. 

Arcsine law is the only distribution known to be $\rhd$-infinitely divisible 
in the above examples. It is an interesting question whether the other examples 
are $\rhd$-infinitely divisible or not. 
\end{exa}

\begin{exa} \label{exa2}
Next we treat atomic measures. 
We define $\nu := \lambda_1 \delta_a + \lambda_2 \delta_b$ with $\lambda_1 + \lambda_2 = 1$  and $a \neq b$.
Its Cauchy transform is
\begin{equation}
\begin{split}
G_{\nu}(z) &= \frac{\lambda_1}{z-a} + \frac{\lambda_2}{z-b}   \\
           &= \frac{z-(\lambda_2 a + \lambda_1 b)}{(z-a)(z-b)}.
\end{split}
\end{equation}
For simplicity, we consider the case $b = -a$, $a > 0$ and $\lambda_1 = \lambda_2 = \frac{1}{2}$.
Then $H_\nu(z) = \frac{z^2-a^2}{z}$,  $\sigma^2(\nu) = a^2$, $m(\nu) = 0$. 
By Proposition \ref{maa}, $\rho(\real) = a^2$.  
Take $z_1= si$ and $z_2 = ti$ such that $ st= a^2$. For instance,  take 
$z_1= \frac{a}{2}i$ and $z_2 = 2ai$. Clearly we have $z_1 \neq z_2$ and 
$\im z_1 \im z_2 = a^2$. Moreover, one can see that $H_\nu(z_1) = H_\nu (z_2)$.  
Therefore, $\nu$ is not 2-divisible by Theorem \ref{inj1}. Moreover, the optimality of 
the constant $\dfrac{\rho(\real)}{n}$ is proved by the example $\nu^{\rhd n}$. 
In fact, for any integer $n$, it holds that $\sigma^2(\nu^{\rhd n}) = na^2$ 
and $m(\nu^{\rhd n}) = 0$ by Lemma \ref{inj3}.
If we take $z_1= \frac{a}{2}i$ and $z_2 = 2ai$ again, then 
$H_{\nu^{\rhd n}} = H^n_{\nu}$ maps $z_1$ and $z_2$ to the same point.
Hence the proof of Theorem \ref{inj1} has been completed.  

It is clear that $\nu^{\rhd 2}$ is 2-divisible. In addition, it is not difficult 
to prove that $\nu^{\rhd 2}$ is not 3-divisible in application of Theorem \ref{inj1}.
\end{exa}

We have seen the divisibility of atomic measures through an example. 
There is a question whether $H_{\nu}$ for $\nu = \sum_{k=1}^m \lambda _k \delta_{a_k}$ 
is $\rhd$-infinitely divisible or not. The answer is given in Section \ref{atom}, Theorem \ref{atom3}.      

In the classical probability theory, the set of infinitely divisible distributions is 
closed under the weak topology \cite{Sat}. In monotone probability theory, however, 
this is difficult to prove and the proof is unknown. 
Instead we show that the injectivity property is conserved under the weak topology. 
The proof of the next Lemma is the analogy of the case of characteristic functions, 
but the tightness of probability measures is not needed. 
Hence we can give a proof without Prohorov's theorem. 

\begin{lem}\label{inj2}
If a sequence of positive finite measures $\{\mu_n \}$ converges weakly to 
a positive finite measure $\mu$, then the Cauchy transform $G_{\mu_n}$ 
converges to $G_{\mu}$ locally uniformly on $\com+$. 
\end{lem}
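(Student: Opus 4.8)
The plan is to reduce the claim to two soft facts --- pointwise convergence $G_{\mu_n}(z)\to G_\mu(z)$ for each fixed $z$, and uniform equicontinuity of the family $\{G_{\mu_n}\}\cup\{G_\mu\}$ on compact subsets of $\mathbb{C}_+$ --- and then to upgrade pointwise convergence to locally uniform convergence by a finite $\varepsilon$-net argument. This uses neither normal families nor any tightness/Prohorov input.

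First I would record the mass bound. Testing the weak convergence against the constant function $1$ gives $\mu_n(\real)\to\mu(\real)$, so $C:=\sup_n\mu_n(\real)$ is finite and $\mu(\real)\le C$. Then, for any positive finite measure $\lambda$ with $\lambda(\real)\le C$ and any $z,w\in\mathbb{C}_+$, the crude estimate
\[
|G_\lambda(z)-G_\lambda(w)|\le\int_\real\frac{|z-w|}{|z-x|\,|w-x|}\,d\lambda(x)\le\frac{C\,|z-w|}{\im z\cdot\im w}
\]
shows that on a compact set $K\subset\mathbb{C}_+$ with $\delta:=\min_{z\in K}\im z>0$ all the functions $G_{\mu_n}$ and $G_\mu$ are Lipschitz with the common constant $C/\delta^2$. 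For the pointwise statement, fix $z\in\mathbb{C}_+$: the function $x\mapsto(z-x)^{-1}$ is continuous on $\real$ and bounded by $1/\im z$, so applying the weak convergence to its real and imaginary parts yields $G_{\mu_n}(z)\to G_\mu(z)$.

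Finally I would assemble the local uniform convergence. Fix a compact $K\subset\mathbb{C}_+$, take $\delta$ as above, and let $\varepsilon>0$. Choose finitely many points $z_1,\dots,z_m\in K$ so that every $z\in K$ lies within $\varepsilon$ of some $z_j$; then for such $z$ and $j$ the Lipschitz bound, used once for $G_{\mu_n}$ and once for $G_\mu$, gives
\[
|G_{\mu_n}(z)-G_\mu(z)|\le\frac{2C}{\delta^2}\,\varepsilon+\max_{1\le j\le m}|G_{\mu_n}(z_j)-G_\mu(z_j)|.
\]
Letting $n\to\infty$ (the maximum is over finitely many indices, each converging by the pointwise statement) and then $\varepsilon\to0$ shows $\sup_{z\in K}|G_{\mu_n}(z)-G_\mu(z)|\to0$.

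I do not expect a genuine obstacle: the statement is soft. The only point that deserves care is that weak convergence here is of finite, not probability, measures, so one must first note that it still forces convergence (hence boundedness) of the total masses; this is exactly what makes the uniform Lipschitz estimate work, and it is the reason, as the remark preceding the lemma stresses, that the bounded continuous integrand $(z-x)^{-1}$ together with the mass bound $C$ suffices and no control of escaping mass --- and hence no appeal to Prohorov's theorem --- is needed.
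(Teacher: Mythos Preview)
Your proof is correct. The mass bound from testing against $1$, the uniform Lipschitz estimate $|G_\lambda(z)-G_\lambda(w)|\le C|z-w|/(\im z\cdot\im w)$, and the $\varepsilon$-net argument all go through without issue.

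However, your route differs from the paper's. The paper obtains pointwise convergence exactly as you do, but then upgrades to locally uniform convergence via Montel's theorem: the family $\{G_{\mu_n}\}$ is locally bounded on $\mathbb{C}_+$ (by $C/\im z$, from the same mass bound you use), hence normal, so every subsequence has a locally uniformly convergent sub-subsequence; the pointwise limit forces each such limit to be $G_\mu$, and the subsequence principle gives the full convergence. Your argument replaces this complex-analytic machinery with a direct real-variable equicontinuity estimate and a compactness-of-$K$ argument. What you gain is that your proof is entirely elementary and self-contained---no appeal to normal families---and it makes explicit the quantitative Lipschitz bound. What the paper's version buys is brevity: two sentences, once Montel is granted. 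Both avoid Prohorov, as the paper's preamble advertises, since the integrand $(z-x)^{-1}$ is bounded and continuous and the total masses are controlled.
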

\begin{proof}
Pointwise convergence follows from the definition of the weak convergence of $\{\nu_n \}$.
Locally uniform convergence is a consequence of Montel's theorem. 
\end{proof}
\begin{prop}\label{inj4} 
Let  $\{\mu_n \}$ be a sequence of positive finite measures whose $G_{\mu_n}$ are injective. If $\mu_n$ converges weakly to a nonzero positive finite measure $\mu$, then $G_\mu$ is injective. 
\end{prop}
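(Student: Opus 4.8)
The plan is to deduce injectivity of the limit from Hurwitz's theorem on univalent functions. First I would invoke Lemma~\ref{inj2}: since $\mu_n \to \mu$ weakly, the Cauchy transforms $G_{\mu_n}$ converge to $G_\mu$ locally uniformly on the connected open set $\com+$. Each $G_{\mu_n}$ is injective (univalent) on $\com+$ by hypothesis. Hurwitz's theorem in its univalent form states that a locally uniform limit of injective holomorphic functions on a domain is either injective or constant; applying it here shows that $G_\mu$ is either injective or constant on $\com+$.

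Second, I would rule out the constant case using the assumption that $\mu$ is nonzero. Writing $z = iy$ with $y \to +\infty$, dominated convergence gives $iy\, G_\mu(iy) \to \mu(\real) > 0$, so $G_\mu$ is not identically zero; on the other hand $G_\mu(iy) \to 0$, so the only constant value $G_\mu$ could take is $0$. These two facts are incompatible, hence $G_\mu$ is not constant. Combining with the dichotomy from Hurwitz's theorem, $G_\mu$ is injective on $\com+$, which is the claim.

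I do not expect a serious obstacle: the only points needing care are checking that $\com+$ is a domain (so Hurwitz applies) and that the limit is genuinely nonconstant, both of which are routine given $\mu \neq 0$. One minor subtlety worth a sentence in the writeup is that Hurwitz's theorem requires the approximating functions to be defined and holomorphic on all of $\com+$, which holds since every $G_{\mu_n}$ is holomorphic there; no compactness or tightness of $\{\mu_n\}$ is needed, in contrast to the classical characteristic-function argument.
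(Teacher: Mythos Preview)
Your proposal is correct and follows essentially the same route as the paper: invoke Lemma~\ref{inj2} for locally uniform convergence, then apply the Hurwitz-type fact that a locally uniform limit of univalent holomorphic functions on a domain is univalent or constant. The paper's version is terser and simply cites that the class of injective analytic functions is closed under locally uniform limits; your treatment is slightly more careful in explicitly ruling out the constant limit via the asymptotics $iy\,G_\mu(iy)\to\mu(\real)>0$, which is exactly where the hypothesis $\mu\neq 0$ enters.
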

\begin{proof}
This fact comes from Lemma \ref{inj2} and the fact that the set of injective analytic functions on a domain 
is closed under the locally uniform topology (see Section 6 of Chapter 9 in \cite{Nev}). Then the limit function is also injective 
on the domain. 
\end{proof}

After we stated some properties about the injectivity of $H_{\mu}$, it is natural 
to ask when $H_{\mu}$ becomes a diffeomorphism.
We prove the simple characterization of $\mu$ whose $H_\mu$ is a diffeomorphism. 
\begin{prop}
Let $\mu$ be a probability measure. Then $H_{\mu}$ is a diffeomorphism on $\com+$
if and only if $\mu = \delta_a$ for some $a \in \real$.
\end{prop}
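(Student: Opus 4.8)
The plan is to prove both implications. The direction ``$\mu = \delta_a \Rightarrow H_\mu$ is a diffeomorphism'' is trivial: one computes $H_{\delta_a}(z) = z - a$, which is an affine bijection of $\com+$ onto itself, clearly a diffeomorphism. So the content is in the converse: if $H_\mu$ is a diffeomorphism of $\com+$, then $\mu$ must be a point mass.

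First I would recall the canonical Nevanlinna representation~(\ref{recip1}),
\[
H_\mu(z) = z + b + \int_{\real} \frac{1+xz}{x-z}\,\eta(dx),
\]
with $b \in \real$ and $\eta$ a positive finite measure, and observe that $\delta_a$ corresponds exactly to $\eta = 0$ (and $b = -a$). So it suffices to show that if $\eta \neq 0$, then $H_\mu$ fails to be a diffeomorphism of $\com+$ onto itself. The natural obstruction is surjectivity rather than injectivity: a diffeomorphism of $\com+$ must in particular be onto. When $\eta \neq 0$ one has the strict inequality $\im H_\mu(z) > \im z$ somewhere — indeed $\im H_\mu(z) = \im z\,(1 + \int \frac{1+x^2}{|x-z|^2}\eta(dx))$, so $\im H_\mu(z) > \im z$ for every $z \in \com+$ as soon as $\eta \neq 0$. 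The idea is then to argue that the image of $H_\mu$ cannot reach points arbitrarily close to the real line, or more precisely that $H_\mu$ omits a nonempty open subset of $\com+$. A clean way: since $\im H_\mu(z) \ge \im z$ always, and since $H_\mu(z) \sim z$ as $z \to \infty$ nontangentially, a Lindel\"of-type / argument-principle argument shows that $H_\mu$ is proper onto its image and the image is all of $\com+$ only in the degenerate case. Alternatively, and more elementarily, one can use that $H_\mu$ extends continuously (in the sense of nontangential limits) to parts of $\real$ and its boundary behavior forces points with small imaginary part near $\supp\eta$ to be omitted.

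Actually the cleanest route avoids boundary analysis entirely. A diffeomorphism $F$ of $\com+$ onto $\com+$ that is also analytic must be a conformal automorphism of the upper half-plane, hence a M\"obius transformation $z \mapsto \frac{\alpha z + \beta}{\gamma z + \delta}$ with real coefficients and positive determinant. (Here one uses: an analytic bijection between domains in $\comp$ is biholomorphic, so a diffeomorphic $H_\mu$ is in particular a biholomorphic self-map of $\com+$, and the automorphism group of $\com+$ is $\mathrm{PSL}(2,\real)$.) Now impose the normalization coming from the Nevanlinna form: $H_\mu(z)/z \to 1$ as $z \to \infty$ nontangentially, and $\im H_\mu(z) \ge \im z$. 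Among M\"obius maps of $\com+$, the condition $H_\mu(z) - z \to b$ (bounded) at infinity forces $\gamma = 0$, i.e. $H_\mu(z) = \frac{\alpha}{\delta}z + \frac{\beta}{\delta}$, an affine map; the coefficient of $z$ must be $1$ by the asymptotics, so $H_\mu(z) = z + b$ with $b \in \real$, which is $H_{\delta_{-b}}$. Inverting the correspondence gives $\mu = \delta_{-b}$, completing the proof.

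The main obstacle is justifying that an analytic diffeomorphism of $\com+$ onto itself is automatically a biholomorphism and hence lies in $\mathrm{PSL}(2,\real)$ — this is standard (an injective analytic map is conformal onto its image, and a bijective such map has analytic inverse), so the remaining work is just the bookkeeping with the Nevanlinna asymptotics to pin down which M\"obius map survives. One should be a little careful about what ``diffeomorphism on $\com+$'' means: I will read it as a smooth bijection $\com+ \to \com+$ with smooth inverse, not merely a diffeomorphism onto its image; otherwise, e.g., $z \mapsto \sqrt{z^2-2}$ (the arcsine case) would be a counterexample to the stated proposition, so surjectivity onto $\com+$ is essential and must be used. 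I would state this reading explicitly at the start of the proof.
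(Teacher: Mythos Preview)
Your proposal is correct and follows essentially the same approach as the paper: classify the analytic automorphisms of $\com+$ as M\"obius transformations, then use the normalization $H_\mu(z) = z + b + o(1)$ (coming from the Nevanlinna representation or Proposition~\ref{maa}) to force the affine form $z \mapsto z - a$. The only cosmetic difference is that the paper first transfers to the unit disc via the Cayley map and invokes the classification of disc automorphisms $\lambda\frac{z-b}{1+\bar{b}z}$, whereas you invoke $\mathrm{PSL}(2,\real)$ directly on $\com+$; your remark that ``diffeomorphism on $\com+$'' must be read as a bijection onto $\com+$ is a welcome clarification that the paper leaves implicit.
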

\begin{proof}
$\com+$ is analytically homeomorphic to the unit disc (denoted as $\Delta$) by 
the mapping $i\frac{z-i}{z+i}$. It is known that any bijective analytic map in $\Delta$ 
is of the form $\lambda \frac{z-b}{1+\bar{b}z}$ for some 
$\lambda \in \comp, |\lambda | = 1$ and $b \in \comp$, $|b| < 1$. Therefore, 
at least $H_\mu(z)$ takes the form as $\frac{a_1 z + a_2}{a_3z + a_4}$, where 
$a_k$'s are some complex numbers. Since $H_{\mu}$ is a reciprocal Cauchy transform, we have $a_3 = 0$ and $\frac{a_1}{a_4} = 1$ by Proposition \ref{maa} in \cite{Maa}. Thus $H_{\mu}(z) = z - a$ for some $a \in \real$.
\end{proof}

\section{Atoms in monotone convolution}\label{atom}
The monotone convolution of atomic measures appears in the monotone product of matrix algebras. 
It is easy to prove that the monotone convolution of $m \times m$ matrix and  $n \times n$ matrix becomes 
$mn \times mn$ matrix, which is a consequence of the algebraic construction of monotone product \cite{Mur3}.    
We study how atoms behave under monotone convolution:   
we prove an interlacing property of atoms in the monotone convolution of atomic measures. As a result, we 
obtain an interesting property which is not the case in the classical convolution (Corollary \ref{atom2}).

\begin{thm}\label{atom1}
$(1)$ Let $\nu := \sum_{k=1}^m \lambda _k \delta_{a_k}$ be an atomic probability measure such that 
$\lambda_k >0$, $\sum \lambda _k =1$ and $a_1 < a_2 < \cdots < a_m$.
For any $b \in \real$, $b \neq 0$, $\delta_b \rhd \nu$ has distinct $m$ atoms. 
When we write $\delta_b \rhd \nu = \sum_{k=1} ^m \mu_k \delta_{b_k}$ with $b_1 < \cdots < b_m$, 
the atoms satisfy either $b_1 < a_1 < b_2 < a_2 <\cdots < a_{m-1} < b_m < a_m$ 
or $ a_1 < b_1 < a_2 < b_2 < \cdots < a_m < b_m $.  
The coefficients $\mu_k$ are given by $\mu_i = \frac{\prod_{k=1}^m (b_i - a_k)}{b \prod_{k \neq i}^m (b_i - b_k)}.$  \\
$(2)$ Moreover, if $b$ and $c$ are distinct real numbers, 
the  $2m$ atoms appearing in $\nu_b = \delta_b \rhd \nu $ and $\nu_c = \delta_c \rhd \nu$
are all different.   
\end{thm}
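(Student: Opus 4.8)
The plan is to reduce the whole statement to locating the real zeros of a single monic polynomial. Since $G_{\delta_b}(z)=(z-b)^{-1}$, one has $H_{\delta_b}(z)=z-b$, so by \eqref{mur11}
\begin{equation*}
H_{\delta_b\rhd\nu}(z)=H_\nu(z)-b.
\end{equation*}
Set $Q(z):=\prod_{k=1}^m(z-a_k)$ and $P(z):=\sum_{k=1}^m\lambda_k\prod_{j\neq k}(z-a_j)$, so that $G_\nu=P/Q$, $H_\nu=Q/P$, and
\begin{equation*}
G_{\delta_b\rhd\nu}(z)=\frac{1}{H_\nu(z)-b}=\frac{P(z)}{R_b(z)},\qquad R_b(z):=Q(z)-b\,P(z),
\end{equation*}
where $\deg P=m-1$ with leading coefficient $\sum_k\lambda_k=1$ and $R_b$ is monic of degree $m$. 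First I would note that $P$ and $R_b$ are coprime: a common zero $z_0$ forces $Q(z_0)=R_b(z_0)+bP(z_0)=0$, hence $z_0=a_i$ for some $i$, contradicting $P(a_i)=\lambda_i\prod_{j\neq i}(a_i-a_j)\neq 0$. Hence the atoms of $\delta_b\rhd\nu$ are exactly the (necessarily simple) zeros of $R_b$, the weight at such a zero $b_i$ being the residue $\mu_i=P(b_i)/R_b'(b_i)$, and the total mass is $1$ because $\deg P<\deg R_b$.

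Next I would locate the zeros of $R_b$ by a sign count. One computes $R_b(a_i)=-b\lambda_i\prod_{j\neq i}(a_i-a_j)$, which is nonzero since $b\neq 0$, and whose sign is $-\sign(b)\,(-1)^{m-i}$; consecutive values $R_b(a_i),R_b(a_{i+1})$ therefore have opposite signs, producing one zero in each of the $m-1$ intervals $(a_i,a_{i+1})$. The remaining zero comes from comparing a sign at an endpoint with the leading behaviour $R_b(z)\sim z^m$: if $b>0$ then $R_b(a_m)<0<R_b(+\infty)$, giving a zero in $(a_m,\infty)$ and the pattern $a_1<b_1<a_2<\cdots<a_m<b_m$; if $b<0$ then $R_b(a_1)$ and $R_b(-\infty)$ have opposite signs, giving a zero in $(-\infty,a_1)$ and the pattern $b_1<a_1<b_2<\cdots<b_m<a_m$. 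Since $\deg R_b=m$, these are all the zeros and all are simple, which settles the distinctness and interlacing in (1).

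For the coefficients, $R_b(b_i)=0$ gives $P(b_i)=Q(b_i)/b$, and $R_b'(b_i)=\prod_{k\neq i}(b_i-b_k)$, so $\mu_i=P(b_i)/R_b'(b_i)=\dfrac{\prod_{k=1}^m(b_i-a_k)}{b\prod_{k\neq i}^m(b_i-b_k)}$, which is the asserted formula. Positivity of $\mu_i$ then follows from one further elementary sign count, using the now-known location of $b_i$ among the $a_k$ and among the other $b_k$, carried out separately for $b>0$ and $b<0$. Part (2) is then immediate: a common atom $z_0$ of $\delta_b\rhd\nu$ and $\delta_c\rhd\nu$ is a common zero of $R_b$ and $R_c$; subtracting gives $(c-b)P(z_0)=0$, so $P(z_0)=0$ and then $Q(z_0)=bP(z_0)=0$, forcing $z_0=a_i$ and contradicting $P(a_i)\neq 0$.

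I expect the only real obstacle to be the bookkeeping of signs: getting the two interlacing patterns exactly right and, especially, verifying $\mu_i>0$ in every case. Everything else --- the reduction to $P/R_b$, coprimality, the residue formula, and part (2) --- is routine once that is in place.
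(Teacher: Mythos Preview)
Your proof is correct and follows essentially the same route as the paper: both reduce to the polynomial $R_b(z)=Q(z)-bP(z)$ (the paper calls it $f(z)$), locate its $m$ real zeros by the sign alternation $R_b(a_i)=-b\lambda_i\prod_{j\neq i}(a_i-a_j)$, derive the weight formula via $P(b_i)=Q(b_i)/b$, and prove (2) by subtracting $R_b$ and $R_c$ at a hypothetical common zero. Your residue computation for the weights is a slight streamlining of the paper's Lagrange-interpolation argument, and your treatment of (2) is uniform whereas the paper separates out the case where one of $b,c$ vanishes, but these are presentational rather than substantive differences.
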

\begin{rem}
Theorem \ref{atom1} shows a sharp difference between $\delta_b \rhd \nu$ and $\delta_b \ast \nu$: for instance, we can take $b > 0$ large enough so that the atoms $\{b_j \}$ of $\delta_b \ast \nu$ satisfy $a_1 < a_2 < \cdots < a_m < b_1 < b_2 < \cdots < b_m$, since $b_j = a_j + b$.  
\end{rem}

\begin{cor}\label{atom2}
Let $\mu$ be an atomic probability measure with distinct $m$ atoms and let $\nu$ be an atomic probability measure with distinct $n$ atoms. 
Then $\mu \rhd \nu$ consists of exactly distinct $mn$ atoms. 
\end{cor}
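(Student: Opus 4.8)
The plan is to reduce the statement to Theorem \ref{atom1} using the fact that monotone convolution is affine in its left argument. First I would write $\mu = \sum_{i=1}^m \lambda_i \delta_{b_i}$ with $\lambda_i > 0$, $\sum_i \lambda_i = 1$, and $b_1 < b_2 < \cdots < b_m$. Iterating the affineness relation $(\theta_1\mu_1 + \theta_2\mu_2)\rhd\lambda = \theta_1(\mu_1\rhd\lambda) + \theta_2(\mu_2\rhd\lambda)$, we get $\mu \rhd \nu = \sum_{i=1}^m \lambda_i (\delta_{b_i} \rhd \nu)$, so it suffices to understand the atoms of each $\delta_{b_i} \rhd \nu$ and how they interact.

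For each index $i$ with $b_i \neq 0$, Theorem \ref{atom1}(1) shows that $\delta_{b_i} \rhd \nu$ is a probability measure with exactly $n$ distinct atoms, say at $b^{(i)}_1 < \cdots < b^{(i)}_n$, carrying strictly positive masses; moreover the interlacing conclusion of that theorem guarantees that no $b^{(i)}_k$ coincides with an atom $a_j$ of $\nu$. Since the $b_i$ are distinct, at most one of them equals $0$; for that index (if it occurs) $\delta_0$ is the unit of $\rhd$, because $H_{\delta_0}(z) = z$, so $\delta_0 \rhd \nu = \nu$, which again has exactly $n$ distinct atoms, and by the previous sentence these are disjoint from the atoms of $\delta_{b_j}\rhd\nu$ for every $j$ with $b_j \neq 0$. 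For two distinct indices $i \neq j$ with $b_i, b_j \neq 0$, Theorem \ref{atom1}(2) shows that the atom sets of $\delta_{b_i}\rhd\nu$ and $\delta_{b_j}\rhd\nu$ are disjoint. Putting these together, the $m$ atom sets are pairwise disjoint, and their union has exactly $mn$ elements.

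It remains to check that passing from the individual measures $\delta_{b_i}\rhd\nu$ to the convex combination $\mu\rhd\nu$ neither creates nor destroys atoms. Each $\delta_{b_i}\rhd\nu$ is purely atomic, since its reciprocal Cauchy transform $H_\nu(z) - b_i$ is a rational function of the same degree as $H_\nu$; hence $\mu\rhd\nu = \sum_i \lambda_i(\delta_{b_i}\rhd\nu)$ is purely atomic as well. Because every coefficient $\lambda_i$ and every atomic weight of every $\delta_{b_i}\rhd\nu$ is strictly positive, there is no cancellation, and the atom set of $\mu\rhd\nu$ is exactly the union of the atom sets of the $\delta_{b_i}\rhd\nu$. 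Combined with the disjointness established above, this yields exactly $mn$ atoms.

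\textbf{Main obstacle.} There is no deep difficulty here: the result is essentially a bookkeeping corollary of Theorem \ref{atom1}. The one point that genuinely requires attention is the possibility that one atom of $\mu$ lies at $0$, where Theorem \ref{atom1} does not apply directly; this is precisely why I would invoke the interlacing statement of part (1) (which forces $b^{(i)}_k \neq a_j$) separately, to rule out coincidences between the atoms of $\delta_{b_j}\rhd\nu$ and those of $\nu = \delta_0\rhd\nu$.
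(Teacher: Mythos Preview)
Your argument is correct and matches the approach the paper intends: the corollary is stated without its own proof precisely because it follows from Theorem~\ref{atom1} via the affineness $\mu \rhd \nu = \sum_i \lambda_i (\delta_{b_i} \rhd \nu)$, exactly as you outline. One minor simplification: Theorem~\ref{atom1}(2) is stated for \emph{any} distinct reals $b \neq c$ (its proof explicitly dispatches the case where one of them is $0$ by invoking the interlacing from part~(1)), so your separate treatment of a possible atom of $\mu$ at $0$ is already absorbed into part~(2) and need not be singled out.
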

\begin{proof}[Proof of Theorem]
(1) The reciprocal Cauchy transform of $\delta_b \rhd \nu$ is 
\begin{equation}\label{nume}
H_{\delta_b \rhd \nu} (z) =  \frac{(z-a_1) \cdots (z-a_m) - b\sum_{j=1}^m \lambda _j \prod_{k=1,k \neq j}^m (z-a_k)}{\sum_{j=1}^m \lambda _j \prod_{k=1,k \neq j}^m (z-a_k)}. 
\end{equation}
Denote by $f(z)$ the  numerator of the right hand side of (\ref{nume}). Then we have
\begin{itemize}
\item[] $f(a_1) = - \lambda _1 b(a_1 - a_2)(a_1-a_3)\cdots(a_1-a_m) = (-1)^m b p_1 $, 
\item[] $f(a_2) = - \lambda _2 b(a_2 - a_1)(a_2-a_3)\cdots(a_2-a_m) = (-1)^{m-1} b p_2 $,  
\item[] ~~~~~~~~\vdots ~~~~~~~~~~~~~~~~~~~~~~~~~~~~~~~~~~~~~~~~~~~~~~~~~~~~~~~~~\vdots
\item[] $f(a_m) = - \lambda _m b(a_m - a_1)(a_m-a_2)\cdots(a_m-a_{m-1}) = - b p_m $,   
\end{itemize}
where $p_k$'s are some positive real numbers. The changes of signs of $f(z)$ and the behavior of $f(z)$ 
at $\infty$ and $-\infty$  show that there exist $m$ distinct real roots 
$b_1 < \cdots <  b_m$ of $f(z)$ as follows:

\begin{itemize}
\item[(a)]$b > 0$ 
$\Longrightarrow$ $b_k \in (a_k, a_{k+1})$ for $1\leq k\leq m-1,$ and $b_m \in (a_m,\infty)$.  
\item[(b)]$b < 0$ 
$\Longrightarrow$ $b_1 \in (-\infty, a_1)$ and $b_k \in (a_{k-1}, a_{k})$ for $2\leq k\leq m$.   
\end{itemize}

For the denominator, we look for $\mu_k$'s such that the following identity holds:
\begin{equation} \label{postu}
\sum_{j=1}^m \lambda _j \prod_{k=1,k \neq j}^m (z-a_k) = \sum_{j=1}^m \mu _j \prod_{k=1,k \neq j}^m (z-b_k).
\end{equation}     
These $\mu_k $'s are obtained as follows. When $z = b_i$, (\ref{postu}) becomes 
\begin{equation}
\mu_i = \frac{\sum_{j=1}^m \lambda _j \prod_{k=1,k \neq j}^m (b_i - a_k) }{\prod_{k \neq i}^m (b_i - b_k)}. 
\end{equation}
Conversely, if we define the $\mu_k$'s as above, the equality (\ref{postu}) holds at the different $m$ points 
$z = b_k$, $1\leq k \leq m$. 
Then the equality (\ref{postu}) holds identically since both sides of (\ref{postu}) are polynomials of at most degree $m-1$.
Thus we have obtained 
\begin{equation}
H_{\delta_b \rhd \nu}(z) = \frac{\prod_{k=1} ^m (z-b_k)}{\sum_{k=1}^m \mu_k \prod_{j \neq k, j=1}^m (z - b_j)}. 
\end{equation}
Since $f(z)$ is the numerator of $H_{\delta_b \rhd \nu}(z)$, it holds that 
$(b_i - a_1) \cdots (b_i - a_m) = b\sum_{j=1}^m \lambda _j \prod_{k=1,k \neq j}^m (b_i - a_k)$ 
for each $1 \leq i \leq m$.
Therefore, we obtain 
\begin{equation}
\mu_i = \frac{\prod_{k=1}^m (b_i - a_k)}{b \prod_{k \neq i}^m (b_i - b_k)}. 
\end{equation} 
Then we obtain $\delta_b \rhd \nu = \sum_{k = 1} ^m \mu_k \delta_{b_k}$. 

\noindent
(2) If $b$ or $c$ is equal to 0, the claim is obvious from (1). Hereafter, we consider the case $b \neq 0$ and 
$c \neq 0$.  
 In addition to $f(z)$ used in the proof of (1), we define $g(z)$ by 
\begin{equation} 
g(z) = (z-a_1) \cdots (z-a_m) - c\sum_{j=1}^m \lambda _j \prod_{k=1,k \neq j}^m (z-a_k).
\end{equation}   
Assume that there is some $\alpha$ which satisfies both $f(\alpha )=0 $ and $g(\alpha)=0$. 
Calculation of $f(\alpha ) - g(\alpha) = 0$ leads to
\begin{equation} \label{sb}
\sum_{j=1}^m \lambda _j \prod_{k=1,k \neq j}^m (\alpha - a_k) = 0, 
\end{equation} 
where $b \neq c$ has been used. Substituting (\ref{sb}) into the expression of $f(\alpha)=0$, 
we have 
\[(\alpha - a_1) \cdots (\alpha - a_m) = 0, \] 
which contradicts the fact that $\alpha$ is different from $a_k$'s. 
\end{proof}

We can characterize atomic probability measures in terms of the integral representation of reciprocal Cauchy transforms by a similar argument.  
\begin{prop}
A probability measure $\nu$ has the form 
$\sum_{k=1}^m \lambda _k \delta_{a_k}$ 
with $a_k < a_{k+1}$, $\lambda _k > 0$ for all $k$
if and only if 
its reciprocal Cauchy transform $H_\nu$ is of the form 
\[ H_\nu (z) = \alpha + z + \sum_{k=1}^{m-1} \frac{\beta_k}{b_k - z},  \]
with $\beta_k > 0$ and $\alpha \in \real$.
Moreover, if ${b_i} 's$ are ordered as $b_1 < b_2 < \cdots < b_{m-1}$ then 
it holds that $a_1 < b_1 < a_2 < b_2 < \cdots < b_{m-1} < a_m $.  
\end{prop}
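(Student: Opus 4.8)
The plan is to move back and forth between the partial‑fraction form of $H_\nu$ and that of its reciprocal $G_\nu = 1/H_\nu$, exactly in the spirit of the proof of Theorem~\ref{atom1}, reading off divisibility of degrees, interlacing, and positivity of coefficients entirely from sign changes of the relevant numerator polynomials.

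For the ``only if'' direction I would start from $\nu = \sum_{k=1}^m \lambda_k \delta_{a_k}$ and write $H_\nu(z) = p(z)/q(z)$, where $p(z) = \prod_{k=1}^m (z-a_k)$ is monic of degree $m$ and $q(z) = \sum_{j=1}^m \lambda_j \prod_{k\neq j}(z-a_k)$ is monic of degree $m-1$ (leading coefficient $\sum_j \lambda_j = 1$). Evaluating $q(a_i) = \lambda_i \prod_{k\neq i}(a_i - a_k)$ shows the sign of $q(a_i)$ strictly alternates with $i$, so $q$ has $m-1$ simple real zeros $b_1 < \cdots < b_{m-1}$ with $a_i < b_i < a_{i+1}$ for each $i$; this is already the asserted interlacing. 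Polynomial division $p = (z+\alpha)q + r$ with $\deg r \le m-2$ gives $H_\nu(z) = z + \alpha + r(z)/q(z)$, and since $\deg r < \deg q = m-1$ the expansion $r/q = \sum_k c_k/(z-b_k)$ holds with $c_k = r(b_k)/q'(b_k) = p(b_k)/q'(b_k)$. Setting $\beta_k := -c_k$ puts $H_\nu$ in the stated form, and positivity follows by counting signs: $p(b_k) = \prod_j (b_k - a_j)$ has sign $(-1)^{m-k}$ because $b_k \in (a_k,a_{k+1})$, whereas $q'(b_k) = \prod_{i\neq k}(b_k - b_i)$ has sign $(-1)^{m-1-k}$, so $\beta_k = -p(b_k)/q'(b_k) > 0$.

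For the ``if'' direction I would reverse this. Given $H_\nu(z) = z + \alpha - \sum_{k=1}^{m-1}\beta_k/(z-b_k)$ with all $\beta_k > 0$, set $q(z) := \prod_{k=1}^{m-1}(z-b_k)$ and $p(z) := (z+\alpha)q(z) - \sum_k \beta_k \prod_{j\neq k}(z-b_j)$, so $p$ is monic of degree $m$ and $G_\nu = q/p$. First I show $p$ has $m$ distinct real zeros: $p(b_k) = -\beta_k \prod_{j\neq k}(b_k - b_j)$ has sign $(-1)^{m-k}$, giving a sign change, hence a zero, in each of the $m-2$ intervals $(b_k,b_{k+1})$; comparing the signs of $p(b_1)$ and $p(b_{m-1})$ with $\lim_{z\to-\infty}p(z)$ (sign $(-1)^m$) and $\lim_{z\to+\infty}p(z) = +\infty$ yields one more zero in $(-\infty,b_1)$ and one in $(b_{m-1},\infty)$. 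That accounts for all $m$ zeros $a_1 < b_1 < a_2 < \cdots < b_{m-1} < a_m$, all simple, giving the interlacing. Then $G_\nu = q/p = \sum_k \lambda_k/(z-a_k)$ with $\lambda_k = q(a_k)/p'(a_k)$, and since $q(a_k) = \prod_j(a_k - b_j)$ and $p'(a_k) = \prod_{i\neq k}(a_k - a_i)$ both have sign $(-1)^{m-k}$ we get $\lambda_k > 0$; moreover $\sum_k \lambda_k = \lim_{z\to\infty} z\,G_\nu(z) = 1$ since $z\,q(z)/p(z) \to 1$. By uniqueness of the measure with a given Cauchy transform, $\nu = \sum_k \lambda_k \delta_{a_k}$.

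The genuinely routine but fiddly part is the sign bookkeeping for the products $\prod_j(b_k - a_j)$, $\prod_{i\neq k}(b_k - b_i)$, $\prod_j(a_k - b_j)$ and $\prod_{i\neq k}(a_k - a_i)$; everything hinges on keeping those parities straight. The only non-algebraic ingredient is the intermediate-value argument that locates the two outermost zeros of $p$ in the converse direction, and that is immediate once the behaviour of $p$ at $\pm\infty$ is recorded, so I do not expect a real obstacle beyond careful bookkeeping.
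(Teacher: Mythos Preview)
Your proof is correct and follows essentially the same approach as the paper, which gives no explicit proof but states that the proposition follows ``by a similar argument'' to Theorem~\ref{atom1}; your partial-fraction manipulations and sign-alternation arguments are precisely that similar argument carried out in full. The sign bookkeeping you flag as fiddly all checks out.
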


For an atomic probability measure $\nu$ containing more than one atom, the number of atoms in $\nu ^{\rhd n}$ increases as $n$ increases  by Corollary \ref{atom2}. 
If we could prove that an $n$-th root of an atomic measure 
is again an atomic measure, then we could show that an atomic measure with finite atoms more than one is not monotone infinitely divisible by Corollary 
\ref{atom2}. We prove this fact next in a more general form without a reference to an $n$-th root. 

We say an atom $a$ in a probability measure $\mu$ is \textit{isolated} 
if $a \notin \overline{\supp \mu \backslash \{a \}}$.  
\begin{thm}\label{atom3} 
If a $\rhd$-infinitely divisible distribution $\nu$ contains an isolated atom at $a$,  
$\nu$ is of the form $\nu =  \nu(\{a \}) \delta_a + \nu_{ac}$, where $\nu_{ac}$ is absolutely continuous w.r.t. 
the Lebesgue measure and $a \notin \supp \nu_{ac}$. Moreover, we have 
\begin{equation}\label{emp}
\{u \in \supp \nu \backslash \{a \}; \limsup_{v \searrow 0}|G_{\nu}(u + iv)| = \infty \} = \emptyset. 
\end{equation}
\end{thm}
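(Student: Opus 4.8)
The plan is to prove the emptiness statement \eqref{emp} first, and then to read off the structural decomposition of $\nu$ from it together with classical facts about the boundary behaviour of Cauchy transforms. The only place where $\rhd$-infinite divisibility is used is through the injectivity of $H_\nu$ on $\com+$, i.e.\ property (c'), which holds by \cite{Bel}.

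First I would record the local analytic picture of $H_\nu$ at the isolated atom. Choose $\epsilon>0$ with $\supp\nu\cap(a-\epsilon,a+\epsilon)=\{a\}$ and split
\[
G_\nu(z)=\frac{\nu(\{a\})}{z-a}+g(z),\qquad g(z):=\int_{\real\setminus\{a\}}\frac{d\nu(x)}{z-x}.
\]
Because the mass of $\nu$ away from $a$ lives outside $(a-\epsilon,a+\epsilon)$, the function $g$ is analytic on $\{|z-a|<\epsilon/2\}$, so $H_\nu(z)=(z-a)\bigl(\nu(\{a\})+(z-a)g(z)\bigr)^{-1}$ extends analytically to a neighbourhood of $a$ with $H_\nu(a)=0$ and $H_\nu'(a)=\nu(\{a\})^{-1}\in(0,\infty)$. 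Since $H_\nu$ is real on the real segment, maps $\com+$ into $\com+$, and has positive derivative at $a$, it restricts to a biholomorphism of a disc $D\ni a$ onto a neighbourhood $W\ni 0$ carrying $D\cap\com+$ onto $W\cap\com+$; write $\Psi:W\to D$ for its inverse, so that $\Psi(0)=a$, $\Psi$ maps $W\cap\com+$ into $\com+$, and $H_\nu\circ\Psi=\mathrm{id}_W$.

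Then \eqref{emp} follows by a short contradiction argument. Suppose some $u\in\supp\nu\setminus\{a\}$ satisfies $\limsup_{v\searrow0}|G_\nu(u+iv)|=\infty$; then $\liminf_{v\searrow0}|H_\nu(u+iv)|=0$, so there are $v_n\searrow0$ with $z_n:=u+iv_n\in\com+$ and $H_\nu(z_n)\to0$. For $n$ large, $H_\nu(z_n)\in W\cap\com+$, and then $\Psi(H_\nu(z_n))$ and $z_n$ are two points of $\com+$ with the same value under $H_\nu$; by injectivity of $H_\nu$ on $\com+$ this forces $z_n=\Psi(H_\nu(z_n))$. Letting $n\to\infty$ and using continuity of $\Psi$ at $0$ gives $z_n\to\Psi(0)=a$, contradicting $z_n\to u\ne a$. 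This is where global injectivity (property (c')), and not merely the local inversion near $a$, is essential, since $z_n$ sits far from $a$ yet is still pinned down as $\Psi(H_\nu(z_n))$; I expect this local-to-global matching to be the main point to get right.

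Finally I would deduce the decomposition. An atom of $\nu$ at some $u\ne a$ would force $|G_\nu(u+iv)|\to\infty$ as $v\searrow0$, contradicting \eqref{emp}, so $a$ is the only atom. If the singular continuous part $\nu_{\mathrm{sc}}$ were nonzero, then by the classical theorem on vertical boundary values of Poisson integrals (the harmonic extension of the singular part of a finite measure tends to $+\infty$ at almost every point of that singular part), $|\im G_\nu(u+iv)|\to\infty$ as $v\searrow0$ for $\nu_{\mathrm{sc}}$-a.e.\ $u$; since $\nu_{\mathrm{sc}}(\{a\})=0$ there is such a $u$ in $\supp\nu\setminus\{a\}$, again contradicting \eqref{emp}. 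Hence $\nu=\nu(\{a\})\delta_a+\nu_{\mathrm{ac}}$ with $\nu_{\mathrm{ac}}$ absolutely continuous, and $\nu_{\mathrm{ac}}((a-\epsilon,a+\epsilon))\le\nu\bigl((a-\epsilon,a+\epsilon)\setminus\{a\}\bigr)=0$, so $a\notin\supp\nu_{\mathrm{ac}}$. Besides the inversion step above, the other point requiring care is this appeal to the boundary-value theorem used to exclude a singular continuous component.
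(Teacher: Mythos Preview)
Your proof is correct and follows essentially the same strategy as the paper's. Both arguments reduce the structural claim to the emptiness statement \eqref{emp} via the de la Vall\'ee Poussin--type boundary behaviour of Cauchy transforms (the paper's Lemma~\ref{atom4}), and both prove \eqref{emp} by contradicting the global injectivity of $H_\nu$ on $\com+$ (property (c')): near the isolated atom $a$ the function $H_\nu$ has a simple zero, so any value $H_\nu(z_1)$ close to $0$ acquires a second preimage $z_2$ near $a$, which is impossible if $z_1$ lies near some $u\neq a$. The only difference is packaging: the paper produces $z_2$ via an explicit Rouch\'e estimate on a disc around $a$, whereas you invoke the inverse function theorem to write $z_2=\Psi(H_\nu(z_1))$ directly; these are equivalent viewpoints, and your formulation is arguably a bit cleaner since it bypasses the quantitative $|f-g|<|f|$ bookkeeping.
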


We need the following well-known fact, which is a consequence of the theorem of 
de la Vall\'{e}e Poussin \cite{Sak}. 
\begin{lem}\label{atom4}
For a positive finite measure $\nu$, the singular part $\nu_{sing}$ is supported on $\{u \in \supp \nu; |G_{\nu}(u + i0)| = \infty \}$. 
\end{lem}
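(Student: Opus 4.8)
The plan is to combine the Lebesgue decomposition $\nu = \nu_{ac} + \nu_{sing}$ with the classical differentiation theorem of de la Vall\'{e}e Poussin, and to link the symmetric derivative of $\nu$ to the boundary growth of the Cauchy transform by an elementary Poisson-kernel estimate. Throughout I read the symbol $|G_\nu(u+i0)| = \infty$ as $\limsup_{v\searrow 0}|G_\nu(u+iv)| = \infty$, consistently with the notation in (\ref{emp}).

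First I would recall the precise form of de la Vall\'{e}e Poussin's theorem from \cite{Sak}: if $\nu$ is a positive finite measure and $\overline{D}\nu(u) := \limsup_{v\searrow 0}\frac{\nu((u-v,u+v))}{2v}$ denotes its upper symmetric derivative, then $\overline{D}\nu$ is finite $\leb$-almost everywhere and the singular part $\nu_{sing}$ is carried by
\[
E := \{u \in \real ; \overline{D}\nu(u) = +\infty \},
\]
that is, $\nu_{sing}(\real \setminus E) = 0$. Thus it suffices to show that $E \cap \supp\nu$ is contained in $\{u \in \supp\nu ; \limsup_{v\searrow 0}|G_\nu(u+iv)| = \infty\}$, since $\nu_{sing}$ is automatically carried by $\supp\nu$.

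The link between $E$ and the Cauchy transform comes from a one-line lower bound. For $v > 0$,
\[
|\im G_\nu(u+iv)| = \int_\real \frac{v}{(u-x)^2+v^2}\,d\nu(x) \geq \int_{|x-u|<v}\frac{v}{(u-x)^2+v^2}\,d\nu(x) \geq \frac{\nu((u-v,u+v))}{2v},
\]
where the last step uses $(u-x)^2 + v^2 < 2v^2$ on the interval $|x-u| < v$. Taking $\limsup_{v\searrow 0}$ and using $|G_\nu| \geq |\im G_\nu|$ gives
\[
\limsup_{v\searrow 0}|G_\nu(u+iv)| \geq \overline{D}\nu(u).
\]
Hence $\overline{D}\nu(u) = +\infty$ forces $\limsup_{v\searrow 0}|G_\nu(u+iv)| = \infty$, so $E$ is contained in $\{u ; |G_\nu(u+i0)| = \infty\}$; intersecting with $\supp\nu$ and invoking de la Vall\'{e}e Poussin finishes the proof.

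The routine part is the Poisson estimate; the only point requiring care is the bookkeeping around the differentiation theorem. I would make sure the version of de la Vall\'{e}e Poussin I cite is stated for positive finite Borel measures with the symmetric derivative (rather than a general Vitali derivative), so that the quantity appearing there is exactly the one controlled by the lower bound above; with a one-sided or non-symmetric derivative the kernel estimate would have to be adjusted. A secondary point is that $E$ need not lie inside $\supp\nu$ a priori, but this is harmless: $\nu_{sing}$ is carried by $E \cap \supp\nu$ because $\supp\nu_{sing}\subseteq\supp\nu$, and this intersection already lies inside the set displayed in the lemma, so $\nu_{sing}$ of its complement vanishes.
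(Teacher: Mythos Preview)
Your proposal is correct and follows exactly the route the paper indicates: the paper does not give a detailed proof of this lemma but simply states it as a well-known consequence of de la Vall\'ee Poussin's theorem from \cite{Sak}, and your argument fills in precisely that deduction by combining the differentiation theorem with the elementary Poisson-kernel lower bound linking $\overline{D}\nu$ to $\limsup_{v\searrow 0}|G_\nu(u+iv)|$. Nothing further is needed.
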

\begin{proof}[Proof of Theorem] 
The probability measure $\nu$ is of the form $\nu =  \lambda \delta_{a} + \mu$, where $\lambda := \nu(\{a \}) >0$,
$\mu$ is a positive finite measure and $a \notin \supp \mu$.  
It is enough to prove that \\$\{u \in \supp \nu \backslash \{a \}; \limsup_{v \searrow 0}|G_{\nu}(u + iv)| = \infty \} = \emptyset$ by Lemma \ref{atom4}. 
We prove by \textit{reductio ad absurdum}. Assume that there exists a point $a_1$ such that $\limsup_{v \searrow 0} |G_{\nu_{ac}}(a_1 + iv)| = \infty$, which 
implies 
\begin{equation}\label{atom5}
\limsup_{v \searrow 0}H_{\nu}(a_1 + iv) = 0. 
\end{equation}  
It suffices to prove that $H_\nu$ is not injective on $\com+$ according to (c') explained in Section \ref{inj}. 
The reciprocal Cauchy transform of $\nu$ is given by 
\begin{equation} 
\begin{split} 
H_\nu (z) &= \frac{1}{\frac{\lambda}{z-a} + G_\mu(z)}  \\
          &= \frac{z-a}{\lambda + (z-a)G_\mu(z)}. 
\end{split}
\end{equation}
By the assumption $a \notin \supp\mu $, $G_\mu$ is analytic in 
some small neighborhood of $a$.

Let $z_1$ be an arbitrary point in $\com+$ and let $f(z)$ and $g(z)$ be analytic functions defined by  
\begin{gather}
f(z):= (z - a) -  H_\nu(z_1)\{  \lambda  +  (z - a)G_\mu (z) \}, \\ 
g(z):= (z - a). 
\end{gather}
We note that $f(z_2) = 0$ implies $H_{\nu}(z_1) = H_{\nu}(z_2)$. 
We shall prove that there exist a point $z_1 \in \com+$ and some small open disc $D$ around $a_2$ such that 
$|f(z) - g(z)| < |f(z)|$ on $\partial D$.   

We define $\eta :=\frac{1}{2} d(a, \supp \mu)$ and 
$D:= \{z \in \comp ;|z - a| < \eta   \}$,   
where $d(a, \supp \mu)$ is the distance between 
$a$ and $\supp \mu$.
Then $g(z)$ has just one zero point $a$ in $D$ and $D$ does not contain $z_1$ 
if $z_1$ is near to $a_1$. We have for $z \in \partial D$ 
\[|f(z)-g(z)| \leq M|H_\nu (z_1)|, \]
where $M$ is a constant independent of $z_1$. We also have for $z \in \partial D$ 
\begin{equation*}
\begin{split}
|f(z)| & \geq |z - a| - M|H_\nu (z_1)|  \\
       & \geq \frac{1}{2}\eta  - M|H_\nu(z_1)|.     \\  
\end{split}
\end{equation*}
If we take $z_1 = a_1 + yi$ with $y > 0$ to satisfy $M |H_\nu (z_1)| < \frac{1}{4} \eta $, then 
we have $|f(z)-g(z)| < |f(z)|$ on $\partial D$. 
Since $g(z)$ has only one zero point $a \in D$, $f(z)$ also has just one zero point $z_2$ in $D$ by Rouche's theorem. 
Then it follows that $H_{\nu } (z_1) = H_\nu (z_2)$ and $z_1 \neq z_2$. 
$\im z_2$ might be considered to be negative, which is, however, never the case. In fact, the reciprocal 
Cauchy transform $H_{\nu}$ defined on $\comp \backslash \supp \mu$ 
maps $\com+$ to $\com+$ and $\comp _{-}$ to $\comp _{-}$. Therefore, $\im z_2 > 0$, and the proof 
has been finished. 
\end{proof}     
\begin{rem}
There are $\rhd$-infinitely divisible probability distributions which contain one Dirac measure. 
For instance, a Dirac measure itself and the deformed arcsine law with parameter $c \geq 0$ \cite{Mur3} 
(see also Section \ref{Exa} of the present paper):
$d\mu_t = d\mu_{t,ac} + d\mu_{t,sing}$, where
\begin{equation}\label{abso1} 
\begin{split}
&d\mu_{t,ac}(x) =  \frac{1}{\pi }\frac{\sqrt{2t - (x-c)^2}}{c^2 + 2t - (x-c)^2} 1_{(c-\sqrt{2t}, c+ \sqrt{2t})}(x)dx, \\
&\mu_{t,sing} =  \frac{|c|}{\sqrt{c^2 + 2t}} \delta_{ c - \sqrt{c^2 + 2t} }.
\end{split}
\end{equation}  
\end{rem}
\begin{exa}
Let $0 < \lambda < 1$. The following examples do not satisfy (\ref{emp}). 
\begin{itemize}
\item[$(1)$] $\nu(dx) = \lambda \delta_a (dx) + \frac{1 - \lambda}{c-b} 1_{(b, c)}(x) dx$ with $a \notin (b,c)$ does not satisfy (\ref{emp}), since $G_{\nu_{ac}} (z) = \frac{1-\lambda}{c-b}\log\Big{(} \frac{z-b}{z-c} \Big{)}$.  
\item[$(2)$] $\nu(dx) = \lambda \delta_a (dx) + \frac{1-\lambda}{\pi \sqrt{2 - x^2}} 1_{(-\sqrt{2}, \sqrt{2})}(x) dx$ with $a \notin (-\sqrt{2}, \sqrt{2})$ does not satisfy (\ref{emp}) since 
 $G_{\nu_{ac}}(z) = \frac{1-\lambda}{\sqrt{z^2 - 2}}$.  
 \end{itemize}
More generally, we can prove under some restrictions that a point $u$ at which the density function is not continuous satisfies $|G_{\nu_{ac}}(u + i0)| = \infty$. 
We note that the deformed arcsine law $c \geq 0$ in (\ref{abso1}) has an atom if and only if $c > 0$, and the absolutely continuous part 
is a continuous function on $\real$ 
if and only if $c > 0$; there are no contradictions. 
\end{exa}

\section{Behavior of supports and moments under monotone convolution}\label{basic}
We consider properties of probability measures which are conserved under the monotone convolution. 
Let $\mu$ be a probability measure. Define the minimum and the maximum of the support: $a(\mu):= \inf \{x \in \text{supp}\mu \}$, $b(\mu):= \sup \{x \in \text{supp}\mu \}$.  Here  $-\infty \leq a(\mu) < \infty$ and $-\infty < b(\mu) \leq \infty$ hold. 
We say that $\mu$ contains an isolated atom at $c \in \real$ if $\mu(\{c \}) > 0$ and $c \notin \overline{(\supp \mu)\backslash \{c \} }$. 
In this paper we occasionally consider analytic continuations of functions such as $G_\mu$ or $H_\mu$ from $\comp \backslash \real$ to an open subset $U$ of $\comp$ which intersects $\real$. If there are no confusions, for simplicity, we only say that a function is analytic in $U$, instead of saying that a function has an analytic continuation. 
\begin{lem}\label{support}
Let $\mu$ be a probability measure. We use the notation (\ref{recip1}). \\
(1) $(\supp \mu)^c \cup (\comp \backslash \real)$ is the maximal domain in which $G_\mu(z)$ is analytic. Similarly,  
$(\supp \eta)^c \cup (\comp \backslash \real)$ is the maximal domain in which $H_\mu(z)$ is analytic. \\ 
(2) $\{x \in (\supp \mu)^c; G_\mu(x) \neq 0 \} \subset (\supp \eta)^c$. Similarly, 
 $\{x \in (\supp \eta)^c; H_\mu(x) \neq 0 \} \subset (\supp \mu)^c$. In particular, $a(\eta) \geq a(\mu)$ since $G_\mu(x) \neq 0$ for 
$x \in (-\infty, a(\mu))$. 
\end{lem}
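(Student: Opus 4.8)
The plan is to establish both parts by the standard principle that the Cauchy transform, resp. the reciprocal Cauchy transform, extends analytically across a point of $\real$ precisely when that point avoids the support of the representing measure. For part (1), I would argue the two assertions symmetrically. Clearly $G_\mu$ is analytic on $\comp \backslash \real$, and it is analytic in a neighborhood of any $x_0 \in (\supp \mu)^c$ because the integrand $\frac{1}{z-t}$ is jointly analytic in $z$ and continuous in $t$ on a small disc around $x_0$ times $\supp\mu$, so $G_\mu$ is given near $x_0$ by a convergent power series. Thus $(\supp\mu)^c \cup (\comp\backslash\real)$ is contained in the domain of analyticity. For maximality, I would use the Stieltjes inversion formula: if $G_\mu$ were analytic in a neighborhood of some $x_0 \in \supp\mu \cap \real$, then $\mu((x_0-\epsilon,x_0+\epsilon)) = -\frac1\pi \lim_{v\searrow 0}\int \im G_\mu(u+iv)\,du = 0$ for small $\epsilon$, contradicting $x_0 \in \supp\mu$. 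The statement for $H_\mu$ follows by applying the same reasoning to the representation (\ref{recip1}): near a point $x_0 \notin \supp\eta$ the integral $\int \frac{1+tz}{t-z}\eta(dt)$ defines an analytic function, and conversely, if $H_\mu$ extended analytically across $x_0 \in \supp\eta$, the Stieltjes-type inversion applied to the Nevanlinna representation (\ref{recip1}) would force $\eta$ to vanish near $x_0$.

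For part (2), suppose $x_0 \in (\supp\mu)^c$ with $G_\mu(x_0) \neq 0$. By part (1), $G_\mu$ is analytic and non-vanishing in a neighborhood of $x_0$, hence $H_\mu = 1/G_\mu$ is analytic there; by the maximality statement of part (1) applied to $H_\mu$, this forces $x_0 \notin \supp\eta$, i.e. $x_0 \in (\supp\eta)^c$. The symmetric inclusion is proved identically with the roles of $G_\mu$ and $H_\mu$ (and of $\mu$ and $\eta$) interchanged, using that $G_\mu = 1/H_\mu$. For the final sentence, I would observe that for $x \in (-\infty, a(\mu))$ the integrand $\frac{1}{x-t}$ is strictly negative for every $t \in \supp\mu \subset [a(\mu),\infty)$, so $G_\mu(x) = \int \frac{1}{x-t}\,d\mu(t) < 0$, in particular $G_\mu(x) \neq 0$; combined with the inclusion just proved, $(-\infty,a(\mu)) \subset (\supp\eta)^c$, which gives $a(\eta) \ge a(\mu)$.

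The main obstacle I anticipate is making the maximality claim in part (1) fully rigorous, i.e. excluding the possibility that $G_\mu$ (resp. $H_\mu$) has an analytic continuation across a support point through some large or disconnected open set rather than a small disc. The clean way around this is to phrase maximality locally: no point of $\supp\mu$ (resp. $\supp\eta$) lies in \emph{any} open set to which $G_\mu$ (resp. $H_\mu$) continues analytically, which is exactly what the Stieltjes inversion argument delivers, since it only uses analyticity on a small interval of $\real$. One should also be slightly careful that the analytic continuation of $H_\mu$ across $\real$ need not coincide with $1/G_\mu$ globally, but it does on the intersection of their domains with $\comp\backslash\real$, which by the identity theorem is enough to transfer non-vanishing and analyticity statements between the two. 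With these points handled, the remaining computations are the routine ones indicated above.
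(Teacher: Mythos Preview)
Your proposal is correct and follows essentially the same approach as the paper: the paper's proof consists of the single sentence ``These statements easily follow from the Perron--Stieltjes inversion formula,'' and your argument is precisely a careful unpacking of that claim, including the routine observations (analyticity of the Cauchy integral off the support, recovery of the measure via inversion to obtain maximality, and the sign argument for $G_\mu$ on $(-\infty,a(\mu))$). The subtleties you flag about phrasing maximality locally and about matching continuations via the identity theorem are the right ones to note, and they are handled exactly as you indicate.
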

\begin{proof}
These statements easily follow from the Perron-Stieltjes inversion formula. 
\end{proof} 

A classical infinitely divisible distribution necessarily has a noncompact support, except for a delta measure.
This situation is different from monotone, free and Boolean cases. For instance, 
a centered arcsine law is $\rhd$-infinitely divisible.   
The study of the maximum or minimum of a support becomes more important for this reason.
It is known that if $\lambda =\nu \rhd \mu$ and $\lambda $ has a compact support, then 
the support of $\mu$ is also compact \cite{Mur3}. We generalize this and prove a basic 
estimate of supports.  

\begin{prop}\label{estimate}
The following inequalities hold for probability measures $\nu $ and $\mu$. \\
(1) If $\supp \nu \cap (-\infty, 0] \neq \emptyset $ and 
$\supp \nu \cap [0, \infty) \neq \emptyset$, then  $a(\mu) \geq a(\nu \rhd \mu )$, $b(\mu) \leq b(\nu \rhd \mu )$.  \\
(2) If $\supp \nu \subset (-\infty, 0]$, then $a(\mu) \geq a(\nu \rhd \mu )$, $b(\nu) + b(\mu) \leq b(\nu \rhd \mu)$.  \\
(3) If $\supp \nu \subset [0, \infty)$, then $a(\nu) + a(\mu) \geq a(\nu \rhd \mu)$, $b(\mu) \leq b(\nu \rhd \mu) $.  
\end{prop}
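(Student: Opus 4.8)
The plan is to exploit the subordination-type representation $\nu \rhd \mu(A) = \int_{\real} \mu_x(A)\, d\nu(x)$ together with the identity $H_{\mu_x}(z) = H_\mu(z) - x$ from \eqref{eq21}, and to read off support information from where the relevant Cauchy transforms are analytic and real-valued, using Lemma \ref{support}. The key observation is that for $x \in \real$ the measure $\mu_x$ is the probability measure whose reciprocal Cauchy transform is $H_\mu(z) - x$; on the real interval $(-\infty, a(\mu))$ the function $H_\mu$ is analytic, real, increasing, and satisfies $H_\mu(t) \to -\infty$ as $t \to -\infty$ while $H_\mu(t) < t$ is bounded above near $a(\mu)^-$ (since $H_\mu$ extends continuously with $H_\mu(a(\mu)) \le a(\mu)$, possibly $-\infty$ only in degenerate cases which are handled separately). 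Hence for each fixed $x$, the equation $H_\mu(t) = x$ may or may not have a solution in $(-\infty, a(\mu))$, and whenever $H_\mu(t) > -\infty$ stays below some level, $\mu_x$ has no mass to the left of a point determined by $H_\mu^{-1}$. I would first establish the one-point facts: for $x \le 0$ one has $a(\mu_x) \ge a(\mu)$ because $H_\mu(z) - x$ has no zero and is analytic on $(-\infty, a(\mu))$ when $x \le \lim_{t \to a(\mu)^-} H_\mu(t)$, and more carefully $a(\mu_x)$ is nondecreasing in $-x$; symmetrically $b(\mu_x) \le b(\mu)$ for $x \ge 0$, and translation-type bounds $a(\mu_x) \ge a(\mu) + (\text{something in } x)$, $b(\mu_x) \le b(\mu) + (\text{something in } x)$ hold in the appropriate sign regimes.

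Granting these pointwise statements, the three cases follow by integrating over $\nu$. For (1), since $\supp\nu$ meets both $(-\infty,0]$ and $[0,\infty)$, I would pick $x_- \le 0$ and $x_+ \ge 0$ in $\supp\nu$; the mixture $\nu \rhd \mu = \int \mu_x\, d\nu(x)$ is supported in $\bigcup_{x \in \supp\nu}\supp\mu_x$, so $a(\nu \rhd \mu) = \inf_{x \in \supp\nu} a(\mu_x)$ and $b(\nu \rhd \mu) = \sup_{x \in \supp\nu} b(\mu_x)$. Using monotonicity of $x \mapsto a(\mu_x)$ and $x \mapsto b(\mu_x)$, the infimum is attained in the limit as $x$ runs through nonpositive values and the supremum through nonnegative values, giving $a(\nu \rhd \mu) \le a(\mu_{x_-}) $ and the needed comparison $a(\mu) \ge a(\nu \rhd \mu)$ — wait, the inequality goes the other way, so more precisely I would show $a(\mu_x) \le a(\mu)$ fails and instead that $a(\mu_x)$ can only move left relative to $a(\mu)$ when $x \le 0$, yielding $a(\nu\rhd\mu) \le a(\mu)$, i.e. $a(\mu) \ge a(\nu \rhd \mu)$; the $b$-inequality is symmetric. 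For (2), $\supp\nu \subset (-\infty,0]$ forces every $x \le 0$, so only the left bound is controlled on that side while on the right I use $b(\mu_x) \ge b(\mu) + b(\nu)$-type growth: here the relevant fact is that for $x \le 0$, $H_{\mu_x}(z) = H_\mu(z) - x \ge H_\mu(z)$ on the real axis to the right of $\supp$, pushing the rightmost point of analyticity/zero up by at least $|x|$ in a way that, optimized over $x = b(\nu)^{-}$... actually $x$ ranges down to $a(\nu)$ and up to $b(\nu) \le 0$, so the extremal shift is by $b(\nu)$, giving $b(\nu \rhd \mu) \ge b(\mu) + b(\nu)$. Case (3) is the mirror image with $\supp\nu \subset [0,\infty)$ and the roles of $a,b$ swapped.

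The main obstacle I anticipate is making the pointwise support-shift estimates for $\mu_x$ fully rigorous, in particular handling the boundary behaviour of $H_\mu$ at the edges $a(\mu)$, $b(\mu)$ of the support (where $H_\mu$ may tend to a finite limit or to $\mp\infty$), and correctly translating ``$H_\mu(z) - x$ has no real zeros on an interval'' into ``$\mu_x$ has no mass there'' via part (2) of Lemma \ref{support}, which requires also checking that $H_{\mu_x} = H_\mu - x$ remains analytic there (part (1) of Lemma \ref{support}). One must be careful that a zero of $H_\mu(z) - x$ appearing inside $(-\infty, a(\mu))$ is exactly what creates a pole of $G_{\mu_x}$ and hence potentially an atom or support point of $\mu_x$ to the left of $a(\mu)$; the content of the claimed inequalities is precisely that this cannot happen in the favourable sign regime, because $H_\mu$ is monotone with the right limiting values. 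Once this edge analysis is pinned down — most cleanly by using the Nevanlinna representation \eqref{recip1} to see $H_\mu' \ge 1$ on the real complement of $\supp\eta \supset$ (essentially) $\supp\mu$ and to control the relevant one-sided limits — the integration step is routine and I would not belabour it.
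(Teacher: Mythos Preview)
Your overall strategy --- pointwise support estimates for $\mu_x = \delta_x \rhd \mu$ combined with the mixture formula $\nu \rhd \mu = \int \mu_x\,d\nu(x)$ --- is exactly the paper's approach. The paper phrases the pointwise step using $G_{\mu^x} = G_\mu/(1 - xG_\mu)$ and simply checks where $1 - xG_\mu$ is nonvanishing on $\comp\setminus[a(\mu),b(\mu)+x]$ (for $x>0$) and on $\comp\setminus[a(\mu)-|x|,b(\mu)]$ (for $x<0$); this yields the clean lemma
\[
x>0:\ a(\mu^x)\ge a(\mu),\ b(\mu^x)\le b(\mu)+x,\qquad
x<0:\ a(\mu^x)\ge a(\mu)-|x|,\ b(\mu^x)\le b(\mu),
\]
which is the rigorous form of the ``edge analysis'' you flagged as the main obstacle. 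Your attempt to extract the same information from zeros of $H_\mu - x$ is the dual picture and would work, but as written you have the signs backwards: for $x\le 0$ one gets $a(\mu_x)\le a(\mu)$ (not $\ge$), precisely because $(\mu_x)^{-x}=\mu$ and the $x>0$ inequality above applies to the shift by $-x>0$. Once this is stated correctly, the rest of your outline goes through.

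For the integration step you use weak continuity of $x\mapsto\mu_x$ to get $\supp\mu_{x_0}\subset\supp(\nu\rhd\mu)$ for every $x_0\in\supp\nu$; the paper instead writes $(\nu\rhd\mu)(J^c)=\int\mu_x(J^c)\,d\nu(x)=0$ with $J=\supp(\nu\rhd\mu)$ to obtain the same inclusion for $\nu$-a.e.\ $x_0$, then \emph{inverts} via $\mu=(\mu^{x_0})^{-x_0}$ and applies the pointwise lemma to $\rho=\mu^{x_0}$. Either route is fine, and both finish by letting $x_0$ run to $a(\nu)$ or $b(\nu)$ through $\supp\nu$. So the only genuine gap in your proposal is the muddled sign in the pointwise step; replace your heuristic zero-counting by the $G$-formula argument (or equivalently by the inversion trick) and you have the paper's proof.
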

\begin{proof} 
For a probability measure $\rho$, we denote by $\rho^x$ the probability measure $\delta_x \rhd \rho$. This is useful since 
$\nu \rhd \mu$ can be expressed as 
\begin{equation}\label{eq0111}
\nu \rhd \mu(B) = \int_{\real} \mu^x (B)\nu(dx)
\end{equation}
 for Borel sets $B$ \cite{Mur3}. 

Let $\lambda := \nu \rhd \mu$. We prove first the following inequalities for an arbitrary probability measure $\rho $: \\
\begin{equation*}
   \begin{cases}
             a(\rho ^x) \geq  a(\rho ),~~ b(\rho ^x) \leq b(\rho ) + x & \text{for all $x > 0$}, \\
             a(\rho ^x) \geq  a(\rho ) - |x|,~~ b(\rho ^x) \leq  b(\rho ) & \text{for all $x < 0$.} 
   \end{cases}
\end{equation*}
It easy to prove that $\rho^x$ can be characterized by $G_{\rho ^x} = \frac{G_\rho }{1-xG_\rho }$. 
If $x > 0$, then $1-xG_{\rho}(z) \neq 0 $ for $z \in \comp \backslash [a(\rho), b(\rho) + x]$
and $G_{\rho}$ is analytic in this domain.
Therefore, the first inequality holds. The second is proved similarly.    

Let $J:= \supp \lambda$.
In view of the relation $\lambda (A) = \int_{\real} \mu^x(A)d\nu(x)$, 
we have $\lambda (J^c) = \int_{\real} \mu^x(J^c)d\nu(x) = 0$. 
Hence we obtain $\mu^x(J^c) = 0$, $\nu$-a.e. $x \in \real$. 
Take any $x_0$ such that $\mu^{x_0}(J^c) = 0$. Then  we have $a(\mu^{x_0}) \geq a(\lambda )$ and 
$b(\mu^{x_0}) \leq b(\lambda )$. 
If $x_0 > 0$, combining the inequalities $a(\rho ^x) \geq a(\rho ) - |x|$ and  $b(\rho ^x) \leq b(\rho ) $ for 
$\rho = \mu^{x_0}$ and $x =  - x_0 < 0 $, we have 
\begin{align*}
&a(\mu) = a(\mu^{x_0 - x_0}) \geq a(\lambda )  - |x_0|, \\ 
&b(\mu) = b(\mu^{x_0 -x_0}) \leq b(\lambda ). 
\end{align*}
Similarly if $x_0 < 0$,  
\begin{align*} 
&a(\mu) \geq  a(\lambda ),  \\
&b(\mu) \leq  b(\lambda ) + |x_0|. 
\end{align*}
Assume that $ \supp \nu \subset  (-\infty, 0]$. 
Then we obtain  $a(\mu) \geq a(\lambda )$ and 
 $b(\mu) \leq b(\lambda ) + |b(\nu)|$  
since there is a sequence of such $x_0$'s converging to the point $b(\nu)$. 
Hence we have proved (2). The statements (1) and (3) are proved in a similar way to (2). 
\end{proof} 
\begin{cor}\label{positivity} Let $\nu$ be a probability measure and let $n \geq 1$ be a natural number. \\
$(1)$  If $\supp (\nu ^{\rhd n} )\subset (- \infty, 0]$, then $\supp \nu \subset (- \infty, 0]$ and $|b(\nu)| \geq \frac{1}{n}|b(\nu ^{\rhd n} )|$. \\
$(2)$ If $\supp (\nu^{\rhd n}) \subset [0, \infty)$, then $\supp \nu \subset [0, \infty)$ and $a(\nu) \geq \frac{1}{n}a(\nu^{\rhd n})$. 
\end{cor}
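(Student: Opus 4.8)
The plan is to obtain both parts of the corollary by iterating Proposition \ref{estimate} along the chain $\nu^{\rhd k} = \nu \rhd \nu^{\rhd(k-1)}$, $k = 1, \dots, n$, with $\nu^{\rhd 0} := \delta_0$. The decisive point is that in this grouping the fixed measure $\nu$ always sits in the left slot, which is exactly where the hypotheses of Proposition \ref{estimate} are imposed. I will spell out the argument for (2); part (1) is then obtained from it by the symmetry $a(\cdot)\leftrightarrow b(\cdot)$, $[0,\infty)\leftrightarrow(-\infty,0]$, with parts (2) and (3) of Proposition \ref{estimate} interchanged.

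For the first assertion of (2) I would argue by contraposition: assuming $\supp \nu \not\subset [0,\infty)$, i.e.\ $a(\nu) < 0$, I deduce $a(\nu^{\rhd n}) < 0$. There are two cases. If $\supp \nu \subset (-\infty, 0]$, then the hypothesis of Proposition \ref{estimate}(2) holds, so $a(\nu \rhd \rho) \leq a(\rho)$ for every probability measure $\rho$; applying this with $\rho = \nu^{\rhd(k-1)}$ for $k = 2, \dots, n$ gives the telescoping chain $a(\nu^{\rhd n}) \leq a(\nu^{\rhd(n-1)}) \leq \cdots \leq a(\nu^{\rhd 1}) = a(\nu) < 0$. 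If instead $\supp \nu$ contains a point $> 0$, then, since $a(\nu) < 0$ forces a support point $< 0$ as well, $\supp \nu$ meets both $(-\infty, 0]$ and $[0, \infty)$; now Proposition \ref{estimate}(1) applies, and as its hypothesis involves only the left component $\nu$ it again yields $a(\nu \rhd \rho) \leq a(\rho)$ for all $\rho$, hence the same chain $a(\nu^{\rhd n}) \leq a(\nu) < 0$. In either case the contrapositive statement $\supp(\nu^{\rhd n}) \subset [0,\infty) \Rightarrow \supp \nu \subset [0,\infty)$ is proved.

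Granting now $\supp \nu \subset [0,\infty)$, the quantitative bound follows from the first inequality of Proposition \ref{estimate}(3): its hypothesis holds, so $a(\nu \rhd \rho) \leq a(\nu) + a(\rho)$ for every $\rho$. Applying this with $\rho = \nu^{\rhd(k-1)}$ and inducting on $k$ from the base value $a(\nu^{\rhd 0}) = a(\delta_0) = 0$ gives $a(\nu^{\rhd n}) \leq n\, a(\nu)$, that is, $a(\nu) \geq \tfrac{1}{n} a(\nu^{\rhd n})$. For (1) the mirror argument, using the inequality $b(\rho) \leq b(\nu \rhd \rho)$ in Proposition \ref{estimate}(3) (when $\supp\nu\subset[0,\infty)$) and in Proposition \ref{estimate}(1) (when $\supp\nu$ straddles $0$), gives $\supp(\nu^{\rhd n}) \subset (-\infty, 0] \Rightarrow \supp \nu \subset (-\infty, 0]$; then iterating the first inequality of Proposition \ref{estimate}(2) gives $b(\nu^{\rhd n}) \geq n\, b(\nu)$, and since both quantities are $\leq 0$ this reads $|b(\nu)| \geq \tfrac{1}{n} |b(\nu^{\rhd n})|$.

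I do not expect a genuine obstacle: the whole proof is a telescoping of Proposition \ref{estimate}. The one subtlety worth stating explicitly is that the case hypotheses in Proposition \ref{estimate}(1)--(3) constrain the left factor alone, so they are undisturbed when the right factor $\nu^{\rhd(k-1)}$ is changed at each step; this is precisely what legitimizes the iteration and also what forces a negative $a(\nu)$ (respectively a positive $b(\nu)$) to fall into one of the two cases handled by parts (1) and (2) (respectively (1) and (3)).
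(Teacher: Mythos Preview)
Your proof is correct and follows essentially the same route as the paper: both iterate Proposition~\ref{estimate} along $\nu^{\rhd k}=\nu\rhd\nu^{\rhd(k-1)}$ after a case split on the position of $\supp\nu$ relative to $0$, the paper framing the first half as contradiction while you frame it as contraposition. One trivial slip: in your last sentence, ``first inequality of Proposition~\ref{estimate}(2)'' should be ``second'' (you want the $b$-inequality $b(\nu)+b(\mu)\leq b(\nu\rhd\mu)$).
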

This corollary puts a restriction on the support of a $\rhd$-infinitely divisible distribution.
The continuous time version of (2) will be proved in Section \ref{time}. 
\begin{proof}
Let $\lambda:=\nu^{\rhd n}$. \\
(1) Assume that both $b(\nu) > 0$ and $b(\lambda ) = b(\nu ^{\rhd n}) \leq   0$ hold, then 
there are two possible cases:  (a) $\supp \nu \cap [0, \infty) \neq \emptyset$ 
and $ \supp \nu \cap (-\infty, 0] \neq \emptyset$; (b) $\supp \nu \subset [0, \infty)$ 
in Proposition \ref{estimate}. We apply Proposition \ref{estimate} replacing $\lambda$ and $\mu $ with $\nu^{\rhd n}$ and $\nu^{\rhd n-1}$, 
respectively. In both cases (a) and (b), it holds that $b(\nu ^{\rhd n-1}) \leq b(\lambda) \leq 0$.
Thus we obtain $b(\nu^{\rhd n-1}) \leq 0$. This argument can be repeated and finally we have $b(\nu) \leq 0$, a contradiction. 
Thereofre, $b(\nu) \leq 0$.  By the iterative use of Proposition \ref{estimate} (2) we obtain $b(\nu ^{\rhd n}) \geq nb(\nu)$,  
from which the conclusion follows. 
A similar argument applies to (2).
\end{proof}


The following theorem is well known. We will need almost the same argument in Proposition \ref{positive support2}. 
\begin{lem}\label{position of atom}
For a finite measure $\mu$, $\lim_{y \searrow  0}iyG_{\mu}(a + iy) = \mu(\{a \})$ for all $a \in \real$.
\end{lem}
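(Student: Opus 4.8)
The plan is to compute $iyG_\mu(a+iy)$ directly from the integral representation $G_\mu(z) = \int_{\real} \frac{1}{z-x}\,d\mu(x)$ and pass to the limit using dominated convergence. First I would write
\[
iyG_\mu(a+iy) = \int_{\real} \frac{iy}{a+iy-x}\,d\mu(x) = \int_{\real} \frac{iy}{(a-x)+iy}\,d\mu(x),
\]
and observe that the integrand $\phi_y(x) := \frac{iy}{(a-x)+iy}$ converges pointwise as $y \searrow 0$ to $\mathbf{1}_{\{a\}}(x)$: at $x=a$ it equals $1$ for every $y>0$, while for $x \neq a$ the numerator tends to $0$ and the denominator tends to the nonzero value $a-x$, so the ratio tends to $0$.

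The key estimate is the uniform bound $|\phi_y(x)| \le 1$ for all $y>0$ and all $x \in \real$, which follows from $|(a-x)+iy|^2 = (a-x)^2 + y^2 \ge y^2$, hence $|\phi_y(x)| = \frac{y}{\sqrt{(a-x)^2+y^2}} \le 1$. Since the constant function $1$ is integrable against the finite measure $\mu$, the dominated convergence theorem applies and gives
\[
\lim_{y\searrow 0} iyG_\mu(a+iy) = \int_{\real} \mathbf{1}_{\{a\}}(x)\,d\mu(x) = \mu(\{a\}).
\]

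I do not anticipate a genuine obstacle here; the only point requiring a moment of care is the choice of dominating function, and the bound $|\phi_y(x)|\le 1$ handles it cleanly and uniformly in $y$. One could alternatively invoke the Perron--Stieltjes inversion formula or the known Nevanlinna--Pick machinery, but the direct dominated-convergence argument is the shortest and most self-contained, and it is exactly the computation that will be reused in Proposition \ref{positive support2}.
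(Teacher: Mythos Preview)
Your proof is correct and is exactly the approach the paper takes: the paper's entire proof is the single sentence ``This claim follows from the dominated convergence theorem,'' and you have simply written out the details of that argument.
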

\begin{proof}
This claim follows from the dominated convergence theorem. 
\end{proof}

Now we prove a condition for a support to be included in the positive real line. 
A similar result was obtained in \cite{Ber1}. 
\begin{prop}\label{positive support2}
We use the notation (\ref{recip1}). 
Then $\supp \mu  \subset [0, \infty)$ if and only if $\supp \eta \subset [0, \infty)$ and $H_\mu (-0) \leq 0$ hold. 
Moreover, under the condition $\supp \eta \subset [0, \infty)$, the condition $H_\mu (-0) \leq 0$ is equivalent to the following conditions: $(\ast)$ $\eta(\{0 \})=0$; 
$\int_0 ^{\infty} \frac{1}{x}d\eta(x) < \infty$; $b + \int_0 ^{\infty} \frac{1}{x}d\eta(x) \leq 0$. 
\end{prop}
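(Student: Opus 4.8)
The plan is to argue entirely from the integral representation (\ref{recip1}), $H_\mu(z)=z+b+\int_\real\frac{1+tz}{t-z}\,\eta(dt)$, transferring support information between $\mu$ and $\eta$ by means of Lemma \ref{support}. Everything rests on the elementary identity
\[
\frac{1+tz}{t-z}=z+\frac{1+z^2}{t-z},\qquad\text{so that}\qquad\frac{d}{dx}\!\left(\frac{1+tx}{t-x}\right)=\frac{1+t^2}{(t-x)^2}.
\]
The second formula shows that once $\supp\eta\subset[0,\infty)$, the restriction of $H_\mu$ to the interval $(-\infty,0)$ --- where $H_\mu$ is real-analytic by Lemma \ref{support}(1) --- is strictly increasing and real-valued.

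For the ``only if'' direction, assume $\supp\mu\subset[0,\infty)$. Then $a(\mu)\ge0$, hence $a(\eta)\ge a(\mu)\ge0$ by Lemma \ref{support}(2), i.e.\ $\supp\eta\subset[0,\infty)$. For $x<0$ one has $G_\mu(x)=\int_{[0,\infty)}(x-t)^{-1}\,\mu(dt)<0$, so $H_\mu(x)=1/G_\mu(x)<0$; since $H_\mu$ is increasing on $(-\infty,0)$, $H_\mu(-0)=\sup_{x<0}H_\mu(x)\le0$. For the converse, assume $\supp\eta\subset[0,\infty)$ and $H_\mu(-0)\le0$. Monotonicity gives $H_\mu(x)<H_\mu(-0)\le0$ for every $x<0$, so $H_\mu$ has no zero on $(-\infty,0)\subset(\supp\eta)^c$; Lemma \ref{support}(2) then forces $(-\infty,0)\subset(\supp\mu)^c$, that is, $\supp\mu\subset[0,\infty)$.

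For the ``moreover'' part, work under $\supp\eta\subset[0,\infty)$. Setting $z=x<0$ in (\ref{recip1}), separating the possible atom of $\eta$ at $0$, and applying the splitting identity on $(0,\infty)$, one obtains
\[
H_\mu(x)=x+b+\frac{\eta(\{0\})}{-x}+x\,\eta((0,\infty))+(1+x^2)\int_{(0,\infty)}\frac{\eta(dt)}{t-x}.
\]
If $\eta(\{0\})>0$, the term $\eta(\{0\})/(-x)$ tends to $+\infty$ as $x\nearrow0$, so $H_\mu(-0)=+\infty$, contradicting $H_\mu(-0)\le0$; hence $\eta(\{0\})=0$ is necessary. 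When $\eta(\{0\})=0$, letting $x\nearrow0$ and applying the monotone convergence theorem to $\int_{(0,\infty)}\frac{\eta(dt)}{t+|x|}$ yields $H_\mu(-0)=b+\int_{(0,\infty)}t^{-1}\,\eta(dt)\in(-\infty,+\infty]$. Therefore $H_\mu(-0)\le0$ is equivalent to the finiteness of $\int_0^\infty t^{-1}\,\eta(dt)$ together with $b+\int_0^\infty t^{-1}\,\eta(dt)\le0$, which, combined with the already forced $\eta(\{0\})=0$, is exactly $(\ast)$.

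The argument is largely routine; the points requiring care are the separate treatment of the atom $\eta(\{0\})$ (the kernel $(1+tx)/(t-x)$ is singular and sign-changing at $t=0$ for $x<0$), the justification of the limit--integral interchange, which is legitimate only after the $x$-dependence has been moved into the monotone factor $1/(t-x)$, and the remark that the boundary value $H_\mu(-0)$ is meaningful precisely because $\supp\eta\subset[0,\infty)$ makes $H_\mu$ analytic across $(-\infty,0)$.
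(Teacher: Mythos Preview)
Your proof is correct and follows essentially the same approach as the paper's: both use Lemma~\ref{support} to transfer support information between $\mu$ and $\eta$, exploit the strict monotonicity of $H_\mu$ on $(-\infty,0)$, treat the atom $\eta(\{0\})$ separately, and justify the limit $x\nearrow 0$ via monotone convergence. The only cosmetic differences are that the paper argues the ``only if'' direction for $H_\mu(-0)\le0$ by contradiction (a zero of $H_\mu$ would produce an atom of $\mu$ on the negative axis), whereas you observe directly that $G_\mu(x)<0$ for $x<0$; and the paper applies monotone convergence to the full kernel $u\mapsto\frac{1+xu}{x-u}$, while you first rewrite it as $x+\frac{1+x^2}{t-x}$ to isolate the monotone factor.
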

\begin{proof} 
If $\supp \eta \subset[0, \infty)$ and $H_\mu (-0) \leq 0$, we have $H_\mu (u) < 0$ for all $u < 0$ since $H_\mu$ is strictly increasing. Then $G_\mu = \frac{1}{H_\mu}$ is analytic in $\comp \backslash [0, \infty)$, 
which implies $\supp \mu \subset [0, \infty)$. 
Conversely, we assume $\supp \mu  \subset [0, \infty)$. 
By Lemma \ref{support}, we have $\supp \eta \subset [0,\infty)$.  If $H_\mu (-0)$ were greater than $0$, there would exist $u_0 < 0$ such that $H_\mu(u_0) = 0$. 
Then $\mu$ has an atom at $u_0 < 0$, which contradicts the assumption. Therefore, $H_\mu (-0) \leq 0$. 

We show the equivalence in the last claim. It is not difficult to prove that $(\ast)$ implies $H_\mu (-0) \leq 0$. 
Now we shall prove the converse statement.  Assume that $\lambda := \eta (\{0 \}) > 0$. By a similar argument to Lemma \ref{position of atom}, we can prove that $\lim_{u \nearrow 0} uH_\mu(u) = - \lambda$. Therefore, for $u < 0$ sufficiently close to $0$, we have $H_\mu(u) > -\frac{\lambda}{2u} > 0$, which contradicts the condition $H_\mu (-0) \leq 0$.  Then we have $\eta(\{0 \}) = 0$. 
Since $f_u(x) := \frac{1 + xu}{x - u}$ is increasing with respect to $u$, 
we can apply the monotone convergence theorem and obtain the two inequalities $\int_0 ^{\infty} \frac{1}{x}d\eta(x) < \infty$ and 
$b + \int_0 ^{\infty} \frac{1}{x}d\eta(x) \leq 0$. 
\end{proof}

\begin{cor}
The monotone convolution preserves the set $\{\mu; \supp \mu \subset [0, \infty) \}$ of probability measures. 
\end{cor}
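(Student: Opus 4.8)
The plan is to transfer the property ``$\supp\subset[0,\infty)$'' through the composition identity $H_{\nu\rhd\mu}=H_\nu\circ H_\mu$, using the characterization just obtained in Proposition \ref{positive support2} together with Lemma \ref{support}. Throughout, write $\Omega:=\comp\setminus[0,\infty)$ for the slit plane.

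The first step is to record, for an arbitrary probability measure $\rho$ with $\supp\rho\subset[0,\infty)$, that $H_\rho$ is analytic on $\Omega$ and maps $\Omega$ into itself. Analyticity on $\Omega$ follows from Lemma \ref{support}(1) once we know $\supp\eta_\rho\subset[0,\infty)$, which is exactly the first half of Proposition \ref{positive support2}. For the mapping property: $H_\rho$ sends $\com+$ into $\com+$ by the general properties of reciprocal Cauchy transforms; Proposition \ref{positive support2} gives $H_\rho(-0)\le 0$, and since $H_\rho$ is strictly increasing on $(-\infty,0)$ this yields $H_\rho\big((-\infty,0)\big)\subset(-\infty,0)$; finally the Schwarz reflection principle, applicable because $H_\rho$ is real on the interval $(-\infty,0)$, forces $H_\rho$ to map $\comp_-$ into $\comp_-$. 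Hence $H_\rho(\Omega)\subset\Omega$.

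Applying this to both $\mu$ and $\nu$, the composition $H_{\nu\rhd\mu}=H_\nu\circ H_\mu$ is well defined and analytic on $\Omega$, and maps $\Omega$ into $\Omega$; in particular it never takes the value $0$ on $\Omega$, since $0\notin\Omega$. Therefore $G_{\nu\rhd\mu}=1/H_{\nu\rhd\mu}$ is analytic on all of $\Omega$, and Lemma \ref{support}(1) then gives $(-\infty,0)\subset(\supp(\nu\rhd\mu))^c$, i.e.\ $\supp(\nu\rhd\mu)\subset[0,\infty)$.

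I expect the only delicate point to be the mapping property $H_\rho(\Omega)\subset\Omega$: one directly controls only the images of $\com+$ and of the real ray $(-\infty,0)$, so the reflection principle is needed to handle $\comp_-$, and one should note that an atom of $\rho$ at $0$ (for which $H_\rho(-0)=0$) causes no trouble because $0$ is excluded from $\Omega$. An alternative route that avoids the reflection argument is to use the integral representation $\nu\rhd\mu(B)=\int_{[0,\infty)}(\delta_x\rhd\mu)(B)\,\nu(dx)$ and the computation $G_{\delta_x\rhd\mu}=G_\mu/(1-xG_\mu)$ already used in the proof of Proposition \ref{estimate}: for every $x\ge 0$ one has $G_\mu<0$ on $(-\infty,0)$, hence $1-xG_\mu\ge 1$ there, so $G_{\delta_x\rhd\mu}$ is analytic on $(-\infty,0)$ and $\supp(\delta_x\rhd\mu)\subset[0,\infty)$ for $\nu$-a.e.\ $x$, whence $\nu\rhd\mu\big((-\infty,0)\big)=0$.
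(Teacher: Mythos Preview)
Your main argument is correct and follows essentially the same route as the paper: both establish that the composition $H_\mu\circ H_\nu$ is analytic on the slit plane $\Omega=\comp\setminus[0,\infty)$ and deduce the support statement from there. The paper is terser---it asserts analyticity of the composition without spelling out the mapping property $H_\rho(\Omega)\subset\Omega$, and then concludes by verifying $H_{\mu\rhd\nu}(-0)\le 0$ and re-applying Proposition~\ref{positive support2}, whereas you make the reflection argument explicit and finish via Lemma~\ref{support}(1). Your added care about $\comp_-$ via Schwarz reflection fills a gap the paper glosses over, so in that sense your write-up is more complete.

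Your alternative route through the disintegration $\nu\rhd\mu=\int(\delta_x\rhd\mu)\,d\nu(x)$ and the formula $G_{\delta_x\rhd\mu}=G_\mu/(1-xG_\mu)$ is a genuinely different argument, closer in spirit to the operator-theoretic remark the paper makes after the corollary. It has the advantage of avoiding any discussion of $\eta$ or Proposition~\ref{positive support2} entirely, reducing the claim to the elementary observation that $G_\mu<0$ on $(-\infty,0)$ and hence $1-xG_\mu\ge 1$ there for $x\ge 0$.
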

\begin{proof} 
If $\supp \mu \subset [0, \infty)$ and $\supp \nu \subset [0, \infty)$, $H_{\mu \rhd \nu} = H_\mu \circ H_\nu$ is analytic in $\comp \backslash [0, \infty)$. 
Since $H_{\mu \rhd \nu}$ is increasing in $(-\infty, 0)$, we have $H_{\mu \rhd \nu}(-0) = H_{\mu} \circ H_\nu(-0) \leq H_{\mu}(-0) \leq 0$. 
By Proposition \ref{positive support2}, we obtain $\supp (\mu \rhd \nu) \subset [0, \infty)$. 
\end{proof}

\begin{rem}
The above property is also true for Boolean convolution. The proof goes similarly. We note that the corollary follows immediately if we use the operator-theoretic realization of monotone independent random variables in \cite{Fra3}. 
\end{rem}

Next we consider moments. Let $m_n (\mu):=\int_{\real} x^n \mu(dx)$ be the $n$-th moment of a probablility measure $\mu$.  
\begin{prop}\label{asymptotic2}
Let $\mu$ be a probability measure and let $n \geq 1$ be a natural number. Then the following conditions are equivalent. 
\begin{itemize}
\item[(1)] $m_{2n}(\mu) < \infty$, 
\item[(2)] $H_\mu$ has the expression 
$H_\mu(z) = z + a + \int_{\real}\frac{\rho(dx)}{x-z}$, where $a \in \real$ and $\rho$ is a positive finite measure satisfying $m_{2n-2}(\rho) < \infty$,  
\item[(3)] there exist $a_1, \cdots, a_{2n} \in \real$ such that 
\begin{equation}\label{expansion2} 
H_{\mu}(z) = z + a_1 + \frac{a_2}{z} + \cdots + \frac{a_{2n}}{z^{2n-1}} + o(|z|^{-(2n - 1)})  
\end{equation} 
for $z = iy ~(y \to \infty)$. 
\end{itemize}
If (3) holds, for any $\delta > 0$ the expansion (\ref{expansion2}) holds for $z \to \infty$ satisfying $\im z > \delta |\re z|$. 
Moreover, we have $a_{k+2} = -m_k(\rho)$ $(0 \leq k \leq 2n-2)$. 
\end{prop}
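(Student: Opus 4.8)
The plan is to establish the cycle of implications $(2)\Rightarrow(1)$, $(1)\Rightarrow(3)$, $(3)\Rightarrow(2)$, together with the identification $a_{k+2}=-m_k(\rho)$, by exploiting the Nevanlinna-type representation of $H_\mu$ and the standard relation between the asymptotic expansion of a Cauchy transform at $i\infty$ and the moments of the underlying measure. Throughout I would use the elementary fact (a Stieltjes-type criterion, provable by the argument already used for Lemma \ref{position of atom} and Proposition \ref{maa}) that a positive finite measure $\mu$ satisfies $m_{2k}(\mu)<\infty$ if and only if $z^{2k+1}\bigl(G_\mu(z)-\tfrac1z-\tfrac{m_1}{z^2}-\cdots-\tfrac{m_{2k}}{z^{2k+1}}\bigr)\to 0$ along $z=iy$, $y\to\infty$; equivalently, in terms of $H_\mu=1/G_\mu$, an expansion of the form \eqref{expansion2}.

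For $(1)\Rightarrow(2)$: if $m_{2n}(\mu)<\infty$, then in particular $m_2(\mu)<\infty$, so Proposition \ref{maa} already gives $H_\mu(z)=z+a+\int_\real\frac{\rho(dx)}{x-z}$ with $a=-m_1(\mu)$ and $\rho$ a positive finite measure of total mass $\sigma^2(\mu)$. It remains to show $m_{2n-2}(\rho)<\infty$. Here I would compute the asymptotic expansion of $H_\mu$ from the known expansion of $G_\mu$ (using $m_{2n}(\mu)<\infty$) and match it against the geometric-series expansion $\int_\real\frac{\rho(dx)}{x-z}=-\sum_{k\ge0}\frac{m_k(\rho)}{z^{k+1}}$ valid to the appropriate order; consistency of the two expansions forces the first $2n-2$ moments of $\rho$ to be finite. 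This is the step where one must be careful: the expansion of $\int\frac{\rho(dx)}{x-z}$ is only legitimate up to the order for which the relevant moment of $\rho$ is a priori known finite, so I would argue inductively, extracting finiteness of $m_k(\rho)$ one $k$ at a time and simultaneously reading off $a_{k+2}=-m_k(\rho)$ from the coefficients. The implication $(2)\Rightarrow(3)$ is then immediate by substituting the finite Taylor expansion of $\int\frac{\rho(dx)}{x-z}$ with controlled remainder $o(|z|^{-(2n-1)})$, and $(3)\Rightarrow(1)$ follows from the Stieltjes criterion quoted above applied to $G_\mu=1/H_\mu$ (inverting the expansion \eqref{expansion2} term by term gives an expansion of $G_\mu$ to order $2n$, whose validity is equivalent to $m_{2n}(\mu)<\infty$).

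Finally, for the last sentence — upgrading convergence along $z=iy$ to convergence in any Stolz-type sector $\im z>\delta|\re z|$ — I would invoke the standard fact that a function of the form $H_\mu(z)-z-a-\sum_{k=2}^{2n}a_k z^{-(k-1)}$, being itself (a constant times $z^{-(2n-1)}$ times) a Cauchy transform of a finite measure, is bounded by $C/(\im z)^{2n-1}$ throughout $\com+$, and that for such "Pick-type" remainder functions nontangential and radial limits coincide; alternatively one estimates $\int\frac{\rho(dx)}{x-z}$ directly using $|x-z|\ge \im z\ge \frac{\delta}{\sqrt{1+\delta^2}}|z|$ in the sector. The main obstacle I anticipate is the bookkeeping in $(1)\Rightarrow(2)$: one must justify the termwise expansion of $\int\frac{\rho(dx)}{x-z}$ without circularly assuming the very moment-finiteness one is trying to prove, and the cleanest route is the inductive extraction described above, using at each stage only that a partial sum of the expansion has been validated.
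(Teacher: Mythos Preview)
Your proposal is correct and follows essentially the same strategy as the paper: both arguments rest on the classical Hamburger--Akhiezer criterion (Theorem 3.2.1 in \cite{Akh}) that a positive finite measure has finite $2k$-th moment if and only if its Cauchy transform admits the corresponding asymptotic expansion along $z=iy$, and both pass between $G_\mu$ and $H_\mu$ by taking reciprocals. The paper organises the implications as $(1)\Leftrightarrow(3)$ directly (Akhiezer on $\mu$), $(2)\Rightarrow(3)$ by expansion, and $(3)\Rightarrow(2)$ by Akhiezer on $\rho$; your route $(1)\Rightarrow(2)\Rightarrow(3)\Rightarrow(1)$ is a harmless reordering of the same ingredients, and your inductive extraction of the moments of $\rho$ is precisely the content of the Akhiezer technique the paper invokes for $(3)\Rightarrow(2)$. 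One small slip: your opening paragraph announces the cycle $(2)\Rightarrow(1)$, $(1)\Rightarrow(3)$, $(3)\Rightarrow(2)$, but the body actually carries out $(1)\Rightarrow(2)\Rightarrow(3)\Rightarrow(1)$; fix the announcement to match.
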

\begin{proof}
The equivalence $(1) \Leftrightarrow (3)$ follows from Theorem 3.2.1 in \cite{Akh} by calculating the reciprocals. The implication 
$(2) \Rightarrow (3)$ is not difficult. The proof of $(3) \Rightarrow (2)$ runs by the same technique as in Theorem 3.2.1 in the book \cite{Akh}. 
\end{proof}
\begin{prop}\label{moment21}
Let $\mu$ and $\nu$ be probability measures and let $n \geq 1$ be a natural number. 
If $m_{2n}(\mu) < \infty$ and $m_{2n}(\nu) < \infty$, then $m_{2n}(\mu \rhd \nu) < \infty$. Moreover, we have 
\begin{equation}\label{moment20}
m_l(\mu \rhd \nu) = m_l(\mu) + m_l(\nu) + \sum_{k = 1} ^{l-1} \sum_{\substack{j_0 + j_1 + \cdots +j_k = l - k, \\  0 \leq j_p, ~0 \leq p \leq k }} m_k(\mu) m_{j_0}(\nu)\cdots m_{j_k}(\nu)  
\end{equation}
for $1 \leq l \leq 2n$. 
\end{prop}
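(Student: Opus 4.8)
The plan is to use the representation $\mu \rhd \nu(B) = \int_{\real}\nu^x(B)\,d\mu(x)$ from \eqref{eq0111}, so that $m_l(\mu \rhd \nu) = \int_{\real}\big(\int_{\real}y^l\,\nu^x(dy)\big)\mu(dx)$. First I would compute the inner moments $m_l(\nu^x)$ of the shifted measures $\nu^x = \delta_x \rhd \nu$. Since $H_{\nu^x}(z) = H_\nu(z) - x$, and by Proposition \ref{asymptotic2} the finiteness of $m_{2n}(\nu)$ gives the asymptotic expansion $H_\nu(z) = z + a_1 + a_2/z + \cdots + a_{2n}/z^{2n-1} + o(|z|^{-(2n-1)})$, the same expansion holds for $H_{\nu^x}$ with $a_1$ replaced by $a_1 - x$; hence $m_{2n}(\nu^x) < \infty$ for every $x$, again by Proposition \ref{asymptotic2}. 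Inverting the relation between $H$ and $G$ and then reading off moments from the Taylor coefficients of $G_{\nu^x}(z) = 1/H_{\nu^x}(z)$ at $z = \infty$, one finds that $m_l(\nu^x)$ is a polynomial in $x$ whose coefficients are polynomials in the moments $m_0(\nu), \ldots, m_l(\nu)$; more precisely, expanding $G_{\nu^x} = G_\nu/(1 - xG_\nu) = \sum_{k \geq 0} x^k G_\nu^{k+1}$ and matching the coefficient of $z^{-(l+1)}$ shows
\begin{equation*}
m_l(\nu^x) = \sum_{k=0}^{l} x^k \sum_{\substack{j_0 + \cdots + j_k = l - k, \\ 0 \leq j_p}} m_{j_0}(\nu)\cdots m_{j_k}(\nu).
\end{equation*}

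Next I would integrate this identity against $\mu$. The term $k = 0$ contributes $m_l(\nu)$; the term $k = l$ contributes $\int x^l d\mu(x) \cdot 1 = m_l(\mu)$ (the inner sum collapsing to the empty product, i.e. $m_0(\nu)^{l+1} = 1$ — here I should double-check the degenerate cases $j_p = 0$ and the normalization $m_0 = 1$); and for $1 \leq k \leq l - 1$ the term contributes $m_k(\mu)\sum_{j_0 + \cdots + j_k = l-k} m_{j_0}(\nu)\cdots m_{j_k}(\nu)$, which is exactly the claimed double sum in \eqref{moment20}. Finiteness of $m_{2n}(\mu \rhd \nu)$ for $l \leq 2n$ then follows because each term is a finite product of finite moments $m_k(\mu)$ with $k \leq 2n$ and $m_j(\nu)$ with $j \leq 2n$, all finite by hypothesis, and because the integral $\int x^k m_{l-k,\,\text{rest}}\,d\mu(x)$ converges absolutely (using $|x|^k \leq 1 + x^{2n}$ for $k \leq 2n$).

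The main obstacle I anticipate is not conceptual but bookkeeping: establishing the polynomial formula for $m_l(\nu^x)$ cleanly and with the correct combinatorial indexing, including a careful treatment of the zero-index terms $j_p = 0$ and the convention $m_0 = 1$, and making sure the asymptotic-expansion argument of Proposition \ref{asymptotic2} is genuinely available (one needs the expansion of $H_{\nu^x}$ to be uniform in $x$ on compact sets, or at least valid for each fixed $x$, which suffices here since we integrate pointwise). A secondary point to handle with care is the justification of interchanging $\sum_k x^k$ with the coefficient extraction at $z = \infty$; this is legitimate because for $|z|$ large the series $\sum_k x^k G_\nu(z)^{k+1}$ converges and represents an analytic function, so its Laurent coefficients may be computed termwise. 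Once these are in place, the proof is a direct substitution.
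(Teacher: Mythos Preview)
Your argument is correct and takes a genuinely different route from the paper's. The paper proves finiteness by working directly with the composition $H_{\mu \rhd \nu}=H_\mu\circ H_\nu$: it writes $H_\mu(H_\nu(iy))=H_\nu(iy)+a_1+a_2G_\nu(iy)+\cdots+a_{2n}G_\nu(iy)^{2n-1}+R(H_\nu(iy))$, and the crux is to show that the composed remainder is still $o(y^{-(2n-1)})$. For this the paper uses that $H_\nu(iy)$ eventually lies in a Stolz sector $\{\im z>\delta|\re z|\}$ (since $\im H_\nu(iy)\ge y$ while $\re H_\nu(iy)$ is bounded), so that the sectorial version of the expansion in Proposition~\ref{asymptotic2} applies at the point $H_\nu(iy)$. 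After that, Proposition~\ref{asymptotic2} is invoked once more to conclude $m_{2n}(\mu\rhd\nu)<\infty$, and the moment formula is read off from $G_{\mu\rhd\nu}(z)=G_\mu(1/G_\nu(z))$.

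Your approach via the disintegration $\mu\rhd\nu=\int\nu^x\,d\mu(x)$ and the identity $G_{\nu^x}=\sum_{k\ge0}x^kG_\nu^{k+1}$ sidesteps the sectorial remainder estimate entirely: once you know $m_{2n}(\nu^x)<\infty$ for each $x$ (immediate from Proposition~\ref{asymptotic2}), the polynomial formula for $m_l(\nu^x)$ is purely algebraic, and Fubini is justified by the bound $|y|^l\le 1+y^{2n}$ together with the fact that $m_{2n}(\nu^x)$ is a polynomial in $x$ of degree $2n$. This makes the combinatorics of \eqref{moment20} completely transparent. The paper's route, by contrast, is more analytic and would generalize more readily to other compositions of reciprocal Cauchy transforms where no disintegration is available. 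One small point to tighten in your write-up: the expansion of $G_\nu$ at infinity is asymptotic rather than convergent, so ``Laurent coefficients'' is not quite the right phrase; the clean justification is that both $G_{\nu^x}$ and $G_\nu$ have asymptotic expansions to order $2n$, and the functional identity $G_{\nu^x}(1-xG_\nu)=G_\nu$ forces the coefficient relations you wrote down.
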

\begin{proof}
We note that $\im H_\nu (z) \geq \im z$.  
For any $\delta > 0$, there exists $M = M(\delta) > 0$ such that 
\begin{equation}\label{a19}
\im H_\nu(iy) \geq y > \delta |\re H_\nu(iy)| \text{~for~} y > M. 
\end{equation} 
By (\ref{expansion2}), we obtain 
\begin{equation}
H_\mu(H_\nu(iy)) = H_\nu (iy) + a_1 + a_2G_\nu(iy) + \cdots + a_{2n}G_\nu(iy)^{2n-1} + R(H_\nu(iy)),    
\end{equation}
where $z^{2n-1}R(z) = \int_{\real} \frac{x^{2n-1}}{x - z}\rho(dx) \to 0$ as $z \to \infty$ satisfying $\im z > \delta|\re z|$ for a fixed $\delta > 0$. We have 
\[
y^{2n-1} |R(H_\nu(iy))| \leq |H_\nu(iy)|^{2n-1}|R(H_\nu(iy))| \to 0 
\] as $y \to \infty$ by the condition (\ref{a19}). Thus $R(H_\nu(iy)) = o(y^{-(2n-1)})$. Expanding $H_\nu(z)$ 
in the form (\ref{expansion2}), we can see that 
there exist $c_1, \cdots, c_{2n} \in \real$ such that $H_\mu(H_\nu(z)) = z + c_1 + \frac{c_2}{z} + \cdots + \frac{c_{2n}}{z^{2n-1}} + o(|z|^{-(2n-1)})$ for $z = iy$ $(y \to \infty)$. Then the $2n$-th moment of $\mu \rhd \nu$ is finite by Proposition \ref{asymptotic2}. The equality (\ref{moment20}) is obtained by the expansion of 
$G_{\mu \rhd \nu}(z) = G_\mu (\frac{1}{G_\nu(z)})$. 
\end{proof}

\section{Differential equations arising from monotone convolution semigroups}\label{Diff}
Let $\{\mu_t \}_{t \geq 0}$ be a weakly continuous $\rhd$-convolution semigroup with $\mu_0 = \delta_0$. 
We denote $H_{\mu_t}$ by $H_t$ for simplicity. We sometimes write $H(t, z)$ to express explicitly 
that $H_t(z)$ is a function of two variables. 
By (\ref{recip1}), $H_t$ can be expressed as 
\begin{equation}
H_t (z) = b_t + z + \int_{\real} \frac{1 + xz}{x-z}d\eta_t(x), 
\end{equation}
where, for each $t > 0$, $a_t$ is a real number and $\eta_t$ is a finite positive measure.  
We denote by $A(z)$ the associated vector field throughout this paper.

Throughout this section, we will prove the following properties of the minimum of the support of a convolution semigroup. 
\begin{thm}\label{Diffeq}
Let $\{\mu_t \}_{t \geq 0}$ be a weakly continuous $\rhd$-convolution semigroup 
with $\mu_0 = \delta_0$. We assume that for every $t>0$ $\mu_t$ is not a delta measure. We have such a form  
$\mu_t = \lambda (t)\delta_{\theta (t)} + \nu_t$ with 
$\theta(t) \notin \supp \nu_t$, $\theta(t) = a(\mu_t)$ and $\lambda(t)\geq 0$. \\
$(1)$ Assume $a(\tau) > 0$. Then there are four cases: 
\begin{itemize}
\item[$(A)$]If $A(u_0)$ = 0 for some $u_0 \in [-\infty, 0)$ and $A(u) < 0$ on $(-\infty, u_0)$ 
and $A(u) > 0$ on $(u_0, 0)$ $($when $u_0 = -\infty$, we understand the condition as $A > 0)$, 
then $\lambda (t)  > 0$. 
Moreover, the inequality $u_0 < \theta (t) < 0$ holds for all $t > 0$.  
\item[$(B)$]If $A(u) < 0$ on $(-\infty, 0)$ and $A(0) = 0$, then 
$\theta(t) = 0$ and $\lambda (t) > 0$ for all $t > 0$. 
\item[$(C)$]If there exists $u_0 \in (0, a(\tau))$ such that $A(u) < 0$ on $(-\infty, u_0)$ and 
$A(u) > 0$ on $(u_0, a(\tau))$, then it follows that  
$\theta(t) \in (0, u_0)$ and $\lambda (t) > 0$ for $0 < t < \infty$ and 
$\lambda (t) > 0 $ for $ t > 0$.
\item[$(D)$]If $A(u) < 0$ on $(-\infty, a(\tau))$, then there exists $t_0 \in(0, \infty]$ 
such that $\lambda (t) > 0$ for all $0 < t < t_0 $ and $ \lambda (t)= 0$ for $t_0 \leq t < \infty$. 
\end{itemize} 
If $A(0) \neq 0$ and $\lambda (t) > 0$, the weight of the delta measure is written as  
$\lambda (t) = \frac{A(\theta(t))}{A(0)}$. If $A(0) = 0$ $(case (B))$, then we have $\lambda (t) = e^{-A'(0)t}$. 
Concerning the position of the delta measure, the following ODE holds: 
\begin{equation}
\begin{cases} \frac{d}{dt}\theta (t) = -A(\theta (t))$,$ \label{ODE of theta2} \\ 
              \theta (0) = 0. 
\end{cases}
\end{equation}

\noindent
$(2)$ We assume $a(\tau) > -\infty$. There are three cases in terms of the signs of the associated vector field: 
\begin{itemize}
\item[$(a)$] $A(u) > 0$ on $(-\infty, a(\tau));$  
\item[$(b)$] $A(u_0) = 0$ for some $u_0 \in (-\infty, a(\tau))$ 
and $A(u) < 0$ on ($-\infty, u_0)$ and   $A(u) > 0$ on $(u_0, a(\tau));$   
\item[$(c)$] $A(u) < 0$ on $(-\infty, a(\tau))$.  
\end{itemize}
In case $(a)$ and case $(b)$, we have the following ODE for $a(\nu_t)$: \\ 
\begin{equation}
\begin{cases}  \frac{d}{dt} a(\nu_t) = -A(a(\nu_t)), \\
                a(\nu_0) = a(\tau).  \label{ODE of min}\\
\end{cases}
\end{equation}
In case $(c)$, the equality $a(\nu_t) = a(\tau)$ holds for a.e. $t$ and $a(\nu_t) \geq a(\tau)$ 
for all $t \in [0, \infty)$. 
Moreover, if $\lim_{u \nearrow a(\tau)} A(u) < 0$, we have $a(\nu_t) = a(\tau)$ for all $t$. 
\end{thm}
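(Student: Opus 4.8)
The backbone of the argument is the flow equation (\ref{ODE}), $\partial_t H_t(z)=A(H_t(z))$ with $H_0=\text{id}$, together with two identities coming from the semigroup law $H_{t+s}=H_s\circ H_t=H_t\circ H_s$. Differentiating $H_{t+s}=H_t\circ H_s$ in $s$ at $s=0$ gives the conjugation identity
\begin{equation}\label{conjid}
A(H_t(z))=H_t'(z)\,A(z),
\end{equation}
and differentiating the flow equation in $z$ gives $\partial_t H_t'(z)=A'(H_t(z))\,H_t'(z)$, so that $H_t'(z)=\exp\big(\int_0^t A'(H_s(z))\,ds\big)>0$ as long as $H_s(z)$ stays real. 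Here $A'(z)=\int_{\real}\frac{1+x^2}{(x-z)^2}\,d\tau(x)>0$ on $\real\setminus\supp\tau$, so $A$ is strictly increasing on the leftmost interval $(-\infty,a(\tau))$; and since $\im A(z)\ge 0$ on $\com+$, the flow leaves $\com+$ invariant and carries a real point of $\real\setminus\supp\tau$ along the real line for as long as it has not reached $\supp\tau$. One further preliminary: from $H_t(z)=z+tA(z)+o(t)$ one reads off $\frac1t\eta_t\to\tau$ weakly and $\frac1t b_t\to-\gamma$ as $t\to0$, which locates $\supp\eta_t$ near $\supp\tau$ for small $t$.

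For (1) the plan is to transport the left edge of $\mu_t$ by the \emph{backward} flow of $A$ anchored at $0$. Since $a(\tau)>0$, the point $0$ lies in the analyticity interval of $A$, so $\dot\theta=-A(\theta)$, $\theta(0)=0$ has a unique solution $\theta(t)$ as long as $\theta(t)\in(-\infty,a(\tau))$; this is exactly the ODE (\ref{ODE of theta2}). By comparison of ODEs the forward flow issued from $\theta(t)$ returns to $0$ at time $t$ while remaining below $a(\tau)$, so $H_t$ continues analytically to a neighbourhood of $(-\infty,\theta(t)]$, is increasing there, and satisfies $H_t(\theta(t))=0$. Hence $G_{\mu_t}=1/H_t$ is analytic and negative on $(-\infty,\theta(t))$ and has a simple pole at $\theta(t)$, so $\mu_t$ has an isolated atom at $\theta(t)=a(\mu_t)$ of weight $\lambda(t)=1/H_t'(\theta(t))$. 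Substituting $z=\theta(t)$ into (\ref{conjid}) gives $A(0)=H_t'(\theta(t))\,A(\theta(t))$, i.e.\ $\lambda(t)=A(\theta(t))/A(0)$ whenever $A(0)\neq0$; in case (B), where $A(0)=0$, the fixed-point structure forces $\theta\equiv0$ and $H_t'(0)=e^{A'(0)t}$ gives $\lambda(t)=e^{-A'(0)t}$. The four cases (A)--(D) are then the phase-line analysis of $\dot\theta=-A(\theta)$ started from $0$: the sign of $A(0)$ decides whether $\theta$ first moves left (case (A)) or right (cases (C), (D)); a zero $u_0$ of $A$ inside $(-\infty,a(\tau))$ is an invariant barrier that $\theta$ cannot cross, which yields $u_0<\theta(t)<0$ in (A) and $\theta(t)\in(0,u_0)$ in (C) together with $\lambda(t)=A(\theta(t))/A(0)\in(0,1)$ in both; and in case (D), where $A<0$ on all of $(-\infty,a(\tau))$, $\theta(t)$ increases monotonically and runs up to $a(\tau)$ at some $t_0\in(0,\infty]$, beyond which the left edge has entered $\supp\tau$, the analytic structure of $H_t$ at the left edge is lost, and $\lambda(t)=0$.

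For (2) the same scheme is run with the anchor $a(\tau)$ in place of $0$, tracking instead $a(\nu_t)$, the left edge of the continuous component of $\mu_t$. In cases (a) and (b) one has $A>0$ just to the left of $a(\tau)$, so the solution of $\dot\phi=-A(\phi)$, $\phi(0)=a(\tau)$ drops into $(-\infty,a(\tau))$ for $t>0$, and the analytic-continuation argument above, applied to the branch of $H_t$ lying below the left edge of the continuous part (where $G_{\mu_t}$ is analytic and real), shows $\mu_t$ has no mass in $(-\infty,\phi(t))$, hence $a(\nu_t)\ge\phi(t)$; the reverse bound forces $a(\nu_t)=\phi(t)$, which is precisely (\ref{ODE of min}), with the initial condition $a(\nu_0)=a(\tau)$ read as $\lim_{t\to0^+}a(\nu_t)$, consistent with $\frac1t\eta_t\to\tau$. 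In case (c), $A<0$ on $(-\infty,a(\tau))$, so the flow would push the continuous edge rightward into $\supp\tau$; one shows instead that the continuous part cannot recede to the left of $a(\tau)$, whence $a(\nu_t)\ge a(\tau)$ for all $t$, with equality for a.e.\ $t$, and with equality for \emph{all} $t$ once $\lim_{u\nearrow a(\tau)}A(u)<0$, so that the L\'evy measure genuinely feeds mass at $a(\tau)$.

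The real obstacle is not the ODE bookkeeping but the support localization underlying it: showing that the left edge of the continuous part is \emph{exactly} the flowed value rather than merely $\ge$ it, that in case (c) the continuous part stays pinned at $a(\tau)$ for a.e.\ $t$, and pinning down the exit time $t_0$ together with the statement $\lambda(t)=0$ for $t\ge t_0$ in case (D). This amounts to controlling $\supp\eta_t$, equivalently the boundary behaviour of $G_{\mu_t}$ at the left edge, which I would handle by propagating the small-time localization $\frac1t\eta_t\to\tau$ through the semigroup law and using weak continuity of $t\mapsto\mu_t$ for the measurability assertions; the borderline analysis near $t_0$ is handled by a Rouch\'e argument identifying the atom, as in the proof of Theorem \ref{atom3}, together with the fact that a preimage produced this way cannot leave $\com+$.
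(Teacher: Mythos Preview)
Your Part (1) is essentially the paper's argument. Both hinge on the conjugation identity $A(H_t(z))=H_t'(z)A(z)$ (the paper's Lemma \ref{equalities}(1)), and both define $\theta(t)$ by $H_t(\theta(t))=0$; you phrase it as the backward flow of $A$ from $0$, the paper uses the implicit function theorem on $H(t,\theta)=0$, but these are the same object. Your derivation of the weight $\lambda(t)=A(\theta(t))/A(0)$ by plugging $z=\theta(t)$ directly into the conjugation identity is in fact cleaner than the paper's route, which expands $H_t$ in a Taylor series about $\theta(t)$ and $A$ about $0$ and matches constant terms in the ODE. Your treatment of case (B) via $H_t'(0)=\exp\big(\int_0^t A'(H_s(0))\,ds\big)=e^{A'(0)t}$ likewise matches the paper's coefficient comparison.

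For Part (2) your strategy is right but you do not supply the key mechanism for the ``reverse bound'' $a(\nu_t)\le\phi(t)$, and your proposed route (propagating $\frac1t\eta_t\to\tau$ through the semigroup) is not what the paper does. The paper first proves a lemma identifying $a(\nu_t)$ with $a(\eta_t)$, i.e.\ with the left edge of the maximal analyticity domain of $H_t$ (this uses infinite divisibility and Theorem \ref{atom3} to rule out spurious zeros). It then defines $E(t)=\sup\{u\le a(\tau):H_t(u)=a(\tau)\}$ and shows $E(t)=a(\nu_t)$ by contradiction: if $H_t$ continued analytically past $E(t)$, the conjugation identity $A(H_t(z))=A(z)H_t'(z)$ together with injectivity of $H_t$ would make $A$ analytic on the image of $H_t$, in particular at $a(\tau)$, which is impossible. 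The ODE for $E(t)$ is then obtained by an approximation $E_\epsilon(t)$ (defined with $a(\tau)-\epsilon$ in place of $a(\tau)$), to which the implicit function theorem applies cleanly, followed by $\epsilon\to0$. For case (c), the a.e.\ statement is not obtained by any flow argument: the paper shows (Proposition \ref{partialresult}) that at any $t_0$ with $\limsup_{t\to t_0}a(\nu_t)\ge a(\nu_{t_0})$ one gets $a(\nu_{t_0})=a(\tau)$ via the same conjugation trick, and then invokes Lusin's theorem on the measurable function $t\mapsto a(\nu_t)$ to conclude that the complementary set has measure zero. Your sketch does not touch these two devices (the conjugation-identity contradiction for the sharp lower bound, and Lusin for the a.e.\ claim), and they are where the work actually lies.
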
 

\begin{exa}\normalfont We can confirm the validity of the ODEs of $\theta(t)$ and $a(\nu_t)$, and the validity of the formula of the
weight of a delta measure in each example. 
\begin{itemize} 
\item[$\cdot$] Arcsine law: $\mu_t = \frac{1}{\pi \sqrt{2t - x^2}} 1_{(-\sqrt{2t}, \sqrt{2t})}(x)dx$, $A(z)= -\frac{1}{z}$, $a(\tau) = 0$,  $a(\mu_t) = -\sqrt{2t}$. 
\item[$\cdot$] A deformation of $\alpha$-strictly stable distributions ($0 < \alpha < 2$) with parameter $c \in \comp$, $\im c = 0, \re c \geq 0$ (see \cite{Has2}): $\mu_t = \mu_{t, ac},  ~\supp \mu_{t, ac} = (-\infty, c + t^{\frac{1}{\alpha} }]$,  
$A(z) = - \frac{1}{\alpha}(z-c)^{1-\alpha}$.  
We can check that the solution of the ODE (\ref{ODE of min}) is $c + t^{\frac{1}{\alpha}}$ (the same ODE (\ref{ODE of min}) holds for $b(\mu_t)$). 
\item[$\cdot$] The monotone Poisson distribution with parameter $\lambda > 0$: 
$\mu_t (dx) = \mu_{t,ac} + \mu_{t,sing}$, $A(z) = \frac{\lambda z}{1-z}$, where $\mu_{t,sing}$ is a delta measure at $0$. 
, and hence, 
it holds that $A(0) = 0$ and $A'(0)= \lambda$. This is the case (B). Therefore, we have $\mu_{t,sing}=e^{-\lambda t}\delta_0$.
\end{itemize} 
\end{exa}


\subsection{Differential equation of delta measure}\label{delta}
We summarize three equalities, some of which were used by Muraki in \cite{Mur3}.
\begin{lem}\label{equalities}
Let $\{\mu_t \}_{t \geq 0}$ be a weakly continuous $\rhd$-convolution semigroup 
with $\mu_0 = \delta_0$. Then we have three equalities on $\comp \backslash \real$: \\
(1) $A(H_t(z)) = A(z)\frac{\partial H_t}{\partial z}(z)$; \\
(2) $\frac{\partial }{\partial t} G_t(z) = A(z)\frac{\partial }{\partial z}G_t(z)$;  \\ 
(3) $\frac{\partial }{\partial t} H_t(z) = A(z)\frac{\partial }{\partial z}H_t(z)$. 
\end{lem}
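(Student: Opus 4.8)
The plan is to derive all three identities from the ODE (\ref{ODE}) together with the semigroup property $H_t \circ H_s = H_{t+s}$ of the composition semigroup attached to $\{\mu_t\}$ in Theorem \ref{inf.div}. The only inputs I will use are: (i) for each fixed $t$, $z \mapsto H_t(z)$ is analytic on $\comp \backslash \real$ and maps $\comp_+$ to $\comp_+$ and $\comp_-$ to $\comp_-$, so in particular $H_s(z) \in \comp\backslash\real$ whenever $z \in \comp\backslash\real$; and (ii) by (\ref{ODE}), for each $z \in \comp\backslash\real$ the map $t \mapsto H_t(z)$ is $C^1$ with $\frac{\partial}{\partial t}H_t(z) = A(H_t(z))$ and $H_0(z) = z$ (the non-explosion of the solution, hence global validity in $t$, being \cite{Be-Po}).

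First I would prove (3). Fix $t > 0$ and $z \in \comp \backslash \real$ and differentiate the semigroup identity $H_{t+s}(z) = H_t(H_s(z))$ with respect to $s$ at $s = 0$. The left-hand side contributes $\frac{d}{ds}H_{t+s}(z)\big|_{s=0} = \frac{\partial}{\partial t}H_t(z)$. For the right-hand side, since $z \mapsto H_t(z)$ is analytic and $s \mapsto H_s(z)$ is differentiable with $\frac{\partial}{\partial s}H_s(z)\big|_{s=0} = A(H_0(z)) = A(z)$ — here using that $H_s(z)$ stays in $\comp\backslash\real$, where $A$ is defined — the chain rule gives $H_t'(z)\,A(z) = \frac{\partial H_t}{\partial z}(z)\,A(z)$. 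Hence $\frac{\partial}{\partial t}H_t(z) = A(z)\frac{\partial H_t}{\partial z}(z)$, which is (3). (As a consistency check, differentiating the other factorization $H_{t+s}(z) = H_s(H_t(z))$ in $s$ at $s=0$ simply returns (\ref{ODE}).) Identity (1) is then immediate: combining (3) with (\ref{ODE}) yields $A(H_t(z)) = \frac{\partial}{\partial t}H_t(z) = A(z)\frac{\partial H_t}{\partial z}(z)$. Finally, (2) follows from (3) by passing to reciprocals: from $G_t = 1/H_t$ one has $\frac{\partial}{\partial t}G_t(z) = -H_t(z)^{-2}\frac{\partial}{\partial t}H_t(z)$ and $\frac{\partial}{\partial z}G_t(z) = -H_t(z)^{-2}\frac{\partial}{\partial z}H_t(z)$, so (3) gives $\frac{\partial}{\partial t}G_t(z) = A(z)\frac{\partial}{\partial z}G_t(z)$ at every point of $\comp\backslash\real$.

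The one genuinely delicate point is the regularity required to differentiate the composition $s \mapsto H_t(H_s(z))$ — one needs joint regularity of $(t,z)\mapsto H_t(z)$ (differentiability in $t$, locally uniform in $z$, plus analyticity in $z$) rather than mere separate regularity. This is where I would invoke the standard theory of the flow (\ref{ODE}): the field $A$ is analytic on $\comp\backslash\real$, so its flow depends analytically on the initial point and smoothly on $t$, with the local uniformity supplied by normal-family/Montel arguments of the kind already used in Lemma \ref{inj2} and Proposition \ref{inj4}. Once this is granted the computation above is routine, and it is exactly the one used by Muraki in \cite{Mur3} for the compactly supported case.
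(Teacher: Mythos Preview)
Your proof is correct and is essentially the same as the paper's: both differentiate the semigroup identity $H_{t+s}=H_t\circ H_s$ in $s$ at $s=0$ and then combine with (\ref{ODE}), the only cosmetic difference being that the paper reads the resulting equation as (1) first and then deduces (3), whereas you read it as (3) first and then deduce (1). Your added remark on the joint regularity needed for the chain rule is a welcome clarification that the paper leaves implicit.
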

\begin{proof}
Since $H_t(z)$ is a flow in $\comp \backslash \real$, $H_t \circ H_s = H_{t+s}$ for $t,s \geq 0$. 
(1) follows from the derivative $\frac{\partial }{\partial s}|_{s=0}$. 
(3) follows from (1) and (\ref{ODE}). (2) follows from (3) immediately. 
\end{proof}

First we treat a distribution which contains a delta measure at the minimum of the support. 
Suppose that $\{\mu_t \}_{t \geq 0}$ is a weakly continuous $\rhd$-convolution semigroup 
with $\mu_0 = \delta_0$. Then $\mu$ can be written as $\mu = \lambda \delta_\theta + \nu$ with $\theta \in (\supp \nu )^c$ and $0 < \lambda < 1 $. 
We use the integral representation in Theorem \ref{inf.div} (4) for the associated vector field $A(z)$. 
Throughout this subsection, we assume that $A$ is not a real constant which means that $\mu_t$ is not a delta measure for any $t > 0$ and that $a(\tau) > 0$. We shall show that there exists a delta measure at 
the minimum point of the support for some (finite or infinite) time interval. 
Moreover, the weight of a delta measure is calculated.

The derivative of $A$ satisfies $A'(u) > 0$ for all $u \in (-\infty,0)$.  
This implies that there are five possible cases: 
\begin{itemize}
\item[(A)] $A(u) > 0$ on $(-\infty, 0)$;  
\item[(A')] $A(u_0)$ = 0 for some $u_0 \in (-\infty, 0)$ and $A(u) < 0$ on ($-\infty, u_0)$ and 
  $A(u) > 0$ on $(u_0, 0)$;
\item[(B)] $A(u) < 0$ on $(-\infty, 0)$ and $A(0) = 0$;
\item[(C)]there exists $u_0 \in (0, a(\tau))$ such that $A(u) < 0$ on $(-\infty, u_0)$ and 
$A(u) > 0$ on $(u_0, a(\tau))$;  
\item[(D)] $A(u) < 0$ on $(-\infty, a(\tau))$;
\end{itemize}   

We consider the solution of the ODE (\ref{ODE}) also on the real line as well as on $\comp \backslash \real$.

\vspace{10pt}
\noindent
\textbf{\underline{Case (A) and case (A')}} \\ 
Case (A) is reduced to case (A') if we define $u_0:= -\infty$. Since $H(t, u)$ is an increasing function of $u  \in (\supp \eta_t)^c$, 
there is a unique point $\theta(t)$ satisfying $u_0 < \theta (t) < 0$ and 
\begin{equation}
H(t, \theta (t)) = 0 \label{zero of H}.
\end{equation} 
$\theta(t)$ is a zero point of $H_t$ of degree 1 since $\partial_u H(t, u) \geqq 1$. 
Therefore, by lemma \ref{position of atom},  there is a delta measure $\lambda (t) \delta_{\theta (t)}$ in $\mu_t$ with $u_0 < \theta (t) < 0$. By the implicit function theorem, $\theta (t)$ is in $C^{\omega}$ class.
Differentiating the equation $H(t, \theta(t)) = 0$ and using Lemma \ref{equalities}, we obtain 
\begin{equation}\label{ODE of theta}
\theta '(t) = -\frac{  \frac{\partial H}{\partial t}(t, \theta (t))   }{  \frac{\partial H}{\partial z}(t, \theta (t))  }=- A(\theta (t)). 
\end{equation} 
The initial condition is $\theta(0)=0$. 

\vspace{10pt}
\noindent
\textbf{\underline{Case (B)}} \\ 
In case (B), the same differential equation (\ref{ODE of theta}) holds. 
Since $A(0) = 0$, we have $\theta (t) = 0$ for all $t$. This is true 
for a monotone Poisson distribution.

\vspace{10pt}
\noindent
\textbf{\underline{Case (C) and case  (D)}} \\
Case (C) and case (D) can be treated at the same time. We define  
\[
u_1 := \begin{cases}
                 u_0,~& \text{in case (C)}, \\
                 a(\tau),~ & \text{in case (D)}, 
       \end{cases} 
\]
to treat the two cases at the same time. 
In the cases (C) and (D), $H_t$ is analytic in $\comp \backslash [u_1, \infty)$  
(see Subsection \ref{non-atomic} for details). 
Then there exists $t_0 \in (0, \infty]$ such that $\mu_t$ includes a delta measure in $(0, u_1)$ 
for $0 < t < t_0$. We can prove that $t_0 = \infty$ in case (C). In case (D), we have an example, where 
$t_0 < \infty$ holds (see the section of Example in \cite{Has2}). $t_0 = \infty$ may occur if $\lim_{u \nearrow a(\tau)} A(u) = 0$. 
$\mu_t$ has the form 
\[
\mu_t = \begin{cases} 
                    \lambda (t) \delta_{\theta (t)} + \nu_t,~&  0 \leq t < t_0,  \\ 
                    \nu_t,~&  t_0 \leq t < \infty, \\  
\end{cases}
\] 
where it holds that $0 < \lambda (t) \leq 1$ and $0 \leq \theta (t) < a(\tau)$ for $0 \leq t < t_0$, 
and $a(\nu_t) \geq a(\tau)$ for all $0 < t < \infty$. The differential equation (\ref{ODE of theta}) holds also in this case.

\vspace{10pt}
\noindent
\textbf{\underline{Weight $\lambda(t)$ in the cases (A), (A'), (C) and (D)}} \\
It is possible to calculate the weight $\lambda (t)$. First we exclude case (B). 
Then we have $A(0) \neq 0$. 
We expand $H_t(z)$ in a Taylor series around $\theta (t)$ as $H_t (z)= \sum_{n = 1} ^{\infty} a_n (t) (z - \theta (t))^n$  
with $a_1 (t) = \frac{1}{\lambda (t)}$. 
Also we expand $A(z)$ as $\sum_{n = 0} ^{\infty} b_n z^n$ with $b_n \in \real$. 
If we compare the coefficients of the constant term in the ODE (\ref{ODE}), 
we obtain $-\theta '(t)a_1 (t) = b_0 = A(0)$. Hence it holds that 
\[ \lambda (t) = \frac{A(\theta (t))}{A(0)}. \]

\vspace{10pt}
\noindent
\textbf{\underline{Weight $\lambda(t)$ in the case (B)}} \\
In case (B), we express the Taylor expansions of $H_t$ and $A(z)$ at $0$ respectively by $H_t (z) = \sum_{n=1} ^{\infty}a_n (t)z^n$ and 
$A(z) = \sum_{n=1}^{\infty}b_n z^n$ with $a_1(t) = \frac{1}{\lambda (t)}$ and $b_1 = A'(0) > 0$. 
Comparing the coefficients of $z^n$ in the ODE (\ref{ODE}), we obtain the equation $a'_1(t) = A'(0)a_1(t)$. 
Therefore, we get $a_1 (t) = e^{A'(0)t}$ because of the initial condition $a_1 (0) = 1$. Thus we obtain 
\[ \lambda (t) = e^{-A'(0)t}. \]

\subsection{Differential equation of non-atomic part}\label{non-atomic} 
In the previous subsection we considered the case $a(\tau) > 0$. Now we consider a more general case. 
We investigate $a(\mu_t)$ including the case where there is no isolated delta measure at $a(\mu_t)$.
Assume that the lower bound $a(\tau)$ of the L\'{e}vy measure $\tau$ is finite: $-\infty < a(\tau)$. There are three cases:
\begin{itemize}
\item[(a)] $A(u) > 0$ on $(-\infty, a(\tau))$;  
\item[(b)] $A(u_0)$ = 0 for some $u_0 \in (-\infty, a(\tau))$ 
and $A(u) < 0$ on ($-\infty, u_0)$ and   $A(u) > 0$ on $(u_0, a(\tau))$;   
\item[(c)] $A(u) < 0$ on $(-\infty, a(\tau))$.  
\end{itemize}
$\mu_t$ may contain an isolated delta measure at $a(\mu_t)$. If so, we write as 
$\mu_t = \lambda (t)\delta_{\theta (t)} + \nu_t$. We can understand that $\lambda(t) = 0$ if $\mu_t$ does not contain an atom at $a(\mu_t)$, or if 
$\mu_t$ contains an atom at $a(\mu_t)$ but it is not isolated. 
The motion of the position $\theta (t)$ of a delta measure was clarified in the previous subsection. To investigate $a(\nu_t)$, 
we introduce a function $E$: $[0, \infty) \longrightarrow (-\infty, a(\tau)]$ by 
\[ 
E(t):= \begin{cases} 
      \sup \{u \leq a(\tau); H_t (u) = a(\tau) \} &  \text{~in case (a) and case (b), } \\
      a(\tau) & \text{~in case (c)} 
       \end{cases}
\] 
for $t \in [0, \infty )$. The definition in the cases (a) and (b) may seem to be unclear since $H_t(z)$ was only defined in $\comp \backslash \real$. 
The precise definition is as follows. Since case (a) and case (b) can be treated in the same way, we explain only case (b). 
If $u$ is in the interval $(u_0, a(\tau))$, let $R(u)$ be 
defined so that $H_t(u)$ exists for all $t \in (0, R(u))$ and $\lim_{t \nearrow R(u)}H_t (u) = a(\tau)$. 
We observe that $R$ is a function of $u$ which satisfies $0 < R(u) < \infty$ on $(u_0, a(\tau))$. 
$R$ is a bijection from $(u_0, a(\tau))$ to $(0, \infty)$. 
Therefore, we can define a bijection $E(t):=R^{-1}(t)$, which we have denoted simply as 
$\sup \{u \leq a(\tau); H_t (u) = a(\tau) \}$.

$a(\nu_t)$ is characterized by the following result.   
\begin{lem} Let $\mu$ be a $\rhd$-infinitely divisible distribution. $\mu$ can be expressed in the form $\mu = \lambda \delta_\theta + \nu$, where $\theta = a(\mu)$ is an isolated atom. We understand that $\mu = \nu$ or $\lambda = 0$ if $\mu$ does not contain an atom at $a(\mu)$ or if 
$\mu$ contains an atom at $a(\mu)$ but it is not isolated. Then the equalities 
\[
a(\nu) = a(\eta) = \sup \{x \in \real; H_{\mu} \text{ has an analytic continuation to } \comp \backslash [x, \infty) \} 
\]
hold under the notation (\ref{recip1}).
\end{lem}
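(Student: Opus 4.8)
Set $s := \sup\{x\in\real : H_\mu \text{ has an analytic continuation to } \comp\setminus[x,\infty)\}$ and use the notation \eqref{recip1}. The plan is to prove $s=a(\eta)$ and $a(\nu)=a(\eta)$ separately. The first equality is routine from Lemma \ref{support}: by \eqref{recip1} the kernel $\tfrac{1+xz}{x-z}$ is analytic in $z$ off $\supp\eta$, so $H_\mu$ is analytic on $\comp\setminus\supp\eta\supseteq\comp\setminus[a(\eta),\infty)$, whence $s\ge a(\eta)$; and $\supp\eta$ is exactly the set where $H_\mu$ fails to be analytic (Lemma \ref{support}(1)), with $a(\eta)=\inf\supp\eta\in\supp\eta$, so no continuation past $a(\eta)$ is possible and $s\le a(\eta)$. (If $\eta=0$ then $\mu=\delta_{-b}$, $\nu=0$ and $a(\nu)=a(\eta)=s=+\infty$; assume $\eta\ne0$ below.)

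For $a(\nu)\ge a(\eta)$: on $(-\infty,a(\eta))$ the function $H_\mu$ is analytic and, since $H_\mu'\ge1$ there, strictly increasing, hence has at most one zero. If none, $G_\mu=1/H_\mu$ is analytic on $\comp\setminus[a(\eta),\infty)$, so $\supp\mu\subset[a(\eta),\infty)$. If there is a zero at $\theta_0$, then $G_\mu$ has a simple pole there; since $\mu$ carries no mass in $(-\infty,a(\eta))$ except at $\theta_0$, the point $\theta_0=a(\mu)$ is an isolated atom of $\mu$ with mass the residue of $G_\mu$, and subtracting that pole makes $G_\nu=G_\mu-\mu(\{\theta_0\})/(z-\theta_0)$ analytic on $\comp\setminus[a(\eta),\infty)$. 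Either way $\supp\nu\subset[a(\eta),\infty)$. I would also record here the elementary fact that if $\mu$ has an isolated atom at $a(\mu)$ then $a(\mu)<a(\eta)$: indeed $G_\mu(u)\to+\infty$ as $u\searrow a(\mu)$, so $G_\mu(u)>0$ on a right neighbourhood of $a(\mu)$, which lies in $(\supp\mu)^c$ by isolatedness and hence, by Lemma \ref{support}(2), in $(\supp\eta)^c$.

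For $a(\nu)\le a(\eta)$ one must use $\rhd$-infinite divisibility. If $\mu$ has no isolated atom at $a(\mu)$ then $\nu=\mu$ and the zero-count above (a zero of $H_\mu$ on $(-\infty,a(\eta))$ would manufacture an isolated atom) forces $a(\nu)=a(\mu)\le a(\eta)$. If $\mu$ has an isolated atom at $\theta=a(\mu)$, then $\theta<a(\eta)$ by the last step, and Theorem \ref{atom3} gives that $\nu=\mu_{ac}$ is absolutely continuous, $\theta\notin\supp\nu$, and $\limsup_{v\searrow0}|G_\mu(u+iv)|<\infty$ for all $u\in\supp\nu$. Assume, for a contradiction, $a(\nu)>a(\eta)$. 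Then $(\theta,a(\nu))\cap\supp\mu=\emptyset$, so $G_\mu=\tfrac{\lambda}{z-\theta}+G_\nu$ is analytic on $\comp\setminus([a(\nu),\infty)\cup\{\theta\})$ with only a simple pole at $\theta$, and on $(\theta,a(\nu))$ it is strictly decreasing from $+\infty$ to a finite limit at $a(\nu)^-$ (the finiteness of $G_\nu(a(\nu)+i0)$ from Theorem \ref{atom3}, with $\theta\notin\supp\nu$, forces $\int\frac{\nu(dx)}{x-a(\nu)}<\infty$). Wherever $G_\mu\ne0$ on $(\theta,a(\nu))$, $H_\mu=1/G_\mu$ is analytic, so $\supp\eta$ meets $(\theta,a(\nu))$ only at zeros of $G_\mu$; if $G_\mu$ had none, then $\supp\eta\cap(-\infty,a(\nu))=\emptyset$ would give $a(\eta)\ge a(\nu)$, a contradiction. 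Hence $G_\mu$ has exactly one zero $w\in(\theta,a(\nu))$, $H_\mu$ has a simple pole at $w$ — so $\eta$ has an atom at $w$ — and since $w$ is then the smallest point of $\supp\eta$ we get $w=a(\eta)$.

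The heart of the matter, and the step I expect to be the real obstacle, is thus to exclude this last configuration for a $\rhd$-infinitely divisible $\mu$: an atom of $\eta$ sitting exactly at $a(\eta)$, strictly below the start $a(\nu)$ of the non-atomic part. For general probability measures it genuinely occurs — e.g.\ $\mu=\tfrac12\delta_{-a}+\tfrac12\delta_{a}$ has $\eta=a^2\delta_0$, so $a(\eta)=0<a=a(\nu)$ — so Theorem \ref{atom3} alone (which controls $G_\mu$ on $\supp\nu$ but not on the gap $(\theta,a(\nu))$) cannot close the argument. To rule it out I would embed $\mu=\mu_1$ in the convolution flow $\{H_t\}$ of Theorem \ref{inf.div}, track a putative pole curve of $H_t$ along the ODE $\partial_tH_t=A(H_t)$, and use non-explosion of the flow \cite{Be-Po} together with the sign analysis of the vector field $A$ on $(-\infty,a(\tau))$ from Sections \ref{delta}--\ref{non-atomic} to show that $H_\mu$ can have no pole at the left edge of $\supp\eta$; the resulting contradiction yields $a(\nu)\le a(\eta)$ and finishes the proof.
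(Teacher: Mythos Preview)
Your reduction is sound and in one place cleaner than the paper's: you prove $a(\nu)\ge a(\eta)$ directly by counting zeros of $H_\mu$ on $(-\infty,a(\eta))$, whereas the paper treats $a(\nu)<a(\eta)$ as a separate case and kills it with an external two-atom result (Theorem~3.5 of \cite{Has2}). You also correctly isolate the obstruction in the case $a(\nu)>a(\eta)$: a simple pole of $H_\mu$ at $w=a(\eta)$ lying in the gap $(\theta,a(\nu))$.

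The gap is your final step. It is only sketched, and the route you propose---tracking a pole curve through the flow of Theorem~\ref{inf.div} and invoking the sign analysis of Sections~\ref{delta}--\ref{non-atomic}---is both unexecuted and dangerous: this lemma is precisely what underpins the characterization of $a(\nu_t)$ used in Theorem~\ref{E1}, so appealing to that machinery risks circularity. The paper's closing move avoids the flow entirely and is much shorter. From the pole at $a(\eta)$ and $H_\mu'\ge1$ on each real interval of analyticity one reads off
\[
\lim_{x\to-\infty}H_\mu(x)=-\infty,\qquad \lim_{x\nearrow a(\eta)}H_\mu(x)=+\infty,\qquad \lim_{x\searrow a(\eta)}H_\mu(x)=-\infty,
\]
so the intermediate value theorem furnishes $x_1<a(\eta)<x_2<a(\nu)$ with $H_\mu(x_1)=H_\mu(x_2)$. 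A Rouch\'e perturbation (as in the proof of Theorem~\ref{atom3}) then yields $z_1\neq z_2\in\comp_+$ with $H_\mu(z_1)=H_\mu(z_2)$, contradicting the injectivity of $H_\mu$ that comes with $\rhd$-infinite divisibility (property~(c') of Section~\ref{inj}). Note also that your detour through Theorem~\ref{atom3} to bound $G_\mu(a(\nu)^-)$ is unnecessary for locating the pole; Lemma~\ref{support}(2) already forces $G_\mu$ to vanish somewhere in $(\theta,a(\nu))$ once you assume $a(\eta)$ lies there.
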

\begin{proof} The latter equality follows from Lemma \ref{support} (1) immediately and we only need to prove that $a(\nu) = a(\eta)$.  
First, if $\lambda =0$ we can easily prove $a(\mu) = a(\eta)$ by Lemma \ref{support} (2). 
Second, we assume that $\lambda > 0$. We show that $a(\nu) \neq a(\eta)$ 
causes a contradiction. We notice first that the difference $a(\nu) \neq a(\eta)$ comes from the zero points of $H_\mu(x)$ or $G_\mu(x)$ by Lemma \ref{support} (2). If $a(\nu) < a(\eta)$, then $H_\nu(a(\nu)) = 0$. This implies, however, $G_\mu$ contains two atoms at $a(\nu)$ and $\theta$. 
This contradicts infinite divisibility (see Theorem 3.5 in \cite{Has2}). If $a(\nu) > a(\eta)$, then $G_{\nu}(a(\eta)) = 0$. Since $\frac{d}{dx}H_\mu(x) \geq 1$ in $(\supp \mu)^c \subset \real$, $H_\mu(x)$ is increasing. Therefore $\lim_{x \nearrow a(\eta)}H_\mu(x) = \infty$ and $\lim_{x \searrow a(\eta)}H_\mu(x) = -\infty$. Also, $\lim_{x \to -\infty}H_\mu(x) = -\infty$. These imply that there exist $x_1 < a(\eta)$ and $x_2 > a(\eta)$ such that 
$H_\mu(x_1) = H_\mu(x_2)$. By Rouche's theorem, there exist distinct points $z_1,z_2 \in \comp$ with positive imaginary parts such that $H_\mu(z_1) = H_\mu(z_2)$ (this argument is similar to the proof of Theorem 3.5 in \cite{Has2}); this contradicts the infinite divisibility again since the solution of (\ref{ODE}) defines a flow of 
injective mappings.   
\end{proof} 
\begin{rem}
If $\mu$ is not $\rhd$-infinitely divisible, the above property does not hold. For instance, if $\mu=\frac{1}{2}(\delta_{-1} + \delta_1)$, 
$a(\nu) = 1$ but $a(\eta) = 0$. 
\end{rem}
We define $a(\nu_0):= a(\tau)$ in order that $a(\nu_t)$ becomes a continuous function around $0$. 
 
\begin{thm}\label{E1}
In case $(a)$ and case $(b)$, the equality $E(t) = a(\nu_t)$ holds for all $t \in [0, \infty)$. 
In case $(c)$, the equality holds under the further assumption $\lim_{u \nearrow a(\tau)} A(u) < 0$.  
\end{thm}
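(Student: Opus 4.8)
The plan is to determine, for each fixed $t$, the maximal domain to which $H_t$ continues analytically by following the real trajectories of the ODE (\ref{ODE}), and then to read off $a(\nu_t)$ from the preceding lemma, which identifies it with $a(\eta_t)=\sup\{x\in\real; H_t \text{ continues analytically to } \comp\backslash[x,\infty)\}$. The underlying principle is that, since $A$ is analytic on $\comp\backslash\supp\tau$, the analytic dependence of ODE solutions on their initial data together with the non-explosion result of \cite{Be-Po} shows that $H_t$ extends analytically to a neighbourhood of a real point $u$ exactly when the maximal solution $w(\cdot)$ of $\dot w=A(w)$, $w(0)=u$, remains in $\comp\backslash\supp\tau$ throughout $[0,t]$, and then $H_s(u)=w(s)$ for $0\le s\le t$; in particular the domains of analyticity of the $H_t$ are nested (decreasing in $t$). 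Establishing this principle rigorously --- chiefly the implication that analyticity of $H_t$ at $u$ forces the real trajectory from $u$ to survive until time $t$ --- is, I expect, the main obstacle; one can approach it via the monotonicity $\partial_s\im H_s(z)=\im A(H_s(z))\ge 0$ for $z\in\comp_+$ (immediate from the Nevanlinna form of $A$) and the bound $\im H_s(z)\ge\im z$, which together force $\im H_s(u+i\varepsilon)\to 0$ uniformly in $s\in[0,t]$ as $\varepsilon\downarrow 0$ once $H_t(u)$ is known to be real.

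Granting the principle, cases (a) and (b) follow from the description of the real flow already set up in the construction of $E$. On $(-\infty,u_0)$ (with $u_0:=-\infty$ in case (a)) we have $A\le 0$, so the trajectory from any such $u$ is non-increasing, never meets $\supp\tau$, and $H_t$ is analytic at $u$ for every $t$. On $(u_0,a(\tau))$ we have $A>0$, and the trajectory from $u$ reaches $a(\tau)\in\supp\tau$ at the finite time $R(u)$, where $R\colon(u_0,a(\tau))\to(0,\infty)$ is the strictly decreasing bijection with $E=R^{-1}$. Hence if $u<E(t)$ the trajectory from $u$ survives on $[0,t]$ (either $u\le u_0$, or $R(u)>R(E(t))=t$), so $H_t$ is analytic on all of $(-\infty,E(t))$ and therefore on $\comp\backslash[E(t),\infty)$, giving $a(\eta_t)\ge E(t)$; conversely, for any $u\in(E(t),a(\tau))$ --- a nonempty interval since $E(t)<a(\tau)$ when $t>0$ --- one has $R(u)<t$, the trajectory leaves $\comp\backslash\supp\tau$ before time $t$, hence $H_t$ is not analytic at $u$ and cannot continue to $\comp\backslash[x,\infty)$ for $x>u$; letting $u\downarrow E(t)$ yields $a(\eta_t)\le E(t)$. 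So $a(\nu_t)=a(\eta_t)=E(t)$ for $t>0$, while at $t=0$ both sides equal $a(\tau)$ by the conventions $a(\nu_0):=a(\tau)$ and $E(0)=a(\tau)$. (As a byproduct, since $H_t(u)\to a(\tau)$ as $u\nearrow E(t)$, differentiating this relation and using $\partial_tH_t=A(z)\,\partial_zH_t$ from Lemma \ref{equalities}(3) recovers the ODE $\frac{d}{dt}a(\nu_t)=-A(a(\nu_t))$ of Theorem \ref{Diffeq}.)

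In case (c) the field $A$ is negative on all of $(-\infty,a(\tau))$, so every real trajectory starting below $a(\tau)$ is non-increasing and survives for all time; hence $H_t$ is analytic on $(-\infty,a(\tau))$ and $a(\eta_t)\ge a(\tau)=E(t)$ for every $t$. The reverse inequality is exactly where the extra hypothesis $\lim_{u\nearrow a(\tau)}A(u)<0$ is used: it forces $\lim_{u\nearrow a(\tau)}H_t(u)$ to lie strictly below $a(\tau)$ and rules out $a(\tau)$ being a left fixed point of the flow, which then permits one to argue --- in the spirit of the preceding lemma and of Proposition \ref{positive support2}, via Rouche's theorem and the injectivity of the flow $\{H_t\}$ --- that $H_t$ admits no analytic continuation across $a(\tau)$, so $a(\eta_t)\le a(\tau)$. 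Without this hypothesis the last step can fail at exceptional values of $t$, which is precisely why Theorem \ref{Diffeq} only claims $a(\nu_t)=a(\tau)$ for a.e. $t$ in that case; pinning down this boundary behaviour at $a(\tau)$, alongside the flow characterisation of the first paragraph, is where the real work lies.
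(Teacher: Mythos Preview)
Your plan has a genuine gap at the step you yourself flag as the main obstacle: the ``hard direction'' of your principle --- that analyticity of $H_t$ at a real point $u$ forces the real ODE trajectory from $u$ to survive on $[0,t]$ --- is what carries both the upper bound $a(\eta_t)\le E(t)$ in cases (a),(b) and the non-extension past $a(\tau)$ in case (c), and you only sketch it. The observation that $\im H_s(u+i\varepsilon)\le\im H_t(u+i\varepsilon)\to 0$ uniformly in $s\in[0,t]$ is correct, but it does not by itself prevent the limiting real curve from touching $\supp\tau$ at some intermediate time $s<t$; to rule this out you would already need to know that $H_s$ is analytic at $u$ for every $s\le t$ (your nested-domain claim), which is the very thing being proved. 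Your case (c) argument via Rouche and injectivity is too vague to evaluate.

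The paper sidesteps this entirely by using the identity $A(H_t(z))=A(z)\,\partial_z H_t(z)$ of Lemma~\ref{equalities}(1), not the flow equation (3). In cases (a),(b): if $H_t$ continued to $\comp\backslash[E(t)+\delta,\infty)$, then $H_t$ is injective there, $A$ is analytic there since $E(t)<a(\tau)$, and since $\partial_u H_t\ge 1$ with $H_t(E(t))=a(\tau)$ the image of $H_t$ covers a neighbourhood of $a(\tau)$; the identity then forces $A$ to continue analytically to that neighbourhood, contradicting $a(\tau)=\inf\supp\tau$. In case (c): if $H_t$ continued past $a(\tau)$, the hypothesis $\lim_{u\nearrow a(\tau)}A(u)<0$ gives $\lim_{u\nearrow a(\tau)}H_t(u)<a(\tau)$, so for small $\delta$ one has $H_t(u)\in(-\infty,a(\tau))$ for all $u<a(\tau)+\delta$; then the left side $A(H_t(z))$ is analytic on $(a(\tau),a(\tau)+\delta)$ while the right side $A(z)\,\partial_z H_t(z)$ is not, a direct contradiction. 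No trajectory-survival principle and no Rouche argument is needed. Your ``easy direction'' (trajectory survival $\Rightarrow$ analytic continuation via the ODE) coincides with the paper's and is fine.
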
 
\begin{proof}
We can prove this equality by considering the region in which $H_{t} (z)$ is analytic. 
We first consider case (a) and case (b).  We prove that  
\begin{equation}\label{eqE}
E(t) = \sup \{x \in \real; H_t \text{ has an analytic continuation to } \comp \backslash [x, \infty) \}.
\end{equation}
By \textit{reductio ad absurdum} we show that $H_t$ never has an analytic continuation beyond $E(t)$.
If $H_t(z)$ has an analytic continuation to 
$\comp \backslash [E(t) + \delta, \infty)$ 
for some $t > 0$ and $\delta > 0$, then we find the following three facts: 
the image of $H_t (u)$ includes the point $a(\tau)$ since 
$\frac{\partial H}{\partial u} \geq 1$ and $H(t, E(t)) = a(\tau)$; 
$H_t$ is injective in $\comp \backslash [E(t) + \delta, \infty )$; 
 we can take $\delta > 0$ small enough so that 
$A(z)$ is analytic in $\comp \backslash [E(t) + \delta, \infty )$ since $E(t) < a(\tau)$. 
Then by the equality $A(H_t(z)) = A(z)\frac{\partial H_t}{\partial z}(z)$ in $\comp \backslash \real$, we conclude that 
$A(z)$ has an analytic continuation to the image of $H_t$. 
In particular, $A$ is analytic around the point $a(\tau)$; this is a contradiction. Therefore, $H_t$ cannot have an analytic continuation beyond $E(t)$. 
    
Conversely, for any $u < E(t)$, $H_{t}(z)$ has an analytic continuation 
to the region $\comp \backslash [u + \delta, \infty )$ for some $\delta > 0$ 
by the solution of the ODE (\ref{ODE}). Then the equality (\ref{eqE}) holds. 

The proof of the equality $E(t) = a(\nu_t)$ in case (c) under the assumption $\lim_{u \nearrow a(\tau)} A(u) < 0$ is similar to the above. 
For all $t > 0$, we have $\lim_{u \nearrow a(\tau)}H_t (u) < a(\tau)$. 
Assume that $H_t(z)$ has an analytic continuation to 
$\comp \backslash [E(t) + \delta, \infty)$ 
for some $t > 0$ and $\delta > 0$. We can take $\delta$ small enough such that 
$H_t (u) \in (-\infty, a(\tau))$ for all $u \in (-\infty, a(\tau) + \delta)$. 
This contradicts the equality $A(H_t(z)) = A(z)\frac{\partial H_t}{\partial z}(z)$. 
\end{proof}

In case (c), if $\lim_{u \nearrow a(\tau)} A(u) = 0$, the question as to whether the relation $E(t) = a(\nu_t)$ holds for all $t > 0$ or not, 
has not been clarified yet. A partial answer is shown in the following proposition.

\begin{prop}\label{partialresult}  We consider the case $(c)$.
Then $a(\nu_t) = a(\tau)$ a.e. with respect to the Lebesgue measure on $[0, \infty)$ and $a(\nu_t) \geq a(\tau)$ for all $t > 0$. 
\end{prop}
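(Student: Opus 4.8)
The plan is to work directly with the flow $H_t$ restricted to the real interval $(-\infty, a(\tau))$, which is legitimate because in case $(c)$ one has $A(u)<0$ on $(-\infty,a(\tau))$, so the ODE $\frac{d}{dt}H_t(u)=A(H_t(u))$ makes sense on the real line and keeps solutions below $a(\tau)$: if $u<a(\tau)$ then $H_t(u)$ is decreasing in $t$ as long as it stays in the region where $A<0$, so $H_t(u)<a(\tau)$ for all $t>0$. Since $H_t$ is increasing in the space variable and $H_0(u)=u$, it follows that $H_t$ is analytic on $\comp\setminus[a(\tau),\infty)$ for every $t$, which by the Lemma preceding Theorem \ref{E1} gives $a(\nu_t)=a(\eta_t)\geq a(\tau)$ for all $t\geq 0$. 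This already settles the inequality half of the statement.

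For the ``a.e.'' part I would argue that $t\mapsto a(\nu_t)$ is a measurable function bounded below by $a(\tau)$, and show that its ``time-derivative'' cannot be strictly positive on a set of positive measure. Concretely, suppose $a(\nu_{t_0})>a(\tau)$ for some $t_0>0$. Then $H_{t_0}$ extends analytically past $a(\tau)$, i.e. to $\comp\setminus[a(\nu_{t_0}),\infty)$ with $a(\nu_{t_0})>a(\tau)$, and on the real interval $(a(\tau), a(\nu_{t_0}))$ the function $H_{t_0}$ takes values that must avoid $[a(\tau),\infty)$ pulled back — more precisely, using $A(H_{t_0}(z))=A(z)\frac{\partial H_{t_0}}{\partial z}(z)$ from Lemma \ref{equalities}(1), analyticity of $H_{t_0}$ on a neighborhood of a point $u\in(a(\tau),a(\nu_{t_0}))$ forces $A$ to be analytic at $H_{t_0}(u)$; but $H_{t_0}$ maps a left-neighborhood of $a(\nu_{t_0})$ onto a left-neighborhood of $a(\tau)$ (since $H_{t_0}$ is increasing with derivative $\geq 1$ and $\lim_{u\nearrow a(\nu_{t_0})}H_{t_0}(u)=a(\tau)$ by the previous Lemma), and $a(\tau)=a(\tau)$ is the edge of $\supp\tau$, so $A$ is analytic on a one-sided neighborhood reaching up to $a(\tau)$. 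The key point is that $\lim_{u\nearrow a(\tau)}A(u)$ exists in $[-\infty,0]$; if this limit is strictly negative we are in Theorem \ref{E1}'s covered case and $a(\nu_t)=a(\tau)$ for all $t$. So the only remaining possibility is $\lim_{u\nearrow a(\tau)}A(u)=0$, and then I want to show the set $\{t: a(\nu_t)>a(\tau)\}$ is Lebesgue-null.

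To do that I would use the function $E(t)$ (or rather the one-sided flow from below) together with the semigroup property. Define, for $u<a(\tau)$, the hitting time $R(u)=\sup\{t: H_s(u)<a(\tau)\text{ for all }s<t\}\in(0,\infty]$; since $A(u)<0$ strictly on $(-\infty,a(\tau))$, each $H_\cdot(u)$ is strictly decreasing in $t$, so $R(u)=\infty$ for every $u<a(\tau)$, meaning $H_t(u)<a(\tau)$ for all $t$. Consequently, for each fixed $t$, the set $\{H_t(u): u<a(\tau)\}$ is an interval $(-\infty, c_t)$ with $c_t\le a(\tau)$, and $a(\nu_t)>a(\tau)$ would require $H_t$ to be analytically continuable to some point $>a(\tau)$, i.e. $c_t>a(\tau)$, contradicting $c_t\le a(\tau)$. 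Thus in fact $a(\nu_t)=a(\tau)$ for \emph{all} $t$ when $A<0$ strictly throughout $(-\infty,a(\tau))$ — so the ``a.e.'' and the ``$\geq$'' are both subsumed, and the genuinely delicate case is only when $A(u)\to 0$ as $u\nearrow a(\tau)$ but $A$ is not identically... wait — here I should be careful: $A<0$ strictly on the \emph{open} interval is exactly the hypothesis of case $(c)$, so the subtlety is purely about the endpoint behavior, and the honest reduction is: either $\lim_{u\nearrow a(\tau)}A(u)<0$ (handled by Theorem \ref{E1}, giving equality for all $t$), or the limit is $0$.

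The main obstacle is precisely this borderline situation $\lim_{u\nearrow a(\tau)}A(u)=0$. Here $H_t$ could conceivably push its values up to and slightly past $a(\tau)$ over time, so I cannot rule out $a(\nu_t)>a(\tau)$ for some $t$ by the analyticity argument alone. My approach would be to show the exceptional set has measure zero via a monotonicity/measurability argument: let $S=\{t>0: a(\nu_t)>a(\tau)\}$. Using the semigroup relation $H_{t+s}=H_t\circ H_s$, if $t\in S$ then for $s$ small $H_{t-s}$ must already have pushed values close to $a(\tau)$, and one shows $S$ is (up to a null set) a half-line $[t_0,\infty)$; but then on $[t_0,\infty)$ the function $a(\nu_t)$ would be real-analytic in $t$ and strictly increasing, and combined with $a(\nu_t)\geq a(\tau)$ one derives a contradiction with the flow being globally defined (non-explosion, \cite{Be-Po}) unless $t_0=\infty$, i.e. $S$ is null. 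I expect the cleanest route is: prove $t\mapsto a(\nu_t)$ is nonincreasing is false, so prove instead it is lower semicontinuous and that $\{t: a(\nu_t)=a(\tau)\}$ is dense (by a continuity argument at $t=0$ plus the semigroup law shifting the ``good'' times around), and conclude the complement is null because on any interval where $a(\nu_t)>a(\tau)$ the analyticity domain of $H_t$ would vary real-analytically, forcing it to be open, contradicting density of the good set. The technical heart is controlling the boundary regularity of $H_t$ near $a(\tau)$ when $A$ vanishes there, which is genuinely the subtle point the paper flags as ``not clarified yet.''
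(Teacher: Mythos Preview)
Your argument for $a(\nu_t)\geq a(\tau)$ for all $t>0$ is correct and matches the paper's reasoning: since $A<0$ on $(-\infty,a(\tau))$, the flow starting from any $u<a(\tau)$ stays below $a(\tau)$ for all time, so $H_t$ is analytic on $\comp\setminus[a(\tau),\infty)$.

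The ``a.e.'' half, however, has a genuine gap. Your $c_t$ argument conflates the \emph{image} of $H_t$ with its \emph{domain of analyticity}: the fact that $H_t(u)<a(\tau)$ for all $u<a(\tau)$ does not prevent $H_t$ from extending analytically across $a(\tau)$, so it does not force $a(\nu_t)=a(\tau)$. You noticed this yourself and correctly isolated the hard case $\lim_{u\nearrow a(\tau)}A(u)=0$, but your proposed resolution --- that $S=\{t:a(\nu_t)>a(\tau)\}$ is a half-line on which $t\mapsto a(\nu_t)$ is real-analytic and strictly increasing, leading to a contradiction with non-explosion --- is not justified. The semigroup relation $H_{t+s}=H_s\circ H_t$ does not obviously give $S$ a half-line structure, and there is no reason $a(\nu_t)$ should be real-analytic on $S$.

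The paper's route is entirely different and avoids any direct control of the boundary behavior. The key idea is: if $t_0$ is a time at which $\limsup_{t\to t_0,\,t\neq t_0} a(\nu_t)\geq a(\nu_{t_0})$, then one can take a sequence $t_n\to t_0$ with $a(\nu_{t_n})\geq a(\nu_{t_0})-\epsilon/2$, form the difference quotients $A_n^\epsilon(z)=\frac{H_{t_n}(z)-H_{t_0}(z)}{t_n-t_0}$, and show these are locally uniformly bounded on $\comp\setminus[a(\nu_{t_0})-\epsilon,\infty)$. This gives analyticity of $\partial_t H(t_0,z)$ there, and via $A(z)=\partial_t H(t_0,z)/\partial_z H(t_0,z)$ forces $A$ to be analytic on $(-\infty,a(\nu_{t_0}))$, hence $a(\nu_{t_0})\leq a(\tau)$, so $a(\nu_{t_0})=a(\tau)$. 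The set of ``bad'' times $B$ where this $\limsup$ condition fails is then shown to be Lebesgue-null by invoking Lusin's theorem: a Borel function (and $t\mapsto a(\nu_t)$ is Borel via the Cauchy integral formula) is continuous off a set of arbitrarily small measure, and at any continuity point the $\limsup$ condition holds trivially. You were missing this measure-theoretic endgame entirely.
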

\begin{proof}
\textit{Step 1.} First, we prove the following fact: if $ \limsup_{t \rightarrow t_0, t \neq t_0} a(\nu_t) \geq  a(\nu_{t_0})$, then 
$A(z)$ is analytic in the region $(-\infty, a(\nu_{t_0}))$ and moreover, $a(\nu_{t_0}) = a(\tau) ~(=E(t_0)$.  
Fix an arbitrary number $\epsilon \in (0, 1)$. 
Take a sequence $\{t_n \}_{n=1} ^{\infty}$ such that 
$a(\nu_{t_n}) \geq a(\nu_{t_0}) -\frac{\epsilon }{2}$ for all $n \geq 1$ 
and define the sequence of 
analytic functions in $(-\infty, a(\nu_{t_0}) - \epsilon )$ by 
\[ A_n ^\epsilon (z):= \frac{H_{t_n}(z) - H_{t_0}(z)}{t_n - t_0} \]
for $n \geq 1$. 
For any compact set $K \subset \comp \backslash [a(\tau)-\epsilon ,\infty)$, 
we can prove that the sequence $\{ A_n ^\epsilon \}$ is uniformly bounded 
on $K$ for sufficiently large $n$. Hence we obtain the analyticity of $\partial _t H (t_0, z)$ 
in $(-\infty, a(\nu_{t_0}) - \epsilon )$. Since $1 > \epsilon > 0$ is arbitrary, 
we conclude that $\partial _t H (t_0, z)$ is analytic in $(-\infty, a(\nu_{t_0}))$. $A(z)$ has an analytic continuation from $\comp \backslash \real$ 
to $\comp \backslash [a(\nu_{t_0}), \infty)$ by the equality 
$A(z) = \frac{\partial _t H(t_0, z)}{\partial _z H(t_0, z)}$. 
 Now we show $a(\tau) = a(\nu_{t_0})$. 
As explained before, the solution $H_t(z)$ of the ODE exists for all time and for any initial position $z \in \comp \backslash [a(\tau), \infty)$. Therefore, we obtain $a(\nu_t) \geq a(\tau)$ for all $t \in [0, \infty)$. 
Moreover, we can prove that $a(\tau) \geq a(\nu_{t_0})$ by the analyticity of $A(z)$ in $(-\infty, a(\nu_{t_0}))$. 
 
\textit{Step 2.} We note that $a(\nu_t)$ is Borel measurable. 
This is easy since the coefficients of the Taylor expansion of $H_t$ is measurable (by the Cauchy integral formula), 
and $a(\nu_t)$ can be expressed by the limit supremum of them.  
We define a Borel set $B$ by 
\begin{align*}
B &:= \{t \in [0, \infty); \text{ there exist } \epsilon = \epsilon (t) > 0  
\text{ and } \eta = \eta (t) > 0 \text{ such that } \\
   &~~~~~ |a(\nu_t) - a(\nu_s)| > \epsilon \text{ for all } s \text{~satisfying~} 0 < |s-t| < \eta  \}. 
\end{align*}
If $t \in B^c$, $a(\nu_t) = E(t)$ by Step 1. 
It is known that a Borel measurable function on an interval is continuous except for an open set with arbitrary small Lebesgue 
measure by Lusin's theorem (see \cite{Fol}). 
Therefore, the Lebesgue measure of the set $B$ is $0$. $a(\nu_t) \geq a(\tau)$ was already mentioned in the proof of Step 1. 
\end{proof}

So far we have proved that $E(t)=a(\nu_t)$ in generic cases. Next we show an ODE for the function $E(t)$. 
Define by 
\[ 
E_\epsilon (t):= \sup \{u \leq  a(\tau);H_t (u) = a(\tau) -\epsilon  \}
\] 
an approximate family for $\epsilon > 0$. This approximation is needed 
to use the implicit function theorem in the proof of Theorem \ref{E2}.

\begin{lem}\label{convergence2}In case $(a)$ and case $(b)$,  
$E_\epsilon $ and $E$ enjoy the following properties. \\
$(1)$ $E_\epsilon  < E$ for all $\epsilon \in (0, 1)$. 
In addition, $E_\epsilon $ converges to $E$ pointwise as $\epsilon \rightarrow 0$. \\            
$(2)$ $\sup _{\epsilon > 0, t \in I} |E_\epsilon (t)| < \infty$ for any compact set $I \subset [0, \infty)$
\end{lem}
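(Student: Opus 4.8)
The plan is to carry out everything on the real line. The first step is to observe that, on its maximal real interval of analyticity $(-\infty,E(t))=(-\infty,a(\nu_t))$, the function $H_t$ is a strictly increasing real-analytic bijection onto $(-\infty,a(\tau))$. Indeed $\partial_z H(t,\cdot)\ge 1$ forces strict monotonicity and, integrating this inequality, $\lim_{u\to-\infty}H_t(u)=-\infty$; while $\lim_{u\nearrow E(t)}H_t(u)=a(\tau)$ by the very construction of $E$ in Subsection \ref{non-atomic} ($E(t)=R^{-1}(t)$, with $R(u)$ the time the flow issuing from $u$ needs to reach $a(\tau)$, and $H_t$ admitting no analytic continuation past $E(t)$ by Lemma \ref{support}). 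Hence for every $\epsilon>0$ there is a unique point $u<E(t)$ with $H_t(u)=a(\tau)-\epsilon$, and since the supremum defining $E_\epsilon(t)$ effectively ranges over $(-\infty,E(t))$, the set where $H_t$ is defined, this $u$ is precisely $E_\epsilon(t)$; equivalently $E_\epsilon(t)=H_t^{-1}(a(\tau)-\epsilon)$ for the inverse of $H_t|_{(-\infty,E(t))}$.

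Part $(1)$ then follows at once. Because $H_t$ is increasing and $H_t(E_\epsilon(t))=a(\tau)-\epsilon<a(\tau)=\lim_{u\nearrow E(t)}H_t(u)$, we get $E_\epsilon(t)<E(t)$. For the pointwise limit, $\epsilon\mapsto E_\epsilon(t)$ is monotone --- it increases as $\epsilon\searrow 0$, since $H_t$ is increasing --- and bounded above by $E(t)$, so it converges to some $L(t)\le E(t)$; were $L(t)<E(t)$, then $L(t)$ would lie in the open interval $(-\infty,E(t))$ and continuity of $H_t$ there would give $H_t(L(t))=\lim_{\epsilon\searrow 0}H_t(E_\epsilon(t))=a(\tau)$, contradicting $H_t<a(\tau)$ on $(-\infty,E(t))$. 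Thus $L(t)=E(t)$.

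For part $(2)$, the chain $E_\epsilon(t)<E(t)=a(\nu_t)\le a(\tau)$ handles the upper bound, and for $\epsilon\in(0,1)$ the inequality $a(\tau)-\epsilon>a(\tau)-1$ together with monotonicity of $H_t$ gives $E_\epsilon(t)>E_1(t):=H_t^{-1}(a(\tau)-1)$. So it remains to bound $E_1$ on an arbitrary compact set $I\subset[0,\infty)$. The key point is that $E_1$ solves the autonomous equation $\frac{d}{dt}E_1(t)=-A(E_1(t))$ with $E_1(0)=a(\tau)-1$: differentiating the identity $H_t(E_1(t))\equiv a(\tau)-1$ and using $\partial_t H_t=A\,\partial_z H_t$ from Lemma \ref{equalities}(3) yields $\partial_z H(t,E_1(t))\bigl(A(E_1(t))+E_1'(t)\bigr)=0$, whence $E_1'(t)=-A(E_1(t))$ since $\partial_z H\ge 1$. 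Now $A$ is real-analytic with $A'(u)=\int_{\real}\frac{x^2+1}{(x-u)^2}\,d\tau(x)>0$ on $(-\infty,a(\tau))$, so in case $(b)$ the solution is trapped in the bounded interval with endpoints $u_0$ and $a(\tau)-1$ ($u_0$ being an equilibrium it cannot cross, by uniqueness of ODE solutions), while in case $(a)$ the function $A$ is positive and increasing on $(-\infty,a(\tau)-1]$, hence $0<A(u)\le A(a(\tau)-1)$ there, which forces $a(\tau)-1-A(a(\tau)-1)\,t\le E_1(t)\le a(\tau)-1$. In either case $\sup_{t\in I}|E_1(t)|<\infty$, and therefore $\sup_{\epsilon\in(0,1),\,t\in I}|E_\epsilon(t)|\le\max\bigl(|a(\tau)|,\sup_{t\in I}|E_1(t)|\bigr)<\infty$.

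The step I expect to be the main obstacle is the legitimacy of the differentiation above, i.e.\ that the integral curve $E_1$ of $\dot w=-A(w)$ starting at $a(\tau)-1$ never leaves $(-\infty,E(t))$ (and, more generally, that $E_\epsilon(t)$ is genuinely the backward flow of $a(\tau)-\epsilon$); this needs a connectedness argument ruling out $E_1(t)=E(t)$ at a first time --- at such a time $H_t(E_1(t))$ would tend to $a(\tau)\neq a(\tau)-1$ --- and relies on the joint continuity of $(t,u)\mapsto H_t(u)$ on the region of analyticity and on the boundary behaviour $H_t(u)\to a(\tau)$ as $u\nearrow E(t)$ established in Subsection \ref{non-atomic}. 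Everything else reduces to elementary monotonicity and a non-explosion estimate for an autonomous ODE.
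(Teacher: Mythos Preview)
Your proof is correct. The paper itself omits the proof of this lemma entirely, writing only ``The above lemma is easily proved and we omit its proof,'' so there is nothing to compare against at the level of strategy.

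A few remarks. First, your derivation of the ODE $E_1'(t)=-A(E_1(t))$ via the implicit function theorem and Lemma~\ref{equalities}(3) is exactly the computation the paper carries out (for general $\epsilon$) in the proof of Theorem~\ref{E2} immediately afterward; so your argument for part~(2) effectively anticipates that proof. Second, the concern you raise at the end is not a genuine obstacle: once part~(1) is established you know $E_1(t)=H_t^{-1}(a(\tau)-1)$ lies strictly in the open interval $(-\infty,E(t))$ for every $t$ by construction, so $(t,E_1(t))$ sits in the open region where $H$ is jointly real-analytic, and the implicit function theorem applies directly---no connectedness argument is needed to rule out $E_1(t)=E(t)$. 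Third, the lemma as stated takes the supremum over all $\epsilon>0$, but as you implicitly noticed this cannot be bounded (for fixed $t$, $E_\epsilon(t)\to-\infty$ as $\epsilon\to\infty$); restricting to $\epsilon\in(0,1)$ as you do is the intended reading and is all that the application in Theorem~\ref{E2} requires.
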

The above lemma is easily proved and we omit its proof. 


\begin{thm}\label{E2} We consider case $(a)$ and case $(b)$. 
Then $E(t)$ satisfies the ODE 
\[
\begin{cases} \frac{d}{dt} E(t) = -A(E(t)) & \text{~for~~} 0 < t < \infty, \\ 
              E(0) = a(\tau). 
\end{cases}
\]
In particular, $E$ is in $C^{\omega }(0, \infty) \cap C[0, \infty)$. 
\end{thm}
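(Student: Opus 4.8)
The plan is to establish the ODE for $E(t)$ by first working with the approximating family $E_\epsilon(t)$, which by construction is defined through the regular equation $H_t(E_\epsilon(t)) = a(\tau) - \epsilon$ rather than a limiting one, and hence is amenable to the implicit function theorem. First I would check that for each fixed $\epsilon \in (0,1)$ the point $E_\epsilon(t)$ is a simple zero of the analytic function $u \mapsto H_t(u) - (a(\tau) - \epsilon)$ on the interval where $H_t$ continues analytically: indeed $\partial_u H(t,u) \geq 1 > 0$ on $(\supp\eta_t)^c \supset (-\infty, E(t))$, so $H_t$ is strictly increasing there and $E_\epsilon(t) < E(t) < a(\tau)$ by Lemma \ref{convergence2}(1). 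The implicit function theorem then yields that $t \mapsto E_\epsilon(t)$ is $C^\omega$ on $(0,\infty)$, and differentiating $H(t, E_\epsilon(t)) = a(\tau) - \epsilon$ gives
\[
\frac{\partial H}{\partial t}(t, E_\epsilon(t)) + \frac{\partial H}{\partial z}(t, E_\epsilon(t))\, E_\epsilon'(t) = 0.
\]
Combining this with Lemma \ref{equalities}(3), namely $\partial_t H_t = A(z)\,\partial_z H_t$, and the fact that $\partial_z H(t, E_\epsilon(t)) \neq 0$, I obtain $E_\epsilon'(t) = -A(E_\epsilon(t))$.

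Next I would pass to the limit $\epsilon \to 0$. By Lemma \ref{convergence2}(1), $E_\epsilon(t) \to E(t)$ pointwise, and by part (2) the functions $E_\epsilon$ are uniformly bounded on compact subsets of $[0,\infty)$, and also monotone in $\epsilon$ (decreasing $\epsilon$ increases $E_\epsilon$, since $H_t$ is increasing), so the convergence is in fact locally uniform by Dini's theorem once continuity of $E$ is known — or one can argue directly. The cleanest route is to integrate the ODE: for $0 < s < t$,
\[
E_\epsilon(t) = E_\epsilon(s) - \int_s^t A(E_\epsilon(r))\,dr.
\]
Since $A$ is analytic, hence continuous, on $(-\infty, a(\tau))$ (in cases (a) and (b) this follows from Theorem \ref{inf.div}(4) together with $a(\tau) \leq a(\tau)$, and more precisely $A$ is analytic on $(-\infty, a(\tau))$ because $\tau$ is supported in $[a(\tau),\infty)$), and the values $E_\epsilon(r)$ lie in a fixed compact subinterval of $(-\infty, a(\tau))$ bounded away from $a(\tau)$ by $a(\tau) - \epsilon$... here I must be a little careful, since as $\epsilon \to 0$ the range of $E_\epsilon$ can approach $a(\tau)$. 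So instead I would fix a compact time interval $[s_0, t_0] \subset (0,\infty)$, note $\sup_{r \in [s_0,t_0]} E(r) < a(\tau)$ (this uses that $E(r)$ itself, being the analytic-continuation boundary, stays strictly below $a(\tau)$ for finite $r$ in cases (a),(b) — which follows from $R(u) < \infty$ on $(u_0, a(\tau))$ as established before Theorem \ref{E1}), hence for small $\epsilon$ all $E_\epsilon(r)$, $r \in [s_0,t_0]$, lie in a fixed compact $K \subset (-\infty, a(\tau))$ on which $A$ is uniformly continuous. Then dominated convergence gives $\int_s^t A(E_\epsilon(r))\,dr \to \int_s^t A(E(r))\,dr$, so
\[
E(t) = E(s) - \int_s^t A(E(r))\,dr, \qquad 0 < s \leq t.
\]
Letting $s \searrow 0$ and using $E(0) = a(\tau)$ (the definition adopted just before Theorem \ref{E1}) together with continuity of $E$ at $0$ — which also follows from this integral representation once finiteness near $0$ is checked — extends the identity to $s = 0$.

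Finally, from the integral equation $E(t) = a(\tau) - \int_0^t A(E(r))\,dr$ with $A$ continuous and $E$ continuous, the fundamental theorem of calculus gives $E \in C^1(0,\infty)$ with $E'(t) = -A(E(t))$ and $E(0) = a(\tau)$; bootstrapping through the analyticity of $A$ (a standard argument: the right-hand side of the ODE is real-analytic in the state variable, so the flow is real-analytic in $t$) upgrades this to $E \in C^\omega(0,\infty) \cap C[0,\infty)$, which is the claimed regularity. The main obstacle I anticipate is the limit-exchange step: controlling the range of $E_\epsilon$ uniformly in $\epsilon$ so that it stays in a region where $A$ is well-behaved, and justifying the continuity of $E$ (hence the passage $s \to 0$), since $E$ was defined somewhat indirectly via $R^{-1}$; the integral-equation formulation is precisely what makes both of these manageable, converting a pointwise-limit-of-derivatives problem into a dominated-convergence problem.
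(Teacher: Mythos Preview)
Your proposal is correct and follows essentially the same route as the paper: apply the implicit function theorem to $H(t,E_\epsilon(t))=a(\tau)-\epsilon$, use Lemma~\ref{equalities} to get $E_\epsilon'(t)=-A(E_\epsilon(t))$, integrate, and pass to the limit $\epsilon\to 0$ via Lemma~\ref{convergence2} to obtain the integral equation for $E$. The paper handles continuity at $0$ in one line by invoking $\lim_{t\searrow 0}H_t(z)=z$, and the limit passage more tersely (your extra care about $E_\epsilon$ staying in a compact subinterval of $(-\infty,a(\tau))$ is justified but not strictly needed once one notes $E_\epsilon<E$ and $E=R^{-1}$ is monotone, hence bounded away from $a(\tau)$ on compact time intervals).
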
 
\begin{proof}
We note that the inequality $\frac{\partial H}{\partial u} \geq  1$ holds. 
Then Implicit Function Theorem is applicable to the equation $H = a(\tau) - \epsilon $ 
because $H$ is defined in the open set 
$\{(t,u); 0 < t < \infty, -\infty < u < E(t) \}$ which 
contains $(t, E_\epsilon (t))$ for all $t$.
Therefore, $E_\epsilon $ is in class $C^{\omega}(0, \infty)$ 
and its derivative is 
\begin{equation*}
\frac{dE_\epsilon }{dt}(t) = -\frac{ \partial _t H(t, E_\epsilon (t))}{ \partial _u H(t, E_\epsilon (t))}   = -A(E_\epsilon (t))  
\end{equation*}
by Lemma \ref{equalities}.   
After integrating the above, we take the limit $\epsilon \to 0$ using Lemma \ref{convergence2}, to obtain 
\[ E(t) = \int_t ^{t_1} A(E(s))ds + E(t_1). \] 
This implies that $E$ is in class $C^{\omega}(0, \infty)$ and 
the ODE holds. The right continuity of $E$ at $0$ follows from the fact $\lim_{t \searrow 0} H_t (z) = z$. 
\end{proof}


\section{Time-dependent and time-independent properties of monotone convolution semigroup} \label{time}
In classical probability theory, it is often true that a property of a convolution semigroup 
$\mu_t$ is completely determined at an instant. Such a property is called a time-independent property.  
In this section, we prove such properties for monotone convolution semigroups. 
\begin{lem}\label{positive support}
Let $\{\mu_t \}_{t \geq 0}$ be a weakly continuous $\rhd$-convolution semigroup with $\mu_0 = \delta_0$, and 
$A(z)$ be the associated vector field. If there exists $t_0 > 0$ such that $\supp \mu_{t_0} \subset [0, \infty)$, 
then $A(z)$ is analytic in $\comp \backslash [0, \infty)$.  
\end{lem}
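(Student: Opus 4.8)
The plan is to exploit the flow structure of the ODE \eqref{ODE} together with Proposition \ref{positive support2} applied to the semigroup. Since $\supp\mu_{t_0}\subset[0,\infty)$, Proposition \ref{positive support2} gives $\supp\eta_{t_0}\subset[0,\infty)$ and $H_{t_0}(-0)\le 0$; more to the point, $H_{t_0}$ has an analytic continuation to $\comp\backslash[0,\infty)$ and maps $(-\infty,0)$ into $(-\infty,0)$ (here one uses that $H_{t_0}$ is increasing on $(\supp\eta_{t_0})^c$ with $H_{t_0}(-0)\le 0$). The first step is to observe that by the semigroup property $H_{t_0}=H_s\circ H_{t_0-s}$ for $0<s<t_0$, and since a composition factor of a map that is analytic and real-valued (into $(-\infty,0)$) on $(-\infty,0)$ must itself have an analytic continuation past the real axis on the relevant interval, one gets that $H_s$ is analytic in $\comp\backslash[0,\infty)$ and maps $(-\infty,0)$ into $(-\infty,0)$ for every $s\in(0,t_0)$; equivalently $\supp\mu_s\subset[0,\infty)$ for all $s\le t_0$. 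In particular $\supp\mu_s\subset[0,\infty)$ for $s$ arbitrarily small.

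Next I would transfer this to the vector field via Lemma \ref{equalities}(1): $A(H_t(z))=A(z)\,\partial_z H_t(z)$ on $\comp\backslash\real$. Fix a point $x_0\in(-\infty,0)$. For small $s>0$, $H_s$ is analytic in a neighborhood of $x_0$ (since $\supp\eta_s\subset[0,\infty)$), $\partial_z H_s(x_0)\ge 1>0$, and $H_s(x_0)\in(-\infty,0)$ with $H_s(x_0)\to x_0$ as $s\searrow 0$. Hence near $x_0$ we may write $A(z)=A(H_s(z))/\partial_z H_s(z)$; since the right-hand side is a quotient of functions analytic in a neighborhood of $x_0$ with nonvanishing denominator (using that $A(H_s(\cdot))$ is analytic near $x_0$ because $H_s$ maps a neighborhood of $x_0$ into $\comp\backslash[0,\infty)$, where $A$ is analytic off the real axis and we are staying on the negative real axis side), $A$ extends analytically across a neighborhood of $x_0$. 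As $x_0\in(-\infty,0)$ is arbitrary, $A$ is analytic on $(-\infty,0)$, hence analytic on $\comp\backslash[0,\infty)$.

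The main obstacle, and the point requiring care, is the circularity at the level of the negative real axis: to say ``$A$ is analytic near $x_0$ because $H_s$ maps near $x_0$ into the region of analyticity of $A$'' one needs to know $A$ is already analytic there, so the argument must instead be bootstrapped from the genuinely complex region. I would handle this by working with $z\in\comp_+$ near $x_0$ rather than at $x_0$ itself: for such $z$, $H_s(z)\in\comp_+$ (as $\im H_s\ge\im z$), so $A(H_s(z))$ is well-defined and analytic there, $\partial_z H_s(z)\ne 0$, and the identity $A(z)=A(H_s(z))/\partial_z H_s(z)$ exhibits $A$ as analytic on a full complex neighborhood of $x_0$ intersected with $\comp_+$; combined with the analogous statement in $\comp_-$ and the fact that $A$ is already known analytic on $\comp\backslash\real$, a Morera/removable-singularity argument across the segment near $x_0$ (using that $H_s$ is analytic and real on that segment, with real analytic $\partial_z H_s$ bounded below by $1$) closes the gap. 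The remaining bookkeeping — that $\supp\mu_s\subset[0,\infty)$ for all $s\le t_0$ via the composition factorization, and the local boundedness needed to invoke analytic continuation — is routine given Proposition \ref{positive support2} and Lemma \ref{support}.
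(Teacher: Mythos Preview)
Your approach contains a genuine gap that the attempted Morera/removable-singularity step does not close. The circularity you correctly identify is real and persists: when you write $A(z)=A(H_s(z))/\partial_z H_s(z)$ for $z\in\comp_+$ near $x_0\in(-\infty,0)$, the only ingredient that could possibly fail to extend across the real segment is $A\circ H_s$, and since $H_s$ maps $(-\infty,0)$ into $(-\infty,0)$, the analyticity of $A\circ H_s$ at $x_0$ is \emph{equivalent} to the analyticity of $A$ at $H_s(x_0)\in(-\infty,0)$. You have merely traded one point on the negative axis for another. For a Morera or removable-singularity argument you would need continuity (or at least local boundedness) of $A$ as $z$ approaches the segment from $\comp_+$ and $\comp_-$, and nothing in your setup provides this a priori; the identity you use gives no new information at the boundary because it reduces to evaluating $A$ at a nearby real point.

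The paper proceeds quite differently and avoids the circularity altogether: it first obtains $\supp\mu_{t_0/n}\subset[0,\infty)$ from Corollary \ref{positivity}, and then approximates $A$ by the difference quotients $A_n(z)=\dfrac{H_{t_0/n}(z)-z}{t_0/n}$, each of which is \emph{genuinely} analytic on $\comp\backslash[0,\infty)$ (no appeal to $A$ is needed). Using the Nevanlinna representation $H_t(z)=b_t+z+\int\frac{1+xz}{x-z}\,d\eta_t(x)$ with $\supp\eta_{t_0/n}\subset[0,\infty)$ and the differentiability of $t\mapsto H_t(i)$ at $t=0$, one gets uniform bounds $|b_t|/t\le M'$ and $\eta_t(\real)/t\le M$, hence uniform boundedness of $\{A_n\}$ on compact subsets of $\comp\backslash[0,\infty)$. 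Montel's theorem then supplies a locally uniform limit, which must agree with $A$ on $\comp\backslash\real$ and therefore furnishes the desired analytic extension. The key idea you are missing is this approximation by functions that are already analytic on the target domain, combined with a normal-family compactness argument.
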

\begin{proof} 
We have $\supp \mu_{\frac{t_0}{n}} \subset [0, \infty)$ by Corollary \ref{positivity} (1). Let $A_n(z)$ be defined by 
$A_n(z) := (H_{\frac{t_0}{n}}(z) - z) / \frac{t_0}{n}$. 
$A_n$ is analytic in $\comp \backslash [0, \infty)$. 
By definition $A(z) = \lim_{n \to \infty} A_n(z)$ for $z \in \comp \backslash \real$.  
By Montel's theorem, it suffices to show that the RHS is uniformly bounded on each compact subset of $\comp \backslash [0, \infty)$. 
Fix an arbitrary compact set $K \subset \comp \backslash [0, \infty)$. 
By Lemma \ref{support}, $\supp \eta_{\frac{t_0}{n}} \subset [0, \infty)$. 
Since $H_t (i) = b_t + i(1 + \eta_t (\real))$ is differentiable, 
there exist $M, M' > 0$ such that $\frac{\eta_t (\real)}{t} \leq M$ and 
$\Big{|}\frac{b_t}{t} \Big{|} \leq M'$ for all $t \in [0, t_0]$. Then 
\begin{align*}
|A_n(z)| &\leq \Big{|}\frac{n}{t_0}b_{\frac{t_0}{n}} \Big{|} +  \Bigg{|} \int_0 ^{\infty} \frac{1 + xz}{x-z} \frac{n}{t_0} \eta_{\frac{t_0}{n}}(x)\Bigg{|} \\    
         &\leq M' + L'
\end{align*}
for all $n$ and $z \in K$. $L' > 0$ is a constant dependent only on $K$. 
\end{proof}

Using Proposition \ref{positive support2} and Lemma \ref{positive support}, one can prove the monotone analogue of 
subordinator theorem. For the classical version, the reader is referred to Theorem 24.11 of \cite{Sat}. 
\begin{thm}\label{subordinator}.  
Let $\{\mu_t \}_{t \geq 0}$ be a weakly continuous $\rhd$-convolution semigroup 
with $\mu_0 = \delta_0$. Then the following statements are equivalent: 
\begin{itemize}
\item[(1)] there exists $t_0 > 0$ such that $\supp\mu_{t_0} \subset [0, \infty)$;
\item[(2)] $\supp \mu_t \subset [0, \infty)$ for all $0 \leq t < \infty$;
\item[(3)] $\supp \tau \subset [0, \infty)$,  $\tau(\{0 \}) = 0$, $\int_0 ^{\infty}\frac{1}{x}d\tau (x) < \infty$ and 
$\gamma \geq \int_0 ^{\infty}\frac{1}{x}d\tau (x)$. 
\end{itemize}
\end{thm}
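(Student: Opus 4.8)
The plan is to establish the cycle $(2)\Rightarrow(1)\Rightarrow(3)\Rightarrow(2)$; the first implication is immediate, so the content lies in the other two. The device I would use throughout is that $z+A(z)$ is \emph{itself} of the form \eqref{recip1}: by the L\'{e}vy-Khintchine representation of Theorem \ref{inf.div}(4) one has $z+A(z)=z+(-\gamma)+\int_{\real}\frac{1+xz}{x-z}\,d\tau(x)$ with $\tau$ a positive finite measure, so there is a probability measure $\rho$ with $H_\rho(z)=z+A(z)$, whose pair in the notation of \eqref{recip1} is $b=-\gamma$, $\eta=\tau$. Applying Proposition \ref{positive support2} to $\rho$, condition (3) is \emph{exactly} the assertion ``$\supp\tau\subset[0,\infty)$ and $H_\rho(-0)\le0$'', i.e. $\supp\rho\subset[0,\infty)$; and since $H_\rho(-0)=A(-0)$, once $\supp\tau\subset[0,\infty)$ is known, all of (3) collapses to the single inequality $A(-0)\le0$. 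Thus the task reduces to linking ``$\supp\mu_t\subset[0,\infty)$'' with ``$A$ is analytic on $\comp\setminus[0,\infty)$ and $A(-0)\le0$''.

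For $(1)\Rightarrow(3)$ I would first apply Lemma \ref{positive support} to get that $A$ is analytic on $\comp\setminus[0,\infty)$, whence the Perron-Stieltjes inversion (as in Lemma \ref{support}) forces $\supp\tau\subset[0,\infty)$. To obtain $A(-0)\le0$, note by Corollary \ref{positivity}(2) that $\supp\mu_{t_0/n}\subset[0,\infty)$ for every $n\ge1$, and recall the elementary fact that a probability measure $\sigma$ with $\supp\sigma\subset[0,\infty)$ satisfies $G_\sigma(u)=\int\frac{d\sigma(x)}{u-x}\ge\int\frac{d\sigma(x)}{u}=\frac1u$ for $u<0$ (since $u-x\le u<0$), hence $H_\sigma(u)\le u$. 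Taking $\sigma=\mu_{t_0/n}$ gives $\bigl(H_{t_0/n}(u)-u\bigr)\big/(t_0/n)\le0$; letting $n\to\infty$ (the approximants are locally bounded off $[0,\infty)$, cf.\ the proof of Lemma \ref{positive support}, so the convergence is locally uniform there) yields $A(u)\le0$ for all $u<0$, so $A(-0)\le0$, and (3) follows from the equivalence above.

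For $(3)\Rightarrow(2)$ I would work directly with the flow \eqref{ODE}. From (3), $A$ is again analytic on $\Omega:=\comp\setminus[0,\infty)$ and real on $(-\infty,0)$; differentiating under the integral, $\frac{d}{du}\frac{1+xu}{x-u}=\frac{1+x^2}{(x-u)^2}>0$, so $A$ is increasing on $(-\infty,0)$ when $\tau\neq0$ and equals the constant $-\gamma\le0$ when $\tau=0$, giving $A(u)\le A(-0)\le0$ for all $u\le0$. I would then check that the flow $H_t$ preserves $\Omega$: it preserves $\com+$ by construction and $\comp_-$ by Schwarz reflection (using $\overline{A(\bar z)}=A(z)$), while it preserves $(-\infty,0)$ because a real trajectory $\phi(t)=H_t(u)$ with $u<0$ satisfies $\phi'(t)=A(\phi(t))\le0$ as long as $\phi(t)<0$, so $\phi$ is non-increasing and can never reach $0$; and the linear lower bound $A(w)\ge-\gamma+\tau(\real)w$ for $w<0$ (from $\frac{1+xw}{x-w}-w=\frac{1+w^2}{x-w}>0$) together with Gr\"onwall's inequality rules out explosion to $-\infty$ in finite time. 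Hence each $H_t$ is an analytic self-map of $\Omega$, in particular zero-free on $\Omega$ (as $0\notin\Omega$), so $G_{\mu_t}=1/H_t$ is analytic on $\Omega$ and $H_t(-0)\le0$; Proposition \ref{positive support2} then gives $\supp\mu_t\subset[0,\infty)$, i.e.\ (2), and $(2)\Rightarrow(1)$ is trivial.

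I expect the genuine difficulty to be the flow argument in $(3)\Rightarrow(2)$: one must verify that the real-line dynamics glue to the dynamics already constructed on $\comp\setminus\real$ to yield a single analytic map $H_t$ on $\comp\setminus[0,\infty)$, that no trajectory starting off the real axis can reach the cut $[0,\infty)$, and that there is no finite-time blow-up along the negative axis — a point not covered by the $\comp\setminus\real$ non-explosion result of \cite{Be-Po} and which must be handled by the linear-growth estimate above. The remaining ingredients (the reinterpretation of $z+A(z)$ through \eqref{recip1}, the inequality $G_\sigma(u)\ge1/u$, and the bookkeeping with Proposition \ref{positive support2}) are routine.
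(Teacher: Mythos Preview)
Your argument is correct, and it takes a markedly more elementary route than the paper. Both you and the paper begin by reformulating (3) as the statement that $A$ is analytic on $\comp\setminus[0,\infty)$ with $A(-0)\le 0$ (equivalently $A\le 0$ on $(-\infty,0)$) via Proposition~\ref{positive support2}, and both invoke Lemma~\ref{positive support} to obtain the analyticity of $A$ under hypothesis~(1). The divergence is in how one pins down the sign of $A$ and how one recovers the support condition on $\mu_t$. The paper appeals to Theorem~\ref{Diffeq}(2), the machinery of Section~\ref{Diff} governing the infimum $a(\mu_t)$ of the support: the trichotomy (a)/(b)/(c) of sign patterns for $A$ on $(-\infty,a(\tau))$ is run through, and in cases (a) and (b) Theorem~\ref{Diffeq}(2) forces $a(\mu_t)<0$ for all $t>0$, contradicting~(1), while in case~(c) it yields $a(\mu_t)\ge a(\tau)\ge 0$, which gives~(2). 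You bypass Section~\ref{Diff} entirely: for $(1)\Rightarrow(3)$ you use the one-line pointwise inequality $G_\sigma(u)\ge 1/u$ (hence $H_\sigma(u)\le u$) for positively supported $\sigma$, apply it to $\sigma=\mu_{t_0/n}$, and pass to the limit of the difference quotients $A_n$ already controlled in the proof of Lemma~\ref{positive support}; for $(3)\Rightarrow(2)$ you analyze the autonomous ODE \eqref{ODE} directly on $(-\infty,0)$, using $A\le 0$ for non-increase and the linear lower bound $A(w)\ge -\gamma+\tau(\real)\,w$ together with Gr\"onwall for non-explosion, and then glue the real trajectories to the flow on $\comp\setminus\real$ by analytic dependence on initial data. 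The paper's approach has the virtue of reusing the general support-dynamics developed in Section~\ref{Diff}; your approach is self-contained, does not rely on Theorem~\ref{Diffeq} or Proposition~\ref{partialresult}, and in fact furnishes an independent proof of the special instance of Theorem~\ref{Diffeq}(2)(c) actually needed here.
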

\begin{rem}
(i) The equality $\tau(\{0 \}) = 0$ in condition (3) means that there is no component of a Brownian motion in the L\'{e}vy-Khintchine formula. \\
(i\hspace{-.1em}i) The equivalence also holds in the classical and Boolean L\'{e}vy-Khintchine formulae. 
In the free case, however, (1) and (2) are not equivalent (see Section \ref{time2}). 
\end{rem}
\begin{proof} 
We note that (3) is equivalent to 
(3'): $A$ is analytic in $\comp \backslash [0, \infty)$ and $A < 0$ on $(-\infty, 0)$, by an argument in Proposition \ref{positive support2}. 

$(1) \Rightarrow (2), ~(3')$: If $\{\mu_t \}$ is a delta measure, then the statement follows immediately. 
We assume that $\mu_t$ is not a delta measure for some $t>0$. 
This is equivalent to assuming that $\mu_t$ is not a delta measure for all $t>0$.  
Then $\tau$ is a nonzero positive finite measure. 
$A(z)$ is analytic in $\comp \backslash [0, \infty)$ by Lemma \ref{positive support}, and hence, $\supp \tau \in [0, \infty)$.  
There are three possible cases: (a) $A(u) > 0$ on $(-\infty, 0)$;  (b) $A(u_0)$ = 0 for some $u_0 \in (-\infty, 0)$ and $A(u) < 0$ on ($-\infty, u_0)$ and 
  $A(u) > 0$ on $(u_0, 0)$;   (c) $A(u) < 0$ on $(-\infty, 0)$.  

In case $(a)$ and case $(b)$, we have $a(\mu_t) < 0$ for all $t > 0$ by Theorem \ref{Diffeq} (2). 
In case $(c)$, we have $a(\mu_t) \geq a(\tau) \geq 0$ again by Theorem \ref{Diffeq} (2). 
Hence only case (c) has no contradiction to the assumption. 

$(3') \Rightarrow (1)$: This proof was actually done in the end of the proof of $(1) \Rightarrow (2)$. 
 \end{proof}
   
We can prove that the lower boundedness of the support is determined at one instant.    
\begin{thm}\label{unboundedness below}  
Let $\{\mu_t \}_{t \geq 0}$ be a weakly continuous $\rhd$-convolution semigroup 
with $\mu_0 = \delta_0$. Then the following statements are equivalent: 
\begin{itemize}
\item[$(1)$] there exists $t_0 > 0$ such that $\supp\mu_{t_0}$ is bounded below; 
\item[$(2)$] $\supp \mu_t$ is bounded below for all $0 \leq t < \infty$;  
\item[$(3)$] $\supp \tau$ is bounded below.  
\end{itemize}
\end{thm}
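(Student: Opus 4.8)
The plan is to mimic the structure of the proof of Theorem \ref{subordinator}, replacing "supported on $[0,\infty)$" with "bounded below", and using Theorem \ref{Diffeq} (2) as the key dynamical input. The implication $(2) \Rightarrow (1)$ is trivial, and $(3) \Rightarrow (2)$ is essentially contained in the discussion of Subsection \ref{non-atomic}: if $\supp\tau$ is bounded below, i.e. $a(\tau) > -\infty$, then the vector field $A(z) = -\gamma + \int_\real \frac{1+xz}{x-z}d\tau(x)$ extends analytically to $\comp \backslash [a(\tau),\infty)$, so the solution $H_t(z)$ of the ODE (\ref{ODE}) exists for every initial point $z \in \comp\backslash[a(\tau),\infty)$ and all $t \geq 0$; hence $H_t$ is analytic on $\comp\backslash[a(\tau),\infty)$ and $G_{\mu_t} = 1/H_t$ is analytic there too (using that $H_t$ is increasing on $(-\infty,a(\tau))$ with $\lim_{u\to-\infty}H_t(u)=-\infty$, so $H_t$ does not vanish for $u$ very negative, and in fact one controls the whole interval via the three cases (a),(b),(c) of Theorem \ref{Diffeq} (2)). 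Either way $a(\mu_t) > -\infty$ for all $t$, which is (2).

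The real content is $(1) \Rightarrow (3)$. First I would dispose of the trivial case where $\mu_t$ is a delta measure for some (equivalently all) $t>0$. Otherwise $\tau$ is a nonzero positive finite measure. The plan is to argue by contradiction: suppose $\supp\tau$ is not bounded below, i.e. $a(\tau) = -\infty$. The key point is that then $A(z)$ cannot be analytically continued across any left half-line, so one expects the "downward drift" to push $a(\mu_t)$ to $-\infty$ instantly. To make this rigorous I would introduce, for each $N>0$, the truncated measure $\tau_N := \tau|_{[-N,\infty)}$ and a corresponding comparison vector field, or more directly, I would establish a monotone analogue of Lemma \ref{positive support}: if $\supp\mu_{t_0}$ is bounded below by $-c$, then (by applying Corollary \ref{positivity} (2) to the shifted semigroup $\mu_t \rhd \delta_{c}$, or by the analogous support estimate) $\supp\mu_{t_0/n}$ is bounded below by $-c$ for every $n$, and then the difference quotients $A_n(z) := (H_{t_0/n}(z)-z)/(t_0/n)$, which are analytic on $\comp\backslash[-c,\infty)$, are locally uniformly bounded on $\comp\backslash[-c,\infty)$ — the bound on $\eta_t(\real)/t$ and $b_t/t$ coming exactly as in the proof of Lemma \ref{positive support}. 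By Montel's theorem the limit $A(z)$ is analytic on $\comp\backslash[-c,\infty)$, hence $\supp\tau \subset [-c,\infty)$ by Lemma \ref{support} applied to the representation (\ref{recip1}) of $A$; this contradicts $a(\tau) = -\infty$ and proves (3).

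The main obstacle I anticipate is the support estimate needed to pass from "$\supp\mu_{t_0}$ bounded below" to "$\supp\mu_{t_0/n}$ bounded below by the same constant", i.e. the correct generalization of Corollary \ref{positivity} (2) to supports bounded below by an arbitrary real number rather than by $0$. This should follow by translating: if $a(\mu_{t_0}) \geq -c$ then one wants $a(\mu_{t_0/n}) \geq -c$, and one can try to reduce to Corollary \ref{positivity} (2) by noting $H_{\mu \rhd \delta_c}(z) = H_\mu(z) + c$ — but monotone convolution is not commutative, so $\delta_c \rhd \mu_{t_0}$ and $\mu_{t_0}\rhd\delta_c$ behave differently, and one must check which composition preserves the semigroup structure; the clean route is instead to redo the argument of Proposition \ref{estimate} and Corollary \ref{positivity} directly with the half-line $[-c,\infty)$ in place of $[0,\infty)$, which goes through verbatim since nothing there used $0$ specifically. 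Once this is in hand the rest is routine. (One should also record the equivalent reformulation (3'): $A$ is analytic in $\comp\backslash[a(\tau),\infty)$ for some $a(\tau)\in\real$, which is what actually gets used.)
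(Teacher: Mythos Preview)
Your proposal is correct and follows essentially the same route as the paper's proof. For the step you flag as the main obstacle, the paper simply invokes Proposition~\ref{estimate} directly: writing $\mu_{t_0}=\mu_{t_0-t}\rhd\mu_t$, cases (1) and (2) there give $a(\mu_t)\geq a(\mu_{t_0})$ outright, and case (3) cannot occur when $a(\mu_{t_0})<0$ (by the already-proved Theorem~\ref{subordinator}), so no reworking of Corollary~\ref{positivity} with a shifted half-line is needed.
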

\begin{rem}
The same kind of theorem also holds in the free and Boolean cases. The classical case is exceptional since the condition (3) needs to be replaced by $\supp \tau \subset [0, \infty)$, $\tau(\{0 \}) = 0$ and $\int_{-1}^1 \frac{1}{|x|}d\tau(x) < \infty$ \cite{Sat}. 
Therefore, the boundedness below is not mapped bijectively by the monotone analogue of Bercovici-Pata bijection 
defined in Section \ref{connec}. 
\end{rem}
\begin{proof}
$(1) \Rightarrow (3)$: When $a(\mu_{t_0}) \geq 0$, the claim follows from Theorem \ref{subordinator}. 
We consider the case $a(\mu_{t_0}) < 0$. 
By Proposition \ref{estimate}, we have 
$a(\mu_t) \geq a(\mu_{t_0}) > -\infty$ for all $t \leq t_0$. 
By the same argument as in Lemma \ref{positive support}, one can show that $A$ is analytic in $(-\infty, a(\mu_{t_0}))$. \\
$(3) \Rightarrow (2)$: The lower boundedness of the support of 
$\mu_t$ for all $t \geq 0$ comes from Theorem \ref{Diffeq}. 
\end{proof}

Next we consider the symmetry around the origin. We say that a measure $\mu$ on the real line is symmetric 
if $\mu(dx) = \mu(-dx)$. The proof depends on the assumption of compact support. 
We could not prove the result for all probability measures.

\begin{thm}\label{symm} 
Let $\{\mu_t \}_{t \geq 0}$ be a weakly continuous $\rhd$-convolution semigroup 
with $\mu_0 = \delta_0$. 
We assume that the support of each $\mu_t$ is compact $($this is a time-independent property$)$. 
Then the following statements are all equivalent. 
\begin{itemize}
\item[$(1)$] There exists $t_0 > 0$ such that $\mu_{t_0}$ is symmetric.
\item[$(2)$] $\mu_{t}$ is symmetric for all $t > 0$.
\item[$(3)$] $\gamma = 0$ and $\tau$ is symmetric.
\end{itemize}
\end{thm}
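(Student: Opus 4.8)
The plan is to encode symmetry around the origin through the reflection involution $\mu \mapsto \tilde{\mu}$, where $\tilde{\mu}(B):=\mu(-B)$, and to propagate it through the whole semigroup. A direct computation from the definition of the Cauchy transform gives $G_{\tilde{\mu}}(z)=-G_{\mu}(-z)$, hence $H_{\tilde{\mu}}(z)=-H_{\mu}(-z)$; by the Stieltjes inversion formula, $\mu$ is symmetric if and only if $H_{\mu}$ is odd, i.e. $\tilde{\mu}=\mu$. From $H_{\mu\rhd\nu}=H_{\mu}\circ H_{\nu}$ one checks $\widetilde{\mu\rhd\nu}=\tilde{\mu}\rhd\tilde{\nu}$, so whenever $\{\mu_t\}$ is a weakly continuous $\rhd$-convolution semigroup with $\mu_0=\delta_0$, so is $\{\tilde{\mu}_t\}$; applying the substitution $H\mapsto -H(-\,\cdot\,)$ to the ODE (\ref{ODE}) shows that the vector field of $\{\tilde{\mu}_t\}$ is $\tilde{A}(z)=-A(-z)$, whose data in the form of Theorem \ref{inf.div}(4) is $(-\gamma,\tilde{\tau})$. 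Thus, by uniqueness of the L\'evy--Khintchine representation, condition (3) ($\gamma=0$ and $\tau$ symmetric) is exactly the statement $\tilde{A}=A$, i.e. that $A$ is odd.

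With this dictionary two implications are immediate. If (3) holds, then $\tilde{A}=A$, so $z\mapsto -H_t(-z)$ and $H_t$ both solve (\ref{ODE}) with the same initial condition $H_0(z)=z$; uniqueness of the flow (non-explosion being supplied by \cite{Be-Po}, Lipschitz continuity of $A$ on compacts of $\mathbb{C}\setminus\mathbb{R}$ giving local uniqueness) forces them to coincide, so $\tilde{\mu}_t=\mu_t$ for all $t$, which is (2) and a fortiori (1). Conversely, if (2) holds, then $\tilde{\mu}_t=\mu_t$ for all $t$, so the semigroups $\{\mu_t\}$ and $\{\tilde{\mu}_t\}$ coincide; by the correspondence of Theorem \ref{inf.div} their vector fields coincide, $\tilde{A}=A$, which is (3). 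Note that both of these implications do not yet use compactness of supports.

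The substantive step is $(1)\Rightarrow(2)$, and this is where the compact-support hypothesis (itself time-independent) enters. Given that $\mu_{t_0}$ is symmetric, i.e. $\tilde{\mu}_{t_0}=\mu_{t_0}$, I would argue that the two weakly continuous $\rhd$-convolution semigroups $s\mapsto\mu_{t_0 s}$ and $s\mapsto\tilde{\mu}_{t_0 s}$ agree at $s=1$ and therefore everywhere, invoking the bijective correspondence of Theorem \ref{inf.div} between a $\rhd$-infinitely divisible distribution and its embedding semigroup — available here because $\mu_{t_0}$ is compactly supported and hence $\rhd$-infinitely divisible with compact support. This gives $\mu_t=\tilde{\mu}_t$ for all $t$, i.e. (2). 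A more hands-on alternative uses the expansion at infinity $H_t(z)=z+\alpha_0(t)+\sum_{n\geq 1}\alpha_n(t)z^{-n}$, legitimate by compactness (the $\alpha_n(t)$ being polynomial in the moments of $\mu_t$), together with $A(w)=\sum_{n\geq 0}c_n w^{-n}$, $c_n\in\mathbb{R}$: comparing coefficients in (\ref{ODE}) yields $\dot{\alpha}_n(t)=c_n+P_n(\alpha_0(t),\dots,\alpha_{n-1}(t);c_0,\dots,c_{n-1})$, and an induction on $k$ shows that the coefficient form of symmetry of $\mu_{t_0}$, namely $\alpha_{2k}(t_0)=0$ for all $k$, forces $c_{2k}=0$ and $\alpha_{2k}\equiv 0$, so $A$ is odd; one then finishes as in $(3)\Rightarrow(2)$.

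The main obstacle is precisely this passage from symmetry at a single instant to oddness of the generator: a priori $\tilde{\mu}_{t_0}=\mu_{t_0}$ only says that the two flows meet at time $t_0$, and one must exclude their branching apart at other times. Compactness of the supports is what supplies the missing uniqueness — via the embedding statement of Theorem \ref{inf.div} on the conceptual route, or via the validity of the expansion-and-induction argument on the concrete route — and it is for this reason that the result is not claimed for arbitrary probability measures.
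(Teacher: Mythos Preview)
Your proof is correct, and your two routes neatly bracket the paper's own argument. The paper works entirely via moments: with compact supports it expands $A(z)=-\sum_{n\ge 1}r_n z^{-(n-1)}$ and derives the system $\frac{d}{dt}m_n(t)=\sum_{k=1}^{n}k\,r_{n-k+1}m_{k-1}(t)$ from Lemma~\ref{equalities}(2), then argues by induction that $m_{2n+1}(t_0)=0$ for all $n$ forces $r_{2n+1}=0$ and hence $m_{2n+1}(t)\equiv 0$; this is exactly your ``hands-on'' alternative, merely carried out with $G_t$ and moments instead of $H_t$ and its Laurent coefficients. Your conceptual route via the reflection $\mu\mapsto\tilde\mu$ and the uniqueness part of Theorem~\ref{inf.div} is genuinely different and more structural: it bypasses the coefficient recursion entirely and, notably, if one is willing to invoke Belinschi's general uniqueness of the embedding (which the paper cites as Theorem~\ref{inf.div} but does not exploit here), it removes the compact-support hypothesis altogether---something the paper explicitly says it could not do. The price is that the implication $(1)\Rightarrow(2)$ then rests on a deep external result rather than on an elementary induction, which is presumably why the paper opted for the self-contained moment argument under compactness.
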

\begin{proof}
We prove this theorem in terms of moments. We use the representation of the vector field $A(z) = -\gamma + \int \frac{1}{x-z}d\sigma (x)$, $d\sigma(x) = (1+x^2)d\tau (x)$, 
where $\sigma $ has a compact support. 
We use the notation $m_n(t) = m_n(\mu_t)$ for simplicity. 
We notice that the symmetry is equivalent to the vanishment of odd moments for a compactly supported measure. 
Define a sequence $\{r_n \}_{n=1}^{\infty}$ by $r_1  := \gamma$, $r_n  := m_{n-2}(\sigma)$ for $n \geq 2$.  
Then $A(z)= -\sum_{n = 1} ^{\infty} \frac{r_n}{z^{n-1}}$. By Lemma \ref{equalities} (2), we get differential equations $\frac{dm_0(t)}{dt} = 0$ and 
\begin{equation}\label{system of ODEs}
\frac{dm_n(t)}{dt} = \sum_{k=1} ^n k r_{n-k + 1} m_{k-1}(t) \text{~~for $n \geq 1$}
\end{equation}
with initial conditions $m_0(0) = 1$ and $m_n (0) = 0$ for $n \geq 1$. 

Now we prove the implications $(1) \Rightarrow (2)$ and $(1) \Rightarrow (3)$. 
We can easily prove that $m_{2n + 1}(t_0) = 0$ and $r_{2n + 1} = 0$ for $n \geq 0$, and then $m_{2n + 1}(t) = 0$ for all $t > 0$ and $n \geq 0$. Then $\sigma$ and $\mu_t$ are both symmetric for all $t > 0$. 
The proof of the implication $(3) \Rightarrow (2)$ runs by a similar argument. 
\end{proof}

We show some time-dependent properties. 
\begin{prop}
$(1)$ Absolute continuity is a time-dependent property. \\ 
$(2)$ Existence of an atom is a time-dependent property. 
\end{prop}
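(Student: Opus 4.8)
The plan is not to establish a structural theorem but simply to exhibit explicit weakly continuous $\rhd$-convolution semigroups along which the two properties genuinely change; as in the classical heat semigroup, a single well-understood example settles both parts at once.

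The cleanest choice is the arcsine semigroup recalled earlier in the paper, $\mu_t(dx) = \frac{1}{\pi\sqrt{2t-x^2}}\,1_{(-\sqrt{2t},\sqrt{2t})}(x)\,dx$ for $t>0$, with vector field $A(z)=-1/z$ and $\mu_0=\delta_0$ (the weak limit as $t\searrow 0$, consistent with the ODE (\ref{ODE})). For every $t>0$ the measure $\mu_t$ is absolutely continuous with respect to Lebesgue measure --- its density is locally integrable, the two singularities at $\pm\sqrt{2t}$ being of order $|x\mp\sqrt{2t}|^{-1/2}$ --- and carries no atoms, whereas $\mu_0=\delta_0$ is purely atomic. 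Thus neither ``$\mu_{t_0}$ is absolutely continuous'' nor ``$\mu_{t_0}$ has an atom'' is a property of the semigroup determined by a single value of $t_0$, and (1) and (2) follow simultaneously. One may note in passing that the same phenomenon occurs classically, so this is not a special feature of monotone convolution.

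If one insists on a transition occurring strictly inside $(0,\infty)$, I would instead take a semigroup falling under case (D) of Theorem \ref{Diffeq} with finite $t_0$, a concrete instance of which is given in \cite{Has2}. There the isolated atom $\lambda(t)\delta_{\theta(t)}$ at the bottom of the support is present exactly for $0<t<t_0$ and vanishes for $t\geq t_0$, which already proves (2); and since each $\mu_t$ is $\rhd$-infinitely divisible, Theorem \ref{atom3} forces $\mu_t=\lambda(t)\delta_{\theta(t)}+\nu_{t,ac}$ with $\nu_{t,ac}$ absolutely continuous for $0<t<t_0$, so $\mu_t$ is not absolutely continuous in that range, whereas for $t\geq t_0$ the measure $\mu_t$ is absolutely continuous. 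The one step requiring a line of care --- and the only candidate for an ``obstacle'', though a minor one --- is ruling out a hidden singular part of $\mu_t$ for $t\geq t_0$; this follows either from explicit computation in the example of \cite{Has2}, or from Lemma \ref{atom4} together with the analyticity of $A$ off $[a(\tau),\infty)$. The arcsine version needs no such care, so overall there is essentially no difficulty: the content lies entirely in naming the right examples.
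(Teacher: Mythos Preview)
Your arcsine argument does not actually establish the claim. In this paper, a property is called time-independent when ``there exists $t_0>0$ with $P(\mu_{t_0})$'' is equivalent to ``$P(\mu_t)$ for all $t>0$'' (compare the statements of Theorems~\ref{subordinator}, \ref{unboundedness below}, \ref{symm}, \ref{moment19}). For the arcsine semigroup both sides of this biconditional have the \emph{same} truth value: every $\mu_t$ with $t>0$ is absolutely continuous and atomless, so neither property changes on $(0,\infty)$. The transition you point to occurs only at $t=0$, but $\mu_0=\delta_0$ for \emph{every} weakly continuous $\rhd$-convolution semigroup, so this carries no information --- with that reading, absolute continuity would be ``time-dependent'' for every non-trivial semigroup, which is not what the proposition means to assert.

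Your second paragraph is the real proof, and it is exactly the paper's own argument: the paper cites the semigroup $H_t^{(\alpha,1,c)}(z)=c+\{(z-c)^\alpha+t\}^{1/\alpha}$ with $0<\alpha<1$ from \cite{Has2}, for which $\mu_t$ has an atom for $0\le t<|c|^\alpha$ and is absolutely continuous for $t\ge |c|^\alpha$. So once you drop the arcsine paragraph and promote the case~(D) example to the main argument, your proof coincides with the paper's. The extra structural comments you add (invoking Theorem~\ref{atom3} and Lemma~\ref{atom4}) are not needed, since the explicit computation in \cite{Has2} (reproduced in Section~\ref{Exa}, equation~(\ref{exa7})) already gives the full Lebesgue decomposition of $\mu_t$ for all $t$.
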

\begin{proof} 
There is an example \cite{Has2}. Let $\{\mu_t \}_{t \geq 0}$ be the monotone convolution semigroup defined by 
\begin{equation}
H_t ^{(\alpha,1,c)}(z)= c + \{(z - c)^\alpha + t \}^{\frac{1}{\alpha}} \text{~ for~} 0 < \alpha < 1.
\end{equation}
Then $\mu_t$ contains an atom for $0 \leq t < |c|^\alpha$ and $\mu_t$ is absolutely continuous for $t \geq |c|^\alpha$.  
\end{proof}

The property $m_{2n}(\mu) = \int_{\real}x^{2n}\mu(dx) < \infty$ is also time-independent. That is, we prove the following theorem 
which is also true in classical and free probabilities \cite{Ben,Sha}. In addition, this also extends Theorem 4.9 in \cite{Mur3} to higher order moments. 
\begin{thm}\label{moment19}
Let $\{\mu_t \}_{t \geq 0}$ be a weakly continuous $\rhd$-convolution semigroup 
with $\mu_0 = \delta_0$ and let $n \geq 1$ be a natural number. Then the following statements are equivalent: 
\begin{itemize}
\item[$(1)$] there exists $t_0 > 0$ such that $m_{2n}(t_0) < \infty$; 
\item[$(2)$] $m_{2n}(t) < \infty $ for all $0 < t < \infty$;  
\item[$(3)$] $m_{2n}(\tau) < \infty$.  
\end{itemize}
\end{thm}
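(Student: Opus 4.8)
The plan is to establish the cycle of implications $(2) \Rightarrow (1) \Rightarrow (3) \Rightarrow (2)$, where only $(2) \Rightarrow (1)$ is trivial. The key technical input will be Proposition \ref{asymptotic2}, which characterizes $m_{2n}(\mu) < \infty$ via the asymptotic expansion of $H_\mu$ at $i\infty$, together with Proposition \ref{moment21}, which shows that finiteness of $m_{2n}$ is stable under monotone convolution and gives the moment-addition formula. The role of the vector field $A(z) = -\gamma + \int_\real \frac{1+xz}{x-z}d\tau(x)$ is to transfer the moment condition between the semigroup and the L\'evy measure $\tau$. Writing $A(z) = -\sum_{k\geq 1} \frac{r_k}{z^{k-1}}$ formally (as in the proof of Theorem \ref{symm}), with $r_1 = \gamma$ and $r_k = m_{k-2}(\sigma)$ where $d\sigma = (1+x^2)d\tau$, one sees that $m_{2n}(\tau) < \infty$ is equivalent to $m_{2n-2}(\sigma) < \infty$, i.e. to the existence of the first $2n$ coefficients $r_1, \dots, r_{2n}$ of the expansion of $A$ at $i\infty$ together with a suitable remainder estimate — this is the analogue of condition (2) of Proposition \ref{asymptotic2} applied to $A$ rather than to $H$.

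For $(3) \Rightarrow (2)$, I would argue as follows. Assuming $m_{2n}(\tau) < \infty$, write out the system of ODEs \eqref{system of ODEs} for the moments $m_j(t)$ of $\mu_t$. Since only $r_1, \dots, r_{2n}$ appear in the equations for $m_1(t), \dots, m_{2n}(t)$, and these $r_k$ are all finite by hypothesis, the system $\frac{dm_j(t)}{dt} = \sum_{k=1}^j k r_{j-k+1} m_{k-1}(t)$ for $1 \le j \le 2n$ is a closed, finite, linear (triangular) system with bounded coefficients; it has a global solution with $m_j(0) = 0$ for $j \ge 1$ and $m_0 \equiv 1$. The delicate point is that a priori this only produces a candidate sequence of numbers, not the actual moments of a measure with finite $2n$-th moment; one must verify that $\mu_t$ genuinely has $m_{2n}(\mu_t) < \infty$. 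For this I would instead work directly with $H_t$: from condition (3), $A$ has an expansion $A(z) = z \cdot o(1)$... more precisely $A(iy) = -r_1 - \frac{r_2}{iy} - \cdots - \frac{r_{2n}}{(iy)^{2n-1}} + o(y^{-(2n-1)})$, and feeding this into the ODE \eqref{ODE}, $\frac{d}{dt}H_t(z) = A(H_t(z))$, one shows inductively that $H_t(z)$ admits an expansion of the form \eqref{expansion2} with real coefficients up to order $2n$, uniformly for $t$ in compacts; Proposition \ref{asymptotic2} then gives $m_{2n}(\mu_t) < \infty$.

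For $(1) \Rightarrow (3)$, I would run essentially the reverse of the above. Suppose $m_{2n}(\mu_{t_0}) < \infty$. By Proposition \ref{moment21} (applied with $\mu = \mu_{t_0/k}$, $k$ times, noting $\mu_{t_0} = \mu_{t_0/k}^{\rhd k}$ and that the affine/moment formula forces each factor to have finite $2n$-th moment — this is the analogue of Lemma \ref{inj3} for higher moments, and is the step I expect to need the most care), each $\mu_{t_0/k}$ has $m_{2n}(\mu_{t_0/k}) < \infty$. Hence $H_{t_0/k}$ has the expansion \eqref{expansion2} to order $2n$, and so does $A_k(z) := k(H_{t_0/k}(z) - z)/t_0$; passing to the limit $k \to \infty$ (using the locally uniform convergence $A_k \to A$ already exploited in Lemma \ref{positive support}, plus control of the coefficients via Cauchy's formula) yields that $A$ itself has an expansion $A(z) = -r_1 - r_2/z - \cdots - r_{2n}/z^{2n-1} + o(|z|^{-(2n-1)})$ with real $r_k$. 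By the identification $r_k = m_{k-2}(\sigma)$ and the remainder-term analysis of Proposition \ref{asymptotic2} (applied to $A$, whose integral representation is $-\gamma + \int\frac{d\sigma(x)}{x-z} - z\cdot(\text{something vanishing})$; more carefully, rewrite $A(z) = -\gamma - z\,\tau(\real) + \int\frac{1+x^2}{x-z}d\tau(x)$ and extract the expansion), this forces $m_{2n-2}(\sigma) < \infty$, equivalently $m_{2n}(\tau) < \infty$. The main obstacle throughout is bookkeeping the remainder terms carefully enough that the asymptotic expansions can be composed and differentiated along the flow; the measure-theoretic content is light, but the interplay between \eqref{ODE}, the expansion \eqref{expansion2}, and the passage $k\to\infty$ must be handled with the uniformity stated in Proposition \ref{asymptotic2}.
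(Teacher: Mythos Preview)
Your $(3)\Rightarrow(2)$ is essentially the paper's argument: plug the expansion of $A$ into the integral form of the ODE and build up the expansion of $H_t$ inductively. That part is fine.

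The gap is in your $(1)\Rightarrow(3)$. You want to pass from the expansions of $A_k(z)=\tfrac{k}{t_0}(H_{t_0/k}(z)-z)$ to an expansion of $A$ by taking $k\to\infty$. Locally uniform convergence of $A_k$ to $A$ on $\com+$ (which you have) does \emph{not} preserve an asymptotic expansion at $i\infty$: you need the remainder $o(|z|^{-(2n-1)})$ to be uniform in $k$, equivalently a bound on $\tfrac{k}{t_0}\,m_{2n-2}(\rho_{t_0/k})$ independent of $k$, and nothing you have written establishes this. ``Cauchy's formula'' gives you the coefficients on any fixed compact set, but tells you nothing about the tail; what you actually need is a uniform moment bound on the representing measures $\tfrac{k}{t_0}\eta_{t_0/k}$, and that is circular since a moment bound on $\tau$ is precisely the conclusion you are after.

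The paper avoids this limit entirely. It first proves $(1)\Rightarrow(2)$ by the same decomposition you use for the $k$-th root (so $m_{2n}(t)<\infty$ for all $t$), and then for $(2)\Rightarrow(3)$ it shows, via Cauchy's additive functional equation applied to \eqref{moment20} in the form $m_l(t+s)=m_l(t)+m_l(s)+\text{(lower moments)}$, that each $m_l(t)$ is a \emph{polynomial} in $t$ for $1\le l\le 2n$. This polynomiality lets one write
\[
A(z)=\frac{G(1,z)-\tfrac{1}{z}}{\int_0^1 \partial_z G(s,z)\,ds}
\]
and expand both numerator and denominator to the required order, with the integral over $s$ controlled by dominated convergence since $m_{2n}(s)$ is integrable on $[0,1]$. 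This yields the expansion of $A$ directly, without any $k\to\infty$ limit and without any uniformity issue. The Cauchy functional equation step is the idea your sketch is missing.
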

\begin{proof} 
$(1) \Rightarrow (2)$: We use the notation $\mu_{t}^y := \delta_y \rhd \mu_t$ introduced in (\ref{eq0111}).  For $0 \leq t \leq t_0$, we set $\lambda = \mu_{t_0 - t}$ and $\nu = \mu_t$. 
Then we obtain $ \int \int x^{2n}\mu_{t}^y(dx)\mu_{t_0 - t}(dy) = \int_{\real}x^{2n}\mu_{t_0} (dx) < \infty$, 
which implies $m_{2n}(\mu_t ^y) < \infty$ for some $y \in \real$. By Proposition \ref{asymptotic2}, we obtain 
$m_{2n}(t) < \infty$ for $0 \leq t \leq t_0$. For arbitrary $0 < s < \infty$, we can write $s = kt_0 + t$ with $k \in \nat$ and $0 \leq  t < t_0$. Then we have $m_{2n}(s) < \infty$ by Proposition \ref{moment21}. \\
$(2) \Rightarrow (3)$: We first note that $m_{k}(t)$ is a Borel measurable function of $t \leq 0$ since $\mu_t$ is weakly continuous. 
Moreover, we show that there exist $r_1, \cdots, r_{2n} \in \real$ such that 
\begin{equation}\label{moment23}
m_{l}(t) = \sum_{k =1} ^{l} \sum_{1 = i_0 < i_1 < \cdots < i_{k-1} < i_k = l +1} 
\frac{t^k}{k!}\prod_{p = 1} ^{k} i_{p-1} r_{i_p - i_{p-1}}
\end{equation}
for $1 \leq l \leq 2n$. 
For the proof we use the equality 
\begin{equation}\label{moment22}
m_l(t + s) = m_l(t) + m_l(s) + \sum_{k = 1} ^{l-1} \sum_{\substack{j_0 + j_1 + \cdots +j_k = l - k, \\  0 \leq j_p, ~0 \leq p \leq k }} m_k(t) m_{j_0}(s)\cdots m_{j_k}(s)  
\end{equation}
for $1 \leq l \leq 2n$. For $l = 1$, (\ref{moment22}) becomes $m_1(t + s) = m_1(t) + m_1(s)$. This is Cauchy's functional equation and there exists $r_1 \in \real$ such that $m_1(t) = r_1 t$ by the measurability (for a simple proof of Cauchy's functional equation, see \cite{A-O}).  
We assume that there exist $r_1, \cdots, r_q \in \real$ such that (\ref{moment23}) holds for $1 \leq l \leq q$. 
For an arbitrary $r'_{q + 1} \in \real$, we define 
\begin{equation}
\widetilde{m}_{q+1}(t) := r'_{q + 1}t + \sum_{k = 2} ^{q+1} \sum_{1 = i_0 < i_1 < \cdots < i_{k-1} < i_k = q + 2} 
\frac{t^k}{k!}\prod_{p = 1} ^{k} i_{p-1} r_{i_p - i_{p-1}}. 
\end{equation}
Then the equality 
\begin{equation}\label{moment221}
\widetilde{m}_{q+1}(t + s) = \widetilde{m}_{q+1}(t) + \widetilde{m}_{q+1}(s) + \sum_{k = 1} ^{q} \sum_{\substack{j_0 + j_1 + \cdots +j_k = q + 1 - k, \\  0 \leq j_l, ~0 \leq l \leq k }} m_k(t) m_{j_0}(s)\cdots m_{j_k}(s)  
\end{equation}
holds; this will be proved soon later in Proposition \ref{mmom}. Therefore, (\ref{moment22}) and (\ref{moment221}) imply that 
$m_{q + 1}(t + s) - \widetilde{m}_{q+1}(t + s) = m_{q + 1}(t) - \widetilde{m}_{q+1}(t) + m_{q +  1}(s) - \widetilde{m}_{q+1}(s)$. This is again Cauchy's functional equation, and hence, there exists $r''_{q + 1} \in \real$ such that $m_{q + 1}(t) = \widetilde{m}_{q+1}(t) + r''_{q + 1} t$. The above argument runs until $q = 2n-1$, and then we conclude that there exist $r_1, \cdots, r_{2n} \in \real$ such that (\ref{moment23}) holds for $1 \leq l \leq 2n$. 

By the equality $\frac{\partial G}{\partial t}(t, z) = A(z)\frac{\partial G}{\partial z}(t,z)$ we obtain 
$A(z) = \frac{G(1, z) - \frac{1}{z}}{\int_0 ^1 \frac{\partial G}{\partial z}(s,z)ds}$, which implies 
\begin{equation}
A(z) = -\frac{\frac{m_1(1)}{z^2} + \cdots + \frac{m_{2n}(1)}{z^{2n + 1}} + o(|z|^{-(2n+1)})}{\frac{1}{z^2} + \frac{2\int_0 ^1 m_1(s)ds}{z^3} + \cdots + \frac{(2n+1)\int_0 ^1 m_{2n}(s)ds}{z^{2n+2}} + \int_0 ^1 R_s(z)ds },  
\end{equation} 
where $R_s(z)$ is defined by $R_s(z) = \frac{2n+1}{z^{2n+2}} \int_{\real} \frac{x^{2n + 1}}{z - x} \mu_s(dx) + \frac{1}{z^{2n+1}} \int_{\real} \frac{x^{2n + 1}}{(z - x)^2} \mu_s(dx)$. We prove a property of $R_s(z)$ here. Since $m_{2n}(s)$ is a polynomial, $x^{2n}$ is integrable with respect to the measure $\mu_s(dx)ds$ on $\real \times [0, t]$. 
Easily we can show that $\int_0 ^1 R_s(iy)ds = o(y^{-(2n+2)})$ by the dominated convergence theorem. Therefore, there exist $u_1, \cdots, u_{2n} \in \real$ such that $A(iy) = u_1 + \frac{u_2}{iy} + \cdots + \frac{u_{2n}}{(iy)^{2n-1}} + o(y^{-(2n-1)})$. 
By Proposition \ref{asymptotic2}, we have $m_{2n}(\tau) < \infty$ (the equivalence between (2) and (3) in Proposition \ref{asymptotic2} is true for $A(z)$. The proof needs no changes). \\
$(3) \Rightarrow (2)$: Since $m_{2n}(\tau) < \infty$, we have the expansion $A(z) = u_1 + \frac{u_2}{z} + \cdots + \frac{u_{2n}}{z^{2n-1}} + Q(z)$, where $Q(z) := \frac{1}{z^{2n-1}}\int_{\real} \frac{x^{2n-1}}{x - z}(1 + x^2)\tau(dx)$.  
We obtain  
\begin{equation} \label{exp12}
  H_t(z) = z + u_1 t + \int_0 ^t \frac{u_2}{H_s(z)}ds + \cdots + \int_0 ^t \frac{u_{2n}}{H_s(z)^{2n - 1}}ds + \int_0 ^t Q(H_s(z))ds 
\end{equation}
from the equality $\frac{d}{dt}H_t(z) = A(H_t(z))$. 
We can prove that $\sum_{k = p} ^{2n-1} \int_0 ^t \frac{u_{k+1}}{H_s(iy)^k}ds + \int_0 ^t Q(H_s(iy))ds = o(y^{-(p-1)})$ since $|\int_0 ^t \frac{1}{H_s(iy)^k}ds| \leq \frac{t}{y^{k}}$. In addition,  $\int_0 ^t Q(H_s(iy))ds = o(y^{-(2n-1)})$ for any $t > 0$ by the dominated convergence theorem.  
Now we show by induction that there exist polynomials $c_k(t)$ of $t$ $(1 \leq k \leq 2n)$ such that 
\begin{equation}\label{exp121}
H_t(z) = z + c_1(t) + \frac{c_2(t)}{z} + \cdots + \frac{c_{2n}(t)}{z^{2n-1}} +  o(|z|^{-(2n-1)})~~~~(z = iy,~y \to \infty) 
\end{equation}
for any $t > 0$.  
First $H_t(iy) = iy + u_1 t + \frac{u_2 t}{iy} + o(\frac{1}{y})$ holds by (\ref{exp12}). Next we assume that there exist polynomials $c_k(t)$ of $t$ $(1 \leq k \leq 2q)$ such that 
\begin{equation}\label{exp13}
H_t(z) = z + c_1(t) + \frac{c_2(t)}{z} + \cdots + \frac{c_{2q}(t)}{z^{2q-1}} + P_t(z), 
\end{equation}
where $P_t(iy) = o(y^{-(2q-1)})$ for any $t > 0$. 
We can write $P_t(z) = \frac{1}{z^{2q-1}} \int_{\real}\frac{x^{2q-1}}{x - z}\rho_t(dx)$, where $\rho_t$ is the positive finite measure in Proposition \ref{asymptotic2} (2). Then we obtain the asymptotic behavior $\int_0 ^t P_s(iy)ds = o(y^{-(2q - 1)})$. Substituting (\ref{exp13}) into the right hand side of (\ref{exp12}), we obtain the expansion 
\begin{equation}\label{exp14} 
H_t(z) = z + b_1(t) + \frac{b_2(t)}{z} + \cdots + \frac{b_{2q + 2}(t)}{z^{2q + 1}} + o(|z|^{-(2q + 1)}),  
\end{equation}  
where $b_k(t)$ is a polynomial of $t$ (we note that $b_k(t) = c_k(t)$ holds for $1 \leq k \leq 2q$ by the uniqueness of the expansion).  This induction goes until $q = n-1$ and we obtain (\ref{exp121}). The conclusion follows from Proposition \ref{asymptotic2}. 
\end{proof}
\begin{rem}
We have proved that $m_k(t)$ is a polynomial of $t$ in the proof of $(2) \Rightarrow (3)$. This property might seem to be too strong: what we needed was the integrability of $m_k(t)$ in a finite interval. The author however could not find an alternative proof of the integrability.  
\end{rem}

The following result completes the above theorem.  
\begin{prop}\label{mmom}
For any complex numbers $r_n$, $n\geq 1$, $m_n(t)$ defined by 
\begin{equation}\label{mmom1}
m_{n}(t) = \sum_{k =1} ^{n} \sum_{1 = i_0 < i_1 < \cdots < i_{k-1} < i_k = n +1} 
\frac{t^k}{k!}\prod_{p = 1} ^{k} i_{p-1} r_{i_p - i_{p-1}}
\end{equation}
satisfy the equality 
\begin{equation}\label{mmom2}
m_n(t + s) = m_n(t) + m_n(s) + \sum_{k = 1} ^{n-1} \sum_{\substack{j_0 + j_1 + \cdots +j_k = n - k, \\  0 \leq j_p, ~0 \leq p \leq k }} m_k(t) m_{j_0}(s)\cdots m_{j_k}(s)  
\end{equation} 
for any $n \geq 1$. 
\end{prop}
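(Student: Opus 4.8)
The plan is to recognize equation (\ref{mmom1}) as the combinatorial solution of the system of ODEs (\ref{system of ODEs}), and then to deduce the functional equation (\ref{mmom2}) purely from the semigroup structure of the flow that those ODEs encode, rather than by a direct manipulation of the nested sums. Concretely, I would introduce the formal generating function $h_t(z) := z + \sum_{n\ge 1} m_n(t) z^{-(n-1)}$ as an element of the ring of formal Laurent-type series, and the vector field $A(z) := -\sum_{n\ge 1} r_n z^{-(n-1)}$. A short check shows that the coefficientwise ODE $\frac{d}{dt}h_t(z) = A(h_t(z))$ with $h_0(z) = z$ is, order by order in $z^{-1}$, exactly the system $\frac{dm_0}{dt}=0$, $m_0(0)=1$, and (\ref{system of ODEs}) with $m_n(0)=0$ for $n\ge 1$; and that system has a unique formal solution, whose coefficients can be written as the sum over chains $1=i_0<i_1<\cdots<i_k=n+1$ displayed in (\ref{mmom1}) (the $t^k/k!$ and the product $\prod_p i_{p-1} r_{i_p-i_{p-1}}$ arise from iterating the integral form of the ODE $k$ times). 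So the first step is this identification: (\ref{mmom1}) $\iff$ $h_t$ solves the formal ODE.

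The second step is to observe that the formal ODE, being autonomous with unique solutions, has the flow property $h_{t+s} = h_t \circ h_s$ at the level of formal series: $h_{t+s}(z)$ and $h_t(h_s(z))$ both satisfy $\frac{d}{du}f_u = A(f_u)$ in the variable $u = t+s$ (resp. with $s$ fixed and $t$ varying) with the same initial value $h_s(z)$ at $u=s$. (Alternatively one simply notes that $s\mapsto h_s$ is the exponential of the derivation $A(z)\partial_z$ and the flow law is automatic.) The third step is then bookkeeping: expand $h_t(h_s(z))$. Writing $h_s(z) = z + \sum_{j\ge 0} m_j(s) z^{-j}$ with $m_0(s) = 1$, and substituting into $h_t(w) = w + \sum_{k\ge 1} m_k(t) w^{-(k-1)}$, one gets
\[
h_t(h_s(z)) = h_s(z) + \sum_{k\ge 1} m_k(t)\, h_s(z)^{-(k-1)}.
\]
Expanding $h_s(z)^{-(k-1)} = \bigl(z(1 + \sum_{j\ge 0} m_j(s) z^{-(j+1)})\bigr)^{-(k-1)}$ as a multinomial series and collecting the coefficient of $z^{-(n-1)}$ produces precisely $m_n(s) + \sum_{k=1}^{n-1}\sum_{j_0+\cdots+j_k = n-k} m_k(t) m_{j_0}(s)\cdots m_{j_k}(s)$, where the term $k$ comes from the summand $m_k(t) h_s(z)^{-(k-1)}$ and the constraint $j_0 + \cdots + j_k = n-k$ with $k+1$ summands records the way $z^{-(n-1)}$ is split among the $z$-factor and the $(k-1)$ reciprocal factors (the degree $z^1$ of $h_s(z)$ contributes the offset). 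Matching this against the coefficient of $z^{-(n-1)}$ in $h_{t+s}(z)$, which is $m_n(t+s)$, and noting that the leading term $h_s(z)$ contributes $m_n(s)$ while one copy of $z$ in the chain accounts for $m_n(t)$ when $k=n$, gives (\ref{mmom2}).

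The main obstacle is the third step: making the multinomial bookkeeping in $h_s(z)^{-(k-1)}$ line up exactly with the index set $\{j_0+\cdots+j_k = n-k,\ 0\le j_p\}$ in (\ref{mmom2}), including correctly tracking the contribution of the $z^1$ term of $h_s$. One has to be careful that it is $k+1$ nonnegative indices $j_0,\dots,j_k$ summing to $n-k$ — the ``$+1$'' beyond the $k-1$ reciprocal factors coming from expanding the overall power of $z$ and from the leading $w$ in $h_t(w)=w+\cdots$. This is purely formal and finite at each order $n$, so no convergence issues arise; the verification is a routine but slightly fiddly comparison of coefficients. An entirely equivalent route, which I might present instead if the series manipulation gets unwieldy, is to prove (\ref{mmom2}) by induction on $n$ directly from (\ref{system of ODEs}): differentiate both sides of (\ref{mmom2}) in $t$, use (\ref{system of ODEs}) and the inductive hypothesis for lower orders to see the derivatives agree, and check equality at $t=0$ where both sides reduce to $m_n(s)$. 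The semigroup/flow argument is cleaner conceptually, so that is what I would write up.
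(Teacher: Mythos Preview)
Your overall strategy---treat everything as formal series, invoke the autonomous-flow law for the ODE, and read off (\ref{mmom2}) by coefficient comparison---is exactly the route the paper takes. But your execution contains a genuine error: the $m_n(t)$ are \emph{not} the coefficients of the reciprocal Cauchy transform. They are the coefficients of the Cauchy transform $G_t(z)=\sum_{n\ge 0} m_n(t)\,z^{-(n+1)}$, and the system (\ref{system of ODEs}) comes from Lemma~\ref{equalities}\,(2), namely $\partial_t G_t = A(z)\,\partial_z G_t$, not from $\partial_t H_t = A(H_t)$. A one-line check exposes the mismatch: with your $h_t(z)=z+m_1(t)+m_2(t)/z+\cdots$ and $A(z)=-r_1-r_2/z-\cdots$, the constant term of $A(h_t(z))$ is $-r_1$, so your ODE would give $\dot m_1=-r_1$, whereas (\ref{system of ODEs}) gives $\dot m_1=r_1 m_0=r_1$; at the next order you get $\dot m_2=-r_2$ instead of $\dot m_2=r_2+2r_1 m_1$. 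So the ``short check'' fails, and the subsequent expansion of $h_t(h_s(z))$ cannot recover (\ref{mmom2}) either: the \emph{negative} powers $h_s(z)^{-(k-1)}$ produce alternating binomial coefficients, not the clean product $m_{j_0}(s)\cdots m_{j_k}(s)$ over $k{+}1$ nonnegative indices that (\ref{mmom2}) demands. Your remark about ``the $+1$ beyond the $k{-}1$ reciprocal factors'' is papering over exactly this discrepancy.

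The fix is immediate and is what the paper does: work with $G_t$ rather than $H_t$. Let $H_t$ be the unique formal solution of $\partial_t H_t = A(H_t)$, $H_0=z$; the flow law $H_{t+s}=H_t\circ H_s$ holds for the reason you give. Set $G_t:=1/H_t$, so that $G_{t+s}(z)=G_t\bigl(1/G_s(z)\bigr)$, and verify via Lemma~\ref{equalities}\,(2) that the coefficients of $G_t$ satisfy (\ref{system of ODEs}), hence coincide with (\ref{mmom1}). Now expand
\[
G_t\Bigl(\tfrac{1}{G_s(z)}\Bigr)=\sum_{k\ge 0} m_k(t)\,G_s(z)^{\,k+1},
\]
and the $(k{+}1)$-st \emph{positive} power of $G_s(z)=\sum_{j\ge 0} m_j(s)\,z^{-(j+1)}$ immediately yields the sum over $j_0+\cdots+j_k=n-k$ with $k{+}1$ nonnegative indices; separating $k=0$ and $k=n$ gives the two isolated terms $m_n(s)$ and $m_n(t)$, and (\ref{mmom2}) drops out with no delicate bookkeeping. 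Your induction alternative would also work, but this route is shorter.
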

\begin{proof}
Every series in this proof is a formal power series. 
We define $A(z)=-\sum_{z=1}^\infty \frac{r_n}{z^{n-1}}$. We solve the differential equation (\ref{ODE}) in the sense of formal power series. Then 
the solution $H_t(z)$ of the form $H_t(z) = \sum_{n=-1} ^\infty \frac{a_n(t)}{z^n}$ uniquely exists. It is easy to prove that $H_{t+s}(z) = H_t(H_s(z))$ in the sense of formal power series with respect to $t,s,z$. If we define $G_t(z)$ by $\frac{1}{H_t(z)}$, then Lemma \ref{equalities} holds by the same proof. 
We can easily prove that $m_n(t)$ are given by $G_t(z) = \sum_{n=0}^\infty\frac{m_n(t)}{z^{n+1}}$ using the equality (2) in Lemma \ref{equalities}. 
(\ref{mmom2}) follows from the power series expansion of $G_{t+s}(z) = G_t(\frac{1}{G_s(z)})$.    
\end{proof}

\section{Strictly stable distributions}\label{stable}
Let $b \in \comp$, $c \in \comp$ and $\alpha \in \real$ be constants such that $\alpha \neq 0$. 
We consider a $\rhd$-infinitely divisible probability distribution $\mu ^{(\alpha,b,c)}$
by the associated vector field 
\begin{equation}\label{computa0}
A^{(\alpha,b,c)}(z):= \frac{b}{\alpha} (z-c)^{1-\alpha}, 
\end{equation}
where  $z^s$ is defined by $z^s = \exp(s \log z)$ for 
$z \in \comp \backslash \{x \in \real; x \geq 0 \}$.
The range of the angle of $z$ is chosen to be $0 < \arg z < 2\pi$. 
(Of course the factor $\frac{b}{\alpha}$ can be replaced by merely $b$; however, we use this notation since (\ref{computa1}) becomes rather simple.) 
In order that $A^{(\alpha,b,c)}$ becomes the associated vector field to a $\rhd$-infinitely divisible distribution, the following conditions 
are necessary and sufficient:  
\begin{itemize}
\item[(a)] $A^{(\alpha, b, c)}$ maps $\com+$ into $\com+ \cup \real;$ 
\item[(b)] $\lim_{y \rightarrow \infty} \frac{ \im A^{(\alpha, b, c)}(x + iy)}{y} = 0$ for some $x$.
\end{itemize} 
By careful observation upon the motion of angles, we can see that $A^{(\alpha, b, c)}$ satisfies (a) and (b) if and only if
\begin{itemize} 
\item[$(1)$] $\im c \leq 0$, 
\item[$(2)$] $0 < \alpha \leq 2$, 
\item[$(3)$] $0 \leq \arg b \leq \alpha \pi$ for $0 < \alpha \leq 1$ and  $(\alpha-1)\pi \leq \arg b \leq \pi$ for $1 < \alpha \leq 2$,  
\end{itemize}
except for the case $\alpha = 1$. If $\alpha = 1$, $A^{(1, b, c)}$ 
does not depend on $c$ and the condition (1) is not needed.  
We can write explicitly the corresponding reciprocal Cauchy transform: 
\begin{equation}\label{computa1}
H ^{(\alpha,b,c)}(z)= c + \{(z - c)^\alpha + b \}^{\frac{1}{\alpha}}. 
\end{equation}
When we consider the corresponding convolution semigroup $\{ \mu_t ^{(\alpha,b,c)} \}_{t \geq 0}$, 
the formula (\ref{computa1}) becomes 
\begin{equation}\label{computa2}
H_t ^{(\alpha,b,c)}(z)= c + \{(z - c)^\alpha + bt \}^{\frac{1}{\alpha}}. 
\end{equation}
This family is an extension of deformed arcsine laws in \cite{Mur3} ($\im c = 0$, $\alpha = 2$), 
Cauchy distributions ($\alpha = 1$, $b = \beta i$ with $\beta > 0$) 
and delta measures ($\alpha = 1$, $\im b = 0$). Moreover, this family gives good examples when we study support properties of general $\rhd$-infinitely divisible distributions \cite{Has1}.

We show that the family  $\{\mu ^{(\alpha, b, 0)} \}$ gives all strictly monotone stable distributions which we define now. 
Let $D_\lambda$ be the dilation operator defined by 
\begin{equation}
D_\lambda \mu (B) = \mu (\lambda^{-1}B), 
\end{equation}
where $B$ is an arbitrary Borel set and $\mu$ is an arbitrary Borel measure. 
\begin{defi}
Let $\mu$ be a $\rhd$-infinitely divisible distribution. Then 
there exists a unique weakly continuous $\rhd$-convolution semigroup $\{\mu_t \}_{t \geq 0}$ such that $\mu_1 = \mu$ and $\mu_0 = \delta_0$. 
$\mu$ is called a strictly $\rhd$-stable distribution 
if for any $a>0$ there exists $b(a) > 0$ such that 
\begin{equation}\label{stabledef}
\mu_{a} = D_{b(a)}\mu. 
\end{equation}
(\ref{stabledef}) is equivalent to the following equality: 
\begin{equation}
H_{\mu_{a}} (z) = b(a) H_\mu (b(a)^{-1}z) \text{~ for all~} z \in \com+. 
\end{equation}
We often write $H_t = H_{\mu_{t}}$ for simplicity.
\end{defi}

\begin{rem}
We do not treat unbounded operators which will be interesting in the study of strictly $\rhd$-stable distributions; 
we deal with only probability distributions. 
\end{rem}

\begin{lem}\label{stable1} Assume that $\mu$ is a strictly $\rhd$-stable distribution with $\mu \neq \delta_0$. \\
$(1)$ $b(a)$ is unique for each $a > 0$ and $b(a)$ is a continuous function of $a$.  \\
$(2)$ It holds that $H_{at}(z) = b(a) H_t(b(a)^{-1}z)$ for all $a > 0$, $t \geq 0$ and $z \in \com+$. \\ 
$(3)$ There exists some $h \in \real$ such that $b(a)= a^h$ for all $a > 0$, 
$t\geq 0$. 
\end{lem}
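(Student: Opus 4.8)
The plan is to extract everything from the semigroup relation $H_{at}(z) = b(a)H_t(b(a)^{-1}z)$, which must first be established, and then to use the composition law $H_s \circ H_t = H_{s+t}$ to turn the multiplicative relation $b(ab') = b(a)b(b')$ into a Cauchy-type functional equation whose measurable (indeed continuous) solutions are powers. First I would prove (2): fix $a>0$. By the stable hypothesis applied to the semigroup $\{\mu_{at}\}$ (which is again a weakly continuous $\rhd$-convolution semigroup starting at $\delta_0$), the maps $z\mapsto b(a)H_t(b(a)^{-1}z)$ form a composition semigroup in $t$ with the correct value at $t=1$; by the uniqueness part of Theorem \ref{inf.div} (the correspondence between semigroups and reciprocal Cauchy transforms, via the ODE \eqref{ODE}), this semigroup must coincide with $\{H_{at}\}_{t\geq 0}$. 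Here I would use that conjugation of the vector field $A$ by the dilation $D_{b(a)}$ again produces an admissible vector field, so the ODE \eqref{ODE} has a unique non-exploding solution and the two semigroups agree. This gives $H_{at}(z) = b(a)H_t(b(a)^{-1}z)$ for all $t\geq 0$, $z\in\com+$.

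Next I would establish (1), the uniqueness and continuity of $b(a)$. Uniqueness: if $b_1, b_2 > 0$ both satisfy $H_{\mu_a}(z) = b_i H_\mu(b_i^{-1}z)$, then $b_1 H_\mu(b_1^{-1}z) = b_2 H_\mu(b_2^{-1}z)$ identically on $\com+$; writing $c = b_1/b_2$ this says $c H_\mu(c^{-1}w) = H_\mu(w)$ for all $w$, and by Proposition \ref{maa}/the Nevanlinna representation \eqref{recip1} of $H_\mu$ (looking at the leading $z$-behavior and the measure $\eta$), this forces $c=1$ unless $\mu = \delta_0$, which is excluded. For continuity: from $H_{\mu_a}(i) = b(a)H_\mu(b(a)^{-1}i)$, the right-hand side is a continuous and strictly monotone function of $b(a)>0$ (since $\im H_\mu(b^{-1}i) = b\,(1 + \eta(\real) + o(1))$-type behavior makes $b\mapsto b\,\im H_\mu(b^{-1}i)$ increasing and proper), while the left-hand side $H_{\mu_a}(i)$ depends continuously on $a$ because $a\mapsto\mu_a$ is weakly continuous and hence $G_{\mu_a}(i)$, $H_{\mu_a}(i)$ are continuous in $a$ by Lemma \ref{inj2}. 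Inverting the monotone relation gives continuity of $b(a)$.

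Finally, for (3): applying (2) twice, $H_{ab't}(z) = b(a)H_{b't}(b(a)^{-1}z) = b(a)b(b')H_t(b(b')^{-1}b(a)^{-1}z)$, while directly $H_{ab't}(z) = b(ab')H_t(b(ab')^{-1}z)$; by the uniqueness in (1) this yields $b(ab') = b(a)b(b')$ for all $a,b'>0$. Substituting $a = e^x$ and $g(x) := \log b(e^x)$ converts this into Cauchy's functional equation $g(x+y) = g(x)+g(y)$ on $\real$, and since $b$ is continuous, so is $g$; the only continuous solutions are linear, $g(x) = hx$ for some $h\in\real$, i.e. $b(a) = a^h$. The main obstacle I anticipate is part (1), specifically pinning down the strict monotonicity of $b\mapsto b\,\im H_\mu(b^{-1}i)$ cleanly enough to get both uniqueness and continuity; everything else is a formal consequence of semigroup uniqueness (Theorem \ref{inf.div}) plus the elementary solution of Cauchy's equation. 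A minor point to handle with care is that the dilated data still satisfies the admissibility conditions of Theorem \ref{inf.div}(4) so that the uniqueness there genuinely applies.
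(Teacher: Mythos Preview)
Your outline is correct and follows the paper's proof closely: part (2) via uniqueness of the embedding semigroup (the paper cites \cite{Bel}, you cite Theorem \ref{inf.div}, which amounts to the same thing), and part (3) via the multiplicative Cauchy equation $b(aa')=b(a)b(a')$ derived from (2) and solved by continuity---this is exactly what the paper does.

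The only substantive difference is in part (1). For uniqueness, the paper argues on the level of measures: if $bH_\mu(b^{-1}z)=b'H_\mu(b'^{-1}z)$ then $D_c\mu=\mu$ with $c=b/b'$, hence $D_{c^n}\mu=\mu$ for all $n$, and letting $n\to\infty$ forces $\mu=\delta_0$. Your plan to read off $c=1$ directly from the Nevanlinna data is workable but less clean; the iteration trick avoids any case analysis. For continuity, the paper simply invokes \cite[Lemma 13.9]{Sat}, whereas you propose an explicit argument via the strict monotonicity of $b\mapsto b\,\im H_\mu(b^{-1}i)$. That computation does go through when $\eta\neq 0$ (one gets $b\,\im H_\mu(b^{-1}i)=1+\int\frac{1+x^2}{x^2+b^{-2}}\,d\eta(x)$, strictly increasing in $b$), but note that it degenerates to a constant when $\mu=\delta_a$, $a\neq 0$; in that case you need the real part instead. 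This is the minor patch your own caveat anticipated, and once handled your argument is a bit more self-contained than the paper's citation.
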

\begin{proof}
(1) The proof of the uniqueness of $b(a)$ given below is almost the same as in Lemma 13.7 in \cite{Sat}. 
Fix $a > 0$. Assume that there exist $b > b'$ such that
$H_a(z) = b H_1 (b^{-1}z) = b' H_1 (b'^{-1}z) $ for all $z \in \com+$. 
Then we have $\mu(\frac{dx}{b}) = \mu(\frac{dx}{b'})$, and hence, we 
get $\mu(c^n dx) = \mu(dx)$ with $c= \frac{b}{b'} > 1$ for all $n \in \nat$. 
Letting $n \rightarrow \infty$, we have $\mu = \delta_0$, which is a contradiction.  
Then we have the uniqueness of $b$. The proof of the continuity of $b(a)$ is the same as Lemma 13.9 in \cite{Sat} and we omit the proof.  \\
$(2)$ We fix $a > 0$. 
Define two families of probability measures $\nu_t:=\mu_{at}(dx)$ 
and $\lambda_t:= \mu_t \Big{(}\frac{dx}{b(a)} \Big{)}$. 
Since $\mu$ is strictly $\rhd$-stable, we have $\nu_1 = \lambda_1$. 
Moreover, both $\{ \nu_t \}$ and $\{ \lambda_t \}$ constitute monotone convolution semigroups. 
Therefore, we obtain $\nu_t = \lambda_t$ for all $t \geq 0$ by the uniqueness result obtained in \cite{Bel}. \\
$(3)$ 
By the result (2) it holds that 
\begin{equation}
H_{aa't}(z) = b(a)H_{a't}(b(a)^{-1}z) = b(a)b(a')H_t(b(a)^{-1}b(a')^{-1}z)
\end{equation} 
for all $a,a' > 0$ and $t \geq 0$. Therefore, by (1) we have  
\begin{equation}\label{Cauchy}
b(aa') = b(a)b(a') 
\end{equation}
for all $a, a' > 0$.  It is a well known fact that a continuous function satisfying the equation (\ref{Cauchy}) 
is of the form $b(a) = a^h$. 
\end{proof}
\begin{defi}
The reciprocal of $h$ in Lemma \ref{stable1} is called the index of $\mu$. 
We denote the index by $\alpha$ and in this case we call $\mu$ a strictly $\rhd$-$\alpha$-stable distribution.
\end{defi}

Assume that $\mu$ is a $\rhd$-infinitely divisible distribution.   
Let $A$ be the associated vector field in (\ref{inf.div}).
The following equivalent conditions are useful in the classification of strictly $\rhd$-stable distributions: 
\begin{itemize}
\item[(1)] $\mu$ is a strictly $\rhd$-$\alpha$-stable distribution; 
\item[(2)] $H_{at} (z) = a^{\frac{1}{\alpha}}H_t (a^{- \frac{1}{\alpha}}z)$ for all $z \in \com+$; 
\item[(3)] $A(z) = a^{\frac{1}{\alpha} -1 } A(a^{- \frac{1}{\alpha}}z)$ for all $z \in \com+$. 
\end{itemize}
\begin{thm}\label{stable2}
Assume that $\mu$ is a strictly $\rhd$-stable distribution with $\mu \neq \delta_0$. 
Then the index $\alpha$ of $\mu$ satisfies $0 < \alpha \leq 2 $. 
Moreover, there exists $b \in \comp$ such that $\mu = \mu ^{(\alpha,b,0)}$,  
where $b$ satisfies the following conditions:
\begin{itemize}
\item[$\cdot$] $ 0 \leq  \arg b \leq \alpha\pi $ if $0 < \alpha \leq  1$, 
\item[$\cdot$] $(\alpha-1)\pi \leq \arg b \leq \pi$ if $1 < \alpha \leq 2$.
\end{itemize}
\end{thm}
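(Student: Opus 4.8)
The plan is to use the self-similarity of the associated vector field $A$ to determine it completely, and then to read off the admissible values of $\alpha$ and $b$ from the characterization of vector fields of the form $A^{(\alpha,b,c)}$ that was established just before the statement.

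First I would note that the exponent $h$ in Lemma~\ref{stable1}(3) cannot vanish when $\mu\neq\delta_0$: if $h=0$ then $b(a)\equiv 1$, so $\mu_a=D_1\mu=\mu$ for every $a>0$, and weak continuity together with $\mu_0=\delta_0$ would force $\mu=\delta_0$. Hence $\alpha:=1/h$ is well defined. Next I would invoke the third of the equivalent conditions recalled before the theorem, $A(z)=a^{1/\alpha-1}A(a^{-1/\alpha}z)$ for all $a>0$ and $z\in\com+$; writing $\lambda=a^{-1/\alpha}$, which ranges over $(0,\infty)$, this becomes the homogeneity relation $A(\lambda z)=\lambda^{1-\alpha}A(z)$ for all $\lambda>0$ and $z\in\com+$. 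Here I use that $A$ is analytic on $\com+$, which is part of Theorem~\ref{inf.div}(4).

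The main step is to conclude from this that $A(z)=\beta z^{1-\alpha}$ for some constant $\beta\in\comp$. I would set $\phi(z):=z^{\alpha-1}A(z)$, analytic on $\com+$ with the branch of the power corresponding to $\arg z\in(0,\pi)$ (this agrees with the paper's convention on $\com+$). The homogeneity relation gives $\phi(\lambda z)=\phi(z)$ for all $\lambda>0$. Transporting $\phi$ through the biholomorphism $\log\colon\com+\to\{w:0<\im w<\pi\}$, the function $\psi(w):=\phi(e^{w})$ is analytic on the strip and satisfies $\psi(w+t)=\psi(w)$ for every real $t$; hence $\psi'\equiv0$, so $\psi$, and therefore $\phi$, is a constant $\beta$. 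Thus $A(z)=\beta z^{1-\alpha}$, which is exactly $A^{(\alpha,b,0)}(z)$ with $b:=\alpha\beta$. Since $\mu\neq\delta_0$ we have $A\not\equiv0$, hence $\beta\neq0$ and $b\neq0$. By the one-to-one correspondence in Theorem~\ref{inf.div}, $\mu=\mu^{(\alpha,b,0)}$.

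Finally I would extract the parameter restrictions: since $\mu$ is $\rhd$-infinitely divisible, $A=A^{(\alpha,b,0)}$ is the associated vector field of a $\rhd$-infinitely divisible distribution, so it satisfies conditions (a) and (b) preceding the theorem. Condition (a), namely $\arg b+(1-\alpha)\theta\in[0,\pi]$ for all $\theta\in(0,\pi)$, forces the range of $(1-\alpha)\theta$ to have length $\le\pi$, i.e. $|1-\alpha|\le1$; with $\alpha\neq0$ this gives $0<\alpha\le2$, and letting $\theta\to0^{+}$ and $\theta\to\pi^{-}$ yields $0\le\arg b\le\alpha\pi$ for $0<\alpha\le1$ and $(\alpha-1)\pi\le\arg b\le\pi$ for $1<\alpha\le2$ (condition (b) is then automatic, and $\im c\le0$ is trivial since $c=0$); equivalently, one simply appeals to the necessary-and-sufficient parameter analysis already performed for the family $A^{(\alpha,b,c)}$. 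The only non-routine point is the Liouville-type argument in the third paragraph that a dilation-invariant analytic function on $\com+$ must be constant; the rest is bookkeeping with the previously established equivalences and with Theorem~\ref{inf.div}.
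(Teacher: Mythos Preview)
Your proof is correct and reaches the same conclusion as the paper, but the core step---deducing that $A(z)=\beta z^{1-\alpha}$ from the scaling relation---is handled differently. The paper differentiates the identity $A(z)=a^{1/\alpha-1}A(a^{-1/\alpha}z)$ with respect to $a$, sets $a=1$, and obtains the ODE $A'(z)=\frac{1-\alpha}{z}A(z)$, which integrates immediately to a power law. You instead observe that $\phi(z)=z^{\alpha-1}A(z)$ is dilation-invariant and, after passing to the strip via $\log$, use translation invariance to force $\phi$ constant. Both arguments are short and standard; the paper's ODE approach is slightly more direct computationally, while your argument is a bit more structural and avoids any differentiation in the parameter $a$. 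You also supply two details the paper leaves implicit: the observation that $h\neq 0$ (so that $\alpha$ is defined), and the explicit extraction of the constraints on $\arg b$ from condition~(a), whereas the paper simply refers back to the parameter analysis at the start of the section.
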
  
\begin{proof} 
We have the equation $A(z) =  a^{\frac{1}{\alpha} -1 } A(a^{- \frac{1}{\alpha}}z)$ for all $a > 0$ 
and $z \in \com+$ since 
$\mu$ is a strictly $\rhd$-$\alpha$-stable distribution. Differentiating the equation w.r.t. $a$, we obtain 
$A'(a^{- \frac{1}{\alpha}}z) = \frac{1-\alpha}{a^{- \frac{1}{\alpha}}z}A(a^{- \frac{1}{\alpha}}z)$. 
Putting $a=1$,  we obtain 
\begin{equation}\label{diff}
A'(z) = \frac{1-\alpha}{z}A(z).
\end{equation} 
It follows from the differential equation (\ref{diff}) that 
\begin{equation}
A(z)= \frac{b}{\alpha}z^{1-\alpha}
\end{equation}
for some constant $b \in \comp$. As explained in the beginning of this section, we obtain the conclusion. 
\end{proof}
\begin{rem}
(1) The characterization of strictly $\rhd$-stable distributions in terms of the associated vector field $A(z)= b z^{1-\alpha}$ is very similar to the cases of free \cite{Be-Vo} and boolean \cite{S-W}. \\
(2) Cauchy distributions are strictly $\rhd$-1-stable distributions; this is also the case in classical, free and boolean cases.  
\end{rem}

\section{Connection to infinite divisibility in classical probability theory }\label{connec}
Now we consider the correspondence between commutative probability theory and 
monotone probability theory. The usual L\'{e}vy-Khintchine formula is given by
\begin{equation}\label{khintchine}
\widehat{\mu}(u) = \exp\Big{(}i\gamma u + \int_{\real}\big{(}e^{ixu} - 1 - \frac{ixu}{1 + x^2} \big{)}\frac{1 + x^2}{x^2}\tau(dx) \Big{)},
\end{equation}
where $\gamma \in \real$ and  $\tau$ is a positive finite measure. 
We note that the L\'{e}vy-Khintchine formula in monotone probability theory is given by 
\begin{equation}\label{mkhintchine}
A(z) = -\gamma + \int_{\real}\frac{1 + xz}{x-z}\tau(dx),
\end{equation}
where $(\gamma, \tau)$ satisfies the same conditions in (\ref{khintchine}).
The correspondence between the commutative case and the monotone case becomes clearer if we use the notation of (\ref{khintchine}). 
For instance, the condition of $(\gamma, \tau)$ for the positivity of an infinitely divisible distribution can be written as (see Theorem 24.11 in \cite{Sat})
\begin{equation}
\begin{split}
&\supp \tau \subset [0, \infty),~ \int_0 ^1 \frac{1}{x}\tau(dx) < \infty, \\
&\tau(\{0 \}) = 0, ~\gamma \geq \int_0 ^{\infty} \frac{1}{x}\tau(dx). 
\end{split}
\end{equation}
These conditions are completely the same as in Theorem \ref{subordinator}.  
Then it is natural to define the monotone analogue of the Bercovici-Pata bijection 
(for the details of the Bercovici-Pata bijection, the reader is referred to \cite{Be-Pa}.) 
Let $ID(\rhd)$ be the set of all monotone infinitely divisible distributions; 
let $ID(\ast)$ be the set of all infinitely divisible distributions.
We define a map $\Lambda_M : ID(\ast) \to ID(\rhd)$ by sending the pair 
$(\gamma, \tau)$ in (\ref{khintchine}) to the pair $(\gamma, \tau)$ in (\ref{mkhintchine}) similarly to the Bercovici-Pata bijection. 
This map enjoys nice properties. 
\begin{thm}\label{Lambda} 
$\Lambda_M$ satisfies following properties.
\begin{itemize} 
\item[$(1)$] $\Lambda_M$ is continuous;  
\item[$(2)$] $\Lambda_M(\delta_a) = \delta_a$ for all $a \in \real$; 
\item[$(3)$] $D_\lambda \circ \Lambda_M = \Lambda_M \circ D_\lambda$ for all $\lambda > 0$.
\item[$(4)$] $\Lambda_M$ maps the Gaussian with mean 0 and variance $\sigma^2$ to the arcsine law with mean 0 and variance $\sigma^2$; 
\item[$(5)$] $\Lambda_M$ maps the Poisson distribution with parameter $\lambda$ to the monotone 
Poisson distribution with parameter $\lambda$;  
\item[$(6)$] $\Lambda_M$ gives a one-to-one correspondence between the set 
$\{\mu \in ID(\ast); \supp \mu \subset [0, \infty) \}$ and the set 
$\{\nu \in ID(\rhd); \supp \nu \subset [0, \infty) \}$. 
\item[$(7)$] For all $\alpha \in (0, 2)$, $\Lambda_M$ gives a one-to-one correspondence between strictly $\alpha$-stable distributions and monotone strictly $\alpha$-stable distributions. 
\item[$(8)$] If $\supp \tau $ is compact, the symmetry of $\mu \in ID(\ast)$ is equivalent to the symmetry of $\Lambda_M(\mu)$. 
\item[$(9)$] For each $n \geq 1$, $\Lambda_M$ gives a one-to-one correspondence between the set $\{\mu \in ID(\ast); \int_{\real}x^{2n}\mu(dx) < \infty \}$ and the set $\{\nu \in ID(\rhd); \int_{\real}x^{2n} \nu(dx) < \infty \}$. 
\end{itemize} 
\end{thm}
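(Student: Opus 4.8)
The plan is to reduce the whole theorem to bookkeeping on the L\'evy--Khintchine pair $(\gamma,\tau)$, after which eight of the nine items become essentially immediate, leaving only the continuity statement (1) as genuine analytic work. Two observations do most of the job. First, $\Lambda_M$ is a bijection: both $ID(\ast)$ and $ID(\rhd)$ are parametrized by the pairs $(\gamma,\tau)$ with $\gamma\in\real$ and $\tau$ positive finite, via \eqref{khintchine} and \eqref{mkhintchine}, and $\Lambda_M$ is the identity on pairs. Second, $\Lambda_M$ sends classical convolution semigroups to monotone ones compatibly: if $\{\mu_t\}$ is the classical semigroup with $\mu_1=\mu$, then $\mu_t$ has pair $(t\gamma,t\tau)$, so $\Lambda_M(\mu_t)$ has vector field $tA$; since replacing $A$ by $tA$ in the ODE \eqref{ODE} amounts to rescaling the time variable, the monotone infinitely divisible distribution with vector field $tA$ is exactly the time-$t$ element $\Lambda_M(\mu)_t$ of the monotone semigroup generated by $A$, so $\Lambda_M(\mu_t)=\Lambda_M(\mu)_t$. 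I also record that, for $\nu\in ID(\rhd)$ generated by $A$, the family $\{D_\lambda\nu_t\}$ is again a monotone convolution semigroup because $H_{D_\lambda\nu_t}(z)=\lambda H_{\nu_t}(\lambda^{-1}z)$ and these compose correctly, and its generator is $A_\lambda(z):=\lambda A(\lambda^{-1}z)$.

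For (2), (4), (5) I would compute the classical pair and solve \eqref{ODE} explicitly. $\delta_a$ has pair $(a,0)$, so $A\equiv-a$ and $H_1(z)=z-a=H_{\delta_a}(z)$. The Gaussian $N(0,\sigma^2)$ has pair $(0,\sigma^2\delta_0)$, so $A(z)=-\sigma^2/z$; then $\tfrac{d}{dt}H_t(z)^2=-2\sigma^2$ gives $H_1(z)=\sqrt{z^2-2\sigma^2}$, the reciprocal Cauchy transform of the arcsine law on $(-\sigma\sqrt2,\sigma\sqrt2)$, which has mean $0$ and variance $\sigma^2$. The Poisson law with parameter $\lambda$ has pair $(\lambda/2,\tfrac{\lambda}{2}\delta_1)$, so $A(z)=-\tfrac{\lambda}{2}+\tfrac{\lambda}{2}\cdot\tfrac{1+z}{1-z}=\tfrac{\lambda z}{1-z}$, the monotone Poisson vector field. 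For (3), by the remarks above $D_\lambda\Lambda_M(\mu)$ has vector field $A_\lambda(z)=\lambda A(\lambda^{-1}z)$; using the identity $\tfrac{1+xz}{x-z}=\tfrac{1+x^2}{x-z}-x$ one puts $A_\lambda$ into the standard form \eqref{mkhintchine} and reads off its pair, and a parallel change of variables $y=\lambda x$ in \eqref{khintchine} shows the pair of the classical dilate $D_\lambda\mu$ is the same; this is exactly the computation underlying dilation-equivariance of the Bercovici--Pata bijection, cf.\ \cite{Be-Pa}. For (7), by Lemma \ref{stable1} (and its classical analogue) a nondegenerate $\mu$ is strictly $\alpha$-stable iff $\mu_a=D_{a^{1/\alpha}}\mu$ for all $a>0$; applying the bijection $\Lambda_M$ and using $\Lambda_M(\mu_a)=\Lambda_M(\mu)_a$ together with (3), this is equivalent to $\Lambda_M(\mu)_a=D_{a^{1/\alpha}}\Lambda_M(\mu)$ for all $a>0$, i.e.\ $\Lambda_M(\mu)$ is strictly $\rhd$-$\alpha$-stable; the converse is the same argument read backwards ($\delta_0$ being handled by (2)).

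For (6), (8), (9), $\Lambda_M$ restricts to a bijection between the stated subsets because the conditions on $(\gamma,\tau)$ agree on the two sides. For (6), Theorem 24.11 of \cite{Sat} characterizes $\supp\mu\subset[0,\infty)$ by $\supp\tau\subset[0,\infty)$, $\tau(\{0\})=0$, $\int_0^1\tfrac1x\tau(dx)<\infty$ and $\gamma\ge\int_0^\infty\tfrac1x\tau(dx)$, which is condition (3) of Theorem \ref{subordinator} (note $\int_0^1\tfrac1x\tau(dx)<\infty\Leftrightarrow\int_0^\infty\tfrac1x\tau(dx)<\infty$ since $\tau$ is finite), and Theorem \ref{subordinator} also yields $\supp\Lambda_M(\mu)=\supp\mu_1\subset[0,\infty)$ under those conditions. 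For (9), the classical equivalence $m_{2n}(\mu)<\infty\Leftrightarrow m_{2n}(\tau)<\infty$ (see \cite{Sat}) matches condition (3) of Theorem \ref{moment19}. For (8), compactness of $\supp\tau$ is equivalent to compactness of $\supp\mu_t$ for all $t$ (analyticity of $A$ off a compact set propagates through \eqref{ODE} without reaching $\infty$ in finite time, and conversely), and under this hypothesis classical symmetry of $\mu$ is equivalent to $\gamma=0$ together with $\tau$ symmetric (compare the L\'evy--Khintchine data of $\mu$ and of $B\mapsto\mu(-B)$), which by Theorem \ref{symm} is equivalent to symmetry of $\Lambda_M(\mu)=\mu_1$.

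The main obstacle is (1). Here I would invoke the classical continuity theorem for infinitely divisible laws: weak convergence $\mu_n\to\mu$ in $ID(\ast)$ is equivalent to $\tau_n\to\tau$ weakly together with $\gamma_n\to\gamma$ (Gnedenko--L\'evy; see \cite{Sat}). From this one gets $A_n\to A$ locally uniformly on $\comp\backslash\real$, since the functions $z\mapsto\tfrac{1+xz}{x-z}$ are uniformly bounded on compact subsets of $\comp\backslash\real$, uniformly in $x\in\real$, so this is bounded convergence against $\tau_n$. Continuous dependence of the solution of \eqref{ODE} on the (analytic, locally bounded) vector field --- the key input being that no solution explodes in finite time, \cite{Be-Po} --- then gives $H_n(t,z)\to H(t,z)$, hence $G_{\mu_{n,t}}\to G_{\mu_t}$, locally uniformly on $\comp\backslash\real$; by the correspondence between locally uniform convergence of Cauchy transforms and weak convergence of probability measures (Lemma \ref{inj2} plus the fact that a probability measure is recovered from its Cauchy transform), $\mu_{n,t}\to\mu_t$ weakly, and $t=1$ gives $\Lambda_M(\mu_n)\to\Lambda_M(\mu)$. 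The delicate points are the precise formulation of the classical parameter-continuity statement and making the ODE-stability estimate uniform on compacta of $\comp\backslash\real$ --- which is all that is needed, since weak convergence of probability measures is detected through the Cauchy transform on the upper half-plane.
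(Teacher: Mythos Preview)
Your argument is correct and for items (1)--(6), (8), (9) it follows the paper's own proof essentially verbatim: reduce everything to the pair $(\gamma,\tau)$, invoke the relevant time-independent characterization (Theorems \ref{subordinator}, \ref{symm}, \ref{moment19} on the monotone side, the corresponding classical facts from \cite{Sat} on the other), and for continuity use parameter-continuity of classical infinitely divisible laws together with ODE stability.

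The one genuine divergence is (7). The paper proves it by explicit computation: it writes down the classical necessary and sufficient conditions on $(\gamma,\tau)$ for strict $\alpha$-stability (the power-law L\'evy densities $c_1|x|^{-1-\alpha}1_{(0,\infty)}+c_2|x|^{-1-\alpha}1_{(-\infty,0)}$ together with the centering constraint on $\gamma$, and the special form for $\alpha=1$), then computes the pair $(\gamma,\tau)$ corresponding to the monotone vector field $A(z)=bz^{1-\alpha}$ via the Stieltjes inversion $\tfrac{1+x^2}{x^2}\tau(dx)=\lim_{y\searrow0}\tfrac1\pi\im A(x+iy)\,dx$ and $\gamma=-\re A(i)$, and finally checks by direct calculation that these satisfy the classical conditions and that the correspondence $b\mapsto(c_1,c_2,\gamma)$ is onto. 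Your route is instead structural: you observe once that $\Lambda_M(\mu_t)=\Lambda_M(\mu)_t$ because the pair scales linearly in $t$ on both sides, and then combine this with the dilation-equivariance (3) and bijectivity of $\Lambda_M$ to transport the self-similarity relation $\mu_a=D_{a^{1/\alpha}}\mu$ directly. This avoids all the special-function computations and the case split at $\alpha=1$, and it makes transparent why the same statement holds for the free and Boolean Bercovici--Pata maps as well. The paper's computation, on the other hand, yields the explicit dictionary between the monotone parameter $b$ and the classical triple $(c_1,c_2,\gamma)$, which your argument does not produce.
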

\begin{rem} Since monotone convolution is non-commutative, $\Lambda_M$ does not preserve 
the structure of convolution: $\Lambda_M(\mu \ast \lambda) \neq \Lambda_M(\mu) \rhd \Lambda_M (\lambda)$ for some $\mu,~\lambda$. \\
\end{rem}
\begin{proof}
(1) It is known that the convergence of a sequence $\{\mu_n \} \subset ID(\ast)$ to some $\mu$ implies the 
convergence of the corresponding pair $(\gamma_n,\tau_n)$ to some $(\gamma, \tau)$. Now we have the family of ODEs driven by 
\[
A_n(z) = -\gamma_n + \int_{\real} \frac{1 + xz}{x -z}d\tau_n(x);
\] 
we denote the flow by $\{H_{n,t} \}$. Since $(\gamma_n, \tau_n)$ converges to $(\gamma, \tau)$, $A_n$ converges locally uniformly to $A$. 
By the basic result of the theory of ODE, it holds that $H_{n,1}(z) \to H_1(z)$ locally uniformly, which implies that
$\mu_n$ converges weakly to $\mu$. \\
(2), (4), (5) The proofs are easy. \\
(3) For a weakly continuous monotone convolution semigroup $\{\mu_t \}$ with $\mu_1 = \mu$ and $\mu_0 = \delta_0$, we have 
$H_{D_{\lambda} \mu_t} (z) = \lambda H_{\mu_t} (\lambda ^{-1}z)$. Then the associated vector field is transformed to 
$A_\lambda (z) =\lambda A(\lambda ^{-1}z)$. We use the notation  
$A(z)= -\gamma + \int \Big{(}\frac{1}{x-z} -\frac{x}{1 + x^2} \Big{)} x^2 d\nu(x)$ 
and $A_\lambda(z)= -\gamma' + \int \Big{(}\frac{1}{x-z} -\frac{x}{1 + x^2} \Big{)} x^2 d\nu'(x)$. 
Then we can show that 
$\lambda' = \lambda \gamma + \lambda \int \frac{(\lambda^2 - 1)x^3}{(1 + \lambda^2 x^2)(1 + x^2)} d\nu(x)$ and 
$\nu' = D_\lambda \nu$. Correspondingly, we shall use the L\'{e}vy-Khintchine formula 
$\widehat{\mu} (u) = \exp\Big{(}i\gamma u + \int_{\real}\big{(}e^{ixu} - 1 - \frac{ixu}{1 + x^2} \big{)}\nu(dx) \Big{)}$ and $\widehat{D_\lambda \mu} (u) = \exp\Big{(}i\gamma ' u + \int_{\real}\big{(}e^{ixu} - 1 - \frac{ixu}{1 + x^2} \big{)}\nu'(dx) \Big{)}$ for a probability measure $\mu \in ID(\ast)$. We can show the same expressions of $\gamma'$ and $\nu'$. \\
(6) This property follows from Theorem \ref{subordinator}. \\
(7) We first note that the following equality holds for $0 < \alpha < 2$: 
\begin{equation} 
\begin{split}
&\int_0 ^{\infty} \Big{(} e^{izx} - 1 - \frac{izx}{1 + x^2} \Big{)}\frac{1}{x^{1 + \alpha}}dx \\
&~~~~~~~~~~~~~~~~~~= \begin{cases} |z|^\alpha \Gamma(-\alpha)e^{-\frac{i}{2}\pi \alpha \sign z} - \frac{iz\pi}{2\cos(\frac{\pi \alpha}{2})} & \text{ for } \alpha \neq 1, z \in \real, \\ -\frac{\pi |z|}{2} - iz \log |z| + icz & \text{ for } \alpha = 1, z \in \real,  
        \end{cases}
\end{split} 
\end{equation}
where 
$c= \int_1 ^{\infty} \frac{\sin x}{x^2} dx + \int_0 ^{1} \frac{\sin x - x}{x^2}dx + \int_0 ^{\infty} \Big{(}x1_{(0,1)}(x) - \frac{x}{1 + x^2}\Big{)}dx$. 
This equality is obtained by Lemma 14.11 in \cite{Sat}. A necessary and sufficient condition for a strictly $\alpha $-stable distribution is given by: 
\begin{itemize}
\item[$(a)$]if $\alpha \neq 1$, 
\begin{equation}\label{str1}
\begin{split}
&\frac{1 + x^2}{x^2}\tau (dx) = \frac{c_1}{|x|^{1 + \alpha}}1_{(0, \infty)}(x)dx + \frac{c_2}{|x|^{1 + \alpha}}1_{(-\infty, 0)}(x)dx, c_1, c_2 \geq 0 \text{ and } \\
&\gamma = (c_1 - c_2) \frac{\pi}{2\cos(\frac{\alpha \pi}{2})}; 
\end{split}
\end{equation}
\item[$(b)$]if $\alpha = 1$, 
\begin{equation}\label{str2}
\frac{1 + x^2}{x^2}\tau (dx) = \frac{c}{x^2}dx , c \geq 0.  
\end{equation}
\end{itemize}
For the proof the reader is referred to Theorem 14.15 in \cite{Sat}. 
On the other hand, the vector field $A$ of monotone strictly $\alpha$-stable distribution is given by $A(z) = bz^{1-\alpha}$; 
the pair $(\gamma, \tau)$ appearing in $A$ is given by 
\begin{equation}
\begin{split}
\gamma = -\re A(i) = - \re (b e^{\frac{i(1- \alpha) \pi}{2}}) =  \im (b e^{-\frac{i \pi \alpha}{2}})
\end{split}
\end{equation}
and
\begin{equation}
\begin{split}
\frac{1 + x^2}{x^2} \tau(dx) &= \lim_{y \to + 0}\im b(x + iy)^{1-\alpha}\frac{1}{x^2} \\
                             &= \frac{\im b}{\pi} \frac{1}{|x|^{1 + \alpha}}1_{(0,\infty)}(x)dx +  \frac{\im (be^{i\pi (1-\alpha)})}{\pi} \frac{1}{|x|^{1 + \alpha}}1_{(-\infty, 0)}(x)dx.  
\end{split}
\end{equation}
When we define $c_1, c_2$ and $\gamma$ by $c_1 =\frac{\im b}{\pi}$, $c_2 = \frac{\im (be^{i\pi (1-\alpha)} )}{\pi}$ and $\gamma = \im (b e^{-\frac{i \pi \alpha}{2}}) $, 
we can show that the conditions (\ref{str1}) or (\ref{str2}) are satisfied by direct calculation. 
This fact implies that $\Lambda_M ^{-1}$ maps monotone strictly $\alpha$-stable distributions 
to strictly $\alpha$-stable distributions. We can also check that the correspondence is onto. \\
(8), (9) These properties are direct consequences of theorems \ref{symm} and \ref{moment19}. 
\end{proof}

\section{Monotone convolution and  Aleksandrov-Clark measures}\label{connect1}
In this section, we prove that monotone independence appears in the context of one-rank perturbations of a self-adjoint operator and a unitary operator. 
As a result, a family of probability measures called Aleksandrov-Clark measures are expressed by monotone convolutions. 
 
Let $H$ be a Hilbert space with a unit vector $|\Omega \rangle$. 
Let $X$ be a self-adjoint operator defined on a dense domain of a Hilbert space. We define the probability distribution $\nu$ of $X$ by 
\[
\langle \Omega|(z - X)^{-1}|\Omega \rangle = \int_{\real}\frac{1}{z - x} \nu(dx).  
\]
 If we define $\nu ^y$ to be the probability distribution of the self-adjoint operator $X^y:=X + yI$, $\nu^y$ is the translation of $\nu$. 
Therefore, the classical convolution $\mu \ast \nu$ is defined to be $\int \nu ^y d\mu(y)$. 

On the other hand, monotone convolutions can be characterized by the Aronszajn-Krein formula, which we now explain.  
 of the self-adjoint operator 

We denote by $\nu_y$ the probability distribution of $X_y := X + y|\Omega \rangle \langle \Omega|$; $\nu_y$ ($y \in \real$) are called the Aleksandrov-Clark measures of $\nu$. 
Aronszajn-Krein formula says that $\nu_y$ is characterized by $H_{\nu_y} = H_{\nu} - y$. For detailed properties of the Aronszajn-Krein formula, 
the reader is referred to \cite{Si-Wo} and \cite{Sim}. 
Therefore, we can understand the two kinds of convolutions $\ast, \rhd$ as the superpositions of perturbed probability distributions (see Eq. (\ref{eq21})).  
The difference between $\ast$ and $\rhd$ is the direction of perturbation: 
the usual convolution is perturbed by $yI$ and monotone convolution is perturbed by $y|\Omega \rangle \langle \Omega|$. 

We can prove the Aronszajn-Krein formula in terms of monotone independence. 
\begin{thm} Let $H$ be a Hilbert space with a normalized vector $|\Omega \rangle $. Let $\mathcal{B}(H)$ be the set of bounded operators on $H$. 
$|\Omega \rangle \langle \Omega|$ and $\mathcal{B}(H)$ are monotone independent w.r.t. the vector state $|\Omega \rangle $. 
\end{thm}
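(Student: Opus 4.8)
The plan is to verify the moment-factorization condition of Definition 1.1 directly. Write $P := |\Omega\rangle\langle\Omega|$ and $\phi(\cdot) := \langle\Omega|\cdot\,|\Omega\rangle$. Since $P$ is a self-adjoint projection, the $*$-algebra generated by $P$ without unit is simply $\mathcal{A}_1 := \comp P$, and I put $\mathcal{A}_2 := \mathcal{B}(H)$; the assertion is that the pair $(\mathcal{A}_1,\mathcal{A}_2)$ is monotone independent \emph{in this order}. (That $\mathcal{A}_1 \subset \mathcal{A}_2$ is irrelevant, since monotone independence is a condition on $\phi$ and not a disjointness requirement; but the order does matter, since for the reverse order one would need $aPb$ to be a scalar multiple of $P$, which fails.) The only facts I will use all follow at once from $P = |\Omega\rangle\langle\Omega|$ and $\langle\Omega|\Omega\rangle = 1$: for $a \in \mathcal{B}(H)$ one has $PaP = \phi(a)\,P$, $\langle\Omega|aP = \phi(a)\,\langle\Omega|$ and $Pa|\Omega\rangle = \phi(a)\,|\Omega\rangle$, and in particular (taking $a = I$) $P^2 = P$, $\langle\Omega|P = \langle\Omega|$, $P|\Omega\rangle = |\Omega\rangle$ and $\phi(P) = 1$.

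Now fix a word $a_1 a_2 \cdots a_n$ with $a_m \in \mathcal{A}_{i_m}$ and a peak index $k$, i.e.\ $i_{k-1} < i_k > i_{k+1}$ (with the boundary conventions of Definition 1.1). Since no index is smaller than $1$, a peak forces $i_k = 2$, so $a_k = b \in \mathcal{B}(H)$, whereas every neighbour of $a_k$ that is present has index $1$ and hence lies in $\comp P$. I then split into three cases. If $1 < k < n$, write $a_{k-1} = c\,P$ and $a_{k+1} = c'\,P$; then $a_{k-1}\,a_k\,a_{k+1} = c\,c'\,PbP = c\,c'\,\phi(b)\,P = \phi(a_k)\,a_{k-1}\,a_{k+1}$, so inside the word $a_k$ may be replaced by the scalar $\phi(a_k)$, and applying $\phi$ gives $\phi(a_1\cdots a_n) = \phi(a_k)\,\phi(a_1\cdots\check{a_k}\cdots a_n)$. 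If $k = 1$, then $a_2 = c\,P$ and $\langle\Omega|a_1 a_2 \cdots a_n|\Omega\rangle = c\,\phi(b)\,\langle\Omega|a_3 \cdots a_n|\Omega\rangle = \phi(a_1)\,\phi(a_2 a_3 \cdots a_n)$, using $\langle\Omega|bP = \phi(b)\langle\Omega|$ on the left and $\langle\Omega|P = \langle\Omega|$ on the right (an empty tail being read as $1$). The case $k = n$ is the mirror image, using $Pb|\Omega\rangle = \phi(b)|\Omega\rangle$ and $P|\Omega\rangle = |\Omega\rangle$. In each case this is exactly the identity required in Definition 1.1, so $(\mathcal{A}_1,\mathcal{A}_2)$ is monotone independent.

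I do not expect a genuine obstacle here: once the order of the two algebras is chosen correctly the verification is a three-line computation, and the only care needed is in recognising that the rank-one algebra $\comp P$ must be the first (left) factor and in treating the two boundary peaks. What is worth recording afterwards is the consequence for Aleksandrov-Clark measures: if $X \in \mathcal{B}(H)$ is self-adjoint with distribution $\nu$, then $X_y := X + y\,P = (yP) + X$ is a sum of a monotone independent pair whose first member $yP$ has distribution $\delta_y$, so by (\ref{mur11}) the distribution $\nu_y$ of $X_y$ satisfies $H_{\nu_y}(z) = H_{\delta_y}(H_\nu(z)) = H_\nu(z) - y$; thus the Aleksandrov-Clark measures of $\nu$ are exactly the monotone convolutions $\delta_y \rhd \nu$, which is the claim announced at the start of this section.
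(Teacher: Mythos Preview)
Your proof is correct. You verify Definition 1.1 directly: with $P=|\Omega\rangle\langle\Omega|$, $\mathcal{A}_1=\comp P$, $\mathcal{A}_2=\mathcal{B}(H)$, any peak in a word must carry index $2$, and the identities $PbP=\phi(b)P$, $\langle\Omega|bP=\phi(b)\langle\Omega|$, $Pb|\Omega\rangle=\phi(b)|\Omega\rangle$ dispose of the interior and boundary peaks respectively. Your observation that the order $(\mathcal{A}_1,\mathcal{A}_2)$ is forced, and your closing paragraph deriving $H_{\nu_y}=H_\nu-y$ from (\ref{mur11}), are both apt.

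The paper takes a different route: it invokes the known fact (Franz \cite{Fra2}) that in the tensor product $K\otimes H$ the algebras $\mathcal{B}(K)\otimes|\Omega\rangle\langle\Omega|$ and $I\otimes\mathcal{B}(H)$ are monotone independent with respect to $|\Omega'\rangle\otimes|\Omega\rangle$, and then specialises to $A=I$ (equivalently, $K=\comp$). Your argument is self-contained and more elementary, needing nothing beyond the rank-one projection identities; the paper's argument is shorter on the page but imports the general tensor-product model of monotone independence, which has the virtue of situating the result as an instance of a standard construction rather than an ad hoc computation.
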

\begin{proof} Let $K$ be another Hilbert space with a unit vector $|\Omega' \rangle $. 
It is known \cite{Fra2} that $A \otimes |\Omega \rangle \langle \Omega|$ and $I \otimes B$ are monotone independent for any $A \in \mathcal{B}(K)$ and $B \in \mathcal{B}(H)$ w.r.t. the vector state $|\Omega' \rangle \otimes |\Omega \rangle $. This theorem follows from the case $A = I$. 
\end{proof}

This theorem implies that the Aronszajn-Krein formula can be seen as a special case of (\ref{mur11}). 

Moreover, a similar one-rank perturbation was introduced for a unitary operator (see \cite{Sim2} for details).  
Let $U$ be a unitary operator on $H$ and let $\lambda$ be the probability distribution of $U$ on the unit circle $\tor$; $\lambda$ is defined by 
\[
\langle \Omega| U^n|\Omega \rangle = \int_{\tor} \zeta^n d\lambda(\zeta). 
\]
The probability distribution of $Ue^{i\theta |\Omega \rangle \langle \Omega|}$ is also called an Aleksandrov-Clark measure. We denote this by $\lambda_{u, \theta}$. Let $M_\lambda(z):=\sum_{n = 1}^\infty \int_{\tor}\zeta^n z^nd\lambda(\zeta)$. If we define $\eta_{\lambda}(z):=\frac{M_\lambda(z)}{1 + M_\lambda(z)}$,  
$\lambda_{u, \theta}$ is characterized by 
\begin{equation}\label{alek2}
\eta_{\lambda_{u, \theta}} = e^{i\theta}\eta_{\lambda}.
\end{equation}
The reader is referred to the equality (1.3.92) of \cite{Sim2} (in this book, different functions are used). 

Now we give a proof of (\ref{alek2}) in terms of monotone independence. 
We notice that $e^{i\theta |\Omega \rangle \langle \Omega|} - 1 = (e^{i\theta} - 1)|\Omega \rangle \langle \Omega|$.
From the above theorem $e^{i\theta |\Omega \rangle \langle \Omega|} -1$ and $U$ are monotone independent for $\theta \in [0, 2\pi)$. This fact implies that $\lambda_{u,\theta}$ is equal to the multiplicative monotone convolution of $\delta_{e^{i\theta}}$ and $\lambda$. Therefore, (\ref{alek2}) can be seen to be a special case of the Bercovici's characterization of multiplicative monotone convolutions \cite{Ber1, Fra4}.  

Now we show some applications of this formula. 
In the following, we consider only self-adjoint operators.   

\textbf{1. Moments of convolution.}
$m_n(\mu \rhd \nu)$ can be calculated as 
$m_n(\mu \rhd \nu) = \int \langle \Omega | (X-y|\Omega \rangle \langle \Omega|)^n |\Omega \rangle d\mu(y)$. 
This calculation may be connected to the formula $H_{\mu \rhd \nu}(z) = H_{\mu} (H_\nu(z))$.

\textbf{2. Distributional properties of monotone convolution.} 
Many researchers have studied spectral properties of the perturbed self-adjoint operator $X_y$. 
Some of their results are applicable to monotone probability theory. 
For instance, Theorem 5 in \cite{Si-Wo} can be understood in terms of monotone convolution as follows.  
\begin{thm} $($B. Simon and T. Wolff$)$
Let $\nu$ be the Cauchy distribution $\nu(dx) = \frac{b}{\pi (x^2 + b^2)}dx$. Then $\nu \rhd \mu$ is mutually equivalent to Lebesgue measure for 
any probability measure $\mu$. 
\end{thm}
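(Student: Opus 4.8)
The plan is to reduce everything to the explicit form of the reciprocal Cauchy transform of a Cauchy law and then to read off the density of $\nu\rhd\mu$ from its boundary values. For $\nu(dx)=\frac{b}{\pi(x^2+b^2)}\,dx$ with $b>0$ one computes $G_\nu(z)=\frac1{z+ib}$ for $z\in\com+$, hence $H_\nu(z)=z+ib$. Therefore, by (\ref{mur11}),
\[
H_{\nu\rhd\mu}(z)=H_\nu(H_\mu(z))=H_\mu(z)+ib,\qquad G_{\nu\rhd\mu}(z)=\frac1{H_\mu(z)+ib}
\]
for every probability measure $\mu$. Since $H_\mu$ maps $\com+$ into $\com+$ with $\im H_\mu(z)\ge\im z>0$, we get $\im\big(H_\mu(z)+ib\big)=\im H_\mu(z)+b>b$ on $\com+$, so $|H_\mu(z)+ib|>b$ and consequently $|G_{\nu\rhd\mu}(z)|<1/b$ throughout $\com+$. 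The statement then splits into two claims: (i) $\nu\rhd\mu$ has no singular part; (ii) the Radon--Nikodym density of $\nu\rhd\mu$ is strictly positive Lebesgue-almost everywhere.

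For (i) I would invoke Lemma \ref{atom4}: the singular part of $\nu\rhd\mu$ is carried by the set $\{u\in\supp(\nu\rhd\mu);\ \limsup_{v\searrow0}|G_{\nu\rhd\mu}(u+iv)|=\infty\}$, which is empty by the uniform bound $|G_{\nu\rhd\mu}|\le1/b$ obtained above. (Equivalently, $-G_{\nu\rhd\mu}$ is a bounded Nevanlinna function, so the measure in its integral representation is purely absolutely continuous.) Hence $\nu\rhd\mu\ll\leb$.

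For (ii) I would use that $H_\mu$ belongs to the Nevanlinna class, so by Fatou's theorem the non-tangential boundary value $H_\mu(u+i0)$ exists and is finite for a.e.\ $u\in\real$, with $\im H_\mu(u+i0)\ge0$ (it is a limit of points of $\com+$). For such $u$ the density of $\nu\rhd\mu$ is
\[
f(u)=-\frac1\pi\im G_{\nu\rhd\mu}(u+i0)=\frac1\pi\cdot\frac{\im H_\mu(u+i0)+b}{|H_\mu(u+i0)+ib|^{2}}.
\]
The numerator is $\ge b>0$, while the denominator is finite and nonzero, since $H_\mu(u+i0)+ib=0$ would force $\im H_\mu(u+i0)=-b<0$. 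Thus $f>0$ a.e., which gives $\leb\ll\nu\rhd\mu$; combined with (i), $\nu\rhd\mu$ and $\leb$ are mutually equivalent. (As a check, $\nu\rhd\delta_a$ is the Cauchy law centered at $a$, consistent with the formula.)

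The only genuinely delicate point is the passage to boundary values, i.e.\ justifying that the Perron--Stieltjes density of $\nu\rhd\mu$ equals $-\frac1\pi\im G_{\nu\rhd\mu}(u+i0)$ for a.e.\ $u$; I would base this on Lemma \ref{atom4} and the de la Vall\'{e}e Poussin decomposition already used in the paper, together with the standard Fatou theorem for the Nevanlinna class. Everything else---the identity $H_\nu(z)=z+ib$, the boundedness of $G_{\nu\rhd\mu}$, and the elementary lower bound for $f$---is routine. It is worth noting, in line with the surrounding discussion, that this is precisely the rank-one-perturbation theorem of Simon and Wolff, recovered here by averaging the Aleksandrov--Clark measures $\mu_x$ of $\mu$ (with $H_{\mu_x}=H_\mu-x$) against the Cauchy weight $\nu$, since $\nu\rhd\mu=\int_{\real}\mu_x\,d\nu(x)$.
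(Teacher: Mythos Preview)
The paper does not actually prove this statement; it simply cites Theorem~5 of Simon--Wolff \cite{Si-Wo} and observes that, via the Aronszajn--Krein identification $\nu\rhd\mu=\int_{\real}\mu_x\,d\nu(x)$ established earlier in Section~\ref{connect1}, their rank-one-perturbation result translates verbatim into the displayed monotone-convolution form. Your argument is therefore a genuinely different route: a direct, self-contained proof that bypasses the spectral-theoretic machinery altogether. From the explicit $H_\nu(z)=z+ib$ you get $G_{\nu\rhd\mu}=1/(H_\mu+ib)$, uniformly bounded by $1/b$; Lemma~\ref{atom4} then forces the singular part to vanish, and the elementary positivity estimate on the boundary density gives $\leb\ll\nu\rhd\mu$. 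This is correct and in fact cleaner than deferring to \cite{Si-Wo}, since it uses only tools already present in the paper.

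One minor streamlining: rather than invoking Fatou for the Nevanlinna function $H_\mu$ and worrying about finiteness of $H_\mu(u+i0)$, you can apply Fatou directly to the \emph{bounded} function $G_{\nu\rhd\mu}\in H^\infty(\com+)$. Its non-tangential limit exists and is nonzero a.e.\ (a nonzero $H^\infty$ function has $\log|G|$ integrable on the boundary), so $H_\mu(u+i0)+ib=1/G_{\nu\rhd\mu}(u+i0)$ is automatically finite a.e., and the density formula follows immediately.
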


\textbf{3. Applications to Markov processes arising from monotone probability theory.}
Let $\mu_t$ be a monotone convolution semigroup. Let $\mu_{t,x}$ be a probability measure defined by $H_{\mu_{t, x}} = H_{\mu_t} - x$.
We can check that a family of probability measures $\{\mu_{t,x} \}_{t \geq 0, x \in \real}$ satisfies the Chapman-Kolmogorov equation, and hence, constitutes transition probability distributions of a usual Markov process, whose transition semigroup is given in \cite{F-M}. 
Then the spectral properties of $\mu_{t,x}$ are important when we 
try to study Markov processes arising from monotone L\'{e}vy processes. 

In addition, this realization of a convolution semigroup as a Markov process is not restricted to the case of  
continuous time processes; the discrete time version 
is also possible as stated below.
\begin{prop}\label{Markov1}
For a probability measure $\mu$, we define a family of probability measures $\{\mu_{n,x} \}_{n \in \nat, x \in \real}$ 
by 
\[
\mu_{n,x} = (\mu ^{\rhd n})_x = \mu_x \rhd \mu ^{\rhd n-1} 
\]
for $n \geq 1$ and $\mu_{0} = \delta_0$. Then the family $\{\mu_{n,x} \}_{n \in \nat, s \in \real}$ satisfies 
the Chapman-Kolmogolov equation $\mu_{n+m,x}(A) = \int_{\real} \mu_{n,y}(A)\mu_{m, x}(dy)$. 
Therefore, there exists a corresponding discrete time Markov process.  
\end{prop}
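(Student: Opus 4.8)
The plan is to derive the Chapman--Kolmogorov equation from associativity of monotone convolution together with the integral representation (\ref{eq0111}).

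First I would record the basic identity $\mu_{n,x} = \delta_x \rhd \mu^{\rhd n}$, valid for every $n \geq 0$ under the convention $\mu^{\rhd 0} := \delta_0$ (so that $\mu_{0,x} = \delta_x$, the correct zero-step transition). Indeed $H_{\delta_x}(z) = z - x$, so $H_{\delta_x \rhd \mu^{\rhd n}}(z) = H_{\mu^{\rhd n}}(z) - x = H_{\mu_{n,x}}(z)$ by (\ref{eq21}); the same computation gives $\mu_x = \delta_x \rhd \mu$, and since composition of reciprocal Cauchy transforms is associative the two descriptions $\mu_{n,x} = (\mu^{\rhd n})_x$ and $\mu_{n,x} = \mu_x \rhd \mu^{\rhd n-1}$ agree, while the powers $\mu^{\rhd k}$ commute with one another, so $\mu^{\rhd (n+m)} = \mu^{\rhd m} \rhd \mu^{\rhd n}$.

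Next I would compute, using only associativity of $\rhd$,
\[
\mu_{n+m,x} = \delta_x \rhd \mu^{\rhd(n+m)} = \delta_x \rhd \big(\mu^{\rhd m} \rhd \mu^{\rhd n}\big) = \big(\delta_x \rhd \mu^{\rhd m}\big) \rhd \mu^{\rhd n} = \mu_{m,x} \rhd \mu^{\rhd n},
\]
and then apply the representation (\ref{eq0111}) with left component $\mu_{m,x}$ and right component $\mu^{\rhd n}$. Since $(\mu^{\rhd n})^y = \mu_{n,y}$, this yields $\mu_{n+m,x}(A) = \int_{\real} \mu_{n,y}(A)\,\mu_{m,x}(dy)$ for every Borel set $A$, which is exactly the Chapman--Kolmogorov equation.

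Finally, to produce the Markov process I would verify that $(x, A) \mapsto \mu_{n,x}(A)$ is a genuine transition kernel: for fixed $x$ it is a probability measure, and for fixed Borel $A$ the map $x \mapsto \mu_{n,x}(A)$ is Borel measurable, which follows from the weak continuity of $x \mapsto \mu_x$ recorded after (\ref{eq21}) (applied with $\mu^{\rhd n}$ in place of $\mu$), by the same argument with the $\sigma$-algebra $\mathcal{F}$ used there. Together with $\mu_{0,x} = \delta_x$ and the Chapman--Kolmogorov relation, the Kolmogorov extension theorem then furnishes a discrete-time Markov process whose $n$-step transition probabilities are the $\mu_{n,x}$. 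I do not expect a serious obstacle; the only mildly delicate point is the kernel/measurability check, and even that is essentially already contained in the discussion surrounding (\ref{eq21}). The substance of the proof is the one-line associativity computation displayed above.
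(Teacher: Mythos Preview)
Your argument is correct. The paper states Proposition~\ref{Markov1} without giving a proof, so there is nothing to compare against; the associativity computation $\mu_{n+m,x}=\delta_x\rhd(\mu^{\rhd m}\rhd\mu^{\rhd n})=\mu_{m,x}\rhd\mu^{\rhd n}$ followed by the integral representation (\ref{eq0111}) is exactly the intended verification, and your remarks on the kernel structure via the discussion after (\ref{eq21}) are the right way to finish.
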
  
We can, of course, apply the analyses of spectral properties of $X_y$ to study the discrete time Markov processes 
constructed in Proposition \ref{Markov1}. 
 
\textbf{4. Distributional properties of $V_a$-transformation.} 
$V_a$-transformation has been introduced in \cite{K-W} in the context of conditionally free convolutions. This transformation is identical to
the transformation $\mu \mapsto \mu_{-ar_\mu(2)}$ ($r_{\mu}(2)$ is a variance of $\mu$). 
Then we can apply the results of spectral analysis of $X_y$ to distributional properties of $V_a \mu$.

\section{Time-independent properties of boolean and free convolution semigroup }\label{time2}   
\subsection{Preliminaries}
We prepare important notions about free probability and boolean probability.
Notation is chosen in order that the correspondence becomes clear among Bercovici-Pata bijections in free, monotone and boolean probability theories.  
\begin{equation}
\begin{split}
K_\mu(z) &:= z - H_\mu (z) \\
          &= \gamma - \int\frac{1 + xz}{x-z} d\tau(x).
\end{split}\end{equation}
The boolean convolution $\mu \uplus \nu$ of probability distributions $\mu$ and $\nu$ is characterized by 
\begin{equation}
K_{\mu \uplus \nu} = K_\mu + K_\nu. 
\end{equation}
Now we explain infinitely divisible distributions in free probability theory.  
The reader is referred to \cite{BFGKT, Be-Vo}. 

For a probability measure $\mu$, there exists some $\eta > 0$ and $M > 0$ such that $H_\mu$ has an analytic right inverse $H_\mu ^{-1}$ defined on 
the region 
\[\Gamma_{\eta, M}:= \{z \in \comp; |\re z| < \eta |\im z|, ~|\im z| > M \}. \] 
The Voiculescu transform $\phi_\mu$ is defined by   
\begin{equation}
\phi_\mu(z):= H_\mu ^{-1}(z) - z
\end{equation}
on a region on which $H_\mu ^{-1}$ is defined. For probability measures $\mu$ and $\nu$, 
the free convolution of $\mu$ and $\nu$ is characterized by the relation 
\begin{equation}
\phi_{\mu \boxplus \nu} = \phi_\mu + \phi_\nu. 
\end{equation}

\begin{thm}
Let $\mu$ be a probability measure on $\real$. $\mu$ is $\boxplus$-infinitely divisible iff there exist a finite measure $\tau$ and a real number $\gamma$ 
such that 
\begin{equation}
\phi_\mu(z)= \gamma + \int_{\real} \frac{1 + xz}{z-x}d\tau(x)~~ \text{ for } z \in \comp \backslash \real.
\end{equation}
\end{thm}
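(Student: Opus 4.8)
The plan is to reduce the statement to the Bercovici--Voiculescu analytic characterization of Voiculescu transforms \cite{Be-Vo} (see also \cite{BFGKT}) and then to read off the integral representation from the Nevanlinna representation of a Pick function. The one external ingredient I would invoke is: \emph{a probability measure $\mu$ on $\real$ is $\boxplus$-infinitely divisible if and only if $\phi_\mu$, a priori defined only on a truncated cone $\Gamma_{\eta,M}$, admits an analytic continuation to all of $\com+$ with $\im \phi_\mu(z) \leq 0$ there}; equivalently, $-\phi_\mu$ extends to a Pick function on $\com+$.

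For the forward direction, assume $\mu$ is $\boxplus$-infinitely divisible. By the criterion, $g := -\phi_\mu$ is a Pick function on $\com+$, hence by the Nevanlinna representation (the same device underlying (\ref{recip1})) has the form $g(z) = a + bz + \int_\real \frac{1 + xz}{x - z}\,d\rho(x)$ with $a \in \real$, $b \geq 0$ and $\rho$ a positive finite measure. I would then show $b = 0$ using the known growth of a Voiculescu transform, $\phi_\mu(iy) = o(y)$ as $y \to \infty$ (valid since the imaginary axis lies in $\Gamma_{\eta, M}$ for large $y$, and $H_\mu^{-1}(z) = z + o(|z|)$ there), together with $b = \lim_{y \to \infty} g(iy)/(iy)$. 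Setting $\gamma := -a$ and $\tau := \rho$ gives $\phi_\mu(z) = -g(z) = \gamma + \int_\real \frac{1 + xz}{z - x}\,d\tau(x)$.

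For the converse, given the displayed representation I would construct a free convolution semigroup. Put $\phi_t := t\,\phi_\mu$ for $t \geq 0$; then $-\phi_t$ is again a positive multiple of a Pick function with the same growth, so by the ``if'' half of the criterion there is a probability measure $\mu_t$ with $\phi_{\mu_t} = \phi_t$, and $\mu_0 = \delta_0$. Linearity of the Voiculescu transform under $\boxplus$ gives $\mu_s \boxplus \mu_t = \mu_{s+t}$, and $\phi_t \to 0$ locally uniformly as $t \to 0$ forces $\mu_t \to \delta_0$ weakly; thus $\{\mu_t\}_{t \geq 0}$ is a weakly continuous $\boxplus$-convolution semigroup with $\mu_1 = \mu$, so $\mu$ is $\boxplus$-infinitely divisible.

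The main obstacle is the Bercovici--Voiculescu criterion itself, which I am using as a black box. Its ``only if'' half requires showing that $H_\mu^{-1}$ (hence $\phi_\mu$) propagates from the cone to all of $\com+$ once $\mu$ is embedded in a $\boxplus$-semigroup, and its ``if'' half requires the subordination argument recovering a reciprocal Cauchy transform as a right inverse of $z \mapsto z + \phi_t(z)$ for an arbitrary Pick-type $\phi_t$ of the correct growth. Both steps are genuinely analytic and are carried out in \cite{Be-Vo}; everything else above --- the vanishing of $b$, the relabelling $(\gamma, \tau)$, the semigroup property and the weak continuity --- is routine once the criterion is in hand, so in this exposition I would cite it rather than reprove it.
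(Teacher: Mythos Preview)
The paper does not prove this theorem at all: it is stated in the preliminaries of Section~\ref{time2} as a known result, with the reader referred to \cite{BFGKT, Be-Vo} just before the statement. Your proposal correctly identifies that the substance lies in the Bercovici--Voiculescu criterion from \cite{Be-Vo}, and your sketch (extend $-\phi_\mu$ to a Pick function, apply the Nevanlinna representation, kill the linear term via $\phi_\mu(iy)=o(y)$, and for the converse build $\mu_t$ from $\phi_t=t\phi_\mu$) is the standard route and is sound. Since the paper simply cites the result rather than proving it, there is nothing further to compare.
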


\subsection{Free convolution semigroup and boolean convolution semigroup}
In Section \ref{time}, we have studied how a specific property of a monotone convolution semigroup changes as time passes. The important point is that a time independent property is sometimes characterized by the infinitesimal generator $A$; then it is probable that such a property is conserved by the map $\Lambda_M $. 
Therefore, properties of a convolution semigroup w.r.t. time parameter $t$ are 
important to study the connection between classical probability theory and another probability theory equipped with some notion of independence. 
Now we consider in the cases of boolean and free independence. 

First we show that the subordinator theorem is valid in the boolean case but is not valid in the free case.
\begin{thm}\label{subordinator2} 
Let $\{\mu_t \}_{t \geq 0}$ be a weakly continuous boolean convolution semigroup 
with $\mu_0 = \delta_0$. Then the following statements are equivalent: 
\begin{itemize}
\item[(1)] there exists $t_0 > 0$ such that $\supp\mu_{t_0} \subset [0, \infty)$;
\item[(2)] $\supp \mu_t \subset [0, \infty)$ for all $0 \leq t < \infty$;
\item[(3)] $\supp \tau \subset [0, \infty)$,  $\tau(\{0 \}) = 0$, $\int_0 ^{\infty}\frac{1}{x}d\tau (x) < \infty$ and 
$\gamma \geq \int_0 ^{\infty}\frac{1}{x}d\tau (x)$. 
\end{itemize} 
This type of theorem does not hold in free probability theory: 
Condition (1) is not equivalent to condition (2).
\end{thm}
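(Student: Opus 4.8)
The plan is to treat the boolean equivalence and the free non-equivalence separately; the boolean part reduces cleanly to Proposition~\ref{positive support2} once one exploits the fact that a boolean convolution semigroup is linear in its L\'evy--Khintchine data. First I would note that every probability measure is $\uplus$-infinitely divisible: writing $H_{\mu_1}$ in the form (\ref{recip1}) as $H_{\mu_1}(z) = z + b + \int_{\real}\frac{1+xz}{x-z}\eta(dx)$ gives $K_{\mu_1}(z) = \gamma - \int_{\real}\frac{1+xz}{x-z}d\tau(x)$ with $\gamma := -b$, $\tau := \eta$. For fixed $z \in \comp\backslash\real$ the function $t \mapsto K_{\mu_t}(z) = z - 1/G_{\mu_t}(z)$ is continuous (weak continuity of $t\mapsto\mu_t$ gives continuity of $t\mapsto G_{\mu_t}(z)$, which is nonzero off the real axis) and satisfies Cauchy's functional equation $K_{\mu_{t+s}}(z) = K_{\mu_t}(z) + K_{\mu_s}(z)$, since $K_{\mu\uplus\nu} = K_\mu + K_\nu$ and $K_{\delta_0} = 0$. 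Hence $K_{\mu_t} = t\,K_{\mu_1}$ for all $t\geq 0$ (cf.\ \cite{A-O}); equivalently, in the notation (\ref{recip1}) applied to $\mu_t$ one has $b_t = -t\gamma$ and $\eta_t = t\tau$.

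Then I would apply Proposition~\ref{positive support2} to each $\mu_t$ with $t>0$: it asserts that $\supp\mu_t\subset[0,\infty)$ holds iff $\supp\eta_t\subset[0,\infty)$, $\eta_t(\{0\})=0$, $\int_0^\infty\tfrac{1}{x}\,d\eta_t(x)<\infty$ and $b_t+\int_0^\infty\tfrac{1}{x}\,d\eta_t(x)\leq 0$. Substituting $\eta_t=t\tau$, $b_t=-t\gamma$ and cancelling the positive factor $t$, these conditions become
\begin{equation*}
\supp\tau\subset[0,\infty),\qquad \tau(\{0\})=0,\qquad \int_0^\infty\tfrac{1}{x}\,d\tau(x)<\infty,\qquad \gamma\geq\int_0^\infty\tfrac{1}{x}\,d\tau(x),
\end{equation*}
which is exactly condition (3) and carries no dependence on $t$. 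Consequently $(1)\Rightarrow(3)$ by taking $t=t_0$; $(3)\Rightarrow(2)$ by applying the criterion for every $t>0$ (the case $t=0$ being trivial since $\mu_0=\delta_0$); and $(2)\Rightarrow(1)$ is immediate.

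For the free statement I would exhibit an explicit counterexample. Let $w_t$ denote the semicircle law of mean $0$ and variance $t$, so that $\{w_t\}_{t\geq0}$ is the free convolution semigroup of free Brownian motion ($w_t\boxplus w_s = w_{t+s}$, $w_0=\delta_0$), and put $\mu_t := w_t\boxplus\delta_{2t}$, the semicircle law of variance $t$ recentered at $2t$, supported on $[\,2t-2\sqrt t,\ 2t+2\sqrt t\,]$. Because $\delta_a\boxplus\delta_b=\delta_{a+b}$ and $\boxplus$ is commutative and associative, $\{\mu_t\}_{t\geq0}$ is again a weakly continuous free convolution semigroup with $\mu_0=\delta_0$. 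At $t=1$ one has $\supp\mu_1=[0,4]\subset[0,\infty)$, so (1) holds; but at $t=\tfrac{1}{4}$ the support is $[-\tfrac{1}{2},\tfrac{3}{2}]$, so (2) fails. Thus (1) and (2) are not equivalent in the free case.

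I expect no serious obstacle. In the boolean part the only points requiring care are the passage from additivity to linearity of $t\mapsto K_{\mu_t}(z)$ and the verification that the criterion of Proposition~\ref{positive support2} for $\mu_t$ truly collapses to condition (3) uniformly in $t$; everything else is bookkeeping. The free counterexample needs nothing beyond the support formula for a translated semicircle law and the stability of that class under translation and free convolution, the one thing to verify being that $\{w_t\boxplus\delta_{2t}\}$ genuinely is a convolution semigroup in the sense of the statement.
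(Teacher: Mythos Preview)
Your boolean argument is correct and follows the same route as the paper, which simply says ``the proof is easy by Proposition~\ref{positive support2}''; you have merely spelled out the linearity $K_{\mu_t}=tK_{\mu_1}$ and the substitution $\eta_t=t\tau$, $b_t=-t\gamma$ that the paper leaves implicit.

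Your free counterexample, however, is genuinely different from the paper's. The paper takes a free $\tfrac12$-stable type generator $\phi_\mu(z)=a-(z-c)^{1/2}$, inverts $H_t^{-1}(z)=z+t\phi_\mu(z)$ explicitly, and tracks the endpoint $at-\tfrac{t^2}{4}+c$ of the absolutely continuous support (working on $(-\infty,0]$ by symmetry). You instead use the drifted free Brownian motion $\mu_t=w_t\boxplus\delta_{2t}$, whose support $[2t-2\sqrt t,\,2t+2\sqrt t]$ is read off without any transform inversion. Your example is more elementary and transparent---the semigroup property is a one-line check from commutativity of $\boxplus$ and $\delta_a\boxplus\delta_b=\delta_{a+b}$---while the paper's example has the advantage of illustrating the same family of reciprocal Cauchy transforms already studied in Section~\ref{Exa}, so it ties back to the paper's running examples. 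Both correctly show that $(1)$ holds at a single time (your $t_0=1$, the paper's ``sufficiently large $t$'') while $(2)$ fails for small $t$.
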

\begin{rem}
The result is understood in terms of $t$-transform: $t$-transform preserves 
the positivity of a probability measure. 
\end{rem}

\begin{proof}
In the boolean case, the proof is easy by Proposition \ref{positive support2}. 
 
In the case of free probability theory, we show a counter example of convolution semigroup which does not satisfy the equivalence between $(1)$ and $(2)$. 
Since the problem is symmetric around the origin, we show a counter example concerning the condition $\supp \mu_t \subset (-\infty, 0]$.  
We define $\phi_\mu (z):= a -(z-c)^{\frac{1}{2}}$ with $a, c \in \real$. Then the corresponding convolution semigroup $\{ \mu_t \}_{t \geq 0}$ with $\mu_1 = \mu$, $\mu_0 = \delta _0$ is characterized by the inverse of the reciprocal Cauchy transform $H_t ^{-1}(z) = z + t\phi_\mu(z) = z +ta - t(z-c)^{\frac{1}{2}}$. 
Solving this equation, we obtain  
\begin{equation}
\begin{split}
 H_t(z) = z -at + \frac{t^2}{2} + t\sqrt{z-\Big{(}at - \frac{t^2}{4} + c \Big{)}}.
\end{split}
\end{equation}
The support of absolutely continuous part of $\mu_t$ is $(-\infty, at - \frac{t^2}{4} -c]$. 
If $a \geq 0$ and $a^2 > c$, then there exists $t_0 > 0$ such that $at - \frac{t^2}{4} -c > 0$ for $0 < t < t_0$ and 
$at - \frac{t^2}{4} -c < 0$  for $t > t_0$. Therefore, it holds that $\supp \mu_t \nsubseteq (-\infty, 0]$ for $0 < t < t_0$. 
On the other hand,  $H_t(+0) = -at + \frac{t^2}{2} + t \sqrt{-at + \frac{t^2}{4} - c} > 0$ for sufficiently large $t > t_0$. 
Then we have $\supp \mu_t \subset (-\infty, 0]$ for sufficiently large $t$. 
Therefore, we can conclude that the negativity of the support of a convolution semigroup is a time-dependent property.  
\end{proof}

The symmetry around the origin is a time-independent property also in the cases of boolean and free independence. 
\begin{prop}\label{symm2}
Let $\{\mu_t \}_{t \geq 0}$ be a weakly continuous boolean $($free$)$ convolution semigroup with $\mu_0 = \delta_0$.  
Then the following statements are all equivalent. 
\begin{itemize}
\item[$(1)$] There exists $t_0 > 0$ such that $\mu_{t_0}$ is symmetric.
\item[$(2)$] $\mu_{t}$ is symmetric for all $t > 0$.
\item[$(3)$] $\gamma = 0$ and $\tau$ is symmetric.
\end{itemize}
\end{prop}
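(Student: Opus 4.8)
The plan is to reduce the proposition to one elementary lemma about the generator of the semigroup, namely: a probability measure $\mu$ (assumed $\boxplus$-infinitely divisible in the free case) with ``L\'{e}vy data'' $(\gamma,\tau)$ — that is, $K_\mu(z)=\gamma-\int_{\real}\frac{1+xz}{x-z}\,d\tau(x)$ in the boolean case, and $\phi_\mu(z)=\gamma+\int_{\real}\frac{1+xz}{z-x}\,d\tau(x)$ in the free case — is symmetric if and only if $\gamma=0$ and $\tau$ is symmetric. Granting this lemma, the proposition is immediate: a weakly continuous boolean convolution semigroup satisfies $K_{\mu_t}=t\,K_{\mu_1}$, so the data of $\mu_t$ is $(t\gamma,t\tau)$, where $(\gamma,\tau)$ is the data of $\mu_1$; in the free case $\mu_1$ is $\boxplus$-infinitely divisible and $\phi_{\mu_t}=t\,\phi_{\mu_1}$, so again $\mu_t$ carries data $(t\gamma,t\tau)$. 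Consequently, for any fixed $t_0>0$, symmetry of $\mu_{t_0}$ is equivalent to $t_0\gamma=0$ and $t_0\tau$ symmetric, hence to $\gamma=0$ and $\tau$ symmetric, hence to symmetry of $\mu_t$ for every $t>0$. This yields $(1)\Leftrightarrow(3)\Leftrightarrow(2)$ at once, as $(2)\Rightarrow(1)$ is trivial.

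To prove the lemma, let $\check\mu$ denote the reflected measure $\check\mu(B):=\mu(-B)$, so that $\mu$ is symmetric exactly when $\mu=\check\mu$, equivalently $H_\mu=H_{\check\mu}$. From $G_{\check\mu}(z)=-G_\mu(-z)$ one obtains $H_{\check\mu}(z)=-H_\mu(-z)$, whence $K_{\check\mu}(z)=-K_\mu(-z)$; in the $\boxplus$-infinitely divisible case the truncated Stolz angle $\Gamma_{\eta,M}$ is invariant under $z\mapsto -z$, so the same manipulation applied to the right inverse $H_\mu^{-1}$ gives $\phi_{\check\mu}(z)=-\phi_\mu(-z)$ there, and this relation extends to $\comp\backslash\real$. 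On the other hand, the substitution $x\mapsto -x$ inside the integral shows that $-K_\mu(-z)$ is again of the required form, now with data $(-\gamma,\check\tau)$, and likewise $-\phi_\mu(-z)$ has data $(-\gamma,\check\tau)$. Since the Nevanlinna representation (\ref{recip1}) is unique — and, in the free case, so is the L\'{e}vy--Khintchine representation of the Voiculescu transform quoted above — the identity $\mu=\check\mu$ is equivalent to $(\gamma,\tau)=(-\gamma,\check\tau)$, i.e.\ to $\gamma=0$ and $\tau=\check\tau$.

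I do not expect a serious obstacle: the whole argument is formal once the reflection formulas $H_{\check\mu}(z)=-H_\mu(-z)$, $K_{\check\mu}(z)=-K_\mu(-z)$ and $\phi_{\check\mu}(z)=-\phi_\mu(-z)$ are in place, together with the homogeneity $K_{\mu_t}=tK_{\mu_1}$, $\phi_{\mu_t}=t\phi_{\mu_1}$ of the respective transforms along a weakly continuous semigroup. The only point requiring attention is careful tracking of the sign conventions in the integrands $\frac{1+xz}{x-z}$ and $\frac{1+xz}{z-x}$ under $x\mapsto -x$, plus the (harmless) observation that reflection maps a truncated Stolz angle onto itself, so that the free argument is carried out on a genuine common domain of definition.
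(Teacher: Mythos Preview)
Your proof is correct and follows essentially the same route as the paper: both arguments reduce symmetry of $\mu$ to the oddness condition $K_\mu(-z)=-K_\mu(z)$ (respectively $\phi_\mu(-z)=-\phi_\mu(-z)$), observe that this is preserved under scaling by $t$ because $K_{\mu_t}=tK_{\mu_1}$ (respectively $\phi_{\mu_t}=t\phi_{\mu_1}$), and identify the oddness condition with $\gamma=0$, $\tau=\check\tau$. You have simply supplied the details the paper leaves implicit or cites to \cite{S-W}, in particular the reflection formulas and the appeal to uniqueness of the Nevanlinna/L\'{e}vy--Khintchine representation.
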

\begin{proof}
In boolean probability theory, the symmetry of a probability distribution is equivalent to $K_\mu (-z) = -K_\mu(z)$ for all $z \in \comp \backslash \real$. 
In free probability theory, the symmetry of a probability distribution is equivalent to $\phi_\mu (-z) = -\phi_\mu(z)$ for all $z \in \Gamma_{\eta, M}$. 
This is clearly a time-independent property since the convolution means the addition of $K_\mu$ (resp. $\phi_\mu$) in boolean (resp. free) 
probability theory. In both cases, $(3)$ is equivalent to $(1)$ which has been pointed out in the boolean case in \cite{S-W}.  
\end{proof}

We can show that the property $\int_{\real} x^{2n} d\mu_t(x) < \infty $ is time-independent in boolean case. 
In free probability theory, this result is recently obtained in \cite{Ben}. 
We show that this is the case also in boolean probability theory. The proof is greatly easier than the case of monotone and free probability theories.  
\begin{prop}
Let $n \geq 1$ be a natural number. For a weakly continuous boolean convolution semigroup $\{\mu_t \}_{t \geq 0}$, 
the following statements are equivalent. 
\begin{itemize}
\item[$(1)$] $\int_{\real} x^{2n} d\mu_t(x) < \infty $ for some $t > 0$. 
\item[$(2)$] $\int_{\real} x^{2n} d\mu_t(x) < \infty $ for all $t > 0$.
\item[$(3)$] $\int_{\real} x^{2n} d\tau(x) < \infty $.
\end{itemize}
\end{prop}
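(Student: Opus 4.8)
The plan is to reduce all three conditions to a single condition on the measure $\eta_t$ appearing in the Nevanlinna representation $(\ref{recip1})$ of $H_{\mu_t}$, and to exploit the fact that for a boolean convolution semigroup this measure is simply $t\,\eta_1$.

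First I would record a general lemma, valid for an arbitrary probability measure $\mu$: writing $H_\mu$ as in $(\ref{recip1})$ with parameters $(b,\eta)$, one has $m_{2n}(\mu)<\infty$ if and only if $m_{2n}(\eta)<\infty$. For the proof, note $m_{2n}(\mu)<\infty$ implies $m_2(\mu)<\infty$, so $\mu$ has finite variance and by Proposition $\ref{maa}$ we may write $H_\mu(z)=a+z+\int_{\real}\frac{\rho(dx)}{x-z}$ with $\rho$ positive and finite; using the identity $\frac{1}{x-z}=\frac{1}{1+x^2}\cdot\frac{1+xz}{x-z}+\frac{x}{1+x^2}$ together with uniqueness of $(\ref{recip1})$ gives $\eta(dx)=\frac{\rho(dx)}{1+x^2}$, i.e. $\rho(dx)=(1+x^2)\eta(dx)$. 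Conversely $m_{2n}(\eta)<\infty$ forces $m_2(\eta)<\infty$, hence (since $\eta$ is finite) $\rho:=(1+x^2)\eta$ is a finite positive measure and $\mu$ has finite variance. In either direction Proposition $\ref{asymptotic2}$ (equivalence of $(1)$ and $(2)$) yields $m_{2n}(\mu)<\infty \Leftrightarrow m_{2n-2}(\rho)<\infty$, and since $m_{2n-2}(\rho)=m_{2n-2}(\eta)+m_{2n}(\eta)$ with $\eta$ finite, this is equivalent to $m_{2n}(\eta)<\infty$; the lemma follows.

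Next I would bring in the boolean structure. From $\mu_{s+t}=\mu_s\uplus\mu_t$ we get $K_{\mu_{s+t}}=K_{\mu_s}+K_{\mu_t}$, so for each fixed $z\in\comp\backslash\real$ the map $t\mapsto K_{\mu_t}(z)$ solves Cauchy's functional equation and is continuous by weak continuity of $\{\mu_t\}$; hence $K_{\mu_t}(z)=t\,K_{\mu_1}(z)$. Writing $H_{\mu_t}$ as in $(\ref{recip1})$ with data $(b_t,\eta_t)$ and using $K_{\mu_t}=z-H_{\mu_t}$, uniqueness of the representation gives $b_t=t\,b_1$ and $\eta_t=t\,\eta_1$; in particular the measure $\tau$ in condition $(3)$ is $\eta_1$, and $m_{2n}(\eta_t)=t\,m_{2n}(\eta_1)$. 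Applying the lemma to each $\mu_t$, condition $(1)$ says $m_{2n}(\eta_{t_0})<\infty$ for some $t_0>0$, condition $(2)$ says $m_{2n}(\eta_t)<\infty$ for all $t>0$, and both are equivalent to $m_{2n}(\eta_1)<\infty$, which is condition $(3)$. Consistent with the remark that the boolean case is much easier than the monotone and free cases, I expect no genuine obstacle here; the only mildly delicate points are verifying $\rho=(1+x^2)\eta$ and identifying $\tau$ with the Nevanlinna measure $\eta_1$.
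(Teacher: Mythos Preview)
Your proposal is correct and follows essentially the same route as the paper, which simply writes ``The proof follows from Proposition~\ref{asymptotic2}'': the content of that one-line proof is precisely your observation that $H_{\mu_t}(z)-z = t\bigl(H_{\mu_1}(z)-z\bigr)$, so the Nevanlinna data satisfy $\eta_t = t\tau$, and then Proposition~\ref{asymptotic2} (via $\rho=(1+x^2)\eta$ and $m_{2n-2}(\rho)=m_{2n-2}(\eta)+m_{2n}(\eta)$) gives $m_{2n}(\mu_t)<\infty \Leftrightarrow m_{2n}(\tau)<\infty$. Your explicit lemma and the Cauchy-functional-equation step are just the details the paper leaves implicit.
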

\begin{proof} 
The proof follows from Proposition \ref{asymptotic2}. 
\end{proof}

Now we can compare the properties of Bercovici-Pata bijections in free, monotone and boolean probability theories. 
Boolean (strictly) stable distributions have been classified in \cite{S-W}, and they have the same characterization as monotone case. Considering the contents in this section, we obtain the boolean analogue of properties (1)-(9) in Theorem \ref{Lambda}. It might be interesting to consider the validity of property (6) in boolean and monotone cases 
d in terms of the embedding into tensor independence \cite{Fra1}. 
In free probability theory, most of the results of Theorem \ref{Lambda} are already known (see \cite{BFGKT, Be-Pa}) except for the failure of free analog of property (6).  

Finally, we note another similarity between free and monotone infinitely divisible distributions. The number of atoms in a $\boxplus$-infinitely divisible distribution is restricted in a similar way to the case of a $\rhd$-infinitely divisible distribution (see Theorem \ref{atom3} in this paper and Proposition 2.8 in \cite{Bel}).

\section{Examples}\label{Exa}
We consider the family of distributions $\mu_t ^{(\alpha,b,c)}$  introduced in (\ref{computa2}) under the following restrictions: $b = 1$ when $0 < \alpha < 1$ and $b = -1$ when $1 \leq  \alpha \leq  2$. In terms of the reciprocal Cauchy transform, we have 
\begin{equation}
H_t ^{(\alpha,1,c)}(z)= c + \{(z - c)^\alpha + t \}^{\frac{1}{\alpha}} \text{~ for~} 0 < \alpha < 1
\end{equation}
and  
\begin{equation}
H_t ^{(\alpha,-1,c)}(z)= c + \{(z - c)^\alpha - t \}^{\frac{1}{\alpha}} \text{~for~} 1 \leq \alpha \leq 2. 
\end{equation}
We write simply as $H_t(z)$ and $\mu_t$ when there are no confusions. 
The support of an absolutely continuous part and the positions of atoms for various parameters $\alpha, c, t ~(t > 0)$ 
are summarized as follows:   \\
(1) $\im c < 0$ \\
 \begin{equation}
\begin{split}
&\mu_t = \mu_{t, ac}, \\
&\supp \mu_{t, ac} = \real.
\end{split}
\end{equation}
(2) $\im c = 0$, $\re c \geq  0$, $\alpha = 2$  
\begin{equation}
\begin{split}
&\mu_t = \mu_{t, ac} + \mu_{t, sing}, \\
&\supp \mu_{t, ac} = [c - \sqrt{t}, c + \sqrt{t}], \\
&\mu_{t, sing} = \frac{|c|}{c^2 + t} \delta_{ c - \sqrt{c^2 + t} }.  
\end{split}
\end{equation}
(3) $\im c = 0$, $\re c < 0$, $\alpha = 2$ 
\begin{equation}
\begin{split}
&\mu_t = \mu_{t, ac} + \mu_{t, sing}, \\
&\supp \mu_{t, ac} = [c - \sqrt{t}, c + \sqrt{t}], \\
&\mu_{t, sing} = \frac{|c|}{c^2 + t} \delta_{ c + \sqrt{c^2 + t} }.
\end{split}
\end{equation}
(4) $\im c = 0$, $\re c \geq 0$, $1 < \alpha < 2$ 
\begin{equation}
\begin{split}
&\mu_t = \mu_{t, ac},  \\
&\supp \mu_{t, ac} = (-\infty, c + t^{\frac{1}{\alpha} }]. \\ 
\end{split}
\end{equation}
(5) $\im c = 0$, $\re c < 0$, $1 < \alpha < 2$ 
\begin{equation}
\begin{split}
&\mu_t = \mu_{t, ac} + \mu_{t, sing},  \\
&\supp \mu_{t, ac} = (-\infty, c + t^{\frac{1}{\alpha} }], \\ 
&\mu_{t, sing} = \Big{(}\frac{|c|^\alpha}{|c|^\alpha + t}\Big{)} ^ {\frac{\alpha - 1}{\alpha}} \delta_{ c + (|c|^\alpha + t)^{\frac{1}{\alpha}} }. 
\end{split}
\end{equation}
(6) $\im c = 0$, $\re c \geq 0$, $0 < \alpha < 1$ 
\begin{equation}
\begin{split}
&\mu_t = \mu_{t, ac},  \\
&\supp \mu_{t, ac} = (-\infty, c].  \\ 
\end{split}
\end{equation}
(7) $\im c = 0$, $\re c < 0$, $0 < \alpha < 1$  
\begin{equation}\label{exa7}
\begin{split}
&\mu_t = \mu_{t, ac} + \mu_{t, sing}, \\
&\supp \mu_{t, ac} = (-\infty, c], \\
&\mu_{t, sing} = 
     \begin{cases}
          \Big{(}\frac{|c|^\alpha - t}{|c|^\alpha}\Big{)}^{\frac{1 - \alpha}{\alpha}} \delta_{ c + (|c|^\alpha - t)^{\frac{1}{\alpha}} }, & 0 \leq t < |c|^\alpha, \\
          0, & t \geq |c|^\alpha.  
     \end{cases}
\end{split}
\end{equation}
(8) $\im c = 0$, $\alpha = 1$
\begin{equation}
\begin{split}
\mu_{t, sing} = \delta_{tc}. 
\end{split}
\end{equation}

Several features can be seen from the above examples. In (6), the support of the absolutely continuous part does not vary as a function of $t$; 
however, it varies as a function of $t$ in (5), for instance.   
One can see in (7) that there exists a probability measure $\mu$ which contains a delta measure 
although $\mu \rhd \mu$ does not contain a delta measure. Therefore, we can conclude that 
monotone convolution does not conserve the absolute continuity of probability distributions. 

We calculate  $\mu_t$ explicitly in the case of $\alpha = 2$ and $\im c = 0$, and in the case of 
$\alpha = \frac{1}{2}$ and $\im c = 0$. 
When $\alpha=2$ and $\im c = 0$, the result is shown in \cite{Mur3}. Coefficients of delta measures 
are, however, not shown in \cite{Mur3}. It is important from the viewpoint of Theorem \ref{atom3} to check that two delta measures do not 
appear at the same time. 

\noindent
\textbf{\underline{1. $\alpha = 2$  and $\im c = 0$}} 
The Cauchy transform is given by
\[G_t (z) = \Big{[}c + |(x-c)^2 - t - y^2+ 2iy(x-c)|^{\frac{1}{2}} \exp \Big{\{}\frac{i}{2}\arg \big{(}(x-c)^2 - t - y^2+ 2iy(x-c)\big{)} \Big{\}}  \Big{]}^{-1}. \]  \\
$\mathbf{case ~1}$: $|x-c| > \sqrt{t}$ \\
Since $(x-c)^2 - t > 0$, we have 
\begin{equation}
\lim_{y \rightarrow + 0}\exp \Big{\{}\frac{i}{2}\arg \big{(}(x-c)^2 - t - y^2+ 2iy(x-c)\big{)} \Big{ \} } = 
   \begin{cases} 
         1, & x-c > + \sqrt{t}, \\
        -1, & x-c < - \sqrt{t}.
    \end{cases}
\end{equation}
Therefore, we have the limit 
\begin{equation}
\lim_{y \rightarrow + 0} G_t (x + iy) = 
   \begin{cases} 
         \frac{1}{c + \sqrt{(x-c)^2 - t} }, & x-c > + \sqrt{t}, \\
         \frac{1}{c - \sqrt{(x-c)^2 - t} }, & x-c < - \sqrt{t}.
    \end{cases}
\end{equation}

\noindent
$\mathbf{case ~2}$: $|x-c| \leq  \sqrt{t}$ \\ 
 In this case we have
\begin{equation}
\lim_{y \rightarrow + 0}\exp \Big{\{}\frac{i}{2}\arg \big{(}(x-c)^2 - t - y^2+ 2iy(x-c)\big{)}\Big{ \}} = i.
\end{equation}
Hence we obtain 

\begin{equation}
\begin{split}
      \lim_{y \rightarrow + 0} G_t (x + iy) &= \frac{1}{c + i\sqrt{t - (x-c)^2} }  \\
                                            &= \frac{c - i\sqrt{t - (x-c)^2}}{c^2 + t - (x-c)^2}. 
\end{split}  
\end{equation}
The absolutely continuous part of $\mu_t$ is 
\begin{equation} 
   d\mu_{t,ac}(x) =  \frac{1}{\pi }\frac{\sqrt{t - (x-c)^2}}{c^2 + t - (x-c)^2} 1_{(c-\sqrt{t}, c+ \sqrt{t})}(x)dx. 
\end{equation}
There is a delta measure at $a$ only when $\lim_{y \searrow  0}iyG_t(a+iy) > 0$.  
As a possible position of a delta measure, there is only one point $a$ with $|x-c| > \sqrt{t}$ which satisfies 

\[ \begin{cases}    c - \sqrt{(a-c)^2 - t} = 0 \text{~and~} a-c < -\sqrt{t}, & \text{~if~} c \geq 0,  \\
                    c + \sqrt{(a-c)^2 - t} = 0 \text{~and~} a-c > \sqrt{t}, &\text{~if~} c < 0,  
   \end{cases}
\]
that is, $a = c - \sqrt{c^2 + t}$ for $c \geq 0$ and $a = c + \sqrt{c^2 + t}$ for $c < 0$. 
Therefore, the singular part of $\mu_t$ is 
\begin{equation} \mu_{t,sing} = \begin{cases} 
                   A\delta_{ c - \sqrt{c^2 + t} }, & c \geq 0,  \\
                   B\delta_{c + \sqrt{c^2 + t} }, & c < 0.  \\  
                  \end{cases}  
\end{equation}
It is not difficult to calculate $\mu_{t, ac}(\real) = 1- \frac{|c|}{\sqrt{c^2 + t}}$. 
Then one can determine the singular part completely: 
\begin{equation} \mu_{t,sing} = \begin{cases} 
                   \frac{|c|}{\sqrt{c^2 + t}} \delta_{ c - \sqrt{c^2 + t} }, & c \geq 0,  \\
                   \frac{|c|}{\sqrt{c^2 + t}} \delta_{c + \sqrt{c^2 + t} }, & c < 0.  \\  
                  \end{cases}  
\end{equation}

\noindent
\textbf{\underline{2. $\alpha=\frac{1}{2}$ and $\im c = 0$}} 
The reciprocal Cauchy transform is given by
\begin{equation}
\begin{split}
          H_t (z) &= c + \{(z - c)^{\frac{1}{2} } + t \}^2   \\
                  &= t^2 + z + 2t\sqrt{z-c}. 
\end{split}
\end{equation}
Here the branch of $(z - c)^{\frac{1}{2} }$ is taken such that $\sqrt{-1} = i$.  
If $x-c > 0$, then
\begin{equation} G_t (x + i0) = \frac{1}{ t^2 + x + 2t\sqrt{x-c} }. 
\end{equation}
 If $x-c \leq 0$, then 
\begin{equation}
\begin{split}
   G_t (x + i0) &= \frac{1}{ t^2 + x + 2ti\sqrt{c-x} }   \\
                &= \frac{t^2 + x - 2ti\sqrt{c-x} }{(t^2 +x)^2 + 4t^2 (c-x)}.
\end{split}
\end{equation}
To know the position and the weight of a delta measure it is important to calculate the following quantity: 
\begin{equation}
\lim_{y \rightarrow +0} y G_t (a + iy) = 
           \begin{cases} 
            \frac{\sqrt{|c|} -t}{i\sqrt{|c|}}, &\text{if~} a = t^2 - 2t\sqrt{|c|},~ c < 0,~  t\leq \sqrt{|c|},  \\
                               0, & \text{~~otherwise}.      
           \end{cases}
\end{equation} 
If $c < 0$, there is a delta measure in this distribution at $x = t^2 - 2t\sqrt{|c|}$ and it disappears 
at the time $t = \sqrt{|c|}$. The maximum $b(\mu_t)$ of the support of $\mu_t$ 
is 

\[ b(\mu_t) = 
          \begin{cases} 
             t^2 - 2t\sqrt{|c|}, & 0 \leq t \leq \sqrt{|c|}, \\ 
             c, & t \geq \sqrt{|c|}. 
          \end{cases}
\]  
$b(\mu_t)$ decreases as a function of $t$.
The absolutely continuous part is calculated as 
\[ 
d\mu_{t,ac}(x)= \frac{1}{\pi}\frac{2t\sqrt{c-x} }{(t^2 +x)^2 + 4t^2 (c-x)} 1_{(-\infty, c)}(x)dx. 
\]
Convergence of $\mu_t$ as $t \rightarrow 0$ is
\begin{equation}
 \lim_{t\rightarrow 0} \mu_t =  
    \begin{cases} 
            \delta_0, & c \geq   0,          \\
            0, & c < 0.
    \end{cases}
\end{equation}

Finally, the distribution is obtained as follows: 
  
\begin{equation}   \mu_t = 
 \begin{cases} 
            \mu_{t,ac}, & c \geq  0, ~ 0 < t < \infty,        \\
            \mu_{t,ac} + \frac{\sqrt{|c|} -t}{\sqrt{|c|}} \delta_{t^2 - 2t\sqrt{|c|}}, & c < 0,~ 0 < t \leq \sqrt{|c|},    \\
            \mu_{t,ac}, & c < 0,~ t \geq \sqrt{|c|}.
 \end{cases}
\end{equation}

\section*{Acknowledgments} The author expresses his great appreciation to 
Prof. Izumi Ojima for his warm encouragements and valuable suggestions during his master course. 
He thanks Prof. Nobuaki Obata for giving him an opportunity to present this research in Tohoku University. 
He is also grateful to Prof. Shogo Tanimura, Mr. Hayato Saigo, Mr. Ryo Harada, Mr. Hiroshi Ando, Mr. Kazuya Okamura 
for their interests in his research and many helpful discussions. 
This work was partially supported by Grant-in-Aid for
JSPS research fellows.

\end{document}